\newcommand{\R}{\mathbb{R}}
\newcommand{\C}{\mathbb{C}}
\newcommand{\bi}{\mathbf i}
\DeclareMathOperator{\Pd}{Pd}
\newcommand{\VE}{\mathscr{E}}
\newcommand{\VF}{\mathscr{F}}
\newcommand{\VP}{\mathscr{P}}
\renewcommand{\pd}{\operatorname{Pd}}
\newcommand{\bC}{\mathbf{C}}
\newcommand{\bB}{\mathbf{B}}
\newcommand{\bA}{\mathbf{A}}
\renewcommand{\calK}{\mathcal{K}}
\newcommand{\scrL}{\mathscr{L}}
\newcommand{\scrH}{\mathscr{H}}
\newcommand{\dom}{\operatorname{dom}}
\title[Hard Lefschetz theorem and Hodge--Riemann relations]{Hard Lefschetz theorem and Hodge--Riemann relations for convex valuations}
\author[Andreas Bernig]{Andreas Bernig}
\author[Jan Kotrbat{\'y}]{Jan Kotrbat{\'y}}
\author[Thomas Wannerer]{Thomas Wannerer}
\address{Institut f\"ur Mathematik, Goethe-Universit\"at Frankfurt, Robert-Mayer-Str. 10, 60054 Frankfurt, Germany}
\email{bernig@math.uni-frankfurt.de}
\address{Charles University, Faculty of Mathematics and Physics, Sokolovsk\'a 49/83, 186 00 Prague, Czechia}
\email{kotrbaty@karlin.mff.cuni.cz}
\address{Friedrich-Schiller-Universit\"at Jena, Fakult\"at f\"ur Mathematik und Informatik, Institut f\"ur Mathematik, Ernst-Abbe-Platz 2, 07743 Jena, Germany}
\email{thomas.wannerer@uni-jena.de}
\thanks{AB was supported by DFG grant BE 2484/10-1.}
\thanks{JK was supported by  DFG grant BE 2484/5-2 and by Charles University grant PRIMUS/24/SCI/009.}
\thanks{TW was supported by DFG grant WA 3510/3-1.}
\subjclass[2020]{52A40, 52A39, 52B45, 43A75}
\begin{document}

\begin{abstract}
 The algebra of smooth translation-invariant  valuations on convex bodies, introduced  by S.~Alesker in the early 2000s, was in part  proved and in part conjectured to satisfy properties  formally analogous to those of the cohomology ring of a compact K\"ahler manifold:  Poincar\'e duality, the hard Lefschetz theorem, and the Hodge--Riemann relations.  Our main result establishes  the hard Lefschetz theorem and the Hodge--Riemann  relations in full generality. As a consequence, we obtain McMullen's quadratic inequalities, which are valid for strongly isomorphic polytopes and known to fail in general, for convex bodies with smooth and strictly positively curved boundary. Our proof is based on elliptic operator theory and on perturbation theory applied to unbounded operators on a natural Hilbert space completion of the space of smooth  translation-invariant valuations.
\end{abstract}

\makeatletter
\providecommand\@dotsep{5}
\def\listtodoname{List of Todos}
\def\listoftodos{\@starttoc{tdo}\listtodoname}
\makeatother

\maketitle

\section{Introduction}

\subsection{Geometric inequalities}

An extremely fruitful approach to the classical isoperimetric problem was discovered by Minkowski at the end of the nineteenth century. In its core lies the observation that the isoperimetric inequality can be formulated as an inequality between mixed volumes of convex bodies. This perspective has led to striking generalizations and a vast array of related results, yet it seems far from exhausted. In this respect, we prove here a collection of new inequalities for mixed volumes that in particular broadly generalizes the isoperimetric inequality.

The mixed volume is the polarization of the Lebesgue measure under the Minkowski addition on the space $\mathcal K(\RR^n)$ of convex compact sets in the $n$-dimensional euclidean space, i.e., the (unique) symmetric,  Minkowski-multilinear function $V:\mathcal K(\RR^n)^n \to \RR$ such that $V(A,\dots,A)=\vol(A)$. It is non-negative and satisfies for all $A_1,\dots,A_n\in\mathcal K(\RR^n)$ the fundamental Alexandrov--Fenchel inequality
\begin{align*}
V(A_1,A_2,A_3,\dots,A_n)^2\geq V(A_1,A_1,A_3,\dots,A_n)V(A_2,A_2,A_3,\dots,A_n).
\end{align*}

The Alexandrov--Fenchel inequality is of central importance in convex geometry and, in fact, far beyond. It subsumes many important inequalities for convex bodies, such as the Minkowski inequalities and the Brunn--Minkowski inequality. The two original proofs by Alexandrov \cite{Aleksandrov:Theorie2,Aleksandrov:Theorie4} and various subsequent alternative proofs  \cite{KavehKhovanskii,Cordero:OneMore,Gromov:ConvexSets, Wang:AF,Teissier:AF, BraendenLeake:LorentzianCones} revealed its deep, unexpected connections to different areas, in particular to algebraic and K\"ahler geometry. Let us also mention the recent breakthrough due to Shenfeld and van Handel  \cite{ShenfeldHandel:Extremals, ShenfeldHandel:Minkowski} in the open  problem of characterizing the equality cases (see also \cite{Schneider:AF,Schneider:AFzonoids,Schneider:BM, HugReichert:Support, HugReichert:Extremizers} for conjectures and partial results).  Last but not least, the Alexandrov--Fenchel inequality is intimately related to combinatorics  \cite{Stanley:TwoApplications, MaShenfeld:Extremals}; for a broader perspective on this connection see also \S\ref{ss:Hodge} below.

Alexandrov's first proof of the Alexandrov--Fenchel inequality was based on approximating convex bodies by strongly isomorphic simple polytopes. This approach was later extended in a seminal paper of McMullen \cite{McMullen:SimplePolytopes} where the following much more general result was proved (see also \cite{Timorin:Analogue,vanHandel:Shepard}):

\begin{theorem}[McMullen \cite{McMullen:SimplePolytopes}]
\label{thm:McMullen}
Let $0\leq k\leq \frac n2$, $N\in\NN$, $x_i\in\RR$, and let $A_j^i,C_l\in\mathcal K(\RR^n)$  be strongly isomorphic simple polytopes. If
\begin{align*}
\sum_{i=1}^N x_i V(A_1^i,\dots,A_k^i,C_0,\dots C_{n-2k},\Cdot[k-1])=0,
\end{align*}
then
\begin{align*}
(-1)^k\sum_{i,j=1}^N x_ix_j V(A_1^i,\dots,A_k^i,A_1^j,\dots,A_k^j,C_1,\dots,C_{n-2k})\geq0
\end{align*}
with equality if and only if $
\sum_{i=1}^N x_i V(A_1^i,\dots,A_k^i,\Cdot[n-k])=0$.
\end{theorem}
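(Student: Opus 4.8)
The plan is to deduce Theorem~\ref{thm:McMullen} from the ``K\"ahler package'' --- Poincar\'e duality, the hard Lefschetz theorem, and the Hodge--Riemann relations --- for McMullen's polytope algebra of the common type, which is the finite-dimensional, purely combinatorial counterpart of the valuation-theoretic structures studied in this paper.

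\emph{Set-up.} First I would fix the common normal fan $\Sigma$ of the simple polytopes $A^i_j$ and $C_l$: a complete simplicial polytopal fan in $\RR^n$ whose rays are the facet normals of the polytopes of the type. The simple polytopes with normal fan $\Sigma$ form, through their support numbers, an open convex cone $\mathcal T\subset\RR^{|\Sigma(1)|}$ (the type cone), on which Minkowski addition is addition of vectors and $P\mapsto\vol(P)$ extends to a homogeneous polynomial of degree $n$ whose polarization is the mixed volume $V$. I would then pass to the graded commutative $\RR$-algebra $\mathcal A=\mathcal A(\Sigma)=\bigoplus_{j=0}^{n}\mathcal A^{j}$ associated with $\Sigma$ --- McMullen's polytope algebra of the type, equivalently the rational Chow ring of the projective simplicial toric variety of $\Sigma$, tensored with $\RR$. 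It is generated in degree one and is Artinian Gorenstein: $\mathcal A^{0}=\RR$, there is a degree isomorphism $\int\colon\mathcal A^{n}\to\RR$, and the multiplication $\mathcal A^{j}\times\mathcal A^{n-j}\to\mathcal A^{n}\cong\RR$ is a perfect pairing (Poincar\'e duality). Each polytope $P$ of the type determines, $\RR$-linearly, a class $\ell_{P}\in\mathcal A^{1}$ normalized so that $\int\ell_{P_{1}}\cdots\ell_{P_{n}}=n!\,V(P_{1},\dots,P_{n})$; the classes with $P\in\mathcal T$ form the open ample cone of $\mathcal A^{1}$, to which each $\ell_{C_{0}},\dots,\ell_{C_{n-2k}}$ belongs.

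\emph{The K\"ahler package for $\mathcal A$.} The heart of the matter --- and the step I expect to be the main obstacle --- is to establish for $\mathcal A$ the hard Lefschetz theorem and the Hodge--Riemann relations with respect to an arbitrary ample class $\ell$: for $j\le n/2$ the operator $\ell^{n-2j}\colon\mathcal A^{j}\to\mathcal A^{n-j}$ is an isomorphism, and the symmetric bilinear form $(a,b)\mapsto(-1)^{j}\int\ell^{n-2j}ab$ is positive definite on the primitive subspace $\ker\bigl(\ell^{n-2j+1}\colon\mathcal A^{j}\to\mathcal A^{n-j+1}\bigr)$. I would prove the two statements together by induction on $n$. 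Poincar\'e duality is the elementary Gorenstein property; for the inductive step I would use, for each ray $\rho\in\Sigma(1)$, the surjection $\mathcal A(\Sigma)\to\mathcal A(\Sigma_{\rho})$ onto the $(n-1)$-dimensional algebra of the star fan, which plays the role of a hyperplane section, together with the classical linear-algebra bootstrap (the Hodge--Riemann relations in the lower-dimensional algebras imply hard Lefschetz in $\mathcal A(\Sigma)$) and a continuity argument along the connected open cone $\mathcal T$: on the open locus where hard Lefschetz holds the signature of the above form is locally constant, so it is enough to verify the Hodge--Riemann relations near the boundary of $\mathcal T$, where they follow from the inductive hypothesis. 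Carrying this out in a self-contained, convex-geometric way is exactly what McMullen does in \cite{McMullen:SimplePolytopes}, and I do not expect a real shortcut. From these ``pure'' statements I would then obtain, by polarization over the ample cone, the \emph{mixed} hard Lefschetz theorem --- $\ell_{C_{1}}\cdots\ell_{C_{n-2k}}\colon\mathcal A^{k}\to\mathcal A^{n-k}$ is an isomorphism --- and the \emph{mixed} Hodge--Riemann relations (Timorin \cite{Timorin:Analogue}): if $\xi\in\mathcal A^{k}$ satisfies $\ell_{C_{0}}\ell_{C_{1}}\cdots\ell_{C_{n-2k}}\,\xi=0$, then $(-1)^{k}\int\ell_{C_{1}}\cdots\ell_{C_{n-2k}}\,\xi^{2}\ge0$, with equality if and only if $\xi=0$.

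\emph{Deduction.} Finally I would set $\xi=\sum_{i=1}^{N}x_{i}\,\ell_{A^{i}_{1}}\cdots\ell_{A^{i}_{k}}\in\mathcal A^{k}$ and $L=\ell_{C_{1}}\cdots\ell_{C_{n-2k}}\in\mathcal A^{n-2k}$. Reading the hypothesis as the vanishing of the associated symmetric multilinear form in its remaining $k-1$ slots, it says that $\int\eta\,\ell_{C_{0}}L\,\xi=0$ for all $\eta\in\mathcal A^{k-1}$ (products of $k-1$ ample classes span $\mathcal A^{k-1}$), whence by Poincar\'e duality $\ell_{C_{0}}L\,\xi=0$; that is, $\xi$ is primitive in the mixed sense. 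The mixed Hodge--Riemann relations then give
\[
(-1)^{k}\sum_{i,j=1}^{N}x_{i}x_{j}\,V\bigl(A^{i}_{1},\dots,A^{i}_{k},A^{j}_{1},\dots,A^{j}_{k},C_{1},\dots,C_{n-2k}\bigr)=\frac{(-1)^{k}}{n!}\int L\,\xi^{2}\ \ge\ 0,
\]
which is the asserted inequality, and equality holds exactly when $\xi=0$. Since the powers $\ell_{D}^{n-k}$ with $D$ of the type span $\mathcal A^{n-k}$, Poincar\'e duality identifies $\xi=0$ with the vanishing of $\int\ell_{D}^{n-k}\xi$ for all such $D$, i.e.\ with $\sum_{i=1}^{N}x_{i}\,V(A^{i}_{1},\dots,A^{i}_{k},\Cdot[n-k])=0$, which completes the plan.
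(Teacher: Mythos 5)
The paper does not prove Theorem~\ref{thm:McMullen}; it is stated as a cited result of McMullen, with \cite{McMullen:SimplePolytopes}, \cite{Timorin:Analogue}, and \cite{vanHandel:Shepard} given as references, so there is no internal proof to compare against. Your sketch is a correct outline of exactly that classical argument: build the graded Artinian--Gorenstein algebra $\mathcal A(\Sigma)$ of the common normal fan, establish the K\"ahler package for ample classes, polarize to the mixed hard Lefschetz theorem and mixed Hodge--Riemann relations, and deduce the inequality by observing that the hypothesis forces $\ell_{C_0}\ell_{C_1}\cdots\ell_{C_{n-2k}}\xi=0$ via Poincar\'e duality, since products of $k-1$ ample classes span $\mathcal A^{k-1}$. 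One small gap you should fill in the equality case: $\xi=0$ in $\mathcal A(\Sigma)$ gives the vanishing of $\sum_i x_iV(A_1^i,\dots,A_k^i,D[n-k])$ only for $D$ of the given type, whereas the theorem asserts it for arbitrary polytopes; passing from one to the other requires McMullen's refinement/approximation argument (every polytope is a Hausdorff limit of polytopes whose normal fan refines $\Sigma$, and the vanishing of $\xi$ is preserved under the natural maps to the algebras of refinements). The contribution of the paper is the analogue of this theorem for smooth, strictly convex reference bodies (Theorems~\ref{thm:mainMV}, \ref{thm:main}, \ref{thm:mainE}), obtained by an entirely different, analytic route --- elliptic estimates and Kato--Rellich perturbation theory on Hilbert space completions of $\Val^\infty$ --- precisely because a finite-dimensional polytope algebra of a fixed type is no longer available in that setting.
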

The two equations in the statement of the theorem are to be understood as between functions on 
polytopes, $[k]$ stands for plugging in a polytope $k$ times. A mixed volume with more than $n$ arguments is zero by convention.

The case $k=0$ of Theorem \ref{thm:McMullen} is equivalent to the non-negativity of the mixed volume, which is a well-known, yet non-trivial fact. For $k=1$, the theorem yields the Alexandrov--Fenchel inequality. This is less obvious but still easy to see, cf. \cite{vanHandel:Shepard} or \cite[Corollary 8.1]{Kotrbaty:HR}. Since these two inequalities extend to \emph{all} convex bodies by continuity, it is tempting to expect the conclusion of Theorem \ref{thm:McMullen} to hold without any assumption on $A_j^i,C_l\in\mathcal K(\RR^n)$. However, a naïve approximation argument does not apply to $k\geq2$ and, in fact, a recent counterexample due to van~Handel \cite{vanHandel:Shepard} shows that the statement is \emph{wrong} for general convex bodies. A natural question therefore is whether McMullen's quadratic inequalities possibly hold for subclasses of convex bodies other than strongly isomorphic simple polytopes.

Inspired by Alexandrov's second proof of the Alexandrov--Fenchel inequality, we answer this question affirmatively by proving an analogue of Theorem \ref{thm:McMullen} for sufficiently smooth convex bodies. Namely, denoting the class of convex bodies with smooth and strictly positively curved boundary by $\mathcal K^\infty_+(\RR^n)$, we prove the following theorem.
\begin{MainTheorem}
\label{thm:mainMV}
Let $0\leq k\leq \frac n2$, $N\in\NN$, $x_i\in\RR$, and $A_j^i,C_l\in\mathcal K^\infty_+(\RR^n)$. If
\begin{align*}
\sum_{i=1}^N x_i V(A_1^i,\dots,A_k^i,C_0,\dots C_{n-2k},\Cdot[k-1])=0,
\end{align*}
then
\begin{align*}
(-1)^k\sum_{i,j=1}^N x_ix_j V(A_1^i,\dots,A_k^i,A_1^j,\dots,A_k^j,C_1,\dots,C_{n-2k})\geq0
\end{align*}
with equality if and only if $\sum_{i=1}^N x_i V(A_1^i,\dots,A_k^i,\Cdot[n-k])=0$.
\end{MainTheorem}
The conclusion of Theorem \ref{thm:mainMV} in fact holds under the weaker assumption that the support functions of the bodies $A_j^i\in\calK(\RR^n)$ are of class $C^{1,1}$, see Corollary \ref{cor:K11}.

\subsection{Hodge theory for smooth valuations}

\label{ss:Hodge}

Although the statement of Theorem \ref{thm:McMullen} is purely convex geometric, there is a remarkable algebraic structure hidden in the background. In fact, the inequalities can be, and were originally formulated in \cite{McMullen:SimplePolytopes} in terms of McMullen's polytope algebra, more precisely as the Hodge--Riemann relations for the subalgebra $\Pi(P)$ generated by polytopes strongly isomorphic to a given simple polytope $P$. For more details about the polytope algebra, the reader is referred to \cite{McMullen:PolytopeAlgebra, BernigFaifman:Polytopealgebra, Brion:Polytope}. Let us only mention here that the product in $\Pi(P)$ is induced by the Minkowski sum.

An analytic counterpart to McMullens's combinatorial theory was discovered in the 2000s through the seminal work of Alesker \cite{Alesker:Irreducibility, Alesker:Product}. Alesker's work along with many subsequent developments, in particular the paper \cite{BernigFu:Convolution} by the first-named author and Fu, revealed that the Minkowski sum, when restricted to convex bodies of class $\mathcal K^\infty_+(\RR^n)$, gives rise to an algebra $\Val^\infty$ of smooth valuations. The algebra $\Val^\infty$ exhibits numerous properties analogous to $\Pi(P)$ and even more structure, some of which is reviewed in \S\ref{ss:methods} below.  It satisfies Poincar\'e duality which turned out to have striking implications in integral geometry \cite{Fu:Unitary,BernigFu:Hig, BernigSolanes:Kinematic,KotrbatyWannerer:O2,Bernig:SU}. More recently, the investigation of the connection between the algebra of valuations and geometric inequalities was initiated \cite{Kotrbaty:HR, KotrbatyWannerer:MixedHR, KotrbatyWannerer:Harmonic,Alesker:Kotrbaty}.

In our paper, we introduce new techniques that allow us to pursue this study far beyond the previous partial results and complete the analogy between  $\Pi(P)$ and $\Val^\infty$. More precisely, we establish new properties of the algebra of smooth valuations that in particular imply Theorem \ref{thm:mainMV}.

To formulate our result, recall that a valuation is a function $\phi:\calK(\RR^n)\to\CC$ satisfying
\begin{displaymath}
\phi(K\cup L)=\phi(K)+\phi(L)-\phi(K\cap L)
\end{displaymath}
whenever $K,L,K\cup L\in\calK(\RR^n)$. The Banach space of translation-invariant, continuous valuations is denoted by $\Val$.  Alesker has discovered  a natural dense subspace $\Val^\infty\subset \Val$ of  smooth valuations.  We postpone the precise definition to \S\ref{sec:smooth_valuations}; let us only mention here that the function
\begin{equation*}
\phi(K)= V(A_1,\dots,A_k,K[n-k])
\end{equation*}
is a smooth valuation for any fixed $A_i \in \mathcal K^\infty_+(\R^n)$. In fact, it follows from a deep theorem of Alesker \cite{Alesker:Irreducibility} together with a recent observation made independently by Knoerr \cite{Knoerr:MV} and van Handel  that such valuations span  $\Val^\infty$.  Equivalently, the same holds for the valuations $\vol(\Cdot+A)$, $A\in\calK_+^\infty(\RR^n)$. The space of smooth valuations decomposes as
\begin{align*}
\Val^\infty=\bigoplus_{k=0}^n\Val_k^\infty,
\end{align*}
where $\Val_k^\infty\subset\Val^\infty$ is the subspace of $k$-homogeneous valuations. The convolution product defined by
\begin{displaymath}
\vol(\Cdot+A_1)*\vol(\Cdot+A_2)=\vol(\Cdot+A_1+A_2)
\end{displaymath}
is continuous in the natural Fr\'echet space topology  on $\Val^\infty$  and turns it into a commutative, associative, graded algebra with Poincar\'e duality.

For $C\in\mathcal K^\infty_+$ we define the map $L_C:\Val_k^\infty\to\Val_{k-1}^\infty$ by
\begin{displaymath}
 (L_C \phi) (K)=\left.\frac{d}{dt}\right|_{t=0} \phi(K+tC).
\end{displaymath}
Equivalently, $L_C$ is given by the convolution product with $n V(C,\Cdot[n-1])$. For any tuple $\bC=(C_1,\dots,C_m)$ of such convex bodies we set $L_\bC=L_{C_1}\circ\cdots\circ L_{C_m}$ which equals, up to a positive constant, the convolution with $V(C_1,\dots,C_m,\Cdot[n-m])$. Our main result is:

\begin{MainTheorem}
\label{thm:main}
Let $0\leq k\leq  \frac n2$. Fix a tuple $\bC_0=(C_0,\ldots,C_{n-2k})$ of convex bodies from $\mathcal K^\infty_+(\RR^n)$ and denote $\bC=(C_1,\ldots,C_{n-2k})$.
\begin{enuma}
\item \emph{Hard Lefschetz theorem.} The map $L_{\bC}:\Val_{n-k}^\infty\to\Val_k^\infty$ is an isomorphism of topological vector spaces.
\item \emph{Hodge--Riemann relations.} The sesquilinear form
\begin{displaymath}
q_{\bC_0}(\phi)=(-1)^k\b\phi*L_{\bC}\phi
\end{displaymath}
on $\ker L_{\bC_0}\subset\Val_{n-k}^\infty$ is positive definite.
\end{enuma}
\end{MainTheorem}

Theorem~\ref{thm:main} confirms a conjecture of the second-named author \cite[Conjecture D]{Kotrbaty:HR} and generalizes many important partial results, previously obtained in various works. In the special case when all the bodies $C_i$ are euclidean balls, the hard Lefschetz theorem was proved in \cite{Alesker:HLComplex, BernigBroecker:Rumin}  and the Hodge--Riemann relations in \cite{Kotrbaty:HR, KotrbatyWannerer:Harmonic}. For arbitrary bodies $C_i$, however, Theorem \ref{thm:main} was known only for $k\leq 1$ \cite{KotrbatyWannerer:MixedHR}. As observed by van Handel, the previous partial results implied in particular the full statement in dimensions $n\leq4$.

It is a remarkable phenomenon that versions of the hard Lefschetz theorem and Hodge--Riemann relations hold across a broad spectrum of separate theories. Indeed, apart from K\"ahler and algebraic geometry, where  statements of this type acquired their names from,  and McMullen's polytope algebra $\Pi(P)$ mentioned above, they have been proved in various different contexts on the frontier between geometry and combinatorics \cite{ADH:Lagrangian,AHK:Hodge,EliasWilliamson:Hodge,HuhWang:Enumeration,Karu:HL,BHMPW:Singular}. All these recent developments had striking consequences, in particular to combinatorial inequalities, many of which had been open for decades (see also the ICM addresses of Huh \cite{Huh:ICM18,Huh:ICM22}).

\subsection{Our methods}
\label{ss:methods}

There is a fundamental difference between Theorem~\ref{thm:main} and the analogous statement in any other context mentioned above. Namely, to the best of our knowledge, $\Val^\infty$ is the only \emph{infinite-dimensional} algebra where the hard Lefschetz theorem and Hodge--Riemann relations are known to hold. This fact makes the proof somewhat more delicate, in particular, one cannot use the general approach of Cattani \cite{Cattani:Mixed,Cattani:DescentLemma} and deduce the general statement directly from the previously established case of euclidean balls. Moreover, in  a finite-dimensional situation, the hard Lefschetz theorem and Hodge--Riemann relations are usually proven simultaneously, using an inductive argument. This general scheme again fails for $\Val^\infty$ since in particular the injectivity and surjectivity of the Lefschetz map $L_\bC$ are not equivalent. 

The infinite-dimensional setting of  Theorem \ref{thm:main} suggests that one should try to  reformulate the statement in  a way that is susceptible to the methods of functional analysis, even if it thus becomes formally stronger than the original problem. In this respect, our key observation is that there exist two natural Hilbert space completions  $\VE_k$ and $\VF_k$ of $\Val^\infty_k$. Their existence is less obvious in the language of mixed volumes, but naturally suggests itself when describing smooth valuations in terms of smooth differential forms, see \S\ref{sec:EF}.
	
We prove the following statement which directly implies the Hodge--Riemann relations (Theorem~\ref{thm:main}(b)) and thus Theorem~\ref{thm:mainMV}. 	
	
\begin{MainTheorem}\label{thm:mainE} 
 With the notation of Theorem \ref{thm:main}, the sesquilinear form $q_{\bC_0}$, densely defined on $\ker L_{\bC_0} \cap \Val^\infty_{n-k}\subset \ker(L_{\bC_0}\colon \VE_{n-k}\to \VE_{k-1})$, is closable. Its closure $q^\VE_{\bC_0}$ is the quadratic form of a unique self-adjoint operator $Q_{\bC_0}^\VE$ on $\ker L_{\bC_0}$.  The operator $Q_{\bC_0}^\VE$ is strictly positive, has discrete spectrum, and its eigenvalues have finite multiplicities. 
\end{MainTheorem}

The basic idea of our proof, which  goes back to Hilbert's proof of the Brunn--Minkowski inequality and  was later generalized by  Alexandrov in his  second proof of the Alexandrov--Fenchel inequality, is to continuously deform a given   tuple $\bC_0$ of smooth convex bodies into euclidean balls,  arguing that positivity of the sesquilinear form $q_{\bC_0}$  is preserved during this process. 
The perturbation theory of unbounded operators  on a Hilbert space, in particular the Kato--Rellich theorem, will provide us with the desired theoretical framework to bring this approach to fruition. 

Our strategy is to first establish the hard Lefschetz theorem for 
\begin{displaymath}
 L_\bC\colon \VE_{n-k}\to \VE_{k}
\end{displaymath}
and to use it to define a family of (unbounded) operators to which  perturbation theory can be applied.
Unlike Hilbert and Alexandrov we  cannot directly work with a natural elliptic differential operator associated to  $q_{\bC_0}$ and yet elliptic operator theory plays an important role in our proof. Elliptic  estimates are crucial in the proof of the surjectivity of $L_\bC\colon  \VE_{n-k}\to \VE_{k}$ and elliptic regularity allows us to obtain the surjectivity of $L_\bC$ also for smooth valuations.  An important ingredient, entering our argument at two steps, are the Hodge--Riemann relations in the context of complex linear algebra, a result due to Timorin \cite{Timorin:Mixed} and the most elementary incarnation of the Hodge--Riemann relations. We first use it to prove the injectivity of $L_\bC$ and later deduce from it the ellipticity of a particular second order differential operator.
	
Let us finally remark that in recent years  spectral estimates and the closely related $L^2$ method have played an important role in a number of high profile results in convex geometry.  Let us mention  the proof of the (B) conjecture by  Cordero-Erausquin, Fradelizi, and Maurey \cite{EFM:BConjecture}, the spectral gap estimate of  Kolesnikov and Milman \cite{KolesnikovMilman:logBM,KolesnikovMilman:BL} in their work on  the log-Brunn--Minkowski inequality \cite{BLYZ:logBM}, the proof of the variance conjecture for unconditional convex bodies by Klartag \cite{Klartag:BerryEsseen},  and the confirmation of the  Gardner--Zvavitch conjecture \cite{KolesnikovLivshyts:GZ,EskenazisMoschidis:GZ}.

\subsection{Organization of the article}

In Section \ref{s:Background} we collect the necessary background from the theory of smooth valuations on convex bodies.  In Section \ref{s:SphereBundle} we develop the linear algebra of differential forms on the sphere bundle, connect it with mixed volumes and smooth valuations, and use it to prove  injectivity in the hard Lefschetz theorem. In Section \ref{s:Surjectivity}, after recalling the necessary results from elliptic operator theory, we  define the Hilbert space completions of the space of smooth valuations and prove surjectivity in the hard Lefschetz theorem. In Section \ref{s:HR} we use perturbation theory and previously established properties of the Lefschetz isomorphism to reduce Theorem \ref{thm:mainE} to the special case of euclidean balls. The latter is proved, along with further spectral properties of the Hodge--Riemann form, in Section \ref{s:Spectral} where we also relax the assumptions of Theorem~\ref{thm:mainMV}.

\subsection*{Acknowledgments}
We thank Semyon Alesker, Astrid Berg, Dmitry Faifman, Joe Fu, Ramon van Handel, and Tobias Weth for useful discussions. The first-named author thanks the ESI Vienna for hosting a research visit that contributed to the results of the present paper.

\section{Background and notation}
\label{s:Background}

In this section we collect for later use definitions and  results from the theory of valuations. For the purposes of this paper it suffices to work in  $\RR^n$ with its standard euclidean inner product and orientation. Let us emphasize however that all concepts we introduce,  including the convolution of valuations, are natural in the sense that they can be formulated in a way that does not depend on these choices. For a more complete presentation of the principal results of the subject we refer the reader to \cite[Chapter 6]{Schneider:BM} and \cite{Alesker:Kent,AleskerFu:Barcelona}.

\subsection{Translation-invariant continuous valuations}
Throughout the article, $B^n\subset\RR^n$ will denote the euclidean unit ball and $S^{n-1}=\partial B^n$ the euclidean unit sphere.

A convex body  is a non-empty convex compact subset of $\RR^n$.  The set of convex bodies, denoted by $\calK(\RR^n)$, is a locally compact metric space when equipped with the Hausdorff metric.

\begin{definition}
	A valuation is a function $\phi\colon \calK(\RR^n)\to \CC$ satisfying 
	\begin{displaymath}
	 \phi(K\cup L)= \phi(K)+ \phi(L)-\phi(K\cap L)
	\end{displaymath}
	for any $K,L\in\calK(\RR^n)$ whenever the union $K\cup L$ is convex.
\end{definition}

A valuation is called translation-invariant if $\phi(K+x)=\phi(K)$ holds for all $K\in\calK(\RR^n)$ and $x\in \RR^n$.
The vector space $\Val$ of translation-invariant, continuous valuations on $\calK(\RR^n)$ admits a natural grading by the degree of homogeneity. Namely, denoting by $\Val_k\subset\Val$ the subspace of valuations satisfying $\phi(tK)= t^k \phi(K)$ for $t>0$ and $K\in\calK(\RR^n)$, it is a fundamental result of McMullen \cite{McMullen:EulerType} that
\begin{equation}
\label{eq:McMullenGrading}
 \Val= \bigoplus_{k=0}^n\Val_{k}.
\end{equation}
An important  example of a  $k$-homogeneous, translation-invariant continuous valuation is given by the mixed volume
\begin{displaymath}
 \phi(K)= V(A_1,\ldots, A_{n-k},K[k]),
\end{displaymath}
where $A_1,\ldots, A_{n-k}$ are arbitrary fixed convex bodies.  The spaces $\Val_0$ and $\Val_n$ are one-dimensional; $\Val_0$ consists of the constant valuations, $\Val_n$  is spanned by the Lebesgue measure $\vol$. The latter is a non-trivial fact due to Hadwiger \cite{Hadwiger:Vorlesungen}.
The spaces $\Val_k$ for $0<k<n$ are infinite-dimensional.
The topology on $\Val$ is the topology of uniform convergence on compact subsets. As observed e.g.\ in \cite{Alesker:McMullen}, it is a consequence of \eqref{eq:McMullenGrading} that this topology is in fact induced by the Banach space norm
\begin{displaymath}
 \|\phi\| = \sup_{K\subset B^n} |\phi(K)|.
\end{displaymath}

\subsection{Smooth valuations}\label{sec:smooth_valuations}
The theory of valuations on convex bodies, as an established line of research, goes back to the solution of Hilbert's 3rd problem by Dehn. In the early 2000s, the work of Alesker completely reshaped this classical subject. At the center of this transformation is Alesker's discovery of the rich algebraic structure of the natural dense subspace of smooth valuations $\Val^\infty\subset \Val$. Below we present only a very narrow, if not to say distorted, image of the modern theory of valuations, focusing only on what is strictly necessary for this paper. In particular, we  discuss neither the Alesker product  \cite{Alesker:Product}, the Fourier transform \cite{Alesker:Fourier,FW:Fourier}, nor other natural operations on valuations.

To define smooth valuations, we need to introduce more terminology. First, the  group $\GL(n)$ acts on the Banach space $\Val$ continuously by
\begin{displaymath}
 (g\cdot \phi)(K)= \phi(g^{-1}K).
\end{displaymath}
Second, the normal cycle of a convex body $K\in\calK(\RR^n)$ is, as a set, 
\begin{displaymath}
 \nc(K)=\{(x,u)\in S\RR^n\colon u \text{ outward unit normal at }x\in K\}.
\end{displaymath}
Here $S\RR^n =\RR^n \times S^{n-1}$ is the sphere bundle of $\RR^n$. In fact, $\nc(K)\subset S\RR^n$ is an oriented compact Lipschitz submanifold of dimension $n-1$ and thus defines a current through integration. For each $\omega\in \Omega^{n-1}(S\RR^n)$,
\begin{align}
\label{eq:omega}
 \phi(K) = \int_{\nc(K)} \omega
\end{align}
is a continuous valuation \cite{Alesker:VMfdsIII}. All differential forms we consider throughout this article are complex-valued. The cartesian product structure of the sphere bundle induces a bigrading $\Omega^k(S\RR^n) = \bigoplus_{i+j=k}\Omega^{i,j}(S\RR^n)$. The subspaces of forms invariant under the translations $(x,u)\mapsto (x+y,u)$ are denoted by $\Omega^k(S\RR^n)^{tr}$ and
 $\Omega^{i,j}(S\RR^n)^{tr}$. If $\omega\in\Omega^{k,n-k-1}(S\RR^n)^{tr}$, then the valuation \eqref{eq:omega} is in $\Val_k$. Finally, let $\calK_+^\infty(\RR^n)\subset \calK(\RR^n)$ be the class of convex bodies with $C^\infty$-smooth boundary and strictly positive Gauss curvature.

\begin{theorem}\label{thm:smooth}
For  $\phi\in\Val$ the following are equivalent:
\begin{enuma}
	\item $\phi$ is a smooth vector for the action of $\GL(n,\RR)$ on $\Val$, i.e., the map $\GL(n,\RR)\to \Val$, $g\mapsto g\cdot \phi$, is smooth.
	\item There exist  $\omega\in \Omega^{n-1}(S\RR^n)^{tr}$  and $c\in\CC$ such that
	\begin{equation} \label{eq:normal_cycle}
	  \phi(K)=c\vol(K) + \int_{\nc(K)} \omega,\quad K\in\calK(\RR^n).
	\end{equation}
	\item There exist $k_j\in\{0,\dots,n\}$, $z_j\in \CC$, and $A^j_i\in \calK_+^\infty(\RR^n)$ such that
	\begin{equation}
	\label{eq:valMixedVol} 
	\phi(K) = \sum_{j=1}^m z_j V( A_1^j, \ldots, A_{n-k_j}^j,K[k_j]),\quad K\in\calK(\RR^n).
	\end{equation}
\end{enuma}
\end{theorem}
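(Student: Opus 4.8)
The plan is to prove the cyclic chain of implications $(c)\Rightarrow(b)\Rightarrow(a)\Rightarrow(c)$, which is the natural order here since each arrow uses a different structural input.

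For $(c)\Rightarrow(b)$: I would first handle the single summand $\phi(K)=V(A_1,\dots,A_{n-k},K[k])$ with all $A_i\in\calK_+^\infty(\RR^n)$. The key tool is that for a body $A$ with smooth boundary and positive Gauss curvature, the support function $h_A$ is smooth on $S^{n-1}$, and the mixed volume $V(A_1,\dots,A_{n-1},K)$ can be written as $\frac1n\int_{S^{n-1}} h_K \, dS(A_1,\dots,A_{n-1})$ where the mixed area measure $S(A_1,\dots,A_{n-1})$ has a smooth density against the round volume of $S^{n-1}$ precisely because the $A_i$ are in $\calK_+^\infty$. Equivalently, one writes $V(A_1,\dots,A_{n-k},K[k])$ as the integral over $\nc(K)$ of an explicit translation-invariant differential form built from the (pullbacks of) support functions of the $A_i$; this is the standard Rumin/normal-cycle presentation of mixed-volume valuations and is where the smoothness of the $A_i$ enters. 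Adding the $c\vol(K)$ term is then free (the constant in $(b)$ absorbs it, or one notes $\vol$ itself is $\int_{\nc(K)}\omega$ for a suitable invariant $\omega$). Summing over $j$ and using that $\Omega^{n-1}(S\RR^n)^{tr}$ is a vector space gives the general case.

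For $(b)\Rightarrow(a)$: given the representation \eqref{eq:normal_cycle}, the map $g\mapsto g\cdot\phi$ is computed by pulling back: for $g\in\GL(n)$ one has $\nc(gK)=\bar g(\nc(K))$ where $\bar g$ is the induced diffeomorphism of $S\RR^n$ (acting by $g$ on the base and by the normalized contragredient on the fiber), so $(g\cdot\phi)(K)=c|\det g|\,\vol(K)+\int_{\nc(K)}\bar g^*\omega$. Since $\bar g^*\omega$ depends smoothly on $g$ in the Fréchet topology of $\Omega^{n-1}(S\RR^n)^{tr}$ — differentiation under the integral sign, plus the fact that integration of a fixed-degree form against the uniformly Lipschitz normal cycles is continuous with respect to that topology — the assignment $g\mapsto g\cdot\phi\in\Val$ is smooth. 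The only mild technical point is the continuity/boundedness of $\omega\mapsto\left(K\mapsto\int_{\nc(K)}\omega\right)$ as a map into $\Val$ with its Banach norm, which follows from uniform bounds on the mass of $\nc(K)$ for $K\subset B^n$; I would cite Alesker's work here.

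For $(a)\Rightarrow(c)$: this is the deep direction and the main obstacle. It rests on Alesker's irreducibility theorem: the $\GL(n)$-representation on each $\Val_k$ (and on $\Val_k$ restricted to $\SL(n)$, together with parity) is of finite multiplicity, so its Gårding space — the space of smooth vectors — is by general representation theory a well-behaved dense subspace, and Alesker's classification identifies the smooth vectors exactly with the valuations of the form \eqref{eq:normal_cycle}. The new input making the mixed-volume description \eqref{eq:valMixedVol} available is the Alesker–Knoerr–van Handel observation, already quoted in the introduction, that the mixed-volume valuations with $\calK_+^\infty$ entries span $\Val^\infty$; concretely, one combines irreducibility (these valuations span a nonzero $\GL(n)$-invariant subspace of smooth vectors, hence a dense one) with the fact that the span is closed in the Fréchet topology of $\Val^\infty$ — the latter being the genuinely nontrivial point, resolved by the cited result. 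I would therefore present $(a)\Rightarrow(b)$ as Alesker's theorem and $(b)\Leftrightarrow(c)$ as the combination of $(c)\Rightarrow(b)$ above with the Alesker–Knoerr–van Handel spanning statement, and refer the reader to \cite{Alesker:Irreducibility, Knoerr:MV} for the parts that genuinely require representation theory of $\GL(n)$.
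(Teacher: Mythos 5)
Your proposal follows essentially the same route as the paper: the hard implication (a)$\Rightarrow$(b) rests on Alesker's irreducibility theorem together with the Casselman--Wallach theorem (which you gesture at via ``general representation theory'' but which the paper cites explicitly and should be named), (b)$\Rightarrow$(c) is the Knoerr/van~Handel observation, and the remaining arrows are elementary. The paper does not write out a proof of this theorem but only indicates the references and the relative difficulty of each direction, and your sketches of the easy arrows --- (c)$\Rightarrow$(b) via mixed area measures with smooth densities, and (b)$\Rightarrow$(a) via pullback of the defining form under $\GL(n)$ --- are consistent with what the paper indicates.
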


\begin{definition} A valuation $\phi\in\Val$ is called \emph{smooth} if it satisfies one and hence all of the properties listed in Theorem~\ref{thm:smooth}.
\end{definition}

Alesker \cite{Alesker:Product} originally introduced smooth translation-invariant valuations via (a). By general representation theory, an immediate consequence of this definition is that the subspace $\Val^\infty\subset\Val$ of smooth valuations is dense. It is not difficult to see that the valuations satisfying (b) and (c) are smooth vectors in the sense of (a). The implication (a)$\implies$(b), on the contrary, is highly non-trivial and follows from Alesker's irreducibility theorem \cite{Alesker:Irreducibility} and the Casselman--Wallach theorem \cite{Casselman:HCmod}. That (b) implies (c) was  observed independently by Knoerr~\cite{Knoerr:MV} and van Handel.

Both (a) and (b) can be used to define the topology on $\Val^\infty$. Namely, $\Val^\infty$ can be topologized either as a subspace of $C^\infty(\GL(n),\Val)$ or as a quotient of $\Omega^{n-1}(S\RR^{n})^{tr}\oplus \CC$; the resulting topologies coincide.

The valuation \eqref{eq:normal_cycle} will also be denoted by $[[c,\omega]]$. By the kernel theorem due to the first-named author and Br\"ocker \cite{BernigBroecker:Rumin},  $\phi=[[c,\omega]]=0$ if and only if 
\begin{displaymath}
  \phi(\{0\})=0 \quad \text{and}\quad D\omega+ c\cdot  dx_1\wedge\cdots \wedge  dx_n=0,
\end{displaymath}
 where $D$ is the Rumin differential  \cite{BernigBroecker:Rumin, Rumin:Contact} defined as follows.   The contact form on $S\RR^n$ is the $1$-form
 \begin{displaymath}
  \alpha|_{x,u} = \sum_{i=1}^n u_i dx_i.
 \end{displaymath}
A form $\eta\in \Omega(S\RR^n)$ is called vertical if $\alpha \wedge \eta=0$.If $\omega \in \Omega^{n-1}(S\RR^n)$, then there is a unique vertical form $\eta \in \Omega^{n-1}(S\RR^n)$ such that $d(\omega+\eta)$ is vertical and we set $D\omega=d(\omega+\eta)$.  We will also need the following fact.

 \begin{lemma}
 \label{lem:closed_vertical}
 Let  $1\leq k\leq n$ and  $\tau\in \Omega^{k,n-k}(S\RR^n)^{tr}$. There are $c\in \CC$ and  $\omega\in \Omega^{k,n-1-k}(S\RR^n)^{tr}$ such that
 \begin{equation}\label{eq:tauRumin} \tau= D\omega + c\cdot dx_1\wedge \cdots \wedge dx_n\end{equation}
 if and only if  
 \begin{equation}\label{eq:closed_vertical} d\tau=0, \quad  \alpha \wedge \tau=0,\quad \text{and}\quad \int_{S^{n-1}}\tau=0.
 \end{equation}\end{lemma}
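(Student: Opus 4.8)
The claim is a local characterization of those translation-invariant forms of top vertical degree that represent a valuation in the Rumin picture. The ``only if'' direction is essentially formal: if $\tau = D\omega + c\, dx_1\wedge\cdots\wedge dx_n$, then $\alpha\wedge\tau = 0$ because $D\omega$ is vertical by construction and $\alpha \wedge dx_1\wedge\cdots\wedge dx_n$ has no $du$-component, hence vanishes as an $(n{+}1)$-form in the $n$ horizontal variables; $d\tau = 0$ because $D\omega = d(\omega+\eta)$ is exact and $dx_1\wedge\cdots\wedge dx_n$ is closed; and the integral condition $\int_{S^{n-1}}\tau = 0$ follows by restricting to a fiber $\{x\}\times S^{n-1}$, on which $D\omega$ restricts to $d$ of the restricted primitive (so integrates to zero by Stokes) while $dx_1\wedge\cdots\wedge dx_n$ restricts to zero. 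So the content is the ``if'' direction.

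\textbf{The ``if'' direction.} Suppose $d\tau = 0$, $\alpha\wedge\tau = 0$, and $\int_{S^{n-1}}\tau = 0$. First I would use the fiberwise integral condition together with exactness of $\tau$ along each fiber to build a primitive. Concretely, since $\tau$ is vertical of bidegree $(k,n-k)$ and its restriction to each fiber $\{x\}\times S^{n-1}$ is a closed $(n{-}1)$-form on $S^{n-1}$ with vanishing integral — and the integral is independent of $x$ by translation invariance — fiberwise Poincar\'e (de Rham on $S^{n-1}$) produces a primitive; translation invariance and a standard averaging/homotopy formula let me choose it smoothly and translation-invariantly, giving $\sigma \in \Omega^{k, n-2-k}(S\RR^n)^{tr}$ (suitably interpreted; for $k = n$ one handles the fiber $S^0$ or degenerate ranges directly) whose vertical exterior derivative recovers the vertical part of $\tau$ in an appropriate sense. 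The cleaner route, which I would actually pursue, is to invoke the kernel theorem of Bernig--Br\"ocker quoted above: the space of valuations $[[c,\omega]]$ is exactly $\Omega^{n-1}(S\RR^n)^{tr}\oplus\CC$ modulo the kernel described there, and the Rumin differential $D$ together with $c\mapsto c\,dx_1\wedge\cdots\wedge dx_n$ assembles into a map whose image I want to identify with the forms satisfying \eqref{eq:closed_vertical}. So I would show: (i) the image of $(\omega,c)\mapsto D\omega + c\, dx_1\wedge\cdots\wedge dx_n$ lands in the space cut out by \eqref{eq:closed_vertical} (this is the ``only if'' part, already done); (ii) this map, descended to $\Val^\infty_k$, is injective with the stated image, by a dimension/duality count against Poincar\'e duality, or by directly exhibiting a primitive.

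\textbf{Exhibiting the primitive.} Given $\tau$ satisfying \eqref{eq:closed_vertical}, set $c = \frac{1}{\vol(B^n)}$ times the coefficient extracted by pairing $\tau$ against a fixed volume form — more precisely, write $\tau = c\, dx_1\wedge\cdots\wedge dx_n + \tau'$ where $\tau'$ has been corrected so that $\int_{S^{n-1}}\tau'$ over a fiber vanishes in the stronger sense needed; but since we already assumed $\int_{S^{n-1}}\tau = 0$ and $dx_1\wedge\cdots\wedge dx_n$ restricts to zero on fibers, I can try $c = 0$ first and see what obstruction to solving $\tau = D\omega$ remains. The obstruction is precisely measured by $\int_{S\RR^n \cap (\text{a fundamental domain})}$ paired against the Lebesgue class, i.e. by $[[0,\omega]]$ vs.\ the one-dimensional $\Val_n$, which is why the constant $c$ is needed when $k = n$ and absent when $k < n$ — this is the asymmetry already visible in the kernel theorem. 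I would split into cases $k < n$ and $k = n$ accordingly. In the generic case $1 \le k \le n-1$ the strategy is: solve the vertical equation first (fiberwise de Rham, using the integral condition) to get $\eta$-type data, then use $d\tau = 0$ to promote this to a full primitive $\omega$ with $d(\omega + \eta)$ vertical, which by definition is $D\omega$; tracking bidegrees shows $\omega$ can be taken in $\Omega^{k,n-1-k}(S\RR^n)^{tr}$.

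\textbf{Main obstacle.} The routine parts — verifying \eqref{eq:closed_vertical} holds for Rumin images, and the bidegree bookkeeping — are easy. The real work is the surjectivity: showing that \emph{every} $\tau$ with $d\tau = 0$, $\alpha\wedge\tau = 0$, $\int_{S^{n-1}}\tau = 0$ actually arises as $D\omega + c\,dx_1\wedge\cdots\wedge dx_n$. The difficulty is that $D$ is not a plain exterior derivative: the primitive $\omega$ must be chosen so that after adding the (uniquely determined) vertical correction $\eta$, the result $d(\omega+\eta)$ is vertical — so one cannot simply quote ordinary Poincar\'e duality on $S\RR^n$. I expect the cleanest argument routes through the Rumin complex: $\tau$ vertical of bidegree $(k, n-k)$ and closed means $\tau$ defines a class in the Rumin cohomology in that degree, the integral condition kills the one piece of cohomology coming from $\Val_n$ (resp.\ from the constant), and then the known structure of the Rumin complex on the contact manifold $S\RR^n$ (its cohomology being concentrated as in the kernel theorem) gives that the class is $D$ of something. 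Making this precise — identifying the relevant Rumin cohomology group and matching it with $\CC$ or $0$ — is the step I would budget the most space for, and it is where I would lean hardest on \cite{BernigBroecker:Rumin}.
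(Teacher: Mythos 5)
Your ``only if'' direction is essentially fine, though it could be tightened: the claim that $D\omega$ is exact and $dx_1\wedge\cdots\wedge dx_n$ is closed gives $d\tau=0$, and $\alpha\wedge D\omega=0$ holds by construction. For the integral, note the condition is vacuous for $k>1$ by degree reasons, and for $k=1$ it follows as you describe.

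The ``if'' direction is where you go astray, and the gap is conceptual rather than a missing technical step. You write that ``the primitive $\omega$ must be chosen so that after adding the (uniquely determined) vertical correction $\eta$, the result $d(\omega+\eta)$ is vertical --- so one cannot simply quote ordinary Poincar\'e duality.'' This identifies the one part of the problem that is actually trivial as the main obstacle, and it sends you chasing ``Rumin cohomology'' and ``dimension/duality counts'' that are not needed. The resolution is two observations you did not make. First, a translation-invariant form in $\Omega^{k,n-k}(S\RR^n)^{tr}$ is nothing but an element of $\largewedge^k(\RR^n)^*\otimes\Omega^{n-k}(S^{n-1})$, so $d\tau=0$ is a closedness condition on $S^{n-1}$ alone; there is no ``fiberwise'' smoothness or translation-invariance to arrange --- it comes for free. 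Since $H^{n-k}(S^{n-1})=0$ for $1<k<n$, one immediately gets $\omega\in\Omega^{k,n-k-1}(S\RR^n)^{tr}$ with $d\omega=\tau$, and for $k=1$ the integral hypothesis kills the top cohomology class; the case $k=n$ is trivial as $d\tau=0$ forces $\tau$ to be a multiple of $dx_1\wedge\cdots\wedge dx_n$. Second, and decisively: once $d\omega=\tau$ is \emph{vertical}, the defining uniqueness of the Rumin correction $\eta$ forces $\eta=0$, so $D\omega=d\omega=\tau$ automatically. There is no extra step of ``choosing $\omega$ so that the correction works out''; verticality of $\tau$ guarantees it. Your proposal as written does not reach a proof because it treats this automatic step as the main difficulty and then defers to an unelaborated cohomological argument via \cite{BernigBroecker:Rumin}, which is neither carried out nor needed.
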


\begin{proof} It follows directly from the definition of the Rumin differential that $\tau$ satisfies \eqref{eq:closed_vertical} if it has the form \eqref{eq:tauRumin}. Conversely, let $\tau$ satisfy \eqref{eq:closed_vertical}. If $k=n$, then $d\tau=0$ implies $\tau = c\cdot dx_1\wedge \cdots \wedge  dx_n$. For $1<k<n$ the cohomology group  $H^{n-k}(S^{n-1})$  vanishes and hence there is  $\omega\in \Omega^{k,n-k-1}(S\RR^n)^{tr}$ such that $\tau= d\omega$.  The same holds for $k=1$ since $\int_{S^{n-1}} \tau =0$ by assumption. Since $\tau$ is vertical, one has $d\omega=D\omega$.
\end{proof}

Note that by convention the integral condition in \eqref{eq:closed_vertical} is automatically satisfied for $k>1$. Lemma \ref{lem:closed_vertical} and the kernel theorem imply at once one more equivalent characterization of smooth translation-invariant valuations, cf. \cite[Section 8]{AleskerBernig:Product}.

\begin{proposition} 
The map
\begin{displaymath}
   \Val^\infty\to\{(\tau,z)\mid \tau\in \Omega^n(S\RR^n)^{tr}\text{ satisfies \eqref{eq:closed_vertical}}, z\in\CC\}
\end{displaymath}
given by
 \begin{displaymath}
 	\phi=\lcur c,\omega\rcur\mapsto \left(D\omega + cdx_1\wedge \cdots \wedge dx_n, \phi(\{0\})\right)
  \end{displaymath}
is a well-defined isomorphism.
\end{proposition}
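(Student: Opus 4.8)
The plan is to deduce the statement formally from the kernel theorem and Lemma~\ref{lem:closed_vertical}, exactly as the sentence preceding it suggests. I would begin by checking that the map is well defined and linear. Given $\phi\in\Val^\infty$, a representation $\phi=[[c,\omega]]$ with $c\in\CC$ and $\omega\in\Omega^{n-1}(S\RR^n)^{tr}$ exists by Theorem~\ref{thm:smooth}(b); if $[[c_1,\omega_1]]=[[c_2,\omega_2]]$, then $[[c_1-c_2,\omega_1-\omega_2]]=0$, and the kernel theorem forces $D(\omega_1-\omega_2)+(c_1-c_2)\,dx_1\wedge\cdots\wedge dx_n=0$, so the first coordinate $D\omega+c\,dx_1\wedge\cdots\wedge dx_n$ of the image depends only on $\phi$; the second coordinate $\phi(\{0\})$ manifestly does, and linearity follows from that of $[[\cdot,\cdot]]$, of $D$, and of evaluation at $\{0\}$. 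To see that the image lies in the target set, I would split $\omega$ into its bidegree components $\omega_k\in\Omega^{k,n-1-k}(S\RR^n)^{tr}$: the purely vertical piece $\omega_0$ is closed, so $D\omega_0=0$, while for $1\le k\le n-1$ the first half of the proof of Lemma~\ref{lem:closed_vertical} shows that $D\omega_k$ satisfies \eqref{eq:closed_vertical}; since $dx_1\wedge\cdots\wedge dx_n$ satisfies \eqref{eq:closed_vertical} too and these conditions are linear, the sum $\tau=D\omega+c\,dx_1\wedge\cdots\wedge dx_n$ satisfies \eqref{eq:closed_vertical}.

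Injectivity is immediate from the kernel theorem: if $\phi=[[c,\omega]]$ maps to $(0,0)$, then $\phi(\{0\})=0$ and $D\omega+c\,dx_1\wedge\cdots\wedge dx_n=0$, whence $\phi=0$. For surjectivity I would take $(\tau,z)$ in the target, decompose $\tau=\sum_{k=1}^n\tau_k$ with $\tau_k\in\Omega^{k,n-k}(S\RR^n)^{tr}$ --- there is no $(0,n-1)$-summand because of the integral condition in \eqref{eq:closed_vertical} --- and observe that comparing bidegrees in $d\tau=0$ and $\alpha\wedge\tau=0$ makes each $\tau_k$ closed and vertical, the only remaining hypothesis of Lemma~\ref{lem:closed_vertical} being the integral condition, which for $k=1$ is the one imposed on $\tau$ and for $k>1$ is automatic. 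The lemma then produces $\omega_k\in\Omega^{k,n-1-k}(S\RR^n)^{tr}$ and $c_k\in\CC$ with $\tau_k=D\omega_k+c_k\,dx_1\wedge\cdots\wedge dx_n$; summing (and using $\omega_n\in\Omega^{n,-1}=0$) yields $\tau=D\omega+c\,dx_1\wedge\cdots\wedge dx_n$ with $\omega=\sum_{k=1}^{n-1}\omega_k\in\Omega^{n-1}(S\RR^n)^{tr}$ and $c=\sum_k c_k$. Finally I would correct the value at the origin: since $\omega$ has no $(0,n-1)$-component, $[[c,\omega]](\{0\})=0$, so choosing $\omega_0\in\Omega^{0,n-1}(S\RR^n)^{tr}=\Omega^{n-1}(S^{n-1})$ with $\int_{S^{n-1}}\omega_0=z$ (possible since integration of top forms surjects onto $\CC$) and using $D\omega_0=0$, the valuation $\phi:=[[c,\omega+\omega_0]]$ --- smooth by Theorem~\ref{thm:smooth}(b) --- still has first coordinate $\tau$ while $\phi(\{0\})=\int_{S^{n-1}}\omega_0=z$, so it maps to $(\tau,z)$.

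The only point that calls for real care is the bookkeeping with the bigrading of the Rumin differential --- that $D$ respects the horizontal degree, annihilates the $(0,n-1)$-piece, and interacts with the conditions of \eqref{eq:closed_vertical} as used above --- together with the precise convention for $\int_{S^{n-1}}$ on forms of total degree $n$ (fiber integration, with the relevant coboundary vanishing by Stokes' theorem on the closed manifold $S^{n-1}$). None of this is substantial, being already implicit in Lemma~\ref{lem:closed_vertical} and its proof, so the statement is genuinely a formal consequence of that lemma and the kernel theorem.
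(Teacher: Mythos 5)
Your argument is correct and is exactly the routine verification the paper intends when it says the proposition follows ``at once'' from the kernel theorem and Lemma~\ref{lem:closed_vertical}: the forward direction of that lemma gives that the image lands in the target, the kernel theorem gives well-definedness and injectivity, and the reverse direction of the lemma applied bidegree-by-bidegree, together with the correction of the value at the origin, gives surjectivity. One small inaccuracy: the $k=0$ summand $\tau_0\in\Omega^{0,n}(S\RR^n)^{tr}$ of $\tau$ (which you call the ``$(0,n-1)$-summand'') vanishes simply because $\Omega^{0,n}(S\RR^n)^{tr}\cong\Omega^n(S^{n-1})=0$ on the $(n-1)$-sphere, not because of the integral condition; relatedly, $\omega_0\in\Omega^{0,n-1}(S\RR^n)^{tr}$ is not vertical in the paper's sense (generically $\alpha\wedge\omega_0\neq 0$), though your key claim $D\omega_0=0$ does hold because $d\omega_0\in\Omega^{0,n}(S\RR^n)^{tr}=0$, so $\eta=0$ already makes $d(\omega_0+\eta)$ vertical.
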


\subsection{The convolution of smooth valuations}

The first-named author and Fu \cite{BernigFu:Convolution} have discovered a continuous bilinear map $\Val^\infty \times \Val^\infty \to  \Val^\infty$ called  convolution
that is uniquely determined by the following property:
\begin{equation}
	\label{eq:def_convolution} \vol(\Cdot + A_1) * \vol(\Cdot + A_2)= \vol (\Cdot + A_1+ A_2),\quad A_1,A_2\in \calK_+^\infty(\RR^n).
\end{equation}
The convolution gives $\Val^\infty$ the structure of a commutative graded algebra with $\vol$ as identity element. Explicitly, $\Val_{n-k}^\infty*\Val_{n-l}^\infty\subset\Val_{n-k-l}^\infty$. If  $\bA$ is a $k$-tuple and $\bC$ an $l$-tuple of convex bodies from $\calK_+^\infty(\RR^n)$ with $k+l\leq n$, then \eqref{eq:def_convolution} yields
\begin{align}
	V(\bA,\Cdot [n-k]) * V(\bC,\Cdot[n-l])  = \frac{(n-k)!(n-l)!}{n!(n-k-l)!}V(\bA,\bC,\Cdot[n-k-l]).
 \label{eq_convolution_mixed_volumes}
\end{align}
In particular, $V(A_1,\ldots,A_k,\Cdot[n-k])$ and $V(A_1,\Cdot[n-1])  * \cdots * V(A_k,\Cdot[n-1]) $ differ only by a positive factor.

The following description of the convolution in terms of differential forms will be crucial for our purposes.

Let $*\colon \largewedge^k (\RR^n)^* \to  \largewedge^{n-k} (\RR^n)^*$ be the Hodge star isomorphism given by $\alpha\wedge*\beta=\langle\alpha,\beta\rangle\vol$. We identify $\Omega^{k,l}(S\RR^n)^{tr}=\largewedge^k(\RR^n)^*\otimes\Omega^l(S^{n-1})$ and define the isomorphism $*_1: \Omega^{k,l}(S\RR^n)^{tr} \to \Omega^{n-k,l}(S\RR^n)^{tr}$ by
\begin{displaymath}
   *_1=(-1)^{\binom{n-k}{2}} \ * \otimes \,\mathrm{id}.
\end{displaymath}
Note that $	d \,*_1 =(-1)^n *_1 d$.

\begin{theorem}[{\cite{BernigFu:Convolution}}]\label{thm:BFconvolution}
Let $k_1+k_2 \leq n$ and suppose $\omega_i\in\Omega^{n-k_i,k_i-1}(S\RR^n)^{tr}$ for $i=1,2$ . Then $ \lcur 0, \omega_1\rcur * \lcur 0, \omega_2\rcur = \lcur 0, \omega\rcur$ with
\begin{displaymath}
 \omega= *_1 ^{-1}( *_1 \omega_1 \wedge *_1 D\omega_2).
\end{displaymath}
Moreover,
\begin{displaymath}
 *_1 D\omega= *_1 D\omega_1 \wedge *_1 D\omega_2.
\end{displaymath}
\end{theorem}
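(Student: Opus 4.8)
The statement to prove is Theorem~\ref{thm:BFconvolution}, the description of the convolution product of smooth valuations in terms of differential forms on the sphere bundle. Here is how I would proceed.

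\textbf{Reduction to a dense set and to the key formula.} First I would observe that by bilinearity and continuity of the convolution, it suffices to verify the formula $*_1 D\omega = *_1 D\omega_1 \wedge *_1 D\omega_2$ for a spanning family of valuations, namely $\phi_i = \vol(\Cdot + A_i)$ with $A_i \in \calK_+^\infty(\RR^n)$; the formula for $\omega$ then follows because $D$ is injective on the relevant spaces of translation-invariant forms modulo the one-dimensional ambiguity (Lemma~\ref{lem:closed_vertical} and the kernel theorem identify a smooth $k$-homogeneous valuation with $k<n$ uniquely with the vertical closed form $D\omega$), so $\omega = *_1^{-1}(*_1\omega_1 \wedge *_1 D\omega_2)$ is forced once we know $D\omega$. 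Thus the heart of the matter is the identity on the level of the $\tau$-forms $\tau_i = D\omega_i$: if $\phi_i \leftrightarrow \tau_i$ under the correspondence $\phi \mapsto (D\omega, \phi(\{0\}))$, then $\phi_1 * \phi_2 \leftrightarrow *_1^{-1}(*_1\tau_1 \wedge *_1\tau_2)$. For $\phi_i=\vol(\Cdot+A_i)$ the associated form is a volume-type form built from the support function of $A_i$, which I would write down explicitly on $S\RR^n$.

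\textbf{Explicit computation for $\vol(\Cdot+A)$.} For $K \in \calK_+^\infty$, the valuation $\vol(\Cdot + K)$ can be written via the normal cycle with an explicit translation-invariant form: pushing forward the standard volume form under the Gauss map parametrization, one gets that $*_1 D\omega_K$ is (up to the fixed sign normalization built into $*_1$) the pullback under the Gauss map of the Lebesgue volume form on $\RR^n$, which in $S^{n-1}$-coordinates becomes $\det(D^2 h_K + h_K \cdot \mathrm{id})$ times the spherical volume form, where $h_K$ is the support function. The crucial algebraic point is that the wedge product $*_1 D\omega_{A_1} \wedge *_1 D\omega_{A_2}$ should reproduce the form of $\vol(\Cdot + A_1 + A_2)$; this reduces — since $h_{A_1+A_2} = h_{A_1} + h_{A_2}$ — to a multilinearity/additivity statement for the Monge–Ampère-type operator $h \mapsto D^2 h + h\cdot\mathrm{id}$ and its mixed discriminants, i.e.\ to the polarization identity for mixed volumes. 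Concretely, one checks that if $\tau_i$ corresponds to the $1$-homogeneous generators $V(A_i, \Cdot[n-1])$ then $*_1\tau_1 \wedge \cdots \wedge *_1\tau_n = n!\, V(A_1,\dots,A_n)\cdot\mathrm{dx}$, matching \eqref{eq_convolution_mixed_volumes}, and the general case follows by multilinear expansion.

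\textbf{Verifying the two asserted identities.} Once the generator computation is in place, I would (i) verify $*_1 D\omega = *_1 D\omega_1 \wedge *_1 D\omega_2$ by linearity, using that both sides are vertical closed translation-invariant $n$-forms with the correct spherical integral (so Lemma~\ref{lem:closed_vertical} applies and both represent a genuine valuation); and (ii) derive the formula $\omega = *_1^{-1}(*_1\omega_1 \wedge *_1 D\omega_2)$ by checking that $D$ of the right-hand side equals $*_1^{-1}(*_1 D\omega_1 \wedge *_1 D\omega_2)$. For this last step I would use the compatibility relation $d\,*_1 = (-1)^n *_1 d$ noted just before the theorem, together with the fact that $\alpha \wedge *_1 D\omega_1 = 0$ (verticality of $D\omega_1$ is equivalent to this, since $*_1$ only touches the horizontal factor) and the Leibniz rule: $D(*_1^{-1}(*_1\omega_1 \wedge *_1 D\omega_2))$ picks up $d(*_1\omega_1) \wedge *_1 D\omega_2$ after applying $*_1^{-1}$ and reinserting, and one must argue the vertical correction form in the definition of $D$ contributes correctly — this amounts to checking that $*_1\omega_1 \wedge *_1 D\omega_2$ already differs from a vertical-corrected representative only by an exact piece, using $d(*_1 D\omega_2)=0$ and that $D\omega_1$ is the Rumin differential of $\omega_1$.

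\textbf{Main obstacle.} The genuinely delicate point is the interaction of the Rumin differential $D$ — which involves the non-canonical choice of a vertical correction form — with the wedge product and the operator $*_1$. Establishing that $*_1\omega_1 \wedge *_1 D\omega_2$ is (up to an exact and a multiple-of-$\mathrm{dx}$ term, both harmless after applying $D$) the correct primitive requires carefully tracking how verticality behaves under $*_1$ and under wedging with a vertical form, and then invoking uniqueness of the Rumin correction. The explicit generator computation, by contrast, is a routine — if somewhat lengthy — differential-geometric calculation with the Gauss map and support functions, which I would relegate to a lemma and not spell out in full.
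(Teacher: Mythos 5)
The paper does not give a proof of Theorem~\ref{thm:BFconvolution} --- it is cited from \cite{BernigFu:Convolution} --- so there is no in-paper argument to compare against. Your overall blueprint (verify $*_1 D\omega = *_1 D\omega_1 \wedge *_1 D\omega_2$ on the generating family $\vol(\Cdot + A)$, extend by bilinearity and continuity, and check the specific representative $\omega=*_1^{-1}(*_1\omega_1\wedge *_1 D\omega_2)$ via a Rumin-compatibility computation) is the right shape of argument, and you are right to single out the interaction of $D$, $*_1$, and the wedge product as the delicate point.

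However, your sketch of that delicate point rests on a false identity. You claim $\alpha \wedge *_1 D\omega_1 = 0$, asserting that verticality of $D\omega_1$ is equivalent to this "since $*_1$ only touches the horizontal factor." That is wrong: because $T$ is the metric dual of the unit covector $\alpha$ on the horizontal factor, $*_1$ exchanges $\alpha\wedge(\cdot)$ with $i_T(\cdot)$ up to sign, so verticality of $\xi$ (i.e.\ $\alpha\wedge\xi=0$) translates under $*_1$ to $i_T(*_1\xi)=0$, \emph{not} to $\alpha\wedge *_1\xi=0$. The paper itself uses exactly the corrected fact in the proof of Proposition~\ref{prop_positivity_star1_domega}. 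This matters for your argument: the natural candidate Rumin correction for $\omega=*_1^{-1}(*_1\omega_1\wedge *_1 D\omega_2)$ is $\eta := *_1^{-1}(*_1\eta_1\wedge *_1 D\omega_2)$ (with $\eta_1$ the correction for $\omega_1$), and one must show that $\eta$ is vertical, i.e.\ $i_T(*_1\eta_1\wedge *_1 D\omega_2)=0$; this follows from $i_T(*_1\eta_1)=0$ and $i_T(*_1 D\omega_2)=0$ by the Leibniz rule for $i_T$, a chain your false identity does not support. Separately, the degree count in your spanning-family formula is off: with $*_1\tau_i\in\Omega^{1,1}(S\RR^n)^{tr}$, the $n$-fold wedge $*_1\tau_1\wedge\cdots\wedge *_1\tau_n$ lands in $\Omega^{n,n}(S\RR^n)^{tr}$, which vanishes identically since $\dim S^{n-1}=n-1$; the mixed volume should come from at most $n-1$ wedge factors together with an integration over $S^{n-1}$, as in Corollary~\ref{cor:evaluation}.
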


\subsection{Poincar\'e duality}

 The convolution of valuations defines a continuous bilinear form $\Val^\infty\times\Val^\infty\to\CC$, called the Poincar\'e pairing, by
\begin{equation}\label{eq:Poincare_pairing}\langle \phi, \psi \rangle =( \phi* \psi)(\{0\}). \end{equation}
In terms of differential forms, if $\phi_1=\lcur 0,\omega_1\rcur \in \Val_k^\infty$ and $\phi_2=\lcur 0,\omega_2\rcur \in \Val_{n-k}^\infty$ have complementary degrees  with $0<k<n$, then
\begin{displaymath}
\omega_1 \wedge D\omega_2\in \Omega^{n,n-1}(S\RR^n)^{tr}\simeq \Omega^{n-1}(S^{n-1})
\end{displaymath}
and 
\begin{equation} \label{eq_wannerer}
	\langle\phi_1,\phi_2\rangle=(-1)^k \int_{S^{n-1}} \omega_1 \wedge D\omega_2 ,
\end{equation}
see \cite[Proposition 4.2]{Wannerer:UnitaryAreaMeasures}. A key property of the product of valuations is the following

\begin{theorem}
The Poincar\'e pairing \eqref{eq:Poincare_pairing} is perfect, i.e., the induced map 
\begin{equation}\label{eq:smooth_generalized}
	\Val^\infty(\RR^n)  \to \Val^\infty(\RR^n)^* 
\end{equation}
is injective with dense image.
\end{theorem}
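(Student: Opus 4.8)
The plan is to reduce the assertion to non-degeneracy of the Poincar\'e pairing and then to establish non-degeneracy using Alesker's irreducibility theorem. Write $P\colon\Val^\infty\to(\Val^\infty)^*$, $\phi\mapsto\langle\phi,\Cdot\rangle$, for the map in the statement. Since the convolution is commutative, the pairing \eqref{eq:Poincare_pairing} is symmetric, $\langle\phi,\psi\rangle=\langle\psi,\phi\rangle$. Hence the pre-annihilator of $\operatorname{im}P$ in $\Val^\infty$ is $\{\psi\in\Val^\infty:\langle\phi,\psi\rangle=0\text{ for all }\phi\}=\{\psi:\langle\psi,\phi\rangle=0\text{ for all }\phi\}=\ker P$, and by the bipolar theorem the weak-$*$ closure of $\operatorname{im}P$ equals $(\ker P)^\perp$. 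Thus $\operatorname{im}P$ is dense exactly when $\ker P=0$, and it remains to prove that $P$ is injective.

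Because the convolution is graded, $\Val^\infty_a*\Val^\infty_b\subseteq\Val^\infty_{a+b-n}$, which is either the zero space or consists of valuations of positive degree, and the latter vanish on the point $\{0\}$; hence the pairing is trivial on $\Val^\infty_a\times\Val^\infty_b$ unless $a+b=n$. By the McMullen grading it therefore suffices to show that for each $0\le k\le n$ the pairing $\Val^\infty_k\times\Val^\infty_{n-k}\to\CC$ is non-degenerate in the first variable. For $k=0$ this is immediate: $\Val^\infty_0=\CC\chi$, $\Val^\infty_n=\CC\vol$, and $\langle\chi,\vol\rangle=\chi(\{0\})=1$ since $\vol$ is the unit for convolution; the case $k=n$ is symmetric. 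So fix $0<k<n$.

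For the remaining degrees I would argue via representation theory. The Poincar\'e pairing is $\operatorname{SL}(n,\RR)$-invariant (the convolution being $\operatorname{SL}(n,\RR)$-equivariant and $\{0\}$ a fixed point), so $P$ restricts to an $\operatorname{SL}(n,\RR)$-equivariant continuous map $P^\varepsilon\colon\Val^{\infty,\varepsilon}_k\to(\Val^{\infty,\varepsilon}_{n-k})^*$ on each parity component $\varepsilon\in\{+,-\}$; here the pairing is block-diagonal with respect to the even/odd decomposition because parity is additive under convolution while $\Val^\infty_0$ consists of even valuations only. By Alesker's irreducibility theorem together with the Casselman--Wallach theorem, $\Val^{\infty,\varepsilon}_k$ is the space of smooth vectors of an irreducible representation and thus has no proper closed invariant subspace; hence $\ker P^\varepsilon$, being closed and invariant, is $\{0\}$ as soon as $P^\varepsilon\neq0$. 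For $\varepsilon=+$, non-vanishing is witnessed by mixed volumes of Euclidean balls: by \eqref{eq_convolution_mixed_volumes}, the valuations $V(B^n[n-k],\Cdot[k])\in\Val^{\infty,+}_k$ and $V(B^n[k],\Cdot[n-k])\in\Val^{\infty,+}_{n-k}$ satisfy
\[
\bigl\langle V(B^n[n-k],\Cdot[k]),\,V(B^n[k],\Cdot[n-k])\bigr\rangle=c_{n,k}\,\vol(B^n)>0
\]
for a positive constant $c_{n,k}$. For $\varepsilon=-$ one must exhibit a single non-zero value of the pairing on the odd part; this can be done by an explicit construction of odd valuations (for instance degree-$(n-1)$ valuations obtained by integrating odd functions against area measures, paired with their images under the Fourier transform) and is in any case part of Alesker's structure theory.

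I expect the only genuinely non-formal point to be precisely this: the non-vanishing of the Poincar\'e pairing on the odd component $\Val^{\infty,-}_k\times\Val^{\infty,-}_{n-k}$, where the mixed-volume witnesses of the even case are unavailable since every mixed-volume valuation is even. A way to see non-degeneracy uniformly in the parity---and an alternative to the representation-theoretic argument---is to rewrite the Poincar\'e pairing through \eqref{eq_wannerer}: using Lemma \ref{lem:closed_vertical} and the kernel theorem, non-degeneracy on $\Val^\infty_k\times\Val^\infty_{n-k}$ amounts to the statement that the Poincar\'e duality pairing of the Rumin complex on the contact manifold $S\RR^n$, restricted to translation-invariant forms, is perfect---equivalently, that any translation-invariant form of the appropriate bidegree which pairs to zero against all Rumin-closed forms is itself a Rumin coboundary. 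Via the product decomposition $\Omega^{i,j}(S\RR^n)^{tr}=\largewedge^i(\RR^n)^*\otimes\Omega^j(S^{n-1})$ this reduces to Poincar\'e duality and Hodge theory on $S^{n-1}$, which is where the remaining work lies.
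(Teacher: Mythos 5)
This theorem is stated in the paper as background without proof (it is a known result of Alesker's, for the Alesker product, transported to the convolution by the Alesker--Fourier transform; see also Bernig--Fu), so there is no in-paper argument to compare against. Your overall scheme is the standard one and most of it is sound: the reduction of density to injectivity via the bipolar theorem is correct (and since $\Val^\infty$ is nuclear Fr\'echet, hence reflexive, weak-$*$ density does upgrade to density in the strong dual topology); the grading reduction to $\Val^\infty_k\times\Val^\infty_{n-k}$ is correct; the block-diagonality in parity is correct; and the irreducibility $+$ Casselman--Wallach argument does show that on each parity component the kernel of $P$ is either $0$ or everything. The even-parity witness via balls is fine.

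Two points, one minor and one genuine. The minor one: you invoke $\SL(n,\RR)$-invariance of the pairing and then want to apply Alesker's irreducibility theorem, which is a $\GL(n,\RR)$-statement. It is cleaner (and avoids having to discuss whether $\SL$-irreducibility holds) to observe directly that $\ker P$ is $\GL(n,\RR)$-invariant: the pairing is $\GL(n,\RR)$-covariant up to the character $|\det g|^{\pm1}$, so $\langle g\phi,\psi\rangle=|\det g|^{\pm1}\langle\phi,g^{-1}\psi\rangle$, whence $\phi\in\ker P$ implies $g\phi\in\ker P$. Then $\GL(n,\RR)$-irreducibility applies as stated.

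The genuine gap, which you flag yourself, is the non-vanishing of the pairing on the odd component $\Val^{\infty,-}_k\times\Val^{\infty,-}_{n-k}$ for $0<k<n$. Mixed-volume witnesses are even, so some additional input is unavoidable. Neither of your two suggested routes is carried out. For the explicit route you would need to produce a pair of odd valuations with non-zero convolution at $\{0\}$, which is not entirely trivial. Your second route---via \eqref{eq_wannerer}, the kernel theorem, and the algebraic non-degeneracy of the wedge pairing $\Omega^{n-k,k-1}(S\RR^n)^{tr}\times\Omega^{k,n-k}(S\RR^n)^{tr}\to\Omega^{n,n-1}(S\RR^n)^{tr}$ together with Hodge theory on $S^{n-1}$ to invert the Rumin differential---is closer in spirit to the techniques of the present paper and, as you rightly note, handles both parities at once, thereby obviating the representation-theoretic reduction altogether. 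But as written it remains a sketch: you would still need to verify that, given a Rumin-closed $\tau=D\omega_2\neq 0$, one can choose $\omega_1\in\Omega^{n-k,k-1}(S\RR^n)^{tr}$ with $\int_{S^{n-1}}\omega_1\wedge\tau\neq 0$, which is where ``the remaining work lies'' in your own phrasing. Until one of these two routes is completed, the proof is incomplete.
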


Elements of $\Val^{-\infty}=(\Val^\infty)^*$ are called generalized translation-invariant valuations, see \cite{AleskerFaifman:Lorentzgroup}. Observe that for $\phi\in \Val^{-\infty}$ and  $K\in\calK_+^\infty(\RR^n)$  it makes sense to define
\begin{equation}\label{eq:evaluation}
  \phi(K)= \langle \phi, \vol(\Cdot + K) \rangle,
  \end{equation}
since the equality is valid in the classical sense if we consider $\phi\in\Val^\infty$ as a generalized valuations via the embedding \eqref{eq:smooth_generalized}.

\section{Differential forms on the sphere bundle}
\label{s:SphereBundle}

The hard Lefschetz theorem and the Hodge--Riemann relations for the algebra $\Val^\infty$ will ultimately be reduced to the Hodge--Riemann relations in complex linear algebra as proven by Timorin. In this section we develop the linear algebra of differential forms on the sphere bundle necessary for this reduction and connect it with smooth translation-invariant valuations, in particular with mixed volumes. As a first application we will be able to prove injectivity in the hard Lefschetz theorem.

\subsection{Complex linear algebra}
\label{subsec:Timorin}

To begin with, we state Timorin's result, namely, the analog of Theorem \ref{thm:main} in the context of complex linear algebra.

Let $W$ be a real vector space of  dimension $2n$ with a complex structure $J\colon W\to W$.
We  denote by  $\largewedge_\CC^k W= \largewedge^k W^*\otimes_\RR \CC$ the space of complex-valued $k$-forms on the real vector space $W$.  Let
$\largewedge_\CC^1 W^* =  W' \oplus \b W '$ be the decomposition of the space of $\RR$-linear maps $W\to \CC$ into the $\CC$-linear and $\CC$-antilinear maps.
Then, denoting $\largewedge^{(p,q)}W^*=\largewedge^p W' \otimes \largewedge^q \b W'$, one has
\begin{equation} \label{eq:pqforms}
	\largewedge^k_\CC W^* = \bigoplus_{p+q=k} \largewedge^{(p,q)}W^*.
\end{equation}
Let us clarify that we violate the standard notation here since in our applications $W$ will usually be a direct sum of two vector spaces and $\largewedge^{p,q}W^*$ will refer to the bidegree decomposition in this other context.

A $(1,1)$-form $\omega \in \largewedge^{(1,1)}W^*$ is called positive if it is real, i.e., $\b \omega= \omega$, and  satisfies $\omega(w,Jw) >0$ for each non-zero  $w \in W$.

\begin{theorem}[Timorin \cite{Timorin:Mixed}] \label{thm_timorin}
Let $p+q \leq n$ and let $\omega_0,\omega_1,\ldots,\omega_{n-p-q}$ be positive $(1,1)$-forms. Set $\Omega=\omega_1 \wedge \ldots \wedge \omega_{n-p-q} \in \largewedge^{(n-p-q,n-p-q)}W^*$. Then the following holds:
	\begin{enuma}
		\item \emph{Hard Lefschetz theorem.} The wedge product with $\Omega$ is an isomorphism
		\begin{displaymath}
			\Omega:\largewedge^{(p,q)}W^* \to \largewedge^{(n-q,n-p)}W^*.
		\end{displaymath}
	\item  \emph{Hodge--Riemann relations.} Let
	\begin{displaymath}
		P^{(p,q)}=\{\omega \in \largewedge^{(p,q)}W^*: \omega_0 \wedge \Omega \wedge \omega=0\}
	\end{displaymath}
be the subspace of primitive forms. Then the sesquilinear form
	\begin{displaymath}
		q(\alpha,\beta)=\bi^{p-q} (-1)^{\binom{p+q}{2}} \alpha \wedge \b \beta \wedge \Omega
	\end{displaymath}
 on $P^{(p,q)}$ is positive definite.
	\end{enuma}
\end{theorem}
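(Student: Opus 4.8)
The plan is to reduce everything to the \emph{non-mixed} case $\omega_0=\dots=\omega_{n-p-q}$, which is classical $\mathfrak{sl}_2$-representation theory, by an induction on $m:=n-p-q$ in which the Lefschetz and the Hodge--Riemann parts are proven in tandem. Write $\largewedge^{(p,q)}:=\largewedge^{(p,q)}W^*$; since $\dim_\CC\largewedge^{(p,q)}=\binom np\binom nq=\dim_\CC\largewedge^{(n-q,n-p)}$, part (a) is equivalent to injectivity of $\Omega\wedge\cdot$. I first record the elementary implication \emph{(b)$\Rightarrow$(a) at the same level}: if $\Omega\wedge\alpha=0$ then $\omega_0\wedge\Omega\wedge\alpha=0$, so $\alpha\in P^{(p,q)}$, while $q(\alpha,\alpha)=\bi^{p-q}(-1)^{\binom{p+q}2}\alpha\wedge\bar\alpha\wedge\Omega=0$ since $\Omega$ has even degree and $\Omega\wedge\alpha=0$; positive definiteness forces $\alpha=0$.

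The base case is the non-mixed one. Choosing a $J$-compatible Hermitian inner product whose Kähler form is the common form $\omega$, the operators $L=\omega\wedge\cdot$, its metric adjoint $\Lambda$, and $H=[L,\Lambda]$ form an $\mathfrak{sl}_2$-triple on $\largewedge^\bullet_\CC W^*$ that preserves the $(p,q)$-bigrading up to the obvious shift. Finite-dimensional $\mathfrak{sl}_2$-theory then supplies the Lefschetz isomorphisms $\omega^k\colon\largewedge^{(p,q)}\to\largewedge^{(p+k,q+k)}$ for $p+q+k\le n$, the primitive decomposition, and---by an explicit computation on a model primitive element, e.g.\ after reducing $W$ to $\CC^n$ with $\omega$ the standard form and diagonalizing---the positivity of $q$ on the classical primitive spaces. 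This is exactly Theorem~\ref{thm_timorin} whenever all $\omega_i$ coincide, at every level $m$.

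For the inductive step I assume the theorem, together with the ancillary Lefschetz decompositions it entails, at level $m-1$. I would first establish \emph{(a) at level $m$}: writing $\Omega=\omega_m\wedge\Omega'$ with $\Omega'=\omega_1\wedge\dots\wedge\omega_{m-1}$ and assuming $\Omega\wedge\alpha=0$, the form $\Omega'\wedge\alpha$ lies in $\ker(\omega_m\wedge\cdot)$, where the single-form $\mathfrak{sl}_2$-machinery for $\omega_m$ together with the level-$(m-1)$ Lefschetz isomorphisms pin down its $\omega_m$-primitive components; feeding this back through the level-$(m-1)$ Hodge--Riemann relations---which make the pairing $(\alpha,\beta)\mapsto\alpha\wedge\bar\beta\wedge\Omega'$ non-degenerate on the level-$(m-1)$ primitive spaces---forces $\alpha=0$. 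Hence $\Omega\wedge\cdot$ is injective, and an isomorphism by the dimension count. Granting (a) at level $m$, one obtains the Lefschetz decomposition $\largewedge^{(p,q)}=P^{(p,q)}\oplus\omega_0\wedge\largewedge^{(p-1,q-1)}$ and, using the classical surjectivity of $\omega_0\wedge\cdot$ above the middle degree, that the rank of $\omega_0\wedge\Omega\wedge\cdot$---hence $\dim P^{(p,q)}$---is the same for every positive tuple. Now the deformation argument applies: the set of $(m+1)$-tuples of positive $(1,1)$-forms is convex, hence connected; the spaces $P^{(p,q)}$ fit together into a vector bundle over it on which $q$ is a non-degenerate Hermitian form (by (a)); its signature is therefore locally constant, and evaluating at the diagonal tuple, where $q$ is positive definite by the base case, shows it is positive definite everywhere. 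This is (b) at level $m$, and the induction closes.

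I expect the real obstacle to be the inductive proof of part (a). Unlike the non-mixed situation, there is no single $\mathfrak{sl}_2$-action seeing all of $\omega_1,\dots,\omega_m$ at once, so injectivity of $\Omega\wedge\cdot$ is not visible from representation theory and must be extracted from a careful analysis of $\omega_m$-primitive components fed through the inductive Hodge--Riemann relations; arranging the simultaneous induction so that precisely the Lefschetz decompositions needed are available at each stage---and verifying that the primitive subspaces $P^{(p,q)}$ have locally constant dimension, which is what legitimizes the signature-continuity step---is the delicate bookkeeping at the heart of the argument.
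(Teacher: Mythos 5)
The paper does not prove this statement; it cites Timorin \cite{Timorin:Mixed} and uses the result as a black box. So there is no in-paper proof to compare against, and your proposal should be judged on its own terms.

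Your high-level scheme -- base case with all forms equal via $\mathfrak{sl}_2$-theory, then an induction on $m=n-p-q$ in which HR at level $m-1$ gives HL at level $m$, and HL at level $m$ gives HR at level $m$ by a connectedness/signature argument -- is the standard shape of all known proofs of the mixed linear Hodge--Riemann relations. But there is a concrete gap in the step ``HL at level $m$ $\Rightarrow$ $q$ nondegenerate on $P^{(p,q)}$,'' which is the load-bearing claim of the continuity argument. Part (a) at level $m$ tells you that $\Omega\wedge\cdot$ is an isomorphism $\largewedge^{(p,q)}\to\largewedge^{(n-q,n-p)}$, hence that $q$ is nondegenerate on \emph{all} of $\largewedge^{(p,q)}$, and that $\omega_0\wedge\Omega\wedge\cdot$ is onto $\largewedge^{(n-q+1,n-p+1)}$ (using classical single-form surjectivity above the middle). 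From this one computes $(P^{(p,q)})^{\perp_q}=\omega_0\wedge\largewedge^{(p-1,q-1)}$, so nondegeneracy of $q|_{P^{(p,q)}}$ is \emph{equivalent} to $P^{(p,q)}\cap\omega_0\wedge\largewedge^{(p-1,q-1)}=0$, i.e., to the implication $\omega_0^2\wedge\Omega\wedge\gamma=0\Rightarrow\gamma=0$ for $\gamma\in\largewedge^{(p-1,q-1)}$. That is hard Lefschetz for the $(m+2)$-tuple $(\omega_0,\omega_0,\omega_1,\ldots,\omega_m)$, which your induction (ascending in $m$) has not yet established. In other words, the Lefschetz decomposition $\largewedge^{(p,q)}=P^{(p,q)}\oplus\omega_0\wedge\largewedge^{(p-1,q-1)}$ that you invoke is not a consequence of (a) at level $m$; it is itself of higher level. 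The known proofs handle this either by a more carefully ordered simultaneous induction (e.g., descending in $p+q$, or on $\dim W$), or by passing through a signature count on the whole $\largewedge^{(p,q)}$ combined with the behaviour of $q$ on the ``non-primitive'' summand at a lower bidegree; a bare homotopy-invariance-of-signature argument on $P^{(p,q)}$ alone does not close without this extra input.

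Separately, your inductive derivation of (a) at level $m$ from HR at level $m-1$ is only gestured at (``pin down the $\omega_m$-primitive components \dots feeding this back through the level-$(m-1)$ Hodge--Riemann relations''). You are right that this is where the real difficulty sits, but as written there is no argument: you need to say precisely which primitive decomposition of $\Omega'\wedge\alpha$ (a form of degree $n+m-2$, above the middle once $m\geq2$) you are using, and how the nondegeneracy of the $\Omega'$-pairing on the \emph{level-$(m-1)$} primitive spaces -- which live in different bidegrees than the components you obtain -- is actually brought to bear. Until both this step and the nondegeneracy-on-$P^{(p,q)}$ step are supplied, the induction does not close.
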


Here and in what follows we use the canonical orientation of $W$ to orient $\largewedge^{2n} W^*$ and the notation $\bi=\sqrt{-1}$.

\subsection{The linear algebra of $(p,p)$-forms}
We are going to apply the linear Hodge--Riemann relations in the situation $W=V\oplus V$, where $V$ is an $n$-dimensional real vector space and $J(v,w)=(-w,v)$.  To this end we first need to verify that a certain class of  forms is of  type $(p,p)$.

 Let $K^p(V)\subset \largewedge_\CC^p V^* \otimes \largewedge_\CC^p V^*$ be the kernel of  the map 
 \begin{displaymath}
   \largewedge_\CC^p V^*\otimes \largewedge_\CC^p V^*\to \largewedge^{p-1}_\CC V^*\otimes \largewedge^{p+1}_\CC V^*
 \end{displaymath}
given by $ \beta\otimes \gamma \mapsto \sum_{i=1}^n \iota_{e_i}\beta \otimes \alpha_i\wedge \gamma,$
where $e_1,\ldots, e_n$ is a basis of $V$ and $\alpha_1,\ldots, \alpha_n$ is the dual basis. We need the following fact, proved for example in \cite[Lemma 5.7]{BFSW}.
\begin{lemma}
	$K^p(V)$ is irreducible under the natural action of $\GL(n,\CC)$. 
\end{lemma}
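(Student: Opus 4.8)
The plan is to identify $K^p(V)$ with a single Schur module by exploiting a hidden $\mathfrak{sl}_2$-symmetry of the ambient tensor product. Fix $1\le p\le n$, write $E$ for the $n$-dimensional complex vector space $\largewedge^1_\CC V^*$ with its natural $\GL(n,\CC)=\GL(E)$-action, and identify $\largewedge^p_\CC V^*=\largewedge^p E$, the contractions $\iota_{e_i}$ being taken against a basis $e_1,\dots,e_n$ of $V$ with dual basis $\alpha_1,\dots,\alpha_n$. Since the map defining $K^p(V)$ is built from the basis-independent element $\sum_i e_i\otimes\alpha_i\in V\otimes V^*$ (the de-wedge-one-factor map composed with a wedge), it is $\GL(n,\CC)$-equivariant, so $K^p(V)$ is a $\GL(n,\CC)$-submodule of $\largewedge^p E\otimes\largewedge^p E$.

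On the bigraded space $\mathcal A=\largewedge^\bullet E\otimes\largewedge^\bullet E$ I would introduce the operators $B=\sum_i\iota_{e_i}\otimes(\alpha_i\wedge-)$, $A=\sum_i(\alpha_i\wedge-)\otimes\iota_{e_i}$, and $H=(\text{degree in the first factor})-(\text{degree in the second factor})$, noting that $B$ restricts on $\largewedge^p E\otimes\largewedge^p E$ to exactly the map whose kernel is $K^p(V)$. Using $\iota_{e_i}(\alpha_j\wedge\omega)+\alpha_j\wedge\iota_{e_i}\omega=\delta_{ij}\omega$, a short computation gives
\[
[H,A]=2A,\qquad [H,B]=-2B,\qquad [A,B]=H,
\]
so $(H,A,B)$ is an $\mathfrak{sl}_2$-triple acting on $\mathcal A$ and commuting with the diagonal $\GL(n,\CC)$-action; in particular $B$ is the lowering operator and $\largewedge^p E\otimes\largewedge^p E$ is the $H$-weight-$0$ subspace.

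Next I would decompose $\mathcal A$ as a $\GL(n,\CC)\times\mathfrak{sl}_2$-module. By the (dual) Pieri rule, $\largewedge^a E\otimes\largewedge^b E$ is multiplicity-free, its summands being the Schur modules $\mathbb S_\lambda(E)$ indexed by two-column Young diagrams $\lambda$ with column heights $h_1\ge h_2$ satisfying $h_1+h_2=a+b$, $h_2\le\min(a,b)$, and $h_1\le n$. Fixing such a $\lambda$ with $h_1+h_2=2p$ and putting $c=\tfrac12(h_1-h_2)$, this description shows that $\mathbb S_\lambda(E)$ occurs in $\largewedge^{p+j}E\otimes\largewedge^{p-j}E$ precisely for $|j|\le c$; as that bidegree carries $H$-weight $2j$, the $\mathfrak{sl}_2$-multiplicity space of $\mathbb S_\lambda(E)$ in $\mathcal A$ has one-dimensional weight spaces in the weights $-2c,-2c+2,\dots,2c$ and no others, hence is the irreducible $\mathfrak{sl}_2$-module of dimension $2c+1$.

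Finally, $K^p(V)$ is the kernel of the lowering operator $B$ on the weight-$0$ subspace $\largewedge^p E\otimes\largewedge^p E$. A weight-$0$ vector of an $\mathfrak{sl}_2$-module is annihilated by the lowering operator iff it spans a trivial $\mathfrak{sl}_2$-submodule, which inside an irreducible module of dimension $2c+1$ forces $c=0$; hence the weight-$0$ part of $\mathbb S_\lambda(E)\otimes(\text{irreducible of dimension }2c+1)$ lies in $K^p(V)$ exactly when $h_1=h_2=p$. Therefore $K^p(V)=\mathbb S_{(2^p)}(E)$, the Schur module of the partition $(2,\dots,2)$ with $p$ parts, which is an irreducible $\GL(n,\CC)$-module since the Schur functors of partitions realize the irreducible polynomial representations. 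I expect the only delicate point to be the Pieri-rule bookkeeping of the third paragraph---tracking the constraint $h_1\le n$ and which bidegrees each $\mathbb S_\lambda(E)$ visits---whereas the $\mathfrak{sl}_2$-relations and the irreducibility of Schur functors are routine or standard.
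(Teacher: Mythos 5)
Your proof is correct. Since the paper does not prove this lemma itself but instead cites an external reference ([BFSW, Lemma 5.7]), there is no internal argument to compare against; you have, in effect, supplied the missing proof. Your route is a clean instance of $(\GL(n,\CC),\mathfrak{gl}_2)$ skew Howe duality on $\largewedge^\bullet(E\oplus E)\cong\largewedge^\bullet E\otimes\largewedge^\bullet E$: the operators $A$, $B$, $H$ that you write down are precisely the standard $\mathfrak{sl}_2$-triple dual to $\GL(E)$, the Pieri-rule bookkeeping reproduces the Howe decomposition $\bigoplus_\lambda \mathbb S_\lambda(E)\otimes\mathbb S_{\lambda'}(\CC^2)$ over two-column shapes $\lambda$, and extracting $\ker B$ on the weight-zero stratum picks out the single rectangular shape $\lambda=(2^p)$. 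I checked the details: the bracket computation $[A,B]=H$ via $\iota_{e_i}(\alpha_j\wedge\Cdot)+\alpha_j\wedge\iota_{e_i}=\delta_{ij}$ is right; $\mathbb S_\lambda(E)$ with $\lambda=(2^{h_2},1^{h_1-h_2})$ does occur in $\largewedge^{p+j}E\otimes\largewedge^{p-j}E$ exactly for $|j|\le c=\tfrac12(h_1-h_2)$; the resulting $\mathfrak{sl}_2$-multiplicity space is the irreducible $V(2c)$; and inside $V(2c)$ the kernel of the lowering operator on the zero-weight line is nonzero iff $c=0$. One micro-comment: in the final step, rather than the slightly indirect "spans a trivial submodule" phrasing, it is equally quick to observe that $F\colon V(2c)_0\to V(2c)_{-2}$ is an isomorphism whenever $c\ge 1$ (both spaces are nonzero and $F$ is injective away from the lowest weight), so $\ker B\cap V(2c)_0=0$ unless $c=0$. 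Either way, the conclusion $K^p(V)\cong\mathbb S_{(2^p)}(E)$ (a nonzero irreducible for $p\le n$) is correct.
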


At the same time, we can view $K^p(V)$ as a subspace of  $\largewedge_\CC^{2p} W^*$, where $W= V\oplus V$. In this respect, the following fact will be important.

\begin{proposition}\label{prop:ppforms}
  \begin{displaymath}
    K^p(V)\subset \largewedge^{(p,p)} W^*.
  \end{displaymath}
\end{proposition}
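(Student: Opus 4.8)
The plan is to use the irreducibility of $K^p(V)$ under $\GL(n,\CC)$ stated just above: since $\largewedge^{(p,p)}W^* \subset \largewedge^{2p}_\CC W^*$ is a $\GL(n,\CC)$-submodule (the complex structure $J$ on $W = V \oplus V$ is $\GL(n,\CC)$-equivariant, where $\GL(n,\CC)$ acts diagonally via the identification of $V \oplus V$ with $V \otimes_\RR \CC$), it suffices to exhibit a single nonzero element of $K^p(V)$ that happens to lie in $\largewedge^{(p,p)}W^*$. Then irreducibility forces $K^p(V) \cap \largewedge^{(p,p)}W^*$ to be all of $K^p(V)$.

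First I would set up coordinates: pick a basis $e_1,\dots,e_n$ of $V$ with dual basis $\alpha_1,\dots,\alpha_n$, and let $W = V \oplus V$ with $J(v,w) = (-w,v)$. Writing the two copies of $V^*$ inside $W^*$ as $\alpha_i^{(1)}$ and $\alpha_i^{(2)}$, the $\CC$-linear and $\CC$-antilinear parts $W' \oplus \b W'$ of $\largewedge^1_\CC W^*$ are spanned by $\zeta_i = \alpha_i^{(1)} + \bi\,\alpha_i^{(2)}$ and $\b\zeta_i = \alpha_i^{(1)} - \bi\,\alpha_i^{(2)}$ respectively (or the opposite sign convention, which does not matter). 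Second, I would identify the natural candidate element of $K^p(V)$: the form corresponding, under the identification of $\largewedge^p_\CC V^* \otimes \largewedge^p_\CC V^*$ with a subspace of $\largewedge^{2p}_\CC W^*$, to something like $\sum_{|I|=|J|=p} \pm\, \alpha_I^{(1)} \wedge \alpha_J^{(2)}$ with the combinatorial signs and index pattern that make it both (i) annihilated by the contraction map defining $K^p(V)$ and (ii) expressible purely in terms of the $\zeta_i$'s and $\b\zeta_i$'s with $p$ factors of each — i.e. of type $(p,p)$. A clean way to produce such an element is to start from the obvious $(p,p)$-form $\sum_{|I|=p}\zeta_I \wedge \b\zeta_I$ (a power of the standard $(1,1)$-form $\sum_i \zeta_i \wedge \b\zeta_i$), re-expand it in the $\alpha^{(1)}, \alpha^{(2)}$ basis, read off the corresponding tensor in $\largewedge^p_\CC V^* \otimes \largewedge^p_\CC V^*$, and then verify directly that this tensor is killed by $\beta \otimes \gamma \mapsto \sum_i \iota_{e_i}\beta \otimes \alpha_i \wedge \gamma$; this last verification is a short index computation using that $\iota_{e_i}$ and $\alpha_i \wedge(\cdot)$ on antisymmetric tensors produce a telescoping cancellation.

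Third, having found one nonzero element of $K^p(V) \cap \largewedge^{(p,p)}W^*$, I would invoke the lemma: $K^p(V)$ is an irreducible $\GL(n,\CC)$-module, and $\largewedge^{(p,p)}W^*$ is $\GL(n,\CC)$-invariant (as it is the fixed eigenspace decomposition piece for the $J$-induced $\mathrm{U}(1)$-weight, and $\GL(n,\CC)$ commutes with $J$), so the nonzero intersection must equal $K^p(V)$, which is exactly the claimed inclusion $K^p(V) \subset \largewedge^{(p,p)}W^*$.

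The main obstacle I anticipate is the bookkeeping in the middle step: one must be careful that the identification $\largewedge^p_\CC V^* \otimes \largewedge^p_\CC V^* \hookrightarrow \largewedge^{2p}_\CC W^*$ being used is the same one under which both the contraction map (defining $K^p(V)$) and the $(p,q)$-bidegree (defining $\largewedge^{(p,p)}W^*$) are natural, and that the $\GL(n,\CC)$-action is the diagonal one on $V \oplus V \cong V \otimes_\RR \CC$ rather than some twisted action. Once the conventions are pinned down, producing the explicit $(p,p)$-element and checking it lies in $K^p(V)$ is a finite, essentially combinatorial computation, and the irreducibility argument closes it immediately.
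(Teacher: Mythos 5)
Your proposal is correct and follows essentially the same strategy as the paper: use $\GL(n,\CC)$-invariance of $\largewedge^{(p,p)}W^*$, exhibit a nonzero element of $K^p(V)\cap\largewedge^{(p,p)}W^*$, and conclude by the irreducibility of $K^p(V)$. The only difference is your choice of witness — the power $\omega^p\propto\sum_{|I|=p}\alpha_I^{(1)}\wedge\alpha_I^{(2)}$ of the standard K\"ahler form, whereas the paper uses the single decomposable term $dz_1\wedge\cdots\wedge dz_p\wedge d\bar z_1\wedge\cdots\wedge d\bar z_p=c\,dx_1\wedge\cdots\wedge dx_p\wedge dy_1\wedge\cdots\wedge dy_p$ (i.e.\ $\alpha_{1\cdots p}\otimes\alpha_{1\cdots p}$), which is slightly simpler since its $(p,p)$-type and its membership in $K^p(V)$ are each visible from a single term.
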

\begin{proof}
	The decomposition \eqref{eq:pqforms} is compatible with the  natural action of $\GL(n,\CC)$, i.e., each summand $\largewedge^{(p,q)} W^*$ is an invariant subspace. 
	Observe that  $K^p(V)$ and $\largewedge^{(p,p)} W^*$ intersect non-trivially; indeed, if $(x_i,y_i)$ are coordinates on $W=V\oplus V$ and $dz_i=dx_i + \bi dy_i$, then
\begin{displaymath}
	dz_1\wedge \cdots\wedge dz_p \wedge d\b z_1\wedge \cdots\wedge d\b z_p=c \  dx_1\wedge \cdots dx_p\wedge  dy_1\wedge \cdots \wedge  dy_p,
\end{displaymath}
for some $c\neq 0$, lies in the intersection. Since $K^p(V)$ is irreducible, the claim follows.
\end{proof}

\subsection{Smooth valuations and $(p,p)$-forms}
Let us write $(x,u)$ for the standard coordinates on $\RR^n\times \RR^n$.
To each convex body $A\in \calK^\infty_+(\RR^n)$ we associate its support function
\begin{displaymath}
h_A(u)=\sup_{a\in A}\langle a,u\rangle,\quad u\in\RR^n,
\end{displaymath}
and the following smooth differential forms on $\RR^n\times (\RR^n\setminus\{0\})$:
\begin{align}\label{eq:omegaOmega}
	\begin{split}
	\alpha_A & = \sum_{i=1}^n  \pder[h_A]{u_i} dx_i,\\
	\omega_A & =  h_A \iota_{T} \vol, \\
	\Omega_A & = -d\alpha_A,
	\end{split}
\end{align}
where $T= \sum_{i=1}^n u_i \pder{x_i}$ denotes the Reeb vector field and $\vol$ is the volume form on the first factor.
By restriction we obtain differential forms on the sphere  bundle $S\RR^n=\RR^n\times S^{n-1}$ that we will not distinguish notationally.

\begin{lemma}\label{lemma:form_mixed_volume} Let $A\in \calK_+^\infty(\RR^n)$. We have
	\begin{displaymath}
	  n V(A,K[n-1])= \int_{\nc(K)}\omega_A, \quad K\in\calK(\RR^n),
	\end{displaymath}
	and 
$*_1 D\omega_A= \Omega_A$.
\end{lemma}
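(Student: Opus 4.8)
The two assertions are independent; I would prove them in turn.

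For the first identity the plan is to use that both sides are translation-invariant continuous valuations of $K$ of degree $n-1$ — the left-hand side by multilinearity and continuity of mixed volumes, the right-hand side because $\omega_A\in\Omega^{n-1,0}(S\RR^n)^{tr}$, cf.\ the discussion around \eqref{eq:omega}. Since $\calK_+^\infty(\RR^n)$ is dense in $\calK(\RR^n)$, it then suffices to check the equality for $K\in\calK_+^\infty(\RR^n)$. For such $K$ the normal cycle is the image of the reverse Gauss map $\Psi\colon S^{n-1}\to S\RR^n$, $\Psi(u)=(\nabla h_K(u),u)$, with its standard orientation, where $h_K$ is extended $1$-homogeneously to $\RR^n\setminus\{0\}$; thus $\int_{\nc(K)}\omega_A=\int_{S^{n-1}}\Psi^*\omega_A$. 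Writing $M=D^2 h_K$ for the Hessian, one has $\Psi^*dx_j=\sum_k M_{jk}\,du_k$, so expanding $\iota_T\vol=\sum_i(-1)^{i-1}u_i\,dx_1\wedge\cdots\wedge\widehat{dx_i}\wedge\cdots\wedge dx_n$ the pullback $\Psi^*(\iota_T\vol)$ becomes a combination of the cofactors of $M$. The point where $1$-homogeneity enters is that $Mu=0$, so — $M$ being symmetric of rank $n-1$ — $\operatorname{adj}(M)=c(u)\,uu^{\top}$ on $S^{n-1}$, with $c(u)=\operatorname{tr}\,\operatorname{adj}(M)$ the product of the principal radii of curvature of $\partial K$; collecting the cofactors and using $\operatorname{adj}(M)u=c(u)u$ then gives $\Psi^*(\iota_T\vol)=c(u)\,d\sigma$, i.e.\ $\Psi^*(\iota_T\vol)$ is the $(n-1)$-st surface area measure $S_{n-1}(K,\cdot)$. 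The identity now follows from the classical integral representation $nV(A,K[n-1])=\int_{S^{n-1}}h_A\,dS_{n-1}(K,\cdot)$; alternatively it may be quoted from the literature on valuations and normal cycles.

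For the second identity I would compute the Rumin differential of $\omega_A$ by hand. Introduce the auxiliary form $\widetilde\omega_A:=\iota_{\nabla h_A}\vol\in\Omega^{n-1,0}(S\RR^n)^{tr}$, where $\nabla h_A=\sum_i(\partial h_A/\partial u_i)\,\partial/\partial x_i$. Two elementary facts, both consequences of Euler's relation for the $1$-homogeneous function $h_A$ — namely $\sum_i u_i\,\partial h_A/\partial u_i=h_A$ and $\sum_i u_i\,\partial^2 h_A/\partial u_i\partial u_j=0$ on $S^{n-1}$ — drive the argument. First, the form $\eta:=\widetilde\omega_A-\omega_A=\iota_{\nabla h_A-h_A T}\vol$ is vertical: indeed $\alpha\wedge\iota_X\vol=(\iota_X\alpha)\,\vol$ for every vector field $X$ (since $\alpha\wedge\vol=0$), and $\iota_{\nabla h_A-h_A T}\alpha=h_A-h_A=0$ on $S^{n-1}$. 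Second, $d(\omega_A+\eta)=d\widetilde\omega_A=\mathcal{L}_{\nabla h_A}\vol$ is vertical, which is again a direct check using Euler's relation. By the uniqueness of the vertical correction in the definition of the Rumin operator, these two facts force $D\omega_A=d\widetilde\omega_A$. It remains to apply $*_1$: from $*(dx_1\wedge\cdots\wedge\widehat{dx_i}\wedge\cdots\wedge dx_n)=(-1)^{n-i}\,dx_i$ one gets $*_1\widetilde\omega_A=(-1)^{n-1}\alpha_A$, whence, using $d\,*_1=(-1)^n *_1\,d$,
\begin{displaymath}
  *_1 D\omega_A=*_1 d\widetilde\omega_A=(-1)^n d\bigl(*_1\widetilde\omega_A\bigr)=(-1)^n(-1)^{n-1}\,d\alpha_A=-d\alpha_A=\Omega_A.
\end{displaymath}

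The content of the lemma is thus computational rather than conceptual, and the one place that genuinely needs care is the bookkeeping of signs and orientations: in the first part, that $\Psi$ is orientation-preserving and that $\Psi^*(\iota_T\vol)$ is the surface area measure with the correct (positive) sign; in the second part, the interplay of the Hodge-star conventions, of the relation $d\,*_1=(-1)^n *_1\,d$, and of the vertical correction in the Rumin differential. The structural observation that keeps the second part short is that, after absorbing $h_A$ into a vertical term, the Rumin differential of $\omega_A$ is just the ordinary exterior differential of $\widetilde\omega_A=\iota_{\nabla h_A}\vol$.
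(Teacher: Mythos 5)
Your argument is correct. For the first identity you approximate by $\calK_+^\infty$ bodies and pull back $\omega_A$ along the reverse Gauss map $\Psi(u)=(\nabla h_K(u),u)$, reducing the claim to the classical formula $nV(A,K[n-1])=\int_{S^{n-1}}h_A\,dS_{n-1}(K,\cdot)$; the paper instead approximates by polytopes and identifies $\int_{\nc(P)}\omega_A$ directly with $\sum_F h_A(\nu_F)\vol_{n-1}(F)$. Both routes rest on the same density-and-continuity argument; the polytope route avoids your adjugate bookkeeping and the attendant orientation check, while yours avoids having to describe the normal cycle of a polytope. For the second identity your vertical correction $\eta=\widetilde\omega_A-\omega_A=\iota_{\nabla h_A-h_AT}\vol$ is in fact identical to the paper's $\alpha\wedge\xi$ with $\xi=-\sum_i(\partial h_A/\partial u_i)\,\iota_{\partial/\partial x_i}\iota_T\vol$ (one checks directly that $\alpha\wedge\xi=\iota_{\nabla h_A}\vol-h_A\iota_T\vol$), so the two proofs coincide in substance; your repackaging via $\widetilde\omega_A=\iota_{\nabla h_A}\vol$ is the cleaner presentation, since it exhibits $D\omega_A=\mathcal L_{\nabla h_A}\vol$ immediately and collapses the final computation to $*_1\widetilde\omega_A=(-1)^{n-1}\alpha_A$ combined with $d*_1=(-1)^n*_1 d$, whereas the paper expands $*_1 D\omega_A$ term by term.
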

\begin{proof}
First, if $P\in\calK(\RR^n)$ is a polytope, then
\begin{displaymath}
  nV(P[n-1],A)= \sum_{F} h_A(\nu_F)\vol_{n-1}(F) =  \int_{\nc(P)}\omega_A,
\end{displaymath}
where the sum is over the facets of $P$ and $\nu_F$ denotes the unit facet normal. The general case of the first statement then follows by continuity.

To prove the second identity, we first show that  $ d(\omega_A+\alpha \wedge \xi)$ with
\begin{displaymath}
	\xi= -\sum_{i=1}^n \pder[h_A]{u_i}\iota_{\pder{x_i}} \iota_T \vol
\end{displaymath}
is vertical. We have
\begin{displaymath}
	d\omega_A=dh_A \wedge \iota_T \vol+h_A d \iota_T \vol,
\end{displaymath}
and the second term is vertical. Since
\begin{displaymath}
	d\alpha \wedge \xi = \sum_{k=1}^n du_k\wedge dx_k \wedge \xi =  - dh_A \wedge i_T\vol
\end{displaymath}
we obtain
\begin{displaymath}
	D\omega_A= d(\omega_A+\alpha \wedge \xi) = d\omega_A+d\alpha \wedge \xi-\alpha \wedge d\xi = h_A d \iota_T \vol-\alpha \wedge d\xi.
\end{displaymath}

Next, we get
\begin{align*}
	\alpha \wedge d\xi & = -\alpha \wedge \sum_{i,k=1}^n \frac{\partial^2 h_A}{\partial u_i \partial u_k} du_k \wedge \iota_{\frac{\partial}{\partial x_i}} \iota_T \vol+\alpha \wedge \sum_{i=1}^n \frac{\partial h_A}{\partial u_i} \iota_{\frac{\partial}{\partial x_i}} d\iota_T \vol.
\end{align*}
Since $\sum_{i=1}^n \pder[h_A]{u_i} u_i = h_A$ by homogeneity of $h_A$, the second term is equal to $h_A d\iota_T \vol$.

Using $\sum_{i=1}^n \frac{\partial^2 h_A}{\partial u_i \partial u_k}u_i=0$ in the third equality, the claimed identity now follows  from
\begin{align*}
	*_1D\omega_A & = *_1\left(\alpha \wedge \sum_{i,k=1}^n \frac{\partial^2 h_A}{\partial u_i \partial u_k} du_k \wedge \iota_{\frac{\partial}{\partial x_i}} \iota_T \vol\right) \\
	& = (-1)^n \sum_{i,k=1}^n \frac{\partial^2 h_A}{\partial u_i \partial u_k} *_1\left(\alpha \wedge \iota_{\frac{\partial}{\partial x_i}} \iota_T \vol \wedge  du_k\right)\\
	& = \sum_{i,k=1}^n \frac{\partial^2 h_A}{\partial u_i \partial u_k} dx_i \wedge du_k\\
	& = -d\alpha_A\\
	& =\Omega_A.
\end{align*}
\end{proof}

\begin{corollary}\label{cor:evaluation}
Let $A\in \calK_+^\infty(\RR^n)$. If $\phi= \lcur0,\omega\rcur$ for $\omega\in \Omega^{k,n-k-1}(S\RR^n)^{tr}$, then 
\begin{displaymath}
  \phi(A) = \frac{(-1)^{\binom{n}{2}}}{k!} \int_{S^{n-1}} \ast_1 \omega \wedge \Omega_A^k.
\end{displaymath}
\end{corollary}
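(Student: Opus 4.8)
The plan is to express both sides of the identity through the convolution algebra and the Poincar\'e pairing, and then to match them using the relation $*_1D\omega_A=\Omega_A$ from Lemma~\ref{lemma:form_mixed_volume} together with an elementary Hodge-star identity on the sphere bundle.

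First I would rewrite $\phi(A)$ as a Poincar\'e pairing. Since $\phi=\lcur 0,\omega\rcur$ is smooth, \eqref{eq:evaluation} gives $\phi(A)=\langle\phi,\vol(\Cdot+A)\rangle$. Expanding $\vol(K+A)=\sum_{j=0}^{n}\binom nj V(A[n-j],K[j])$ and using that $\langle\cdot,\cdot\rangle$ vanishes on $\Val_k^\infty\times\Val_j^\infty$ for $j\neq n-k$ (for $k+j<n$ the convolution product lies in the zero space $\Val_{k+j-n}^\infty$, while for $k+j>n$ it lies in a space of valuations vanishing at $\{0\}$), only the $(n-k)$-homogeneous term contributes, so $\phi(A)=\binom nk\langle\phi,V(A[k],\Cdot[n-k])\rangle$. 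By Lemma~\ref{lemma:form_mixed_volume}, $\lcur 0,\omega_A\rcur=nV(A,\Cdot[n-1])$, and an induction from the normalization \eqref{eq_convolution_mixed_volumes} yields $\lcur 0,\omega_A\rcur^{*k}=\tfrac{n!}{(n-k)!}V(A[k],\Cdot[n-k])$; hence $\binom nk V(A[k],\Cdot[n-k])=\tfrac1{k!}\lcur 0,\omega_A\rcur^{*k}$ and
\begin{displaymath}
\phi(A)=\frac1{k!}\,\bigl\langle\phi,\ \lcur 0,\omega_A\rcur^{*k}\bigr\rangle .
\end{displaymath}

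Next I would pass to differential forms. Iterating Theorem~\ref{thm:BFconvolution} and using $*_1D\omega_A=\Omega_A$ shows that $\lcur 0,\omega_A\rcur^{*k}=\lcur 0,\nu\rcur$ for some $\nu\in\Omega^{n-k,k-1}(S\RR^n)^{tr}$ with $*_1D\nu=\Omega_A^k$. Plugging $\omega_1=\omega$ and $\omega_2=\nu$ into the Poincar\'e pairing formula \eqref{eq_wannerer} gives $\phi(A)=\tfrac{(-1)^k}{k!}\int_{S^{n-1}}\omega\wedge D\nu$. It then remains to replace $\omega\wedge D\nu$ by $*_1\omega\wedge *_1D\nu=*_1\omega\wedge\Omega_A^k$, which is the one technical point: for $\xi\in\Omega^{k,n-k-1}(S\RR^n)^{tr}$ and $\zeta\in\Omega^{n-k,k}(S\RR^n)^{tr}$ one has, as forms in $\Omega^{n,n-1}(S\RR^n)^{tr}$,
\begin{displaymath}
*_1\xi\wedge *_1\zeta=(-1)^{k+\binom n2}\,\xi\wedge\zeta ,
\end{displaymath}
which follows from the elementary relation $\alpha\wedge\gamma=*\alpha\wedge*\gamma$ for $\alpha\in\largewedge^k(\RR^n)^*$, $\gamma\in\largewedge^{n-k}(\RR^n)^*$, together with the Koszul signs coming from the definition of $*_1$ and from commuting the $\largewedge(\RR^n)^*$- and $\Omega(S^{n-1})$-factors. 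Substituting and integrating over $S^{n-1}$ gives $\phi(A)=\tfrac{(-1)^k}{k!}(-1)^{k+\binom n2}\int_{S^{n-1}}*_1\omega\wedge\Omega_A^k=\tfrac{(-1)^{\binom n2}}{k!}\int_{S^{n-1}}*_1\omega\wedge\Omega_A^k$, as claimed. (For $k=0$ the identity is immediate, so one may assume $1\le k\le n-1$.)

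The only genuine difficulty is the sign and constant bookkeeping --- the form identity $*_1\xi\wedge *_1\zeta=\pm\,\xi\wedge\zeta$ and the normalization of $\lcur 0,\omega_A\rcur^{*k}$. A self-contained alternative is to compute $\int_{\nc(A)}\omega$ directly using the reverse Gauss parametrization $u\mapsto(\nabla h_A(u),u)$ of $\nc(A)$: the integrand then involves the $k\times k$ minors of the Hessian of $h_A$, which reassemble $\Omega_A^k$, and only the orientation of that parametrization needs to be tracked. In either case the computation is finite and routine, and a useful consistency check is that all $k$-dependence in the exponents cancels by virtue of $\binom n2=\binom{n-k}2+\binom k2+k(n-k)$.
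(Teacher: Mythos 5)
Your argument is correct and essentially coincides with the paper's proof, which reduces $\phi(A)$ to the pairing $\tfrac{1}{k!}\langle\phi,\psi_A^{*k}\rangle$ with $\psi_A=nV(A,\Cdot[n-1])$ and then applies Theorem~\ref{thm:BFconvolution} together with Lemma~\ref{lemma:form_mixed_volume}; the only superficial difference is that the paper obtains this first step directly from $(L_A^k\phi)(\{0\})=k!\phi(A)$, immediate from $k$-homogeneity, rather than by expanding $\vol(\Cdot+A)$ via \eqref{eq:evaluation}. The sign bookkeeping for $*_1\xi\wedge*_1\zeta$ that you carry out is left implicit in the paper, but it is exactly the computation that justifies the final equality there.
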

\begin{proof}
Denote  $\psi_A=nV(A,\Cdot[n-1])$. By Lemma~\ref{lemma:form_mixed_volume} and Theorem~\ref{thm:BFconvolution},  we have
\begin{align*}
k!\phi(A)=(L_A^k\phi)(\{0\})=\langle\phi,\psi_A^k\rangle=(-1)^{\binom{n}{2}}\int_{S^{n-1}} \ast_1 \omega \wedge \Omega_A^k.
\end{align*}
\end{proof}

The  sphere bundle of any riemannian manifold has a natural contact structure given by a distribution of contact planes  $H_{x,u}\subset T_{x,u}SM$. In the special case $M=\RR^n$, each contact plane $H_{x,u}=  u^\perp \oplus u^\perp$ is a complex vector space  of dimension $n-1$ with the complex structure $J(X_1,X_2)= (-X_2,X_1)$.

We say  that a differential form $\omega\in \Omega^{p+q}(S\RR^n)$ is a $(p,q)$-form if each 
$ \omega|_{x,u}$ belongs to $\largewedge^{p+q} H^*_{x,u}$  and is a $(p,q)$-form in the sense of Subsection~\ref{subsec:Timorin}. Note that the first condition is equivalent to $i_T\omega=0$. A $(1,1)$-form $\omega\in \Omega^2(S\RR^n)$ is called positive if each $\omega|_{x,u}\in  \largewedge^{1,1} H^*_{x,u}$ is positive.

\begin{proposition} \label{prop_positivity_star1_domega}
If $\omega\in \Omega^{n-k,k-1}(S\RR^n)^{tr}$, then $*_1 D\omega$ is a $(k,k)$-form. Moreover, the $(1,1)$-form $*_1 D\omega_A$ is positive for each $A\in\calK_+^\infty(\RR^n)$.
\end{proposition}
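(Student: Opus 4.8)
The plan is to reduce the claim to two ingredients: first, the computation of $\ast_1 D\omega_A = \Omega_A$ from Lemma \ref{lemma:form_mixed_volume}, which already handles the generators and their positivity; and second, the $\GL(n,\C)$-irreducibility of $K^k(V)$ together with Proposition \ref{prop:ppforms}, which upgrades ``being in the kernel of the contraction map'' to ``being of type $(k,k)$ on the contact plane''. The starting point is the observation, contained in Lemma \ref{lem:closed_vertical}, that $\tau = D\omega$ for $\omega\in\Omega^{n-k,k-1}(S\R^n)^{tr}$ is characterized (up to the multiple of $dx_1\wedge\cdots\wedge dx_n$, which does not affect $\ast_1$ applied to the remaining part, since $\ast_1$ acts only on the $x$-factor) by being closed and vertical, $\alpha\wedge\tau = 0$. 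So I would first translate the two stated assertions into pointwise statements about $\ast_1\tau|_{x,u}$ on the contact plane $H_{x,u}$.

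First I would verify the pointwise algebraic claim: if $\tau\in\largewedge^{n-k}(\R^n)^*\otimes\Omega^k(S^{n-1})$ satisfies $\iota_T\tau = 0$ and $\alpha\wedge\tau = 0$, then $\ast_1\tau|_{x,u}$, viewed via the splitting $H_{x,u} = u^\perp\oplus u^\perp$ and the identification $\Omega^{i,j}(S\R^n)^{tr} = \largewedge^i(\R^n)^*\otimes\Omega^j(S^{n-1})$, lies in $K^k(u^\perp)\subset\largewedge^{(k,k)}H_{x,u}^*$. The verticality condition $\alpha\wedge\tau = 0$ says $\sum_i u_i\,dx_i\wedge\tau = 0$; after applying $\ast_1$ (which replaces a wedge with $dx_i$ on the first factor by an interior product $\iota_{e_i}$, up to sign) this becomes exactly the defining equation $\sum_i \iota_{e_i}\beta\otimes\alpha_i\wedge\gamma = 0$ of $K^k$, once one restricts all $x$-indices to $u^\perp$ — which is legitimate because $\iota_T\tau=0$ forces the $u$-component of $\tau$ to be horizontal, and the condition $\iota_T(\ast_1 D\omega)=0$ needs a separate short check (this is really the statement that $D\omega$ contains no $\alpha$-factor once the vertical correction has been made, together with the Leibniz rule for $\iota_T$ and $\ast_1$). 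By Proposition \ref{prop:ppforms}, membership in $K^k(u^\perp)$ gives that $\ast_1 D\omega|_{x,u}$ is a $(k,k)$-form in the sense of Subsection \ref{subsec:Timorin}, which together with $\iota_T(\ast_1 D\omega)=0$ is precisely the definition of a $(k,k)$-form on $S\R^n$.

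For the positivity of $\ast_1 D\omega_A$ when $k=1$: by Lemma \ref{lemma:form_mixed_volume} we have $\ast_1 D\omega_A = \Omega_A = -d\alpha_A = \sum_{i,j} \frac{\partial^2 h_A}{\partial u_i\partial u_j}\,dx_i\wedge du_j$, so the claim becomes a statement about the Hessian of the support function. Fix $(x,u)\in S\R^n$; on the contact plane $H_{x,u} = u^\perp\oplus u^\perp$ with $J(X_1,X_2)=(-X_2,X_1)$, the form $\Omega_A$ evaluated on $(X_1,X_2)$ and its $J$-image $(-X_2,X_1)$ produces $\mathrm{Hess}\,h_A(u)[X_1,X_1] + \mathrm{Hess}\,h_A(u)[X_2,X_2]$ restricted to $u^\perp$. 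Since $A\in\calK_+^\infty(\R^n)$, the restriction of $\mathrm{Hess}\,h_A(u)$ to $u^\perp$ is positive definite — this is the standard fact that the eigenvalues of the Hessian of the support function on $u^\perp$ are the principal radii of curvature of $\partial A$ at the point with outer normal $u$, which are strictly positive exactly under the $\calK_+^\infty$ hypothesis. Hence $\Omega_A(w,Jw)>0$ for all nonzero $w\in H_{x,u}$, and $\Omega_A$ is real since $h_A$ is real, so it is a positive $(1,1)$-form.

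The main obstacle I anticipate is the bookkeeping in the first step: carefully tracking how $\ast_1$, the identification $\Omega^{i,j}(S\R^n)^{tr}=\largewedge^i(\R^n)^*\otimes\Omega^j(S^{n-1})$, and the restriction to the contact plane interact, so that the verticality condition $\alpha\wedge D\omega=0$ transforms into the contraction identity defining $K^k(u^\perp)$ \emph{with all indices living in $u^\perp$}. In particular one must be careful that $\Omega^k(S^{n-1})$ at the point $u$ is $\largewedge^k u^\perp{}^*$, not $\largewedge^k(\R^n)^*$, so the tensor factors of $K^k$ are indeed both built from the $(n-1)$-dimensional space $u^\perp$; this is where the hypothesis $\iota_T\tau = 0$ (equivalently, that the $S^{n-1}$-component is genuinely a form on the sphere) is used, and it is what makes the irreducibility lemma for $K^k(u^\perp)$, rather than some $n$-dimensional analogue, the relevant input.
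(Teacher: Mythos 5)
There is a genuine gap in the first half of your argument: you have misidentified which differential condition produces membership in $K^k(u^\perp)$, and in doing so dropped the role of closedness of $D\omega$ entirely.

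Concretely, you claim that after applying $\ast_1$, the verticality condition $\alpha\wedge\tau = \sum_i u_i\,dx_i\wedge\tau = 0$ turns into the defining equation $\sum_i\iota_{e_i}\beta\otimes\alpha_i\wedge\gamma = 0$ of $K^k$. This is not what happens. Since $\ast_1$ converts $dx_i\wedge(\Cdot)$ on the first tensor factor into $\pm\,\iota_{e_i}$, applying $\ast_1$ to $\sum_i u_i\,dx_i\wedge\tau=0$ yields $\sum_i u_i\,\iota_{e_i}(\ast_1\tau)=\iota_T(\ast_1\tau)=0$ -- contraction of $\ast_1\tau$ with the single Reeb vector $T$. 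There is no $\alpha_i\wedge$ acting on the second factor, and no sum over a dual basis; the $u_i$ weight is precisely what turns the sum into a contraction with $u$. So verticality gives only that $\ast_1 D\omega$ restricts to a form on the contact plane $H_{x,u}$, not that it lies in $K^k(u^\perp)$.

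The condition that does produce $K^k(u^\perp)$ is $d\alpha\wedge D\omega=0$, which your plan never invokes. Since $d\alpha=\sum_i du_i\wedge dx_i$, applying $\ast_1$ to $d\alpha\wedge D\omega=0$ gives $\sum_i (du_i)|_{u^\perp}\wedge\iota_{e_i}(\ast_1 D\omega)=0$, which -- once $\iota_T(\ast_1 D\omega)=0$ is known, so that the $\iota_u$ term drops and one may pass to a basis of $u^\perp$ -- is exactly the defining equation of $K^k(u^\perp)$. And $d\alpha\wedge D\omega=0$ is where closedness enters: from $d(\alpha\wedge D\omega)=d\alpha\wedge D\omega-\alpha\wedge d(D\omega)$ together with $\alpha\wedge D\omega=0$ and $d(D\omega)=0$. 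You mention ``closed and vertical'' in passing, but your pointwise lemma is stated only with $\iota_T\tau=0$ and $\alpha\wedge\tau=0$ as hypotheses, so closedness is never used. Those two hypotheses are in fact incompatible for a nonzero $\tau\in\largewedge^{n-k}(\RR^n)^*\otimes\Omega^k(S^{n-1})$: $\iota_T\tau=0$ forces the $\RR^n$-factor into $\largewedge^{n-k}(u^\perp)^*$, while $\alpha\wedge\tau=0$ forces it to be divisible by $\langle u,\cdot\rangle$, and these two subspaces intersect trivially. The second part of your argument, the positivity of $\Omega_A$ via the Hessian of the support function on $u^\perp$, is correct and matches the paper.
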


\begin{proof}
First, the identities $\alpha \wedge D\omega=0$ and $d\alpha \wedge D\omega=0$ imply $i_T (*_1D\omega)=0$. Hence, by Proposition~\ref{prop:ppforms}, $*_1D\omega \in K^k(u^\perp) \subset \largewedge^{(k,k)}H_{x,u}^*$. Second, since $A$ has a strictly positively curved boundary,
\begin{displaymath}
  \Omega_A(X,JX)= \Hess h_A(u)(X_1,X_1) + \Hess h_A(u)(X_2,X_2)>0
\end{displaymath}
for every non-zero $X=(X_1,X_2)\in H_{x,u}$. 
\end{proof}

\subsection{Injectivity in the hard Lefschetz theorem}

We are now in a position to deduce  injectivity in the hard Lefschetz  theorem for smooth valuations from the linear Hodge--Riemann relations.

\begin{proposition} \label{prop_injectivity}
	For each tuple $\bC=(C_1,\ldots, C_{n-2k})$ of convex bodies from $\calK_+^\infty(\RR^n)$, the map $L_\bC\colon \Val^\infty_{n-k}\to \Val^\infty_{k}$ is injective.
\end{proposition}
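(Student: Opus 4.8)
The plan is to reduce injectivity of $L_\bC\colon \Val^\infty_{n-k}\to\Val^\infty_k$ to the pointwise hard Lefschetz statement in complex linear algebra (Theorem~\ref{thm_timorin}(a)) applied fiberwise on the contact distribution $H_{x,u}$. Suppose $\phi=\lcur 0,\omega\rcur\in\Val^\infty_{n-k}$ with $L_\bC\phi=0$; I want to conclude $\phi=0$, equivalently $D\omega=0$ by the kernel theorem (note $\phi(\{0\})=0$ automatically since $\phi$ is homogeneous of positive degree). By \eqref{eq_convolution_mixed_volumes} the valuation $L_\bC\phi$ agrees up to a positive constant with the iterated convolution $V(C_1,\Cdot)*\cdots*V(C_{n-2k},\Cdot)*\phi$, so by Theorem~\ref{thm:BFconvolution} and Lemma~\ref{lemma:form_mixed_volume} its defining form $\tau$ satisfies
\begin{displaymath}
 *_1 D\tau = *_1 D\omega \wedge \Omega_{C_1}\wedge\cdots\wedge\Omega_{C_{n-2k}},
\end{displaymath}
up to a positive factor, where $\Omega_{C_j}=*_1 D\omega_{C_j}$. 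Hence $L_\bC\phi=0$ forces
\begin{displaymath}
 *_1 D\omega \wedge \Omega_{C_1}\wedge\cdots\wedge\Omega_{C_{n-2k}}=0
\end{displaymath}
as a form on $S\RR^n$ (here I should also invoke Lemma~\ref{lem:closed_vertical} / the kernel theorem to pass from the vanishing of the valuation to the vanishing of this top-degree vertical form, checking the integral normalization in degree $1$ if $k=n-1$ needs care, i.e.\ when the target is $\Val^\infty_1$).

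Now fix a point $(x,u)\in S\RR^n$ and work in the complex vector space $W=H_{x,u}=u^\perp\oplus u^\perp$ of real dimension $2(n-1)$ with $J(X_1,X_2)=(-X_2,X_1)$. By Proposition~\ref{prop_positivity_star1_domega}, each $\eta:=*_1 D\omega|_{x,u}$ lies in $\largewedge^{(k,k)}W^*$, and each $\omega_{C_j}:=\Omega_{C_j}|_{x,u}$ is a positive $(1,1)$-form on $W$. Setting $p=q=k$ and $n'=n-1$, the number of positive $(1,1)$-forms is $n-2k = n'-p-q+1$; so I am one form short of the top wedge. To fix this I multiply by one more positive $(1,1)$-form, say $\omega_{C_0}$ for an auxiliary body $C_0\in\calK^\infty_+$ (or simply the standard K\"ahler form of $W$), obtaining $\eta\wedge\omega_{C_0}\wedge\Omega_{C_1}\wedge\cdots\wedge\Omega_{C_{n-2k}}=0$ in $\largewedge^{(n',n')}W^*\cong\CC$. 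But this is exactly the statement that $\eta$ is a primitive $(k,k)$-form for the operator $\Omega:=\omega_{C_1}\wedge\cdots\wedge\omega_{C_{n-2k}}$ (in Timorin's notation with $p=q=k$), so by the Hodge--Riemann positivity of Theorem~\ref{thm_timorin}(b) applied to $\alpha=\beta=\eta$ we get $q(\eta,\eta)=0$, hence $\eta=0$. Alternatively, and more directly, the hard Lefschetz isomorphism $\Omega\colon \largewedge^{(k,k)}W^*\to\largewedge^{(n'-k,n'-k)}W^*=\largewedge^{(n-2k-1+1,\dots)}\dots$—here I must be slightly careful with the index bookkeeping since $\Omega$ has degree $2(n-2k)$ and maps into $\largewedge^{(n'-q,n'-p)}W^*$—shows that wedging with $\Omega$ is injective on $(k,k)$-forms once we have the extra form to land in the right total degree; either route gives $\eta=0$ pointwise.

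Since $(x,u)$ was arbitrary, $*_1 D\omega=0$ on all of $S\RR^n$, hence $D\omega=0$, hence $\phi=0$ by the kernel theorem. This proves injectivity. The main obstacle I anticipate is the index/degree bookkeeping in the last step: the product $\Omega_{C_1}\wedge\cdots\wedge\Omega_{C_{n-2k}}$ has $n-2k$ factors, which is exactly one fewer than needed for Timorin's hard Lefschetz isomorphism to have matching degrees with a $(k,k)$-form on an $(n-1)$-complex-dimensional space, so one must either absorb an auxiliary positive $(1,1)$-form (equivalently phrase it as primitivity plus Hodge--Riemann positivity, which is cleanest) or track carefully why the conclusion still follows; the positivity of all the $\Omega_{C_j}$ from Proposition~\ref{prop_positivity_star1_domega} is what makes any of these variants legitimate. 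A secondary technical point is justifying the passage from ``the valuation $L_\bC\phi$ vanishes'' to ``the form $*_1D\omega\wedge\Omega_{C_1}\wedge\cdots\wedge\Omega_{C_{n-2k}}$ vanishes identically on $S\RR^n$'': this uses the kernel theorem together with Lemma~\ref{lem:closed_vertical} and, in the degenerate degree $k=n-1$, the vanishing of the relevant integral over $S^{n-1}$, which itself follows because that integral computes $(L_\bC\phi)$ evaluated on a ball via Corollary~\ref{cor:evaluation} and hence is zero.
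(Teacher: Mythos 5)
Your reduction to the fiberwise statement is correct up to the identity $*_1D\omega\wedge\Omega_{C_1}\wedge\cdots\wedge\Omega_{C_{n-2k}}=0$, and your observation that each $*_1D\omega|_{x,u}$ is a $(k,k)$-form and each $\Omega_{C_j}|_{x,u}$ a positive $(1,1)$-form on $H_{x,u}$ is exactly what the paper does, citing the same lemmas. But your final, pointwise step contains a genuine gap: the condition you derived at each $(x,u)$ is precisely Timorin's primitivity condition for $\eta=*_1D\omega|_{x,u}$ (take $\omega_0=\Omega_{C_{n-2k}}$ and $\Omega=\Omega_{C_1}\wedge\cdots\wedge\Omega_{C_{n-2k-1}}$, which has the correct number $n'-p-q=(n-1)-2k$ of factors). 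Primitivity is a \emph{non-trivial} linear condition; a primitive $(k,k)$-form does not have to vanish, so you cannot conclude $\eta=0$ pointwise. Your degree bookkeeping compounds the problem: you already have the right count of $(1,1)$-forms, so multiplying by an auxiliary $\omega_{C_0}$ produces a form of degree $2n-2k+2$, which exceeds $\dim_\RR H_{x,u}=2n-2$ and vanishes for trivial reasons whenever $k\le 1$; similarly, wedging $\eta$ with the full product $\Omega_{C_1}\wedge\cdots\wedge\Omega_{C_{n-2k}}$ sends $\largewedge^{(k,k)}$ into $\largewedge^{(n-k,n-k)}$, which is already the zero space since $n-k>n-1$, so no injectivity statement can be read off there.

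What is missing is precisely the global Stokes argument that the paper uses. The correct use of Theorem~\ref{thm_timorin}(b) yields only a pointwise \emph{inequality}: with $\tau=*_1D\omega$, one has
\[
(-1)^k\,\tau\wedge\b\tau\wedge\Omega_{C_1}\wedge\cdots\wedge\Omega_{C_{n-2k-1}}\ge 0\quad\text{at }(x,u),
\]
with equality iff $\tau|_{x,u}=0$. Multiplying by $\alpha_{C_{n-2k}}$ (positive along the Reeb direction when each $C_i$ contains the origin) produces a non-negative top-degree density on $S^{n-1}$. Since $\tau$ is closed and $H^k(S^{n-1})=0$, write $\tau=d\kappa$; then Stokes' theorem together with the primitivity relation $\tau\wedge\Omega_{C_1}\wedge\cdots\wedge\Omega_{C_{n-2k}}=0$ shows the total integral of this density is zero, and only then does the pointwise equality case force $\tau\equiv0$. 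Your proposal omits this integration step entirely, so it proves only that $\tau$ is everywhere primitive, not that it vanishes.
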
	

\proof

For $k=0$, this follows trivially from \eqref{eq_convolution_mixed_volumes} and $V(C_1,\ldots,C_n) \neq 0$.  Let us assume $k>0$ from now on. {By translation invariance, we may further assume} that each $C_i$ contains the origin in its interior.

Let $\phi=\lcur0,\omega\rcur$ be in the kernel of $L_\bC$. Put $\tau=*_1 D\omega\in \Omega^{k,k}(S\RR^n)$. By \eqref{eq_convolution_mixed_volumes}, Theorem \ref{thm:BFconvolution}, Lemma~\ref{lemma:form_mixed_volume}, and Proposition~\ref{prop_positivity_star1_domega},
\begin{equation}\label{eq:HLcondition}
	\Omega_{C_1} \wedge \cdots \wedge \Omega_{C_{n-2k}} \wedge \tau =0.
\end{equation}
Thus each $\tau|_{x,u}$ is a $(k,k)$-form that is  primitive with respect to the positive $(1,1)$-forms $\Omega_{C_1}|_{x,u},\ldots,\Omega_{C_{n-2k}}|_{x,u}$.

The linear Hodge--Riemann relations (Theorem \ref{thm_timorin}), for the complex vector space  $(H_{x,u}, J)$ of complex dimension $n-1$ and $p=q=k$ yield
\begin{displaymath}
 (-1)^k \tau\wedge \b \tau \wedge \Omega_{C_1}\wedge \cdots \wedge \Omega_{C_{n-2k-1}} \geq 0 \quad \text{at }(x,u)
\end{displaymath}
with equality if and only if  $\tau|_{x,u}=0$.  Each convex body $A\in\calK_+^\infty(\RR^n)$ with the origin in the interior satisfies
$\alpha_{A}|_{x,u}(u,0)= h_A(u)>0$. We thus have
\begin{equation} \label{eq:integrand}	
 	(-1)^{k}  \alpha_{C_{n-2k}} \wedge \tau\wedge \b \tau \wedge \Omega_{C_1}\wedge \cdots \wedge \Omega_{C_{n-2k-1}} \geq 0 \quad \text{at }(x,u)
\end{equation}
with equality if and only if  $\tau|_{x,u}=0$.

Since $\tau$ is closed and $H^k(S^{n-1})=0$, we may write $ \tau =d\kappa.$ Using Stokes' theorem and \eqref{eq:HLcondition},
\begin{align*} & \int_{S^{n-1}}  \alpha_{C_{2k-n}} \wedge \tau \wedge \b \tau \wedge \Omega_{C_1}\wedge \cdots \wedge \Omega_{C_{2k-n-1}}\\
	&\quad = -\int_{S^{n-1}}  \Omega_{C_{2k-n}} \wedge \tau \wedge \b \kappa \wedge \Omega_{C_1}\wedge \cdots \wedge \Omega_{C_{2k-n-1}}=0. 
\end{align*} 
This implies that equality holds in \eqref{eq:integrand} for every $(x,u)$  and thus $\tau=0$, which implies that $\phi=0$.
\endproof

\section{Surjectivity in the hard Lefschetz theorem}

\label{s:Surjectivity}

Let $\bC=(C_1,\ldots,C_{n-2k})$ be a tuple of convex bodies from $\calK_+^\infty(\RR^n)$. The goal of this section is to establish the surjectivity of $L_\bC\colon \Val^\infty_{n-k}\to \Val^\infty_k$, thus proving the hard Lefschetz theorem. As we have seen in the previous section, after a careful description of the Lefschetz map in terms of differential forms on the sphere bundle, the injectivity is an immediate consequence of Timorin's linear Hodge--Riemann relations and Stokes' theorem. Proving surjectivity however will require tools from PDE theory, in particular the  machinery of Sobolev spaces, weak solutions,  and elliptic regularity.

We will introduce in this section two natural Hilbert space completions, denoted $\VE_k$ and $\VF_k$, of the space of smooth valuations $\Val_{k}^\infty$. The existence of these two Hilbert space completions is a result of the two  descriptions of smooth  valuations via  smooth differential forms: forms of degree $n-1$ or closed vertical $n$-forms. In the present section, the  Hilbert space $\VE_k$ will play a  lesser role, but it will be of central importance for our proof of the Hodge--Riemann relations.

\subsection{Facts from elliptic operator theory}

Let us recall from \cite{LawsonMichelson:Spin} several fundamental results about elliptic operators. 

The Sobolev $s$ norm of a Schwartz function $u\colon \RR^n\to \CC^p$ is defined by 
\begin{equation}\label{eq:sobolev} 
	\|u\|_s = \int_{\RR^n}  (1+|\xi|)^{2s}   |\hat u(\xi)|^2 d\xi,
\end{equation}
where $\hat u$ denotes the Fourier transform of $u$. Note that $\|\Cdot\|_0$ is the $L^2$ norm. 

Let $E\to X$ be a complex vector bundle over a compact manifold $X$. Let $U_i$, $i=1,\ldots, N$, be an open cover of $X$ by coordinate neighbourhoods such that $E$ is locally trivial over each $U_i$. Fix local coordinates, local trivializations, and a partition of unity $(\rho_i)_{i=1}^N$  subordinate to $(U_i)_{i=1}^N$. Let $\Gamma(E)$ denote the space of smooth sections of $E$. Any $u\in \Gamma(E)$ can be written as $u=\sum_{i=1}^N u_i$ with $u_i=\rho_i u$. For $s\in \RR$, the Sobolev $s$ norm on $\Gamma(E)$ is defined by 
\begin{displaymath}
   \|u\|_s= \sum_{i=1}^N \| u_i\|_{s},
\end{displaymath}
where, using $E|_{U_i} \cong U_i\times\CC^p$, the norms on the right-hand side are given by \eqref{eq:sobolev}. Different choices of the coordinate system or the local trivializations give equivalent Sobolev norms. The completion of $\Gamma(E)$ with respect to this norm is denoted $H^s(E)$. It is clear from \eqref{eq:sobolev} that $H^s(E)$ is  a Hilbert space.

Let $E$ and $F$ be complex vector bundles of ranks $p$ and $q$  over $X$. A linear map  $P\colon \Gamma(E)\to \Gamma(F)$ is called a  differential operator of order $m$ if each point of $X$ possesses an open neighbourhood $U$ with coordinates $x_1,\ldots,x_n$ and a local trivialization $E|_U\cong U\times \CC^p$ such that $P$ can be written as 
\begin{displaymath}
P = \sum_{|\alpha|\leq m}  A^\alpha \frac{\partial^{|\alpha|}}{\partial x^\alpha},
\end{displaymath}
where $A^\alpha$ are $q\times p$ matrices of complex-valued functions on $U$ and  $A^\alpha \neq 0$ for some $\alpha$ with $|\alpha|=m$. 

\begin{lemma} \label{lemma:sobolevExtension}
	Let $P\colon \Gamma(E)\to \Gamma(F)$ be a differential operator of order $m$ over a compact manifold $X$.  For any $s\in \RR$, the differential operator $P$ extends to a bounded operator $P\colon H^s(E)\to H^{s-m}(F)$.
\end{lemma}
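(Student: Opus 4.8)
The plan is to reduce the statement to the standard Sobolev boundedness of differential operators on $\R^n$ (i.e.\ to the content of \eqref{eq:sobolev}) by a partition-of-unity argument, and the only real work is bookkeeping that the various choices entering the definition of $\|\cdot\|_s$ are harmless. First I would fix, once and for all, the data used to define the Sobolev norms on $\Gamma(E)$ and $\Gamma(F)$: an open cover $U_1,\dots,U_N$ of $X$ by coordinate charts over which both $E$ and $F$ are trivial, local coordinates on each $U_i$, local trivializations $E|_{U_i}\cong U_i\times\C^p$ and $F|_{U_i}\cong U_i\times\C^q$, and a subordinate partition of unity $(\rho_i)$. Shrinking the cover if necessary, we may also assume that over each $U_i$ the operator $P$ has the stated local form $P=\sum_{|\alpha|\le m}A^\alpha\partial^\alpha$ with the $A^\alpha$ smooth $q\times p$ matrix-valued functions on $U_i$.

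The core estimate is the following local claim: if $v\in\Gamma(E)$ is supported in a compact subset of $U_i$, then $\|Pv\|_{s-m}\le C_i\|v\|_s$, with $C_i$ depending only on the chosen data. Indeed, in the trivialization over $U_i$ the section $v$ is a compactly supported smooth $\C^p$-valued function, and $Pv=\sum_{|\alpha|\le m}A^\alpha\partial^\alpha v$. The basic inequality $\|\partial^\alpha u\|_{t}\le\|u\|_{t+|\alpha|}$ for Schwartz (hence compactly supported smooth) functions, immediate from \eqref{eq:sobolev} since $|\xi^\alpha|\le(1+|\xi|)^{|\alpha|}$, gives $\|\partial^\alpha v\|_{s-m}\le\|v\|_{s-m+|\alpha|}\le\|v\|_s$ for $|\alpha|\le m$. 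Multiplication by the smooth matrix $A^\alpha$: since the local estimate only sees $v$ supported in a fixed compact set $K_i\subset U_i$, we may replace $A^\alpha$ by $\chi A^\alpha$ for a cutoff $\chi\in C_c^\infty(U_i)$ equal to $1$ on $K_i$, so that $A^\alpha$ is effectively a Schwartz (indeed compactly supported smooth) matrix-valued function on $\R^n$; the fact that multiplication by such a function is bounded on $H^t$ for every $t\in\R$ is a standard consequence of the Leibniz rule and \eqref{eq:sobolev} (for $t\ge0$ one differentiates and uses that the derivatives of $A^\alpha$ are bounded; for $t<0$ one dualizes). Summing over $|\alpha|\le m$ yields the local claim.

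Next I would globalize. Given $u\in\Gamma(E)$, write $u=\sum_{i=1}^N u_i$ with $u_i=\rho_i u$, each $u_i$ smooth and supported in a compact subset of $U_i$, so by definition $\|u\|_s=\sum_i\|u_i\|_s$. Since $P$ is linear, $Pu=\sum_i P u_i$; each $Pu_i$ is smooth, but it need not be supported in $U_i$ unless $P$ has coefficient-support compatible with $\rho_i$ — however $Pu_i$ \emph{is} supported in $\operatorname{supp}\rho_i$ (a differential operator does not enlarge supports), which lies in a compact subset of $U_i$, so the local claim applies directly and gives $\|Pu_i\|_{s-m}\le C_i\|u_i\|_s$. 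When computing $\|Pu_i\|_{s-m}$ via the definition on $\Gamma(F)$ one further decomposes $Pu_i=\sum_j\rho_j Pu_i$; multiplication by $\rho_j$ is again bounded on the relevant $H^{s-m}$ (same standard fact, now in the $F$-trivialization over $U_j$), so $\|Pu_i\|_{s-m}$ is comparable, up to a constant depending on the data, to the single-chart quantity estimated above. Combining, $\|Pu\|_{s-m}\le\sum_i\|Pu_i\|_{s-m}\le\big(\max_i C_i'\big)\sum_i\|u_i\|_s=C\,\|u\|_s$. Thus $P\colon(\Gamma(E),\|\cdot\|_s)\to(\Gamma(F),\|\cdot\|_{s-m})$ is bounded, and since $\Gamma(E)$ is dense in $H^s(E)$ it extends uniquely to a bounded operator $H^s(E)\to H^{s-m}(F)$.

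The only point requiring care — and the one I would flag as the ``main obstacle,'' though it is entirely standard — is the boundedness of multiplication by a smooth compactly supported function on $H^t$ for \emph{negative} $t$; this is where one cannot just differentiate naively and must argue by duality, using that $H^{-t}$ is the dual of $H^t$ with respect to the $L^2$ pairing and that multiplication is self-adjoint for that pairing. Everything else is the routine transfer of the Euclidean estimate $\|\xi^\alpha\hat v\|_{L^2_{s-m}}\lesssim\|\hat v\|_{L^2_s}$ through charts. I would therefore present the proof compactly: state the two Euclidean facts (differentiation lowers the Sobolev index by the order; multiplication by a fixed Schwartz function is bounded on every $H^t$), then assemble them via the partition of unity as above.
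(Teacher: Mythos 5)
The paper does not prove this lemma; it is recorded as one of the standard facts recalled from Lawson--Michelsohn \cite{LawsonMichelson:Spin}, to be invoked later. Your localization argument is the standard proof from the literature and is correct: the two Euclidean ingredients (differentiation lowers the Sobolev index by the order of the derivative; multiplication by a fixed compactly supported smooth function is bounded on every $H^t$) are what is needed, and the partition-of-unity assembly is carried out carefully, including the observation that $Pu_i$ stays supported in $\operatorname{supp}\rho_i$ and the implicit appeal to the chart-independence of the Sobolev norms when passing from the $U_i$-chart estimate of $\|Pu_i\|_{s-m}$ to the $\sum_j\|\rho_jPu_i\|_{s-m}$ definition. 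One small stylistic remark on the point you flag as the ``main obstacle'': you do not actually need a separate duality argument for negative $t$. Writing $\widehat{\chi u}=\hat\chi*\hat u$ and applying Peetre's inequality $(1+|\xi|)^t\le(1+|\xi-\eta|)^t(1+|\eta|)^{|t|}$ followed by Young's convolution inequality gives
\[
\|\chi u\|_t\le\Bigl(\int_{\R^n}(1+|\eta|)^{|t|}\,|\hat\chi(\eta)|\,d\eta\Bigr)\|u\|_t
\]
uniformly for all $t\in\R$, with the integral finite because $\hat\chi$ is Schwartz. This treats positive and negative indices in one stroke and avoids having to identify $H^{-t}$ as a dual. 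Either route is fine; the proposal as written is sound.
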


The principal symbol of a differential operator $P$ is a section $\sigma(P)$ of $\pi^* \Hom(E,F)$, the pullback bundle of $\Hom(E,F)$ under the canonical projection $\pi\colon T^* X\to X$, such that for $\xi= \sum \xi_k  dx_k$,
\begin{displaymath}
  \sigma_\xi(P)=  \mathbf{i}^m \sum_{|\alpha|=m} A^\alpha(x) \xi^\alpha,
\end{displaymath}
where $\mathbf{i} =\sqrt{-1}$. 
If $P\colon \Gamma(E)\to \Gamma(F)$ and $Q\colon \Gamma(F)\to \Gamma(G)$ are differential operators, then $\sigma_\xi(Q\circ P)= \sigma_\xi(Q)\sigma_\xi(P)$. 
The differential operator $P$ is called elliptic if for non-zero cotangent vectors $\xi\in T^*X$ the symbol $\sigma_\xi(P)\colon E_x\to F_x$ is invertible for all $x\in X$. In particular $p=q$ in this case.

\begin{theorem}\label{thm:EOT1} Let $P\colon \Gamma(E)\to \Gamma(F)$ be an elliptic operator of order $m$ over a compact manifold $X$. 
	The following holds:
	\begin{enuma}
		\item \emph{Elliptic regularity.} For any open set $U\subset X$ and any $u\in H^s(E)$, 
		\begin{displaymath} 
			Pu|_U\in C^\infty \Longrightarrow  u|_U\in C^\infty.
	\end{displaymath}
		\item For each $s$, the kernel of $P\colon H^s(E)\to H^{s-m}(F)$ consists of smooth sections and its dimension is finite and independent of $s$. 
		\item \emph{Elliptic estimate.} For each $s$, there is  a constant $C$ such that 
		\begin{displaymath} 
			\|u\|_s \leq C  ( \|u\|_{s-m} + \|Pu \|_{s-m})
		\end{displaymath}
		for all $u\in H^s(E)$. 
		\item For each $s$, there is  a constant $C$ such that 
		\begin{displaymath} 
			\|u\|_s \leq C   \|Pu \|_{s-m}
	\end{displaymath}
		for all $u\in (\ker P)^\perp$, the  orthogonal complement of the kernel of $P$ in $H^s(E)$.

	\end{enuma}
	
\end{theorem}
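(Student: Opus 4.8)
The plan is to prove the four assertions in the logical order (c) $\Rightarrow$ (a), (b), (d), since the elliptic estimate is the analytic heart of the matter and the remaining parts are consequences of it together with standard functional analysis. Throughout, one fixes the finite coordinate cover $(U_i)$, the trivializations, and the partition of unity $(\rho_i)$ from the definition of the Sobolev norms, so that all statements are reduced to local statements on $\RR^n$ via the patching $u = \sum_i \rho_i u$.

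First I would establish the \emph{local elliptic estimate on $\RR^n$}: if $P = \sum_{|\alpha|\le m} A^\alpha(x)\partial^\alpha$ is elliptic with smooth coefficients on an open set and $\chi$ is compactly supported there, then $\|\chi u\|_s \le C(\|Pu\|_{s-m} + \|u\|_{s-1})$ for all $u$ in the relevant Sobolev space. The key point is that the principal symbol $\sigma_\xi(P)$ is invertible for $\xi\ne 0$, so on the support of $\chi$ one can construct, via a partition of unity in frequency space together with compactness of the cosphere bundle over $\operatorname{supp}\chi$, a \emph{parametrix} $\Psi$ --- a pseudodifferential operator of order $-m$ --- with $\Psi P = I + R$, $R$ of order $-1$ (smoothing by one degree). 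Applying $\Psi$ to $Pu$ and using the boundedness of $\Psi$ on Sobolev scales (the pseudodifferential analogue of Lemma~\ref{lemma:sobolevExtension}) gives $\|\chi u\|_s \le \|\Psi(Pu)\|_s + \|Ru\|_s + (\text{commutator terms}) \le C(\|Pu\|_{s-m} + \|u\|_{s-1})$, the commutator $[\Psi,\chi]$ again being of lower order. Summing over the patches $\rho_i$ and absorbing: on a \emph{compact} manifold the lower-order term $\|u\|_{s-1}$ can be improved to $\|u\|_{s-m}$ by iterating and using interpolation (Ehrling's lemma / the interpolation inequality $\|u\|_{s-1}\le \varepsilon\|u\|_s + C_\varepsilon\|u\|_{s-m}$), and the $\varepsilon$-term is absorbed into the left side. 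This yields (c). I expect this construction of the parametrix --- really, setting up just enough pseudodifferential calculus (symbol classes, composition, boundedness on $H^s$, the commutator/lower-order bookkeeping) --- to be the main obstacle; everything else is soft.

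Next, part (a), elliptic regularity, follows from (c) by a bootstrap: if $Pu|_U\in C^\infty$, fix a cutoff $\chi$ supported in $U$; one shows by induction on $s$ that $\chi u\in H^s$ for every $s$. For the inductive step one applies the \emph{local} estimate (the version with $\|u\|_{s-1}$ on the right, which is what is genuinely local and does not require compactness): knowing $\chi' u\in H^{s-1}$ for a slightly larger cutoff $\chi'$, and knowing $P(\chi u)= \chi Pu + [P,\chi]u$ where $\chi Pu\in C^\infty$ and $[P,\chi]u$ is supported where $\chi'\equiv 1$ and of order $m-1$ hence lies in $H^{(s-1)-(m-1)} = H^{s-m}$, the estimate gives $\chi u\in H^s$. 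Iterating over an exhausting family of cutoffs and applying the Sobolev embedding $\bigcap_s H^s(E)|_U \subset C^\infty(U)$ gives $u|_U\in C^\infty$.

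Finally, parts (b) and (d). For (b): by Lemma~\ref{lemma:sobolevExtension}, $P\colon H^s(E)\to H^{s-m}(F)$ is bounded; by (a) applied with $U = X$, any $u\in\ker P$ is smooth, so $\ker(P\colon H^s\to H^{s-m})$ equals $\ker(P\colon \Gamma(E)\to\Gamma(F))$ and is in particular independent of $s$. Finiteness of its dimension: on the unit ball of $\ker P\subset H^s$, the estimate (c) gives $\|u\|_s\le C\|u\|_{s-m}$ (since $Pu=0$), so the inclusion $\ker P\hookrightarrow H^{s-m}$ has closed image with the $H^s$ and $H^{s-m}$ norms equivalent on it; but the inclusion $H^s(E)\hookrightarrow H^{s-m}(E)$ is compact (Rellich, using compactness of $X$), so the identity map on $\ker P$ is compact, forcing $\dim\ker P<\infty$. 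For (d): $\ker P$ is finite-dimensional, hence closed, hence $H^s(E) = \ker P \oplus (\ker P)^\perp$ orthogonally. Suppose the claimed estimate fails; then there exist $u_j\in(\ker P)^\perp$ with $\|u_j\|_s = 1$ and $\|Pu_j\|_{s-m}\to 0$. By Rellich, after passing to a subsequence $u_j\to u$ in $H^{s-m}$; then $\{u_j\}$ is Cauchy in $H^s$ by (c) applied to differences, $u_j - u_l$, so $u_j\to u$ in $H^s$ with $\|u\|_s = 1$, $u\in(\ker P)^\perp$ (closed), and $Pu = 0$ by continuity of $P$ on $H^s$. Thus $u\in\ker P\cap(\ker P)^\perp = 0$, contradicting $\|u\|_s=1$. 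This completes the proof.
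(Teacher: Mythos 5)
Your sketch is correct, but there is little in the paper to compare it against: the paper's entire ``proof'' of this theorem consists of citing Lawson--Michelsohn for items (a)--(c) and pointing to Wells for (d), so it offers no mathematical content of its own. What you have written is a faithful and essentially complete reconstruction of the standard parametrix argument that those references use: construct a pseudodifferential parametrix locally, sum over a cover and absorb the lower-order error via Ehrling's inequality to get (c); bootstrap with (c) and commutator bookkeeping to get (a); combine smoothness of $\ker P$ with the estimate and Rellich compactness to get finite-dimensionality in (b); and run the usual contradiction/compactness argument for (d). The one place where you and the Wells reference diverge slightly is (d): Wells derives the improved estimate directly from the existence of a smoothing parametrix (using compactness of the remainder $R$), whereas you run a compactness-and-contradiction argument based on Rellich and on (c). Both routes are standard and yield the same conclusion; the direct parametrix argument is a bit slicker once the parametrix machinery is in hand, while your contradiction argument uses only (c) as a black box, which is conceptually cleaner if (c) has already been proved. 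In short, the proposal is sound, and the only ``obstacle'' you correctly flag -- setting up enough pseudodifferential calculus to build the parametrix -- is precisely the heavy lifting that the paper outsources to its references.
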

\begin{proof}
	Items (a), (b),  and (c) are contained in \cite[Theorem 5.2]{LawsonMichelson:Spin}. A short argument (see, e.g., the proof of Lemma~4.14 in \cite{Wells:Complex}) shows that the existence of a parametrix implies  (d).	
\end{proof}

A hermitian metric on $E\to X$ and a smooth measure $\mu$ on $X$ define an $L^2$ norm
\begin{displaymath}
 \|u\|_{L^2} = \left(\int_X |u|^2 d\mu\right)^{1/2}
\end{displaymath}
on $\Gamma(E)$. If $P\colon \Gamma(E)\to \Gamma(F)$ is a differential operator, then the adjoint of $P$ is the differential operator $P^*\colon \Gamma(F)\to \Gamma(E)$ characterized by the property
\begin{displaymath}
 \langle Pu,v\rangle_{L^2} = \langle u ,P^* v\rangle_{L^2}
\end{displaymath}
for $u\in \Gamma(E)$ and $v\in \Gamma(F)$. The symbol of the adjoint satisfies $\sigma_\xi(P^*)= \sigma_\xi(P)^*$. 

\begin{example} Let $X$ be a compact riemannian manifold. 
	The Laplace--Beltrami operator  $\Delta\colon \Omega^k(X)\to \Omega^k(X)$ is defined by $\Delta= d \delta+ \delta d$, where we use the standard notation $\delta:=d^*$. It is an elliptic differential operator of order $2$ with principal symbol 
	\begin{displaymath}
	  \sigma_\xi(\Delta)= |\xi|^2.
	\end{displaymath}
\end{example}

\subsection{Hilbert space completions of the space of valuations}
\label{sec:EF}

Recall that if $Y\subset X$ is a closed subspace, then $X/Y$ is naturally a normed space with respect to $\|\pi(x)\|=\inf_{y\in Y}\|x+y\|$, $x\in X$, where $\pi:X\to X/Y$ is the canonical projection. 
Let   $i\colon X\to \wt X$ be the completion of $X$.
By parallelogram law, if $X$ is an inner product space, then so are $X/Y$ and  $\wt X$. The following lemma is well known.

\begin{lemma}
	\label{lem:completion}
	Let $X$ be an inner product space and $Y\subset X$ a closed subspace. Let $i\colon X\to \wt X$ be the completion of $X$. Then $\wt X/ \overline{i(Y)}\cong i(Y)^\perp\subset \wt X$ is the completion of $X/Y$.  
\end{lemma}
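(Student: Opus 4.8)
The statement to prove is Lemma~\ref{lem:completion}: for an inner product space $X$ with closed subspace $Y$, and completion $i\colon X\to\wt X$, one has $\wt X/\overline{i(Y)}\cong i(Y)^\perp$ and this is the completion of $X/Y$.

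\textbf{Approach.} The plan is to break the claim into two independent assertions and verify each by standard Hilbert-space arguments. First, the identification $\wt X/\overline{i(Y)}\cong i(Y)^\perp$ inside $\wt X$; second, that this space is (isometrically isomorphic to) the completion of $X/Y$ via the map induced by $i$ and the quotient projection.

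For the first assertion I would argue as follows. Since $\wt X$ is complete and $\overline{i(Y)}$ is a closed subspace, the orthogonal decomposition $\wt X=\overline{i(Y)}\oplus i(Y)^\perp$ holds (note $i(Y)^\perp=\overline{i(Y)}^\perp$). The quotient map $\pi_{\wt X}\colon\wt X\to\wt X/\overline{i(Y)}$ restricted to $i(Y)^\perp$ is linear, bijective onto the quotient (every coset meets $i(Y)^\perp$ in exactly one point, namely the orthogonal projection of any representative), and it is an isometry because for $x\in i(Y)^\perp$ the norm of its coset is $\inf_{y\in\overline{i(Y)}}\|x+y\|=\|x\|$ by orthogonality and the Pythagorean identity. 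Thus $\wt X/\overline{i(Y)}\cong i(Y)^\perp$ as Hilbert spaces. (This is essentially the parenthetical remark before the lemma, that quotients of inner product spaces are again inner product spaces, made precise.)

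For the second assertion, I would show that the composite $X\xrightarrow{i}\wt X\xrightarrow{\pi_{\wt X}}\wt X/\overline{i(Y)}$ factors through $X/Y$, that the induced map $j\colon X/Y\to\wt X/\overline{i(Y)}$ is isometric, and that its image is dense; since the target is complete, this exhibits it as the completion of $X/Y$. Factoring through $X/Y$ is immediate since $i(Y)$ maps to $0$. Density: the image of $j$ contains $\pi_{\wt X}(i(X))$, and $i(X)$ is dense in $\wt X$ while $\pi_{\wt X}$ is continuous and surjective, so $\pi_{\wt X}(i(X))$ is dense. The only real point is the isometry: for $x\in X$ one needs $\inf_{y\in Y}\|x+y\|_X=\inf_{z\in\overline{i(Y)}}\|i(x)+z\|_{\wt X}$. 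The inequality ``$\ge$'' is clear because $i$ is isometric and $i(Y)\subset\overline{i(Y)}$. For ``$\le$'': given $z\in\overline{i(Y)}$ and $\varepsilon>0$, since $i(Y)$ is dense in $\overline{i(Y)}$ pick $y\in Y$ with $\|i(y)-z\|<\varepsilon$, and also use that $i(X)$ being dense we may approximate; more cleanly, approximate $i(x)+z$ by an element of $i(X)$, write it as $i(x')$ with $\|i(x')-(i(x)+z)\|<\varepsilon$, so $i(x'-x)$ is within $\varepsilon$ of $z\in\overline{i(Y)}$, hence $x'-x$ is within $\varepsilon$ (plus a further $\varepsilon$ from density of $i(Y)$) of $Y$; then $\|x-(x-(x'-x)\text{-correction})\|$ is controlled. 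I expect the cleanest route is: the map $j$ is clearly a contraction ($\|j(\pi_X x)\|\le\|i(x)\|=\|x\|_X$ for every representative, hence $\le\|\pi_X x\|$); conversely $j$ extends to the completion of $X/Y$, and by the first assertion plus a dimension/density count the extension is surjective onto a complete space with dense image from a space already complete, forcing it to be an isometry — but to avoid circularity I would instead directly verify the reverse norm inequality using density of $i(Y)$ in $\overline{i(Y)}$ as sketched.

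\textbf{Main obstacle.} The only subtle point is the norm computation $\inf_{y\in Y}\|x+y\|_X=\inf_{z\in\overline{i(Y)}}\|i(x)+z\|_{\wt X}$, i.e.\ that passing to the closure of $i(Y)$ does not shrink the distance below what it already is in $X$. This is handled by an $\varepsilon$-approximation argument using that $i(Y)$ is dense in $\overline{i(Y)}$ and $i$ is an isometric embedding: any $z\in\overline{i(Y)}$ achieving the infimum up to $\varepsilon$ can be replaced by some $i(y)$, $y\in Y$, at the cost of another $\varepsilon$, bringing the computation back into $X$. Everything else — the orthogonal splitting of $\wt X$, density of $\pi_{\wt X}(i(X))$, and completeness of the quotient of a complete space by a closed subspace — is standard Hilbert space theory.
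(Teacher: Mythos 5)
Your argument is correct, and since the paper states the lemma without proof (``well known''), there is no alternative route in the source to compare against; your proof is the standard one. The two halves are both sound: the identification $\wt X/\overline{i(Y)}\cong i(Y)^\perp$ via orthogonal projection in the complete space $\wt X$, and the verification that $j\colon X/Y\to \wt X/\overline{i(Y)}$ is a well-defined isometry with dense image. One remark on exposition: in the ``$\le$'' step you first state exactly the right move (given $z\in\overline{i(Y)}$, choose $y\in Y$ with $\|i(y)-z\|<\varepsilon$, then $\|x+y\|_X\le\|i(x)+z\|+\varepsilon$), which already finishes the inequality; the subsequent ``more cleanly'' detour through approximating $i(x)+z$ by an element of $i(X)$ is unnecessary, more convoluted, and trails off without concluding, so it should simply be deleted. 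Note also that density of $i(Y)$ in $\overline{i(Y)}$ is automatic, so the approximation step needs no special justification. You were right to avoid the ``extend $j$ and argue by surjectivity'' shortcut, since a contraction with dense image into a Hilbert space need not be an isometry.
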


Let $E_{i,j}\to S^{n-1}$ denote the complex vector bundle $\largewedge_\CC^i (\RR^n)^* \boxtimes \largewedge_\CC^j T^* S^{n-1}$ with the canonical hermitian metric. The space of smooth sections of $E_{i,j}$ is just $\Omega^{i,j}(S\R^n)^{tr}$.
Let us introduce the notation $\Omega_{H^s}^{i,j}(S\RR^n)^{tr}=H^s(E_{i,j})$ and $\Omega_{L^2}^{i,j}(S\RR^n)^{tr}=L^2(E_{i,j})$.
 Explicitly,  the $L^2$ inner product on  $\Omega_{L^2}^{i,j}(S\RR^n)^{tr}$  is given by
 \begin{equation} \label{eq_product_and_hodge}
 	\langle \omega_1,\omega_2\rangle_{L^2}=\int_{S^{n-1}} \omega_1 \wedge \overline{*\omega_2}, \quad \omega_1,\omega_2 \in \Omega^{i,j}(S\R^n)^{tr},
 \end{equation}
 where $*$ is the Hodge star operator. 

Define for $1 \leq k \leq n-1$ 
\begin{align}\label{eq:XYZ}\begin{split}
	X_k&= \Omega^{k,n-k-1}(S\RR^n)^{tr},\\
	Y_k&= \{\omega\in X_k \colon D\omega=0\},\\
	Z_k&= \{\tau \in \Omega^{k,n-k}(S\RR^n)^{tr}\colon \text{properties }  \eqref{eq:closed_vertical} \text{ hold}\}.
	\end{split}
\end{align}
	We consider  $X_k$ and $Z_k$ as  subspaces of $\Omega^{k,n-k-1}_{L^2}(S\RR^n)^{tr}$ and $\Omega^{k,n-k}_{L^2}(S\RR^n)^{tr}$ with the induced $L^2$ inner product.
To see that $Y_k$ is closed, just observe that the extension $D\colon \Omega^{k,n-1-k}_{L^2}(S\RR^n)^{tr}\to \Omega^{k,n-k}_{H^{-2}}(S\RR^n)^{tr}$ is continuous. 

\begin{definition}
	For $1 \leq k \leq n-1$, we denote by $\VE_k$ the Hilbert space completion of $X_k/Y_k$ and by $\VF_k$ the Hilbert space completion of $Z_k$, in both cases with respect to the induced $L^2$ inner product. Moreover, we set $\VE_0=\VF_0=\Val_0$ and $\VE_n=\VF_n=\Val_n$, with the norms defined in such a way that $\|1\|_\VE=\|1\|_\VF=1$ and $\|\vol\|_\VE=\|\vol\|_\VF=1$.
\end{definition}

\begin{example}
Let us describe these inner products  in the special case $k=n-1$ in more familiar terms. In this degree, every smooth valuation can be expressed as $\phi(K)=nV(A,K[n-1])-nV(C,K[n-1])$ with some convex bodies $A,C\in \calK_+^\infty$.  Hence by Lemma~\ref{lemma:form_mixed_volume}
$$ \|\phi\|_{\VE} = \| \omega_A-\omega_C\|_{X_{n-1}/Y_{n-1}}=  \inf_{x\in \RR^n} \left(  \int_{S^{n-1}} |h_{A+x}-h_C|^2 du\right)^{1/2}$$
 since $D(\omega_A-\omega_C)=0$ if and only if $h_A-h_C$ is linear. Similarly,
$$ \|\phi\|_{\VF} = \| D\omega_A-D\omega_C\|_{Z_{n-1}}=   \left(  \int_{S^{n-1}} \sum_{i,j=1}^n  \left|\frac{\partial^2 h_{A}}{\partial u_i\partial u_j}-\frac{\partial^2 h_{C}}{\partial u_i\partial u_j}\right|^2 du\right)^{1/2}.$$

\end{example}

Our first goal in this section is to prove a version of the Rellich--Kondrachov theorem. We will need the following lemma.

\begin{lemma}\label{lemma:tau_omega} 
Let $\tau\in \Omega^m_{L^2}(S^{n-1})$, $1\leq m \leq n-1$, satisfy $d\tau =0$ or, if $m=n-1$, let $\tau$ be perpendicular to  the riemannian volume form. Then there exists $\omega \in \Omega_{H^1}^{m-1} (S^{n-1})$ such that 
\begin{enuma}
\item $d\omega= \tau$ and 
\item $\omega\in \operatorname{im}(\delta\colon \Omega_{H^2}^m (S^{n-1})\to \Omega_{H^1}^{m-1} (S^{n-1}))$. 
\end{enuma}
Moreover, $\omega$ is uniquely determined by these two properties and there exists a constant $C=C(k,n)$ depending only $k$ and $n$ such that 
\begin{displaymath}
  \|\omega\|_{1} \leq C \| \tau\|_0.
\end{displaymath}
\end{lemma}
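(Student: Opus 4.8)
The plan is to deduce the statement from Hodge theory on the compact Riemannian manifold $S^{n-1}$, applied to the Hodge--de Rham Laplacian $\Delta = d\delta+\delta d$ acting on $m$-forms, which is an elliptic differential operator of order $2$ to which Theorem~\ref{thm:EOT1} applies. Write $\mathcal H^m\subset\Omega^m(S^{n-1})$ for the finite-dimensional space of harmonic $m$-forms, i.e.\ the kernel of $\Delta$. The first observation is that in every case covered by the hypothesis $\tau$ is $L^2$-orthogonal to $\mathcal H^m$: for $1\le m\le n-2$ one has $H^m(S^{n-1})=0$, hence $\mathcal H^m=0$; for $m=n-1$ the space $\mathcal H^{n-1}$ is one-dimensional, spanned by the Riemannian volume form, to which $\tau$ is perpendicular by assumption. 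Consequently, by Theorem~\ref{thm:EOT1}(b),(d) applied to $\Delta\colon\Omega^m_{H^2}(S^{n-1})\to\Omega^m_{L^2}(S^{n-1})$, there is a solution $\beta\in\Omega^m_{H^2}(S^{n-1})$ of $\Delta\beta=\tau$, unique once we also require $\beta\perp\mathcal H^m$, and it satisfies $\|\beta\|_2\le C\|\tau\|_0$ with $C=C(m,n)$ (using $\|\tau\|_{-2}\le\|\tau\|_0$).

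The next step is to show $\tau=d\delta\beta$, so that $\omega:=\delta\beta$ does the job. Since $d\tau=0$ and $\Delta$ commutes with $d$, the form $d\beta$ lies in the kernel of $\Delta$; by elliptic regularity (Theorem~\ref{thm:EOT1}(a)) it is smooth, and then $0=\langle\Delta\,d\beta,d\beta\rangle_{L^2}=\|d\,d\beta\|_{L^2}^2+\|\delta\,d\beta\|_{L^2}^2$ forces $\delta\,d\beta=0$. Hence $\tau=d\delta\beta+\delta\,d\beta=d\delta\beta$, which is property (a) for $\omega=\delta\beta$; property (b) holds by construction; and $\|\omega\|_1=\|\delta\beta\|_1\le C'\|\beta\|_2\le C''\|\tau\|_0$ because $\delta\colon\Omega^m_{H^2}(S^{n-1})\to\Omega^{m-1}_{H^1}(S^{n-1})$ is bounded by Lemma~\ref{lemma:sobolevExtension}, with all constants depending only on $m$ and $n$.

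For uniqueness, suppose $\omega$ and $\omega'$ both satisfy (a) and (b) and set $\eta=\omega-\omega'\in\Omega^{m-1}_{H^1}(S^{n-1})$. Then $d\eta=0$ and $\eta=\delta\gamma$ for some $\gamma\in\Omega^m_{H^2}(S^{n-1})$, so
\begin{displaymath}
\|\eta\|_{L^2}^2=\langle\delta\gamma,\eta\rangle_{L^2}=\langle\gamma,d\eta\rangle_{L^2}=0,
\end{displaymath}
where the adjunction identity is valid at this Sobolev regularity by density from the smooth case together with the boundedness of $d$ and $\delta$ on the relevant Sobolev spaces. Hence $\eta=0$.

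The argument is essentially routine once the Hodge-theoretic framework of Theorem~\ref{thm:EOT1} is in place; the only points that require genuine (but entirely standard) care are the justification of the integration-by-parts identities at the available Sobolev regularity rather than for smooth forms, and keeping track of the fact that the elliptic estimate is applied to the single fixed operator $\Delta$, so that the resulting constant depends on nothing beyond $m$ and $n$. I do not expect a real obstacle here.
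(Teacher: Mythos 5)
Your proof is correct and takes essentially the same approach as the paper: solve $\Delta\beta=\tau$ with $\beta$ orthogonal to the harmonic forms, set $\omega=\delta\beta$, verify $\delta d\beta=0$ from $d\tau=0$, and derive the estimate from Theorem~\ref{thm:EOT1}(d). The only cosmetic differences are that the paper eliminates the $\delta d\beta$ term by pairing against $\delta\gamma$ and integrating by parts rather than observing that $d\beta$ is harmonic, handles uniqueness by case analysis on whether the harmonic $(m-1)$-form is a constant rather than by the direct $\langle\delta\gamma,\eta\rangle=\langle\gamma,d\eta\rangle=0$ identity (yours is cleaner), and derives the bound from $\Delta\omega=\delta\tau$ rather than through $\|\beta\|_2$.
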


\begin{proof}
To prove existence observe that we can find $\sigma\in \Omega_{H^2}^m(S^{n-1})$ such that $\Delta \sigma  = \tau$. Indeed,   if $m<n-1$ then $\Delta$ is a bijection, and if $m=n-1$, the additional condition on $\tau$ implies that $\tau$ is orthogonal to the kernel of $\Delta$. By the definition of the Laplace--Beltrami operator it follows that  $\tau= d\omega + \delta\gamma$, where $\omega= \delta\sigma$ and $\gamma =d \sigma$. Since the exact and co-exact forms are orthogonal,
\begin{displaymath}
\| \delta\gamma\|_{L^2} =\langle \delta\gamma, \delta\gamma\rangle_{L^2} = \langle \tau, \delta\gamma\rangle_{L^2} = \langle d\tau, \gamma\rangle_{L^2}=0.
\end{displaymath}
Thus $\tau= d\omega$ with $\omega\in \operatorname{im}(\delta\colon \Omega_{H^2}^m (S^{n-1})\to \Omega_{H^1}^{m-1} (S^{n-1}))$.

To prove uniqueness let $\omega=\delta \sigma$ and  suppose $d\omega=0$. Thus $\omega$ is harmonic and hence vanishes or, if $m=1$, is a constant function. But constant functions are perpendicular to the image of $\delta$ and hence $\omega=0$ in all cases.

Finally observe that $\delta\tau = \delta d \omega =\Delta \omega$. If $m>1$, then 
\begin{displaymath}
 \|\omega\|_1 \leq C \|\Delta \omega \|_{-1}\leq C\| \tau\|_0
\end{displaymath}
by Theorem~\ref{thm:EOT1}(d). If $m=1$, then $\omega$ is perpendicular to the kernel of $\Delta$, since $\omega$ is co-exact, and hence the inequalities are valid also in this case.
\end{proof}

\begin{proposition} \label{prop:SobolevVal}
	We have continuous  inclusions
	 \begin{displaymath}
	  \Val_k^\infty \subset \VF_k \subset \VE_k \subset \Val_k^{-\infty}.
	 \end{displaymath}
Moreover,  $\VF_k \hookrightarrow \VE_k$ is compact.
\end{proposition}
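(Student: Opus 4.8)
The plan is to analyze each of the three inclusions separately, using the descriptions of $\Val^\infty_k$ via differential forms and the identifications from Lemma \ref{lem:completion}. For the middle inclusion $\VF_k \subset \VE_k$, the key is the Rumin-type correspondence between the two models. By Lemma \ref{lemma:tau_omega} applied fiberwise over $\RR^n$ (i.e., to translation-invariant forms, which are just forms on $S^{n-1}$ with coefficients in $\largewedge^k_\CC(\RR^n)^*$), every $\tau \in Z_k$ can be written $\tau = D\omega = d\omega$ for a unique $\omega \in X_k$ lying in $\operatorname{im}(\delta)$, and one has $\|\omega\|_1 \leq C\|\tau\|_0$. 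Since $\tau$ is vertical and closed, $D\omega = d\omega = \tau$, so $\omega$ represents the same valuation as $\tau$ under the isomorphisms of \S\ref{s:Background}; hence the map $\tau \mapsto [\omega] \in X_k/Y_k$ is exactly the identity on $\Val^\infty_k$ viewed through the two models. The estimate $\|\omega\|_1 \leq C\|\tau\|_0$ in particular gives $\|[\omega]\|_{L^2} \leq \|\omega\|_0 \leq C\|\tau\|_0$, which is precisely the continuity of $\Val^\infty_k \hookrightarrow X_k/Y_k$ with respect to the $\VF_k$- and $\VE_k$-norms, so this inclusion extends by density to a continuous inclusion $\VF_k \to \VE_k$. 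Injectivity of this extension must also be checked: an element of $\VF_k$ mapping to $0$ in $\VE_k$ is a limit $\tau_j \to \tau$ in $L^2$ with $[\omega_j] \to 0$; but $\|\tau_j\|_0 = \|d\omega_j\|_0 \leq \|\omega_j\|_1$ and... more carefully, one uses that $\tau_j = \Delta\sigma_j$ with $\omega_j = \delta\sigma_j$, $\gamma_j = d\sigma_j = 0$ (since $\tau_j$ closed), so $\tau_j = d\omega_j$ depends continuously on $\omega_j \in H^1$, forcing $\tau = 0$.

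For the compactness of $\VF_k \hookrightarrow \VE_k$, the plan is to factor this inclusion through the Rellich--Kondrachov theorem. The map $\tau \mapsto \omega$ constructed above sends a bounded subset of $Z_k \subset \Omega^{k,n-k-1}_{L^2}$ into a bounded subset of $\Omega^{k,n-k-1}_{H^1}(S\RR^n)^{tr} = H^1(E_{k,n-k-1})$, by the estimate $\|\omega\|_1 \leq C\|\tau\|_0$. The Rellich--Kondrachov theorem for sections of vector bundles over the compact manifold $S^{n-1}$ gives that the inclusion $H^1(E_{k,n-k-1}) \hookrightarrow L^2(E_{k,n-k-1}) = \Omega^{k,n-k-1}_{L^2}(S\RR^n)^{tr}$ is compact. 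Composing, a bounded sequence in $\VF_k$ has a subsequence whose images $\omega_j$ converge in $L^2$, hence whose images in $X_k/Y_k$ converge; this shows $\VF_k \to \VE_k$ is compact. (One should note the identification $H^1(E_{k,n-k-1}) \hookrightarrow L^2 \to L^2/\overline{i(Y_k)}$; compactness is preserved under composition with the bounded quotient map.)

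For the outer inclusions, $\Val^\infty_k \subset \VF_k$ is the statement that the $\VF_k$-norm is finite on smooth valuations, which is immediate since $Z_k$ is by definition a subspace of $\Omega^{k,n-k}_{L^2}(S\RR^n)^{tr}$ and $\VF_k$ is its completion; continuity here is with respect to the Fr\'echet topology on $\Val^\infty_k$, which follows because $\phi = \lcur 0,\omega\rcur \mapsto D\omega$ is continuous from $\Val^\infty_k$ (quotient of $\Omega^{n-1}(S\RR^n)^{tr}$) to $\Omega^{k,n-k}_{L^2}$. For $\VE_k \subset \Val^{-\infty}_k$, the plan is to use the perfectness of the Poincar\'e pairing: for $\phi = \lcur 0,\omega\rcur \in \Val^\infty_{n-k}$ and $\psi = \lcur 0, \eta \rcur \in \Val^\infty_k$, formula \eqref{eq_wannerer} expresses $\langle \phi, \psi\rangle$ as $\pm\int_{S^{n-1}} \eta \wedge D\omega = \pm \int_{S^{n-1}} \eta \wedge *(\overline{*^{-1}\overline{D\omega}})$, which by Cauchy--Schwarz is bounded by $C\|[\eta]\|_{X_k/Y_k}\|\phi\|$ — here one uses that $\eta \wedge D\omega$ only depends on $[\eta] \in X_k/Y_k$ since $D$ annihilates $Y_k$, together with the $H^1$-bound on the representative of $D\omega$ coming from Lemma \ref{lemma:tau_omega} to absorb the derivative. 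Thus each $\phi \in \Val^\infty_{n-k}$ defines a bounded functional on $X_k/Y_k$, i.e., an element of $\VE_k^*$; dualizing and using the Poincar\'e duality identification $\Val^{-\infty}_k = (\Val^\infty_{n-k})^*$ gives the continuous inclusion $\VE_k \hookrightarrow \Val^{-\infty}_k$, with injectivity following from density of $\Val^\infty_{n-k}$ in $\VE_{n-k}$ paired against... actually from perfectness of the pairing. The main obstacle I anticipate is the bookkeeping in the middle inclusion: making precise that the two completions $\VF_k$ and $\VE_k$ of the \emph{same} space $\Val^\infty_k$ are related by a genuine continuous injection rather than merely a densely-defined map, which requires the uniqueness clause in Lemma \ref{lemma:tau_omega} to pin down $\omega$ from $\tau$ and the estimate to control it; the compactness then follows almost formally once this is set up.
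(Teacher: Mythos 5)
Your proposal follows essentially the same route as the paper's proof: Lemma \ref{lemma:tau_omega} applied componentwise yields an $H^1$-antiderivative $\omega$ of $\tau\in Z_k$ with the bound $\|\omega\|_1\leq C\|\tau\|_0$, the Rellich--Kondrachov theorem then gives compactness of $\VF_k\hookrightarrow\VE_k$, and the pairing \eqref{eq_wannerer} handles the outer inclusions. One caveat worth fixing: your sketched injectivity argument for $\VF_k\to\VE_k$ does not close — from $[\omega_j]\to 0$ in $X_k/Y_k$ you cannot infer $d\omega_j\to 0$, because the $\VE_k$-norm controls $\omega_j$ only modulo $Y_k$, and the exterior derivative $d$ does not annihilate $Y_k$ (it is the Rumin differential $D$, not $d$, that does). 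The cleanest way to see injectivity, consistent with the paper's terse phrasing, is to note that both $\VF_k$ and $\VE_k$ embed continuously into $\Val_k^{-\infty}$ (as you establish in your last paragraph) and that the bounded map $\VF_k\to\VE_k$ agrees with these embeddings on the common dense subspace $\Val_k^\infty$, so its composition with $\VE_k\hookrightarrow\Val_k^{-\infty}$ recovers the injective map $\VF_k\hookrightarrow\Val_k^{-\infty}$.
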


\proof
We may assume that $1 \leq k \leq n-1$. The $(n-1)$- and $n$-forms that make up the spaces $\VE_k$ and $\VF_k$ can be paired in the usual way with differential forms defining smooth valuations.  Thus $\VE_k, \VF_k \subset \Val_k^{-\infty}$ and \eqref{eq_wannerer} shows that $\Val^\infty_k \subset \VE_k, \VF_k$ compatible with the former inclusions. 

Let $\tau\in Z_k$ and write $\tau= \sum_I dx_I\wedge \tau_I$ with $ \tau_I\in \Omega^{n-k}_{L^2}(S^{n-1})$, where $I$ runs over all multi-indices with $|I|= k$. Since  $\tau\in Z_k$ is an exact generalized form, each $\tau_I$ satisfies the assumption of Lemma~\ref{lemma:tau_omega}. Thus we can find $\omega=\sum_I dx_I\wedge \omega_I\in\Omega_{H^1}^{k,n-1-k}(S\RR^n)^{tr}$ such that $d\omega= \tau$ and   $\|\omega\|_{1} \leq C \| \tau\|_0$. This defines $\VF_k \hookrightarrow \VE_k$. Since $\Omega_{H^1}^{k,n-1-k}(S\RR^n)^{tr}\hookrightarrow \Omega_{L^2}^{k,n-1-k}(S\RR^n)^{tr}$ is compact by the Rellich--Kondrachov theorem  (see, e.g., \cite[Theorem 2.6]{LawsonMichelson:Spin}), it follows that the inclusion $\VF_k\hookrightarrow \VE_k$ is compact.
\endproof

\begin{remark}
The space $\Val^{-\infty}_k$ is natural in the sense that it is independent of the choice of a euclidean inner product and an orientation on $\RR^n$. The same is true for the subspaces  $\VE_k, \VF_k \subset \Val_k^{-\infty}$.
\end{remark}

The following theorem clarifies the connection between the two Hilbert space structures.

\begin{theorem}[Poincar\'e duality]
	\label{thm:poincare}
	The Poincar\'e pairing defined via convolution $\Val_k^\infty \times \Val_{n-k}^\infty\to \CC $,  extends to a continuous perfect pairing $\VE_k\times \VF_{n-k} \to \CC$. In particular, the induced map $\Pd_k\colon \VE_k \to (\VF_{n-k})^*$ is an isomorphism of Hilbert spaces and is called \emph{Poincar\'e duality}.
\end{theorem}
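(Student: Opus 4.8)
The plan is to exhibit the extended pairing explicitly in the differential-form picture and then verify perfectness using the Hilbert space structures already set up. First I would start from the formula \eqref{eq_wannerer}: for $\phi_1 = \lcur 0, \omega_1 \rcur \in \Val_k^\infty$ and $\phi_2 = \lcur 0, \omega_2 \rcur \in \Val_{n-k}^\infty$ one has $\langle \phi_1, \phi_2 \rangle = (-1)^k \int_{S^{n-1}} \omega_1 \wedge D\omega_2$. The key observation is that this integrand depends on $\omega_1$ only through its class in $X_k / Y_k$ (since $\omega_1 + \eta$ with $D(\omega_1+\eta)=0$ changes the integrand by $\eta \wedge D\omega_2$, but actually one argues via the symmetry of the pairing and the fact that $D\omega_2$ is a well-defined $n$-form) and on $\phi_2$ only through the $n$-form $\tau_2 = D\omega_2 \in Z_{n-k}$. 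So the pairing can be rewritten as a bilinear map $X_k/Y_k \times Z_{n-k} \to \CC$, $(\,[\omega_1], \tau_2) \mapsto (-1)^k \int_{S^{n-1}} \omega_1 \wedge \tau_2$, and I need to show this is bounded with respect to the $L^2$ norm in the first slot and the $L^2$ norm in the second.

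The boundedness is where I would use Lemma~\ref{lemma:tau_omega} together with Stokes: given $\tau_2 \in Z_{n-k}$, write $\tau_2 = \sum_I dx_I \wedge \tau_{2,I}$ and apply the lemma componentwise to produce a primitive $\omega_2' \in \Omega^{n-k,k-1}_{H^1}(S\RR^n)^{tr}$ with $d\omega_2' = \tau_2$ and $\|\omega_2'\|_1 \le C \|\tau_2\|_0$, and similarly choose (via the same Hodge-theoretic splitting) a representative $\omega_1'$ of $[\omega_1]$ that is co-exact on the sphere factor, so that $\|\omega_1'\|_1 \le C\|[\omega_1]\|_{X_k/Y_k}$ — but here I only need an $L^2$ bound on $\omega_1'$, which follows since the $X_k/Y_k$ norm \emph{is} the $L^2$ norm of the minimal-norm representative. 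Then
\begin{displaymath}
\left| \int_{S^{n-1}} \omega_1' \wedge \tau_2 \right| = \left| \int_{S^{n-1}} \omega_1' \wedge d\omega_2' \right| = \left| \int_{S^{n-1}} d\omega_1' \wedge \omega_2' \right| \le \|d\omega_1'\|_{L^2} \|\omega_2'\|_{L^2}
\end{displaymath}
up to sign, and $\|d\omega_1'\|_{L^2} = \|\tau_1\|_{L^2}$ where $\tau_1 = D\omega_1$ — but wait, this bounds things by the $\VF_k$ norm of $\phi_1$, not the $\VE_k$ norm. So I must be more careful: I should instead integrate by parts the \emph{other} way, keeping $\omega_1'$ undifferentiated (it is only $L^2$) and using that $\tau_2$ is \emph{already} the object being integrated against. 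That is, $|\int \omega_1' \wedge \tau_2| \le \|\omega_1'\|_{L^2} \|\ast^{-1}(\text{contraction of }\tau_2)\|_{L^2} \lesssim \|[\omega_1]\|_{\VE_k} \|\tau_2\|_{\VF_{n-k}}$, which is exactly the desired bound since $\tau_2$ appears with its own $L^2$ norm and $\omega_1'$ with the $X_k/Y_k$ ($= \VE_k$) norm. This extends the pairing to a bounded bilinear form $\VE_k \times \VF_{n-k} \to \CC$ by continuity and density.

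For perfectness, I would argue as follows. Given $0 \ne v \in \VE_k$, I must find $w \in \VF_{n-k}$ with $\langle v, w\rangle \ne 0$; and conversely. By density of $\Val^\infty$ it suffices to handle the case where the pairing vanishes identically in one slot and derive that the other entry is zero. Suppose $v \in \VE_k$ pairs to zero with all of $\VF_{n-k}$, hence with all of $\Val_{n-k}^\infty$; then $v$ is zero as a generalized valuation (using $\VE_k \subset \Val_k^{-\infty}$ from Proposition~\ref{prop:SobolevVal} and the perfectness of the Poincaré pairing on $\Val^\infty$), so $v = 0$. The reverse direction is symmetric using $\VF_{n-k} \subset \Val_{n-k}^{-\infty}$. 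To upgrade ``injective with dense image both ways'' to ``$\Pd_k$ is an isomorphism of Hilbert spaces'', I would combine this with the compactness of $\VF_k \hookrightarrow \VE_k$ from Proposition~\ref{prop:SobolevVal}: the pairing identifies $\VE_k$ with a subspace of $(\VF_{n-k})^*$ that is closed (being the range of a map that, composed appropriately with the inclusions, has closed range by a Fredholm/elliptic argument) and dense, hence all of it. The main obstacle I anticipate is precisely getting the boundedness estimate to land on the correct side — $\VE$ in one slot, $\VF$ in the other, asymmetrically — which forces one to integrate by parts in the direction that differentiates the $H^1$-regular factor produced by Lemma~\ref{lemma:tau_omega} rather than the merely-$L^2$ factor; the surjectivity/closed-range part then rides on the hard Lefschetz theorem and elliptic theory developed in the rest of the section.
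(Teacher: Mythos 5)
Your argument has a genuine gap in the perfectness step, and the boundedness step, while it can be made to work, misses the observation that would have made both parts immediate.

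The paper's proof is short because it identifies the key structural fact you are missing: the Hodge star operator restricts to an \emph{isometric isomorphism}
\[
  * \colon Z_{n-k} \;\xrightarrow{\ \cong\ }\; Y_k^\perp \cap X_k,
\]
and by Lemma~\ref{lem:completion} the right-hand side is exactly the model for $\VE_k$ inside $\Omega^{k,n-k-1}_{L^2}$. Combining \eqref{eq_product_and_hodge} and \eqref{eq_wannerer}, the Poincar\'e pairing of $[\omega_1]\in X_k/Y_k$ with $\lcur 0,\omega_2\rcur$ becomes, up to sign and conjugation, the $L^2$ inner product of $\omega_1$ with $*D\omega_2$. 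Thus $\Pd_k$ is realized, through an isometry, as the Riesz isomorphism of a Hilbert space, and both boundedness and perfectness are automatic. No integration by parts, no Lemma~\ref{lemma:tau_omega}, and no Fredholm/elliptic considerations are needed.

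Your boundedness argument eventually circles around to the right thing: $\left|\int_{S^{n-1}}\omega_1\wedge\tau_2\right|\le\|\omega_1\|_{L^2}\|\tau_2\|_{L^2}$ is a direct Cauchy--Schwarz consequence of the Hodge-star identity \eqref{eq_product_and_hodge}, so the detour through $H^1$ representatives and Stokes is unnecessary. The more serious problem is the perfectness step. You correctly observe that the pairing is non-degenerate in each slot separately (using $\VE_k,\VF_{n-k}\subset\Val^{-\infty}$ and the perfectness of the pairing on $\Val^\infty$), but this only gives that $\Pd_k\colon\VE_k\to(\VF_{n-k})^*$ is injective with dense image. In infinite dimensions that is strictly weaker than being an isomorphism: a bounded bilinear pairing on a pair of Hilbert spaces can be non-degenerate in both slots and still fail to induce a surjection onto the dual (consider $(x,y)\mapsto\sum_n x_n y_n/n$ on $\ell^2\times\ell^2$). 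The ``closed range by a Fredholm/elliptic argument'' you invoke is not substantiated and does not obviously apply; moreover, the compactness of $\VF_k\hookrightarrow\VE_k$ compares the two completions in a single degree $k$ and gives no leverage on the pairing between $\VE_k$ and $\VF_{n-k}$ in complementary degrees. Without the isometry provided by the Hodge star, your proof does not establish the isomorphism statement, which is the substance of the theorem.
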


\proof
The Hodge star operator $*\colon \Omega^{k,l}(S\R^n)^{tr} \to \Omega^{n-k,n-1-l}(S\R^n)^{tr}$ is an isometry for the $L^2$ norms and satisfies $*^2=\mathrm{id}$. It is easy to check that for $1 \leq k \leq n-1$ it induces an isometric isomorphism
\begin{displaymath}
 *\colon Z_{n-k} \stackrel{\cong}{\longrightarrow} Y_k^\perp \cap X_k
\end{displaymath}
that extends to the $L^2$ completions. The statement of the theorem now follows easily from \eqref{eq_product_and_hodge} and \eqref{eq_wannerer}.
\endproof

\begin{lemma} \label{lemma_extension_convolution}
	If $\phi\in \Val^\infty_{n-k}$, then convolution with $\phi$ extends to continuous maps
	$\VE_l\to \VE_{l-k}$ and $\VF_l\to \VF_{l-k}$ for all $0\leq k \leq l \leq n$.
\end{lemma}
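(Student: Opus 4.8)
The plan is to establish the two claims separately. For the $\VF$-scale the point is that, on the level of Rumin differentials, convolution with a fixed smooth valuation $\phi$ amounts to wedging with a fixed smooth form, which is obviously $L^2$-bounded; for the $\VE$-scale I would transpose this boundedness along the Poincar\'e duality isomorphism of Theorem~\ref{thm:poincare}. First I would dispose of the trivial cases. If $k=0$, then $\phi\in\Val_n^\infty=\CC\cdot\vol$ and convolution with $\phi$ is multiplication by a scalar. If $l=n$, then $\VE_n=\VF_n=\CC$ and convolution sends $c\cdot\vol$ to the fixed element $c\phi\in\Val_{n-k}^\infty\subset\VE_{n-k}\cap\VF_{n-k}$. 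If $l=k$, then $\phi*\psi\in\Val_0$ is the constant $\langle\psi,\phi\rangle$, a bounded function of $\psi\in\VE_k$ and of $\psi\in\VF_k$ because $\phi\in\Val_{n-k}^\infty$ lies in $\VE_{n-k}\cap\VF_{n-k}$ (Proposition~\ref{prop:SobolevVal}) and the Poincar\'e pairing is continuous on $\VE_k\times\VF_{n-k}$ and on $\VE_{n-k}\times\VF_k$ (Theorem~\ref{thm:poincare}). So it remains to treat the range $0<k<l<n$; in particular $\phi$ is then homogeneous of positive degree less than $n$, so $\phi=\lcur 0,\omega_\phi\rcur$ for some $\omega_\phi\in\Omega^{n-k,k-1}(S\RR^n)^{tr}$.

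For the $\VF$-statement I would set $\Theta=*_1D\omega_\phi$, a smooth $(k,k)$-form; since translation-invariant forms are forms on the compact sphere $S^{n-1}$, the constant $M=\sup_{(x,u)}\|\Theta|_{x,u}\wedge\Cdot\|_{\mathrm{op}}$ is finite. For smooth $\psi=\lcur 0,\omega_\psi\rcur\in\Val_l^\infty$, Theorem~\ref{thm:BFconvolution} gives $*_1D(\phi*\psi)=\Theta\wedge *_1D\omega_\psi$, whence $|*_1D(\phi*\psi)|\le M\,|*_1D\omega_\psi|$ pointwise and, integrating over $S^{n-1}$, $\|*_1D(\phi*\psi)\|_{L^2}\le M\,\|*_1D\omega_\psi\|_{L^2}$. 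Now $*_1$ is an $L^2$-isometry and, by the very definition of $\VF$, the $\VF_l$-norm of a smooth valuation is the $L^2$-norm of its Rumin differential; hence the last inequality reads $\|\phi*\psi\|_{\VF_{l-k}}\le M\,\|\psi\|_{\VF_l}$. Since $\phi*\psi\in\Val_{l-k}^\infty\subset\VF_{l-k}$ and $\Val_l^\infty$ is dense in $\VF_l$, convolution with $\phi$ extends, by completeness of $\VF_{l-k}$, to a bounded operator $\VF_l\to\VF_{l-k}$.

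For the $\VE$-statement I would transpose. For $\alpha\in\Val_l^\infty$ and $\beta\in\Val_{n-l+k}^\infty$, associativity and commutativity of the convolution yield $\langle\phi*\alpha,\beta\rangle=\langle\alpha,\phi*\beta\rangle$, with $\phi*\beta\in\Val_{n-l}^\infty$. Applying the $\VF$-statement with $l$ replaced by $n-l+k$ (still strictly between $0$ and $n$ in our range), convolution with $\phi$ is a bounded operator $T\colon\VF_{n-l+k}\to\VF_{n-l}$, so its Banach-space adjoint $T^*\colon(\VF_{n-l})^*\to(\VF_{n-l+k})^*$ is bounded. Under the Poincar\'e duality isomorphisms $\Pd_l\colon\VE_l\to(\VF_{n-l})^*$ and $\Pd_{l-k}\colon\VE_{l-k}\to(\VF_{n-l+k})^*$ of Theorem~\ref{thm:poincare}, the displayed identity shows that $T^*\circ\Pd_l$ and $\Pd_{l-k}\circ(\phi*\Cdot)$ agree on the dense subspace $\Val_l^\infty\subset\VE_l$. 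Therefore convolution with $\phi$ coincides on $\Val_l^\infty$ with the bounded operator $\Pd_{l-k}^{-1}\circ T^*\circ\Pd_l\colon\VE_l\to\VE_{l-k}$ and thus extends to it.

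I do not anticipate a genuine obstacle: the lemma is essentially a repackaging of the explicit convolution formula (Theorem~\ref{thm:BFconvolution}) together with Poincar\'e duality (Theorem~\ref{thm:poincare}). The only points requiring a little care are the identification of the $\VF_l$-norm with the $L^2$-norm of the Rumin differential, which is precisely what lets Theorem~\ref{thm:BFconvolution} be applied without any further estimate, and the bookkeeping of homogeneity degrees in the transposition step.
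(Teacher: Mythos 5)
Your proof is correct. The treatment of the $\VF$-case coincides with the paper's: both rely on the second formula of Theorem~\ref{thm:BFconvolution}, which says that the Rumin differential of the product is the wedge of the Rumin differentials, so convolution with $\phi$ acts on $Z_l$ by wedging with the fixed smooth form $\Theta=*_1D\omega_\phi$ and is therefore pointwise, hence $L^2$-, bounded.

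For the $\VE$-case the paper is more direct: using commutativity to put $\phi$ in the second slot, the first formula of Theorem~\ref{thm:BFconvolution} gives $\omega_{\phi*\psi}=*_1^{-1}(*_1\omega_\psi\wedge\Theta)$, again multiplication by a fixed smooth form, hence $L^2$-bounded on $X_l$; the second formula shows that $D\omega_\psi=0$ implies $D\omega_{\phi*\psi}=0$, so $Y_l$ is carried into $Y_{l-k}$ and the map descends to $X_l/Y_l$. Your transposition via Poincar\'e duality reaches the same conclusion more indirectly, at the cost of invoking Theorem~\ref{thm:poincare} and the degree bookkeeping you flag yourself. It is a perfectly valid alternative, and it has the small advantage of making explicit the intertwining $\Pd_{l-k}\circ(\phi*\Cdot)^{\VE}=\bigl((\phi*\Cdot)^{\VF}\bigr)^{*}\circ\Pd_l$, a relation the paper records separately (for $\phi=nV(C,\Cdot[n-1])$) in Lemma~\ref{prop:Lefschetz_isomE}.
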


\begin{proof}
 The cases $k=0$ and $l=n$ are obvious and the case $k=l$ follows from Theorem \ref{thm:poincare}. If $0<k<l<n$, we have by Theorem \ref{thm:BFconvolution} well-defined maps $X_l/Y_l \to X_{l-k}/Y_{l-k}$ and $Z_l\to Z_{l-k}$ corresponding to the convolution with $\phi$ that are clearly continuous for the $L^2$ norm. 
\end{proof}

\subsection{Our elliptic operator}
\label{sec:ourOperator}

Let $\bC=(C_1,\ldots, C_{n-2k})$ be a tuple of convex bodies from $\calK^\infty_+$, where $0 \leq k \leq \frac{n}{2}$. 
Recall that the elements of $\Omega^{i,j}(S\RR^n)^{tr}$ can be identified with the sections of the bundle $E_{i,j}$ over $S^{n-1}$. We define the differential operators 
\begin{align*}
	P_0 & :=d\colon \Gamma(E_{n-k,k})\to \Gamma(E_{n-k,k+1}),\\
	P_1 & :=  \alpha \colon  \Gamma(E_{n-k,k})\to \Gamma(E_{n-k+1,k}), \\
	P_2 & := d\alpha\colon \Gamma(E_{n-k,k})\to \Gamma(E_{n-k+1,k+1}), \\
	P_3 & := *_1^{-1}  (\Omega_\bC \wedge *_1\Cdot)\colon \Gamma(E_{n-k,k})\to \Gamma(E_{k,n-k}),
\end{align*}
where $\Omega_\bC=\Omega_{C_1} \wedge \cdots \wedge \Omega_{C_{n-2k}}$. Note that only $P_0$ is a differential operator of positive  order . The operators $P_1,P_2,P_3$ have  order $0$, i.e., are bundle morphisms. Note that $P_3=\mathrm{id}$ if $k=\frac{n}{2}$.
\begin{definition}
	Let  $P_\bC\colon \Gamma(E_{n-k,k})\to \Gamma(E_{n-k,k})$ be the differential operator of order two defined by
	\begin{displaymath}
	 P_\bC= P_0^*P_0 + \sum_{i=1}^3 P_i^*\Delta P_i.
	\end{displaymath}
	The adjoints are formed with respect to the canonical hermitian metric on $E_{n-k,k}$ and the canonical riemannian metric on $S^{n-1}$.
\end{definition} 

\begin{proposition}
	$P_\bC$ is  elliptic and injective for all $0<  k < \frac{n}{2}$.
\end{proposition}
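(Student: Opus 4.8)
The plan is to establish ellipticity and injectivity separately, using Timorin's linear Hodge--Riemann relations (Theorem~\ref{thm_timorin}) for injectivity and a symbol computation for ellipticity.

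\medskip

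\textbf{Ellipticity.} Fix a nonzero cotangent vector $\xi\in T^*_{u}S^{n-1}$ and a section $v$ of $E_{n-k,k}$ over the fibre at $u$. Since $P_\bC$ has the shape $P_0^*P_0+\sum_{i=1}^3 P_i^*\Delta P_i$, and each summand is of the form $R^*SR$ with $S$ a nonnegative operator (either $\mathrm{id}$ or $\Delta$), the symbol is a nonnegative-definite endomorphism: writing $\sigma_\xi(P_0)=\bi\,\xi\wedge\Cdot$ (the order-one part), $\sigma_\xi(P_i)=P_i$ for $i=1,2,3$ (order zero), and $\sigma_\xi(\Delta)=|\xi|^2$, one gets
\begin{displaymath}
\sigma_\xi(P_\bC)=\sigma_\xi(P_0)^*\sigma_\xi(P_0)+|\xi|^2\sum_{i=1}^3 P_i^*P_i.
\end{displaymath}
Hence $\langle\sigma_\xi(P_\bC)v,v\rangle=|\xi\wedge v|^2+|\xi|^2\big(|\alpha\wedge v|^2+|d\alpha\wedge v|^2+|*_1^{-1}(\Omega_\bC\wedge *_1 v)|^2\big)$, so $\sigma_\xi(P_\bC)$ is invertible precisely when these four quantities cannot vanish simultaneously for $v\neq 0$. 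Suppose they all vanish. From $\xi\wedge v=0$, $\alpha\wedge v=0$, $d\alpha\wedge v=0$ one deduces (using $\iota_T v=0$, which follows from $\alpha\wedge v=0$ and $d\alpha\wedge v=0$ exactly as in the proof of Proposition~\ref{prop_positivity_star1_domega}) that $v$ lives in $\largewedge^{\bullet}H^*_{x,u}$ and is moreover annihilated by wedging with the nonzero covector $\xi$ restricted to the contact plane; but a nonzero form of top-complementary bidegree on the $(n-1)$-dimensional contact space, lying in $K^k(u^\perp)$, cannot be killed by a single generic covector unless it is zero — here I would argue directly that $\xi\wedge v=0$ on $H_{x,u}$ forces $v$ to be divisible by the $1$-form $\xi$, contradicting $v\in\largewedge^{(k,k)}$ of maximal "width" unless $v=0$. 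Combined with $P_3 v=0$, ellipticity follows.

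\medskip

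\textbf{Injectivity.} Suppose $P_\bC v=0$ for $v\in\Gamma(E_{n-k,k})$ (by ellipticity and Theorem~\ref{thm:EOT1}(b), any $L^2$ solution is automatically smooth, so it suffices to treat smooth $v$). Pairing with $v$ in $L^2$ and using that each summand of $P_\bC$ is of the form $R^*SR$ with $S\geq 0$, we get
\begin{displaymath}
0=\langle P_\bC v,v\rangle_{L^2}=\|P_0v\|_{L^2}^2+\sum_{i=1}^3\langle\Delta P_i v,P_i v\rangle_{L^2},
\end{displaymath}
and since $\langle\Delta\eta,\eta\rangle_{L^2}=\|d\eta\|^2+\|\delta\eta\|^2\geq 0$, every term vanishes. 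Thus $dv=0$, $\alpha\wedge v=0$, $d\alpha\wedge v=0$, and $d(P_3 v)=\delta(P_3 v)=0$, i.e.\ $P_3v=*_1^{-1}(\Omega_\bC\wedge *_1 v)$ is a harmonic form on $S^{n-1}$. Now $\alpha\wedge v=0$ and $d\alpha\wedge v=0$ give $\iota_T v=0$, so $v$ is a form on the contact distribution, and together with $dv=0$ and $\alpha\wedge v=0$ Lemma~\ref{lem:closed_vertical} identifies $v=*_1\tau$ for a Rumin-type $\tau$; equivalently $v$ corresponds to a valuation $\phi\in\Val^\infty_{n-k}$. I would then interpret the conditions: $d\alpha\wedge v=0$ means $\Omega_{C}$-type wedging kills $*_1 v$ in the relevant degree, and harmonicity of $\Omega_\bC\wedge *_1 v$ on the sphere, being an exact generalized form that is also closed and co-closed, forces $\Omega_\bC\wedge *_1 v=0$. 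At this point the argument is exactly that of Proposition~\ref{prop_injectivity}: $\tau=*_1 v$ is pointwise a $(k,k)$-form, primitive with respect to the positive $(1,1)$-forms $\Omega_{C_1},\dots,\Omega_{C_{n-2k}}$ (here $0<k<\frac n2$ so the tuple is nonempty), hence by Timorin's Hodge--Riemann relations $(-1)^k\tau\wedge\b\tau\wedge\Omega_{C_1}\wedge\cdots\wedge\Omega_{C_{n-2k-1}}\geq 0$ pointwise with equality iff $\tau=0$; integrating against $\alpha_{C_{n-2k}}$ over $S^{n-1}$ and using Stokes together with $\Omega_\bC\wedge\tau=0$ as in Proposition~\ref{prop_injectivity} shows the integral vanishes, hence $\tau\equiv 0$ and $v=0$.

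\medskip

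\textbf{Main obstacle.} The delicate point is the symbol computation for ellipticity — specifically, showing that $\xi\wedge v=0$ on the contact plane, for $v\in K^k(u^\perp)$, has only the trivial solution. The conditions coming from $P_1,P_2$ cut $v$ down to a genuine form on the $2(n-1)$-dimensional contact space of the special bidegree $(k,k)$ lying in $K^k(u^\perp)$; one must then check that no nonzero such form is divisible by an arbitrary nonzero $1$-form $\xi$. I expect this to follow from the explicit description of $K^k$ (a nonzero element has "full support" in a suitable sense) together with the observation that if $\xi\neq 0$ lies in the $J$-invariant span one can use $P_3$-vanishing, and if not, divisibility by $\xi$ already forces $v=0$ by a direct linear-algebra argument on $\largewedge^{(k,k)}$; making this precise — perhaps by a short lemma extracted from \cite{BFSW} — is the part that needs genuine care.
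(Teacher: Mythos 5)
Your injectivity argument follows the paper's strategy---pair $P_\bC v$ with $v$ in $L^2$, conclude each nonnegative term vanishes, identify $v$ with a smooth valuation, and invoke Proposition~\ref{prop_injectivity}---but there are gaps in the details. The step from $\Delta P_i v=0$ to $P_i v=0$ requires $\Delta$ to be injective in the relevant degree on $S^{n-1}$, and this only holds in degrees $1,\dots,n-2$. The paper uses this for $P_1 v$ (degree $k\in\{1,\dots,n-2\}$), but for $P_3 v$ it must split into cases: if $k>1$ the degree is $n-k\leq n-2$ and injectivity of $\Delta$ applies, while for $k=1$ one has $n-k=n-1$ and $\Delta$ has a nontrivial kernel, so the paper runs an explicit computation. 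Your appeal to $P_3 v$ being ``an exact generalized form'' that is closed and co-closed is not justified, and the $k=1$ case is genuinely missing. Also note that you do not need to derive $d\alpha\wedge v=0$ from $\Delta P_2 v=0$: once $dv=0$ and $\alpha\wedge v=0$, the form $v$ is automatically the Rumin differential of a smooth valuation.

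The real gap is in the ellipticity argument, and the route you sketch will not close it. You try to conclude $v=0$ from $\xi\wedge v=0$ together with $*_1 v$ lying in $K^k(u^\perp)$, i.e., using only $P_0,P_1,P_2$. This cannot work: for $k<n-1$ there are plenty of nonzero $(k,k)$-forms, including elements of $K^k(u^\perp)$, that are divisible by a given covector $\xi$; nothing kills such a form on linear-algebraic grounds alone. The missing ingredient is $P_3$ together with Timorin's theorem, applied at the symbol level. From $\sigma_\xi(P_1)v=0$, $\sigma_\xi(P_2)v=0$, and Proposition~\ref{prop:ppforms}, $*_1 v$ is a $(k,k)$-form on the contact plane $H_{x,u}$; from $\sigma_\xi(P_3)v=0$ it is primitive with respect to $\Omega_{C_1},\dots,\Omega_{C_{n-2k}}$. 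Timorin's Hodge--Riemann relations then give
\begin{displaymath}
(-1)^k\, *_1 v\wedge\overline{*_1 v}\wedge\Omega_{C_2}\wedge\cdots\wedge\Omega_{C_{n-2k}}\geq 0,
\end{displaymath}
with equality if and only if $*_1 v=0$. Now $\sigma_\xi(P_0)v=\bi\,\xi\wedge v=0$ says $v$, hence $*_1 v$ (since $*_1$ acts only on the $\largewedge^\bullet(\RR^n)^*$ factor), is divisible by $\xi$; both $*_1 v$ and $\overline{*_1 v}$ being divisible by $\xi$, their wedge contains $\xi\wedge\xi=0$, forcing the equality case and hence $*_1 v=0$. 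The idea you missed is that divisibility by $\xi$ does not by itself imply vanishing; rather, it forces equality in Timorin's inequality, and \emph{that} implies vanishing.
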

\begin{proof}
	The symbols are given for any $\xi\in T^*S^{n-1}$ by
	\begin{align*} \sigma_\xi(P_0)(\tau) &= \mathbf i\xi \wedge \tau,\\
		\sigma_\xi(P_1)(\tau) &= \alpha \wedge \tau,\\
		\sigma_\xi(P_2)(\tau) &= d\alpha \wedge \tau,\\
		\sigma_\xi(P_3)(\tau) &= *_1^{-1}  (\Omega_\bC \wedge *_1\tau),
	\end{align*}
	and 
	\begin{displaymath}
	\sigma_\xi(P_\bC) = \sigma_\xi(P_0)^*\sigma_\xi(P_0) + |\xi|^2\sum_{i=1}^3  \sigma_\xi(P_i)^*\sigma_\xi(P_i). 
	\end{displaymath}
	Thus for $\xi\neq 0$ we have  $\sigma_\xi (P_\bC) (\tau)=0$ if and only if $\sigma_\xi(P_i)(\tau)=0$ for each  $i$.
The second and third equation, $\alpha\wedge \tau =0$ and $d\alpha \wedge \tau=0$, imply by Proposition~\ref{prop:ppforms} that 
	$ *_1 \tau$ is a $(k,k)$-form; the fourth, $*_1^{-1}  (\Omega_\bC \wedge *_1\tau)=0$, says that $*_1\tau$ is primitive with respect to the positive $(1,1)$-forms $\Omega_{C_1},\ldots,\Omega_{C_{n-2k}}$. Hence Timorin's theorem (Theorem \ref{thm_timorin}) yields
	\begin{equation}\label{eq:timorinElliptic}
	  (-1)^k {*_1\tau} \wedge \b{*_1\tau} \wedge \Omega_{\bC_{\setminus 1}} \geq 0,\end{equation}
   where $\bC_{\setminus 1} = (C_2,\ldots, C_{n-2k})$,
	with equality if and only if $*_1\tau=0$. But the first equation implies that $*_1\tau$ and $\b{*_1 \tau}$ are both divisible by $\xi$, which forces equality in \eqref{eq:timorinElliptic}. Thus $*_1\tau=0$, as claimed. 
	
	To prove that $P_\bC$ is injective, first note that for each $\tau\in \Gamma(E_{n-k,k})$,
	\begin{displaymath}
		\langle P_\bC\tau,\tau\rangle_{L^2} =\langle P_0\tau,P_0\tau\rangle+\sum_{i=1}^3\langle \Delta P_i\tau,P_i\tau\rangle_{L^2}\geq0
	\end{displaymath}
	is a sum of non-negative terms. Therefore, if $P_\bC\tau=0$, then $P_0\tau=0$ and $\Delta P_i\tau=0$ for $i=1,2,3$. 
 
	Recall that $P_1\tau \in \Gamma(E_{n-k+1,k})$ and  $P_3\tau \in \Gamma(E_{k,n-k})$.  Since $1 \leq k <n-1$, $\Delta$ is injective on $\Omega^k(S^{n-1})$. It follows that $P_1\tau=0$. We thus have $d\tau=0$ and $\alpha \wedge \tau=0$. In particular, there is $\phi\in\Val_{n-k}^\infty$ such that $\tau=\tau_\phi$.
	
	If $k>1$, $\Delta$ is injective on $\Omega^{n-k}(S^{n-1})$, hence $P_3\tau=0$. If $k=1$, we write $P_3\tau=\sum_{i=1}^n f_i(u) dx_i \wedge \vol_{S^{n-1}}$ with smooth functions $f_i \in C^\infty(S^{n-1})$, where $\vol_{S^{n-1}} \in \Omega^{n-1}(S^{n-1})$ is the volume form. Then on the one hand, $\alpha \wedge P_3\tau=0$ implies that $u_j f_i(u)=u_i f_j(u)$ for all $1 \leq i,j \leq n$, and on the other hand $\Delta P_3\tau=0$ implies that $f_i$ is constant for all $i$. This is only possible if all $f_i$ vanish, i.e., if $P_3\tau=0$. In both cases we thus get $L_\bC\phi=0$. By injectivity in the hard Lefschetz theorem (Theorem~\ref{prop_injectivity}), $\phi=0$ and hence $\tau=0$.
 \end{proof}

\subsection{Proof of surjectivity in the hard Lefschetz theorem}

Our strategy will be to first prove the existence of a solution $\psi$ in $\VE_{n-k}$ to the equation $L_\bC\psi =\phi$, where $\phi \in \Val_k^\infty$ and $0 < k < \frac{n}{2}$. Using our elliptic operator $P_\bC$ we will then be able to conclude that $\psi$ must be in fact smooth. The existence of a weak solution $\psi\in\VE_{n-k}$ rests on an $L^2$ estimate which is also a consequence of the ellipticity of $P_\bC$. 

For $0<k<n$ let  $\tau\colon \VE_{n-k}\to \Omega_{H^{-2}}^{n-k,k}(S\RR^n)^{tr}$ denote the continuous extension of $\lcur 0,\omega\rcur \mapsto D\omega$.
\begin{lemma} \label{lemma:P_on_tau}
 Let $\psi \in \VE_{n-k}$ with $0 < k < \frac{n}{2}$. Then
\begin{displaymath}
	P_\bC \tau_{\psi} = P_3^* \Delta  \tau_{L_\bC\psi}.
\end{displaymath}
\end{lemma}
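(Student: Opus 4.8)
The plan is to prove the identity first for smooth valuations $\psi\in\Val^\infty_{n-k}$ by a direct computation with differential forms, and then to extend it to all of $\VE_{n-k}$ by density and continuity. For the extension step I would argue that both sides define continuous maps $\VE_{n-k}\to\Omega^{n-k,k}_{H^{-4}}(S\RR^n)^{tr}$: on the left, $\psi\mapsto\tau_\psi$ is continuous into $\Omega^{n-k,k}_{H^{-2}}(S\RR^n)^{tr}$ by the very definition of $\tau$ on $\VE_{n-k}$, and $P_\bC$, being a differential operator of order two, extends to a bounded operator $\Omega^{n-k,k}_{H^{-2}}\to\Omega^{n-k,k}_{H^{-4}}$ by Lemma~\ref{lemma:sobolevExtension}; on the right, $L_\bC\colon\VE_{n-k}\to\VE_k$ is continuous by Lemma~\ref{lemma_extension_convolution}, hence so is $\psi\mapsto\tau_{L_\bC\psi}$, the Laplacian extends boundedly to the relevant negative-order Sobolev spaces, and $P_3^*$ is a bundle morphism. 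Since $X_{n-k}/Y_{n-k}$, which we identify with $\Val^\infty_{n-k}$, is dense in $\VE_{n-k}$, it then suffices to treat $\psi\in\Val^\infty_{n-k}$.

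For such a $\psi$ I would write $\psi=\lcur0,\omega\rcur$ with $\omega\in\Omega^{n-k,k-1}(S\RR^n)^{tr}$, so that $\tau_\psi=D\omega$. By construction of the Rumin differential, $D\omega$ is closed and vertical, i.e.\ $d(D\omega)=0$ and $\alpha\wedge D\omega=0$; differentiating the latter and using the former gives $d\alpha\wedge D\omega=0$ as well. Consequently $P_0\tau_\psi=d\,D\omega=0$, $P_1\tau_\psi=\alpha\wedge D\omega=0$, and $P_2\tau_\psi=d\alpha\wedge D\omega=0$, so that $P_\bC\tau_\psi=P_3^*\Delta P_3\tau_\psi$ and everything reduces to identifying $P_3\tau_\psi$.

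The key observation is that $P_3$ is precisely the differential-form incarnation of $L_\bC$. Indeed, $L_\bC=L_{C_1}\circ\cdots\circ L_{C_{n-2k}}$ and each $L_{C_i}$ is the convolution with $nV(C_i,\Cdot[n-1])=\lcur0,\omega_{C_i}\rcur$, where $*_1D\omega_{C_i}=\Omega_{C_i}$ by Lemma~\ref{lemma:form_mixed_volume}. Iterating the identity $*_1D(\lcur0,\sigma_1\rcur*\lcur0,\sigma_2\rcur)=*_1D\sigma_1\wedge *_1D\sigma_2$ from Theorem~\ref{thm:BFconvolution} then yields $*_1\tau_{L_\bC\psi}=\Omega_{C_1}\wedge\cdots\wedge\Omega_{C_{n-2k}}\wedge *_1\tau_\psi=\Omega_\bC\wedge *_1\tau_\psi$, and applying $*_1^{-1}$ gives $\tau_{L_\bC\psi}=*_1^{-1}(\Omega_\bC\wedge *_1\tau_\psi)=P_3\tau_\psi$. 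Plugging this into $P_\bC\tau_\psi=P_3^*\Delta P_3\tau_\psi$ gives the claim.

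I do not expect a serious obstacle: the computation on $\Val^\infty_{n-k}$ is essentially bookkeeping, once one recalls that $\tau_\psi=D\omega$ is closed and vertical and that the bundle morphism $P_3$ reproduces the convolution with $\Omega_\bC$. The only point that needs a little care is the density/continuity argument in the first paragraph, namely making sure that both sides land in one fixed Sobolev completion so that the equality, known on the dense subspace $\Val^\infty_{n-k}$, propagates to all of $\VE_{n-k}$.
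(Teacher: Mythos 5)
Your proof is correct and follows essentially the same strategy as the paper's: both sides land in $H^{-4}(E_{n-k,k})$ by Lemma~\ref{lemma:sobolevExtension}, for smooth $\psi$ one verifies $P_i\tau_\psi=0$ for $i=0,1,2$ (from $D\omega$ being closed and vertical) and $\tau_{L_\bC\psi}=P_3\tau_\psi$ (from Lemma~\ref{lemma:form_mixed_volume} and Theorem~\ref{thm:BFconvolution}), and the general case follows by density. You merely spell out the verifications that the paper's proof leaves to the reader.
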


\proof
Note that both sides are well defined  forms in $H^{-4}(E_{n-k,k})$ by Lemma \ref{lemma:sobolevExtension}. Suppose first that $\psi \in \Val^\infty_{n-k}$. Then $\tau_{L_\bC\psi}= P_3 \tau_\psi$, while $P_i\tau_{\psi} =0$ for $i=0,1,2$, which implies the displayed equation. The general case follows by approximation, using Lemma \ref{lemma:sobolevExtension} and Lemma~\ref{lemma_extension_convolution}.
\endproof

\begin{proposition}\label{prop:ellipticEst}
For $0 \leq k \leq \frac{n}{2}$, there exists a constant $c>0$, depending on $k$ and $\bC$, such that
	\begin{displaymath}
			\|\psi \|_{\VF}\leq c \|L_{\mathbf C}\psi\|_{\VF}
	\end{displaymath}
for all $\psi\in \VF_{n-k}$.
\end{proposition}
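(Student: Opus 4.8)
The plan is to reduce the asserted inequality to the elliptic estimate of Theorem~\ref{thm:EOT1}(d) applied to the operator $P_\bC$. First observe that the cases $k=0$ and $k=\frac n2$ are essentially trivial: when $k=\frac n2$ one has $P_3=\mathrm{id}$ and $L_\bC$ is, up to a positive constant, an isomorphism by \eqref{eq_convolution_mixed_volumes}; when $k=0$ the target $\VF_0$ is one-dimensional and $L_\bC$ is multiplication by $V(C_1,\dots,C_n)\ne 0$. So assume $0<k<\frac n2$. Under the identification of $\VF_{n-k}$ with the $L^2$-completion of $Z_{n-k}$, a valuation $\psi\in\VF_{n-k}$ corresponds to $\tau_\psi = D\omega\in\Omega^{n-k,k}_{L^2}(S\RR^n)^{tr}$, and the $\VF$-norm of $\psi$ is comparable to $\|\tau_\psi\|_0$ (indeed $\|\psi\|_\VF = \|\tau_\psi\|_{L^2}$ up to the normalization constants in \eqref{eq_wannerer}). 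Likewise $\|L_\bC\psi\|_\VF$ is comparable to $\|\tau_{L_\bC\psi}\|_0$, and by Theorem~\ref{thm:BFconvolution} (or Lemma~\ref{lemma:P_on_tau} read at the level of $(k,k)$-forms) one has $\tau_{L_\bC\psi} = P_3\tau_\psi$ up to a positive constant. Thus it suffices to bound $\|\tau_\psi\|_0$ by a constant multiple of $\|P_3\tau_\psi\|_0$.

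The key point is that $\tau_\psi$ is not an arbitrary section of $E_{n-k,k}$: since $\tau_\psi$ comes from a valuation, it satisfies $P_0\tau_\psi = d\tau_\psi = 0$, $P_1\tau_\psi = \alpha\wedge\tau_\psi=0$, and $P_2\tau_\psi = d\alpha\wedge\tau_\psi = 0$ (first for $\psi\in\Val^\infty_{n-k}$, and then for general $\psi\in\VF_{n-k}$ by density and continuity of these order-$\le 1$ operators on $L^2$, exactly as in the proof of Lemma~\ref{lemma:P_on_tau}). Consequently, pairing with $\tau_\psi$,
\begin{displaymath}
\langle P_\bC\tau_\psi,\tau_\psi\rangle_{L^2} = \|P_0\tau_\psi\|_{L^2}^2 + \sum_{i=1}^3\langle \Delta P_i\tau_\psi,P_i\tau_\psi\rangle_{L^2} = \langle \Delta P_3\tau_\psi, P_3\tau_\psi\rangle_{L^2},
\end{displaymath}
and by Lemma~\ref{lemma:P_on_tau} this equals $\langle P_3^*\Delta\tau_{L_\bC\psi},\tau_\psi\rangle_{L^2} = \langle \Delta\tau_{L_\bC\psi},P_3\tau_\psi\rangle_{L^2}$, which after the identification $\tau_{L_\bC\psi}\simeq P_3\tau_\psi$ is $\le C\|P_3\tau_\psi\|_{1}^2$ — but more usefully it shows that $P_\bC\tau_\psi$ is controlled in a negative Sobolev norm by $\tau_\psi$ itself. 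The cleanest route is: since $P_\bC$ is elliptic (shown above) and injective on $\Gamma(E_{n-k,k})$, hence on $H^s$ by Theorem~\ref{thm:EOT1}(b), Theorem~\ref{thm:EOT1}(d) gives $\|u\|_0\le C\|P_\bC u\|_{-2}$ for all $u\in H^0(E_{n-k,k})$. Apply this with $u=\tau_\psi$ (first for smooth $\psi$, then by density) and use $P_\bC\tau_\psi = P_3^*\Delta P_3\tau_\psi$ together with the fact that $P_3^*\Delta P_3\colon H^0\to H^{-2}$ is bounded, composed through $H^0$ via $P_3\colon H^0\to H^0$ bounded and $\Delta\colon H^0\to H^{-2}$ bounded: this yields $\|\tau_\psi\|_0 \le C\|P_3^*\Delta P_3\tau_\psi\|_{-2} \le C'\|P_3\tau_\psi\|_0 = C''\|\tau_{L_\bC\psi}\|_0$, which is the desired estimate.

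The main obstacle is the bookkeeping needed to pass from smooth $\psi\in\Val^\infty_{n-k}$ to general elements of the Hilbert space completion $\VF_{n-k}$: one must check that the algebraic identities $P_0\tau_\psi=P_1\tau_\psi=P_2\tau_\psi=0$ and $\tau_{L_\bC\psi}=P_3\tau_\psi$ survive under $L^2$-limits, which is where the continuity of $D$, of the zeroth-order operators $P_1,P_2,P_3$, and of convolution with $\phi\in\Val^\infty$ on the completions (Lemma~\ref{lemma_extension_convolution}, Lemma~\ref{lemma:sobolevExtension}) is used. A secondary subtlety is that injectivity of $P_\bC$ has only been established for $0<k<\frac n2$ (and the degenerate endpoints handled separately), so one must invoke Theorem~\ref{thm:EOT1}(d) in the form that uses injectivity — which is legitimate precisely in that range — while treating $k\in\{0,\frac n2\}$ by hand as indicated.
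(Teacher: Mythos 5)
Your proof is correct and follows essentially the same route as the paper: handle $k=0$ and $k=\tfrac n2$ by hand, then for $0<k<\tfrac n2$ use injectivity of $P_\bC$ together with Theorem~\ref{thm:EOT1}(d) to get $\|\tau_\psi\|_0\le c\|P_\bC\tau_\psi\|_{-2}$, rewrite $P_\bC\tau_\psi=P_3^*\Delta\tau_{L_\bC\psi}$ (Lemma~\ref{lemma:P_on_tau}, which you rederive via $P_0\tau_\psi=P_1\tau_\psi=P_2\tau_\psi=0$ and $\tau_{L_\bC\psi}=P_3\tau_\psi$), and conclude with the order-$0$ and order-$2$ Sobolev bounds on $P_3^*$ and $\Delta$, before extending to $\VF_{n-k}$ by density. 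The detour through $\langle P_\bC\tau_\psi,\tau_\psi\rangle_{L^2}$ in the middle of your argument is not needed and you correctly abandon it in favor of the direct chain, which is exactly the one in the paper.
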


\begin{proof}
	The statement is trivial for $k=0$ (with $c=\frac{1}{n! V(C_1,\ldots,C_n)}$) and $k=\frac{n}{2}$ (with $c=1$). Let $0<k<\frac{n}{2}$. Let $\psi \in \Val^\infty_{n-k}$. Since $P_\bC$ is injective, Theorem~\ref{thm:EOT1}(d) and Lemma~\ref{lemma:sobolevExtension} imply that
 \begin{displaymath}
  \|\tau_{\psi}\|_{s} \leq c\|P_\bC\tau_{\psi}\|_{s-2} = c\|P_3^*\Delta\tau_{L_\bC\psi}\|_{s-2}
 	\leq c\|\Delta\tau_{L_\bC\psi}\|_{s-2} \leq  c\|\tau_{L_\bC\psi}\|_{s},
 \end{displaymath}
where in each step $c$ denotes a possibly different  constant. Choosing $s=0$ yields the desired inequality in the smooth case. The general case follows by approximation, using Lemma \ref{lemma_extension_convolution}.
\end{proof}

We are now ready to complete the proof of the hard Lefschetz theorem.

\begin{theorem}
	\label{prop_surjectivity}
For any $\phi \in \Val_{k}^\infty, 0 \leq  k  \leq \frac{n}{2}$ there is $\psi \in \Val_{n-k}^{\infty}$ such that
$L_\bC \psi=\phi$. 
\end{theorem}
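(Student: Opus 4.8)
The plan is to realize $\psi$ as a weak $\VF_{n-k}$-solution produced by Hilbert space duality, and then upgrade it to a smooth valuation via elliptic regularity. First I would dispose of the trivial endpoints: $k=0$ is immediate from \eqref{eq_convolution_mixed_volumes} and $k=\tfrac n2$ is tautological since $L_{\bC}$ is then (a positive multiple of) the identity. So assume $0<k<\tfrac n2$.

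The existence of a weak solution: consider the Poincar\'e pairing and observe that $L_{\bC}\colon \VF_{n-k}\to\VF_{k}$ (Lemma~\ref{lemma_extension_convolution}) has, under Poincar\'e duality (Theorem~\ref{thm:poincare}), an adjoint that is again essentially $L_{\bC}$, now acting $\VE_{k}\to\VE_{n-k}$; more precisely $\langle L_{\bC}\psi,\eta\rangle=\langle\psi,L_{\bC}\eta\rangle$ for $\psi\in\VF_{n-k}$, $\eta\in\VE_{k}$, by commutativity of the convolution. The elliptic estimate Proposition~\ref{prop:ellipticEst} says $L_{\bC}$ is bounded below on $\VF_{n-k}$, hence has closed range; combined with the injectivity of $L_{\bC}$ on $\VE_k$ coming from Proposition~\ref{prop_injectivity} extended to $\VE_k$ (or, equivalently, the injectivity of the formal adjoint), a standard closed-range/orthogonality argument shows $L_{\bC}\colon\VF_{n-k}\to\VF_{k}$ is onto $\VE_k^{*}$-functionals, and in particular there is $\psi\in\VF_{n-k}\subset\VE_{n-k}$ with $L_{\bC}\psi=\phi$ as generalized valuations. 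Concretely: the functional $\eta\mapsto\langle\phi,\eta\rangle$ on $L_{\bC}(\VE_k)$ is bounded by the elliptic estimate applied to the adjoint, so Riesz/Hahn--Banach produces $\psi$.

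Now the regularity step, which is the heart of the matter. Given $\psi\in\VE_{n-k}$ with $L_{\bC}\psi=\phi\in\Val_k^\infty$, Lemma~\ref{lemma:P_on_tau} gives
\begin{displaymath}
P_{\bC}\tau_{\psi}=P_3^{*}\Delta\,\tau_{L_{\bC}\psi}=P_3^{*}\Delta\,\tau_{\phi},
\end{displaymath}
and the right-hand side is smooth because $\phi$ is. Since $\tau_{\psi}\in H^{-2}(E_{n-k,k})$ and $P_{\bC}$ is an elliptic operator of order two (established in the previous subsection), elliptic regularity Theorem~\ref{thm:EOT1}(a) forces $\tau_{\psi}\in C^\infty$. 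A form in $\Omega^{n-k,k}(S\RR^n)^{tr}$ of the type $\tau_\psi=D\omega$ (which it is, as a smooth closed vertical form with the appropriate integral vanishing, by the characterization via Lemma~\ref{lem:closed_vertical}) defines a smooth valuation; thus $\psi\in\Val_{n-k}^\infty$. Together with Proposition~\ref{prop_injectivity}, this yields that $L_{\bC}\colon\Val_{n-k}^\infty\to\Val_k^\infty$ is a continuous bijection, and the open mapping theorem (in the Fr\'echet category) upgrades it to an isomorphism of topological vector spaces, completing the hard Lefschetz theorem.

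The main obstacle I expect is the existence of the weak solution in the correct space: one must match the functional-analytic duality (Theorem~\ref{thm:poincare}) with the one-sided elliptic estimate (Proposition~\ref{prop:ellipticEst}) so that the solution lands in $\VF_{n-k}$ (or at least $\VE_{n-k}$) rather than merely in some dual of smooth valuations, and one must be careful that the adjoint of $L_{\bC}$ under Poincar\'e duality is again $L_{\bC}$ — which it is only because convolution is commutative and the Poincar\'e pairing is defined through it. Once $\psi$ is known to be a genuine element of $\VE_{n-k}$, the passage to smoothness is a clean application of Lemma~\ref{lemma:P_on_tau} and elliptic regularity.
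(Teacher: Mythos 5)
Your overall scheme matches the paper's exactly: produce a weak solution by combining the elliptic estimate with Hahn--Banach and Poincar\'e duality, then upgrade it to a smooth valuation by feeding $P_\bC\tau_\psi=P_3^*\Delta\tau_\phi$ into elliptic regularity. The regularity half of your argument is precisely the paper's. But the existence half is garbled in ways that would break a careful write-up. You write $L_\bC\colon\VE_k\to\VE_{n-k}$ as the adjoint of $L_\bC\colon\VF_{n-k}\to\VF_k$; since $L_\bC$ lowers the degree by $n-2k$ and $k<\tfrac n2$, the correct direction is $\VE_{n-k}\to\VE_k$, as Lemma~\ref{prop:Lefschetz_isomE} shows: $\Pd_k\circ L_\bC^\VE=(L_\bC^\VF)^*\circ\Pd_{n-k}$. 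Your ``concrete'' functional $\eta\mapsto\langle\phi,\eta\rangle$ on $L_\bC(\VE_k)$ is ill-typed ($L_\bC(\VE_k)$ would live in degree $3k-n<0$); what the argument needs is the functional $\ell(L_\bC\rho)=\langle\phi,\rho\rangle$ on the subspace $\{L_\bC\rho:\rho\in\Val_{n-k}^\infty\}\subset\VF_k$, well-defined by Proposition~\ref{prop_injectivity} and bounded by the chain $|\ell(L_\bC\rho)|\leq\|\phi\|_\VE\|\rho\|_\VF\leq c\|\phi\|_\VE\|L_\bC\rho\|_\VF$ from Proposition~\ref{prop:ellipticEst}; then Hahn--Banach extends it to $\VF_k$ and Poincar\'e duality represents the extension by some $\psi\in\VE_{n-k}$ — not a priori in $\VF_{n-k}$ as you claim. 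That $\psi\in\VF_{n-k}$ (indeed $\psi\in\Val_{n-k}^\infty$) only emerges after the regularity step, so asserting it at the existence stage is unwarranted. The phrase ``onto $\VE_k^*$-functionals'' has no clear meaning here, and the closed-range machinery you allude to is unnecessary once the functional is set up correctly — Hahn--Banach alone suffices. With these corrections your proposal is the paper's proof.
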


\begin{proof}
	 The statement follows from injectivity of $L_\bC$ (Proposition \ref{prop_injectivity}) for $k=0$ and is trivial for $k=\frac{n}{2}$. We may therefore assume that $0<k<\frac{n}{2}$. By Proposition~\ref{prop:SobolevVal} we may consider $\phi$ as an element of $\VE_k$. Define the subspace
	\begin{displaymath}
		H:=\{L_\bC\rho \mid \rho\in\Val_{n-k}^\infty\}  \subset \VF_{k}
	\end{displaymath}
	and the following linear functional on $H$:
	\begin{displaymath}
		\ell(L_\bC\rho)=\langle\phi, \rho\rangle,
	\end{displaymath}
where on the right-hand side is the (extended) Poincar\'e pairing on $\VE_k \times \VF_{n-k}$. 
Note that $\ell$ is well defined by the injectivity part of the hard Lefschetz theorem, Proposition \ref{prop_injectivity}. Using the $L^2$ estimate of Proposition~\ref{prop:ellipticEst}, we obtain
\begin{displaymath}
 |\ell(L_\bC\rho)| \leq \|\phi\|_\VE \|\rho\|_\VF \leq c   \|\phi\|_\VE \|L_\bC \rho\|_\VF,
\end{displaymath}
hence $\ell$ is a bounded linear functional. 

By the Hahn--Banach theorem $\ell$ extends to a continuous linear functional $\tilde \ell$ on $\VF_k$. By Poincar\'e duality (Theorem~\ref{thm:poincare}), there is $\psi\in \VE_{n-k}$ such that $\tilde\ell = \langle  \psi, \Cdot\rangle$ and hence
\begin{displaymath}
   \langle\phi, \rho\rangle= \ell(L_\bC \rho)= \langle \psi, L_\bC \rho\rangle = \langle  L_\bC\psi, \rho\rangle
\end{displaymath}
for all $\rho\in\Val_{n-k}^\infty$. Consequently, we have  $L_\bC\psi =\phi$ and therefore  $P_\bC \tau_\psi=P_3^*\Delta \tau_\phi$ by Lemma \ref{lemma:P_on_tau}. Since the right-hand side is smooth if $\phi$ is smooth and $P_\bC$ is elliptic, Theorem~\ref{thm:EOT1}(a) implies that $\tau_\psi$ is smooth and hence $\psi \in \Val_{n-k}^{\infty}$.
\end{proof}

\begin{proof}[Proof of Theorem~\ref{thm:main}(a)] 
Proposition~\ref{prop_injectivity} and  Theorem~\ref{prop_surjectivity} imply that the map $L_\bC\colon \Val_{n-k}^\infty\to \Val_{k}^\infty$ is a bijection. Since $L_\bC$ is continuous, the open mapping theorem for Fr\'echet spaces implies that $L_\bC$ is an isomorphism of topological vector spaces. 
\end{proof}

\subsection{Continuous dependence on the reference bodies}

Identifying a convex body $A$ with its support function $ h_A$, we can view $\calK^\infty_+(\RR^n)$ as a subset of $C^2(S^{n-1})$. The topology inherited from this inclusion is called the $C^2$ topology on $\calK^\infty_+(\RR^n)$.  If $X$ and $Y$ are Banach spaces we denote by $\scrL(X,Y)$ the Banach space of bounded linear operators from $X$ to $Y$.

The reason why we work in the following proposition with the $C^2$ topology on $\calK^\infty_+(\RR^n)$ and not on $\calK^2_+(\RR^n)$, the class of convex bodies with  support function in $C^2$ and strictly positive radii of curvature, is that we don't know whether  the Lefschetz map $L_\bC$ is surjective for  reference bodies belonging merely to  $\calK^2_+(\RR^n)$. 

\begin{proposition}
\label{prop:contDependence}
Let $0 \leq k \leq \frac{n}{2}$. The  isomorphism $L_\bC\colon \Val^\infty_{n-k}\to \Val^\infty_k$ extends to an isomorphism $L_\bC
\colon \VF_{n-k}\to \VF_{k}$ of Hilbert spaces. Moreover, the maps
 $\calK^\infty_+(\RR^n)^{n-2k}\to \scrL(\VF_{n-k}, \VF_k)$,
\begin{displaymath}
	(C_1,\ldots, C_{n-2k})\mapsto L_{\bC},
\end{displaymath}
and $\calK^\infty_+(\RR^n)^{n-2k}\to \scrL(\VF_{k}, \VF_{n-k})$,
\begin{displaymath}
(C_1,\ldots, C_{n-2k})\mapsto (L_{\bC})^{-1}	,
\end{displaymath}
where the domain is equipped with the $C^2$ topology, are continuous.

\end{proposition}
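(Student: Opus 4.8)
The plan is to leverage the elliptic machinery developed in this section—specifically the operator $P_\bC$, the identity $P_\bC \tau_\psi = P_3^* \Delta \tau_{L_\bC \psi}$ from Lemma~\ref{lemma:P_on_tau}, and the $L^2$ estimate of Proposition~\ref{prop:ellipticEst}—and to track how the constants depend on the reference bodies $\bC$. The first step is to establish that $L_\bC$ extends to an isomorphism $\VF_{n-k}\to\VF_k$ of Hilbert spaces. Continuity of $L_\bC\colon\VF_{n-k}\to\VF_k$ is Lemma~\ref{lemma_extension_convolution}; injectivity with closed range is immediate from Proposition~\ref{prop:ellipticEst} (the estimate $\|\psi\|_\VF \leq c\|L_\bC\psi\|_\VF$ passes to the completion). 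Surjectivity onto all of $\VF_k$ follows from the surjectivity already proven on the dense subspace $\Val_k^\infty\subset\VF_k$ (Theorem~\ref{prop_surjectivity}): given $\phi\in\VF_k$, approximate by $\phi_j\in\Val_k^\infty$, solve $L_\bC\psi_j=\phi_j$ with $\psi_j\in\Val_{n-k}^\infty$, and use the estimate to see that $(\psi_j)$ is Cauchy in $\VF_{n-k}$, with limit $\psi$ satisfying $L_\bC\psi=\phi$. Hence $L_\bC$ is a continuous bijection between Hilbert spaces, thus an isomorphism by the open mapping theorem.

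The second step is continuity of $\bC\mapsto L_\bC$ in operator norm. The bundle morphism $P_3 = *_1^{-1}(\Omega_\bC\wedge *_1\,\cdot\,)$ depends on $\bC$ only through $\Omega_\bC = \Omega_{C_1}\wedge\cdots\wedge\Omega_{C_{n-2k}}$, and each $\Omega_{C_i} = -d\alpha_{C_i}$ is built from second derivatives of the support function $h_{C_i}$; thus the coefficients of $P_3$ (as a bundle map, computed at each $(x,u)$) depend continuously—indeed, in $C^0(S^{n-1})$—on $(h_{C_1},\dots,h_{C_{n-2k}})$ in the $C^2$ topology. Since for $\psi\in\Val_{n-k}^\infty$ one has $\tau_{L_\bC\psi} = P_3\tau_\psi$ pointwise, it follows that $\|L_\bC\psi - L_{\bC'}\psi\|_\VF = \|(P_3 - P_3')\tau_\psi\|_{L^2} \leq \|P_3-P_3'\|_{C^0}\,\|\tau_\psi\|_{L^2}$, giving $\|L_\bC - L_{\bC'}\|_{\scrL(\VF_{n-k},\VF_k)} \leq \|P_3 - P_3'\|_{C^0} \to 0$ as $\bC'\to\bC$ in $C^2$. (One should record here that $\|\psi\|_\VF = \|\tau_\psi\|_{L^2}$ for $1\le k\le n-1$, and handle the endpoint degrees trivially.)

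The third step is continuity of $\bC\mapsto (L_\bC)^{-1}$. This is the standard fact that inversion is continuous on the set of invertible operators between two fixed Banach spaces, combined with a uniform bound on $\|(L_\bC)^{-1}\|$ over a neighborhood. The uniform bound is the delicate point: Proposition~\ref{prop:ellipticEst} gives $\|(L_\bC)^{-1}\| \leq c(k,\bC)$, but we need $c$ to be locally bounded in $\bC$. Tracing its origin, $c$ comes from the elliptic estimate Theorem~\ref{thm:EOT1}(d) applied to $P_\bC$, whose constant depends on the $C^\infty$-seminorms of the coefficients of $P_\bC$ and, crucially, on a lower bound for $P_\bC$ coming from its injectivity (equivalently, a positive lower bound for the smallest eigenvalue); the latter in turn rests on the uniform positivity in Timorin's inequality \eqref{eq:timorinElliptic}, which is a continuous (hence, on compact parameter sets, uniform) function of the positive $(1,1)$-forms $\Omega_{C_i}$. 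I expect \textbf{this uniformity—showing the constant in the elliptic estimate for $P_\bC$ is locally bounded in the $C^2$ topology on $\bC$—to be the main obstacle}, since the classical statement of Theorem~\ref{thm:EOT1}(d) does not come with explicit control of the constant. The cleanest remedy is probably a compactness/continuity argument: the coefficients of $P_\bC$ depend continuously on $\bC$ in every $C^m$-norm (they are smooth functions of $h_{C_i}$ and its $u$-derivatives, which vary continuously in $C^\infty(S^{n-1})$ once $\bC$ varies in $C^2$—here one uses that $\calK^\infty_+$ consists of fixed smooth bodies and only the parameter, not the regularity class, is moving), the family $\{P_\bC\}$ is a continuous family of elliptic operators with uniformly invertible symbol on a compact neighborhood, and for such families the parametrix construction yields a locally uniform constant in (d); alternatively one invokes a direct argument that $\psi\mapsto (L_\bC\psi, \text{lower-order data})$ depends continuously on $\bC$ together with the already-established invertibility, and concludes by a Neumann-series perturbation. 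With the uniform bound in hand, $\|(L_\bC)^{-1} - (L_{\bC'})^{-1}\| \leq \|(L_\bC)^{-1}\|\,\|(L_{\bC'})^{-1}\|\,\|L_\bC - L_{\bC'}\| \to 0$ finishes the proof.
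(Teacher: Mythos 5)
Your overall structure mirrors the paper's: use Lemma~\ref{lemma_extension_convolution} for boundedness of $L_\bC$, Proposition~\ref{prop:ellipticEst} for the inverse (hence the Hilbert-space isomorphism), the pointwise identity $\tau_{L_\bC\psi}=P_3\tau_\psi$ together with the $C^2$-dependence of $\Omega_\bC$ for continuity of $\bC\mapsto L_\bC$, and continuity of inversion for $\bC\mapsto(L_\bC)^{-1}$. Steps one and two are correct and match the paper (the paper is just terser).

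However, your step three is overcomplicated, and the ``main obstacle'' you flag does not exist. You assert that proving continuity of $\bC\mapsto(L_\bC)^{-1}$ requires, as a \emph{separate input}, a locally uniform bound on $\|(L_\bC)^{-1}\|$, and you then propose tracking how the constant in Theorem~\ref{thm:EOT1}(d) depends on $\bC$. This gets the logic backwards. The elementary fact is that on the open set of invertible operators in $\scrL(\VF_{n-k},\VF_k)$, the inversion map $T\mapsto T^{-1}$ is continuous, and its proof is self-contained: if $T_0$ is invertible and $\|T-T_0\|<\|T_0^{-1}\|^{-1}$, the Neumann series shows $T$ is invertible with $T^{-1}=\bigl(\mathrm{id}-T_0^{-1}(T_0-T)\bigr)^{-1}T_0^{-1}$, which both yields the local uniform bound on $\|T^{-1}\|$ \emph{automatically} and gives $T^{-1}\to T_0^{-1}$ in operator norm as $T\to T_0$. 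So once you know each $L_\bC$ is invertible (your step one) and $\bC\mapsto L_\bC$ is norm-continuous (your step two), continuity of $\bC\mapsto(L_\bC)^{-1}$ is immediate. There is no need to revisit the elliptic estimate or its parametrix, and the entire digression about uniformity of the constant $c(k,\bC)$ should be deleted. The Neumann-series ``alternative'' you mention in passing at the end is in fact the whole argument, not an alternative; it renders the elliptic input irrelevant to step three.
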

\begin{proof}
	The elliptic estimate Proposition~\ref{prop:ellipticEst} implies that the inverse 
	\begin{displaymath}
		L^{-1}_\bC \colon \Val_k^\infty \to \Val_{n-k}^\infty
	\end{displaymath}
extends continuously to $\VF_k\to \VF_{n-k}$. This proves the first part. 
	
	The continuous dependence of $L_\bC$ on $\bC$ is clear from the definition of the norm on $\VF$. The continuity of $\bC\mapsto (L_\bC)^{-1}$ is an immediate consequence of the elementary fact that inversion is continuous in the operator norm.
\end{proof}

Our next goal is to show that the same conclusion holds if  $\VF$ is replaced by $\VE$.
We will show this using Poincar\'e duality.

\begin{lemma} \label{prop:Lefschetz_isomE}
		Let $C \in \calK^\infty_+(\R^n)$. Then the following diagram commutes:
	\begin{center}
		\begin{tikzpicture}
			\matrix (m) [matrix of math nodes,row sep=3em,column sep=4em,minimum width=2em]
			{ \VE_{k+1}  & (\VF_{n-k-1})^*\\
				\VE_{k} &  (\VF_{n-k})^*\\};
			\path[-stealth]
			(m-1-1) edge node [left] {$L^{\VE}_C$}  (m-2-1)
			(m-1-1) edge node [above] {$\pd_{k+1}$}  (m-1-2)
			(m-1-2) edge node [right] {$(L^\VF_C)^*$}  (m-2-2)
			(m-2-1) edge node [above] {$\pd_k$}  (m-2-2);
		\end{tikzpicture}
	\end{center}
\end{lemma}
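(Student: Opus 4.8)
The plan is to verify the commutativity of the diagram by unwinding all four maps on the dense subspace $\Val^\infty_{k+1}\subset\VE_{k+1}$ and then passing to the completion by continuity. All four maps in the diagram are continuous by Lemma~\ref{lemma_extension_convolution} (for $L^\VE_C$ and $L^\VF_C$) and by Theorem~\ref{thm:poincare} (for $\pd_k$ and $\pd_{k+1}$), and $\Val^\infty_{k+1}$ is dense in $\VE_{k+1}$ by Proposition~\ref{prop:SobolevVal}, so it suffices to check the identity of functionals
\begin{displaymath}
  \left((L^\VF_C)^*\circ\pd_{k+1}\right)(\phi) = \left(\pd_k\circ L^\VE_C\right)(\phi) \quad\text{in } (\VF_{n-k})^*
\end{displaymath}
for every $\phi\in\Val^\infty_{k+1}$, and moreover, since $\Val^\infty_{n-k}$ is dense in $\VF_{n-k}$, to test both sides against an arbitrary $\psi\in\Val^\infty_{n-k}$.

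The key step is then a direct computation using only the definitions. Unwinding the right-hand side, $(\pd_k\circ L^\VE_C)(\phi)$ is the functional $\psi\mapsto\langle L_C\phi,\psi\rangle$, where $\langle\cdot,\cdot\rangle$ is the (extended) Poincar\'e pairing $\VE_k\times\VF_{n-k}\to\CC$. Unwinding the left-hand side, $\pd_{k+1}(\phi)$ is the functional $\rho\mapsto\langle\phi,\rho\rangle$ on $\VF_{n-k-1}$, and applying $(L^\VF_C)^*$ precomposes with $L^\VF_C\colon\VF_{n-k}\to\VF_{n-k-1}$, giving the functional $\psi\mapsto\langle\phi,L_C\psi\rangle$. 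Thus the asserted commutativity reduces to the identity
\begin{displaymath}
  \langle L_C\phi,\psi\rangle = \langle\phi,L_C\psi\rangle,\quad \phi\in\Val^\infty_{k+1},\ \psi\in\Val^\infty_{n-k},
\end{displaymath}
which for smooth valuations is immediate from the definition of the Poincar\'e pairing via convolution \eqref{eq:Poincare_pairing}, the commutativity and associativity of the convolution product, and the fact that $L_C$ is convolution (up to a fixed positive constant) with $nV(C,\Cdot[n-1])$: both sides equal $\bigl(nV(C,\Cdot[n-1])*\phi*\psi\bigr)(\{0\})$ up to the same constant.

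I do not expect a genuine obstacle here; the only point requiring a little care is the bookkeeping of which completion each pairing lives on and checking that the continuous extensions of $L_C$ from $\Val^\infty$ to $\VE$ and to $\VF$ (which exist by Lemma~\ref{lemma_extension_convolution}) are indeed compatible with the extension of the Poincar\'e pairing (Theorem~\ref{thm:poincare}), so that the smooth identity propagates to the diagram of Hilbert spaces. This compatibility is built into the constructions, so the density argument in the first paragraph closes the proof once the smooth identity above is established.
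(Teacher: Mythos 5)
Your proof is correct and follows essentially the same route as the paper: the paper's proof consists of the one-line observation that $\langle L_C\phi,\psi\rangle=\langle\phi,L_C\psi\rangle$ holds for smooth valuations (which is your convolution-commutativity computation) and then extends by continuity (which is your density argument). You have simply spelled out what the paper leaves implicit.
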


\begin{proof}
	The identity $\langle L_C \phi,\psi\rangle = \langle  \phi,L_C\psi\rangle$  clearly holds for smooth valuations and extends by continuity to $\phi\in \VE_{k+1}$ and $\psi\in \VF_{n-k}$.
\end{proof}

\begin{corollary} \label{cor_contDependence_E}
Proposition~\ref{prop:contDependence} holds also with $\VF$ replaced by $\VE$. 
\end{corollary}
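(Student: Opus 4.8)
The plan is to deduce the $\VE$-version directly from the already-established $\VF$-version by dualizing, that is, by iterating the commutative square of Lemma~\ref{prop:Lefschetz_isomE}. First I would stack the $n-2k$ squares of that lemma associated to $C_1,\dots,C_{n-2k}$ (the order is immaterial, since convolution is commutative); the shared intermediate rows cancel, and using $(ST)^{*}=T^{*}S^{*}$ repeatedly one obtains the single commutative square
\begin{displaymath}
\pd_k\circ L^\VE_\bC=(L^\VF_\bC)^{*}\circ\pd_{n-k},
\end{displaymath}
where $L^\VE_\bC\colon \VE_{n-k}\to\VE_k$ is the continuous $L^2$-extension of $L_\bC|_{\Val^\infty}$ (it coincides with the composition of the single-body extensions of Lemma~\ref{lemma_extension_convolution}, being continuous and agreeing with $L_\bC$ on the dense subspace $\Val^\infty_{n-k}$), and $L^\VF_\bC\colon\VF_{n-k}\to\VF_k$ is the isomorphism from Proposition~\ref{prop:contDependence}. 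Equivalently,
\begin{displaymath}
L^\VE_\bC=\pd_k^{-1}\circ(L^\VF_\bC)^{*}\circ\pd_{n-k}.
\end{displaymath}

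From here everything follows formally. By Theorem~\ref{thm:poincare} the Poincar\'e duality maps $\pd_k,\pd_{n-k}$ are isomorphisms of Hilbert spaces, and by Proposition~\ref{prop:contDependence} the map $L^\VF_\bC$ is an isomorphism, hence so is its Banach adjoint $(L^\VF_\bC)^{*}\colon(\VF_k)^{*}\to(\VF_{n-k})^{*}$; the second displayed identity therefore exhibits $L^\VE_\bC$ as a composition of Hilbert-space isomorphisms, which is the first assertion. For continuity I would use that the Banach adjoint $T\mapsto T^{*}$ is an isometry (in particular $1$-Lipschitz) from $\scrL(\VF_{n-k},\VF_k)$ to $\scrL((\VF_k)^{*},(\VF_{n-k})^{*})$, while $\pd_k^{-1}$ and $\pd_{n-k}$ are fixed bounded operators; since $\bC\mapsto L^\VF_\bC$ is continuous for the $C^2$ topology by Proposition~\ref{prop:contDependence}, composing with these shows that $\bC\mapsto L^\VE_\bC$ is continuous. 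Likewise, from $(L^\VE_\bC)^{-1}=\pd_{n-k}^{-1}\circ((L^\VF_\bC)^{-1})^{*}\circ\pd_k$ and the continuity of $\bC\mapsto(L^\VF_\bC)^{-1}$ (again Proposition~\ref{prop:contDependence}) one gets continuity of $\bC\mapsto(L^\VE_\bC)^{-1}$. The cases $k\in\{0,n/2\}$ are trivial, exactly as in Proposition~\ref{prop:contDependence}.

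I do not expect a genuine obstacle here: the argument is pure diagram-chasing combined with the elementary facts that Banach duality is an isometry between operator spaces and that inversion is continuous in the operator norm. The only point deserving a line of care is the identification of the iterated left-hand vertical map in Lemma~\ref{prop:Lefschetz_isomE} with the honest $L^2$-extension of $L_\bC$, which rests on nothing more than density of $\Val^\infty_{n-k}$ in $\VE_{n-k}$ and continuity of the maps involved; everything else is bookkeeping.
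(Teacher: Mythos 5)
Your proposal is correct and follows essentially the same route as the paper: deduce the commutative square $\pd_k\circ L^\VE_\bC=(L^\VF_\bC)^{*}\circ\pd_{n-k}$ by iterating Lemma~\ref{prop:Lefschetz_isomE}, then obtain both the isomorphism claim and the continuity claims from Proposition~\ref{prop:contDependence} via the facts that Banach duality and composition are continuous in the operator norm. The extra detail you supply (stacking the single-body squares, checking that the iterated left vertical arrow agrees with the $L^2$-extension of $L_\bC$ by density and continuity) is exactly what the paper elides with the phrase ``immediately implies,'' so there is no substantive difference in method.
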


\begin{proof}
Lemma~\ref{prop:Lefschetz_isomE} immediately implies $\Pd_k \circ L_\bC^\VE =  (L_\bC^\VF)^* \circ \Pd_{n-k}$ or,  equivalently, $L_\bC^\VE = (\Pd_k)^{-1} \circ  (L_\bC^\VF)^* \circ \Pd_{n-k}$. Since taking duals and composition are continuous in the operator norm, the corollary follows from Proposition~\ref{prop:contDependence}.
\end{proof}

\section{Hodge--Riemann relations}

\label{s:HR}

In this section we use the properties of the Lefschetz isomorphism, in particular its continuous dependence  on the reference bodies, to prove the Hodge--Riemann relations. As mentioned already in the introduction, the basic idea is to continuously deform a given tuple of reference bodies $\bC=(C_0,\ldots, C_{n-2k})$ into euclidean balls $\bB=(B^n,\ldots, B^n)$  and to argue that positivity of the Hodge--Riemann form $q_\bC$  is preserved during this process.

The Hodge--Riemann form $q_\bC$  extends to a bounded quadratic form on $\VP_\bC\subset \VF_{n-k}$, the closed subspace of primitive valuations with respect to $\bC$. However,  $0$ will always belong to the spectrum of $q_\bC$ and therefore we cannot directly conclude that small perturbations of $q_\bC$ will remain positive.  Luckily we can also work with the Hilbert space $\VE_{n-k}$, in which  $q_\bC$ is strictly positive. The price we have to pay for this, however, is that our quadratic form becomes unbounded, which adds another layer of technical difficulties.

In this and the following section, we will slightly deviate from the previous notation. Since the additional body $C_0$ is important in the definition of the primitive subspaces and hence in the Hodge--Riemann form, we will denote by $\bC=(C_0,\ldots, C_{n-2k})$ the complete tuple of reference bodies and write $\bC_{\setminus0}:=(C_1,\ldots,C_{n-2k})$ if we wish to exclude $C_0$.

\subsection{Facts from perturbation theory}
For the convenience of the reader, we  review here some of the notions appearing in the statement of the Kato--Rellich theorem and refer  to the books \cite{ReedSimon:I,ReedSimon:II,Kato} for more information.

An operator in a Hilbert space $\scrH$ is a linear map $T\colon \dom(T)\to \scrH$ defined on a subspace $\dom (T)$ of $\scrH$. If $\dom(T)$, the  domain of $T$, is a dense subspace  of $\scrH$, then $T$ is said to be densely defined. Note that if $T$ is densely defined and  bounded, then $T$ can be uniquely extended to all of $\scrH$. 

An operator $T$ is called closed if its graph is a closed subset of $\scrH\times \scrH$. If the closure of the graph of $T$ happens to be a graph of an operator $\b T$, then $T$ is called closable. In this case $\b T$ is the smallest closed extension of $T$.

The adjoint $T^*$ of a densely defined operator  $T$  on $\scrH$ is defined as follows. Let $\dom(T^*)$ denote the set of all $x\in \scrH$ for which there is an $y\in\scrH$ with $ \langle Tz, x\rangle = \langle z, y\rangle$ for all $z\in \dom(T)$. For each such $x\in \dom(T^*)$ one defines $T^* x = y$. The adjoint need not to be densely defined and even $\dom(T^*)=0$ is possible.  A densely defined  operator $T$ is called self-adjoint if $\dom(T)=\dom(T^*)$ and $T^*=T$; 
 $T$ is called symmetric if $\dom(T) \subset \dom(T^*)$ and $T^*|_{\dom(T)}=T$. Another way to express this is to say that 
 \begin{displaymath}
   \langle Tx,y\rangle = \langle x,Ty\rangle
 \end{displaymath}
 for all $x,y\in \dom (T)$. 
 
 A symmetric operator $T$ is said to be bounded below by $M\in \RR$, if $\langle Tx,x\rangle \geq M \langle x,x\rangle$ holds for all $x\in \dom (T)$. $T$ is called (strictly) positive if $T$ is bounded below by a (strictly) positive number.
 Operators that are  bounded below by some number are also called semibounded. If $T$ is bounded below by $M$ and closable, then $\b T$ is also bounded below by $M$.
 
 A densely defined operator $T$ is called essentially self-adjoint if $\b T$ is self-adjoint. Proving that an operator is self-adjoint can be difficult. Below we will make use of the following criterion.

\begin{theorem}[{\cite[Theorem X.26]{ReedSimon:II}}]\label{thm:saExt}
Let $T$ be a strictly positive symmetric operator. Then the following are equivalent
\begin{enuma}
	\item $T$ is essentially self-adjoint.
	\item $T$ has dense image.
	\item $T$ has only one semibounded self-adjoint extension. 
\end{enuma}
\end{theorem}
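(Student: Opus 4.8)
The final statement to prove is Theorem X.26 from Reed--Simon, which is quoted as a black-box citation. Since the paper explicitly says "we will make use of the following criterion" and attributes it to \cite[Theorem X.26]{ReedSimon:II}, no proof is expected here — but if I were to sketch one, here is how I would proceed.

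\medskip

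The plan is to prove the three equivalences via the Friedrichs extension and the basic theory of semibounded self-adjoint operators. First I would recall that a strictly positive symmetric operator $T$ (say $\langle Tx,x\rangle \geq M\langle x,x\rangle$ with $M>0$) always admits at least one self-adjoint extension, the Friedrichs extension $T_F$, constructed by completing $\dom(T)$ with respect to the inner product $\langle x,y\rangle_+ = \langle Tx,y\rangle$ and identifying this completion with a subspace of $\scrH$ (strict positivity guarantees the completion embeds injectively into $\scrH$); moreover $T_F$ is again bounded below by $M$. This is the standard tool underlying all three statements.

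\medskip

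For (a)$\Rightarrow$(b): if $\bar T$ is self-adjoint and strictly positive (bounded below by $M>0$), then $0$ is not in its spectrum, so $\bar T$ is a bijection of $\dom(\bar T)$ onto $\scrH$; in particular $\operatorname{im}(\bar T)=\scrH$, and since $\operatorname{im}(T)$ is dense in $\operatorname{im}(\bar T)$ (as $T$ is a core for $\bar T$), $T$ has dense image. For (b)$\Rightarrow$(a): suppose $\operatorname{im}(T)$ is dense. I would show $\ker(T^*)=0$: if $T^*x=0$ then $\langle Tz,x\rangle = \langle z,T^*x\rangle = 0$ for all $z\in\dom(T)$, so $x\perp \operatorname{im}(T)$, hence $x=0$. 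Combined with the fact that $\operatorname{ran}(T_F)=\scrH$ (from strict positivity of the Friedrichs extension) and a comparison argument — any self-adjoint extension $S$ of $T$ satisfies $T\subset S\subset T^*$, so $\ker S \subset \ker T^* = 0$, forcing $S$ bijective — one deduces that the self-adjoint extension is unique, and in fact equals $\bar T$, giving essential self-adjointness. For (c): the equivalence with (a),(b) follows by noting that $T_F$ is always a semibounded self-adjoint extension, so if there is only one it must be $T_F$, and separately one shows $T_F = \bar T$ precisely when $\bar T$ is self-adjoint; conversely distinct semibounded extensions are ruled out exactly when $\operatorname{im}(T)$ is dense by the kernel argument above.

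\medskip

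The main obstacle in a self-contained proof would be constructing the Friedrichs extension carefully and verifying that it is bounded below by the same constant $M$ — this requires the form-domain completion argument and checking the embedding into $\scrH$ is injective, which is where strict positivity (as opposed to mere non-negativity) is used. In the present paper, however, this is not an obstacle at all: the statement is invoked as a citation, and the proof is simply the reference to \cite[Theorem X.26]{ReedSimon:II}.
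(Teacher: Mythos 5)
You correctly observe that the paper does not prove Theorem~\ref{thm:saExt}: it is quoted verbatim from Reed--Simon and invoked as a black box, so no proof is owed. Your Friedrichs-extension sketch is in fact the standard route, so there is no divergence from ``the paper's proof'' to discuss. For what it is worth, the sketch does contain two real gaps that would need filling in a self-contained treatment. In (b)$\Rightarrow$(a) you argue that $\ker S\subset\ker T^*=0$ ``forces $S$ bijective'' for any self-adjoint extension $S$; but for a self-adjoint operator, $\ker S=0$ gives only dense range, not surjectivity, unless one already knows $0\notin\sigma(S)$ --- and an arbitrary self-adjoint extension of a strictly positive $T$ need not stay bounded below (the Krein extension typically has $0$ in its spectrum). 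The correct mechanism is: $T^{-1}\colon\operatorname{ran}(T)\to\dom(T)$ is bounded and symmetric on a dense domain, hence extends uniquely to a bounded self-adjoint operator which must equal $T_F^{-1}$, and a graph-norm argument then gives $T_F=\bar T$. Likewise, the equivalence with (c) is not obtained ``by the kernel argument''; the non-trivial half is to \emph{produce} a second semibounded self-adjoint extension when $T$ is not essentially self-adjoint, which requires the von Neumann/Krein extension theory. None of this affects the paper, which only uses the statement as a cited fact.
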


For a closed operator $T$ one  defines its spectrum as follows. A complex number $\lambda$ is said to be in the resolvent set of $T$ if $\lambda -T \colon \dom(T)\to \scrH$ is a bijection. The resolvent set is denoted $\rho(T)$. The complement of the resolvent set is the spectrum of $T$ and is denoted by $\sigma(T)$.

The following lemma is a simple consequence of the spectral theorem for unbounded operators (see \cite[Theorem VIII.4]{ReedSimon:I} or  \cite[Theorem 12.32]{Rudin:FA}).

\begin{lemma} \label{lemma_bound_and_spectrum}
	Let $T$ be a self-adjoint operator on $\calH$. Then  $T$  is bounded below by $M$ if and only if $\sigma(T)\subset [M,\infty)$. 
\end{lemma}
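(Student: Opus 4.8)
The plan is to deduce both implications from the spectral theorem for unbounded self-adjoint operators. I would invoke the form of the theorem providing a projection-valued measure $E$ on $\RR$ with $T=\int_\RR\lambda\,dE(\lambda)$, domain $\dom(T)=\{x\in\calH:\int_\RR\lambda^2\,d\langle E(\lambda)x,x\rangle<\infty\}$, and $\langle Tx,x\rangle=\int_\RR\lambda\,d\langle E(\lambda)x,x\rangle$ for $x\in\dom(T)$, together with the standard description of the spectrum via $E$: a real number $\lambda_0$ belongs to $\sigma(T)$ if and only if $E\big((\lambda_0-\varepsilon,\lambda_0+\varepsilon)\big)\neq0$ for every $\varepsilon>0$. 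Both facts can be quoted from \cite{ReedSimon:I} or \cite{Rudin:FA}.

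For the implication that $\sigma(T)\subset[M,\infty)$ forces $T$ to be bounded below by $M$, I would observe that for $x\in\dom(T)$ the measure $\mu_x:=\langle E(\cdot)x,x\rangle$ is non-negative of total mass $\|x\|^2$ and supported in $\sigma(T)\subset[M,\infty)$, so $\langle Tx,x\rangle=\int\lambda\,d\mu_x(\lambda)\ge M\|x\|^2$. For the converse I would argue by contradiction: assuming $T$ bounded below by $M$ but $\lambda_0\in\sigma(T)$ with $\lambda_0<M$, I would pick $\varepsilon>0$ with $\lambda_0+\varepsilon<M$; then $P:=E\big((\lambda_0-\varepsilon,\lambda_0+\varepsilon)\big)\neq0$, a unit vector $x\in\operatorname{im}P$ has spectral measure supported in $(\lambda_0-\varepsilon,\lambda_0+\varepsilon)$, hence $x\in\dom(T)$ and $\langle Tx,x\rangle\le\lambda_0+\varepsilon<M=M\|x\|^2$, contradicting the lower bound. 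Alternatively, this direction can be carried out without the spectral description of $\sigma(T)$: for $\lambda<M$ and $x\in\dom(T)$ one has $\langle(T-\lambda)x,x\rangle\ge(M-\lambda)\|x\|^2$, hence $\|(T-\lambda)x\|\ge(M-\lambda)\|x\|$ by Cauchy--Schwarz; since $T-\lambda$ is closed, its image is then closed, and $\operatorname{im}(T-\lambda)^\perp=\ker\big((T-\lambda)^*\big)=\ker(T-\lambda)=0$ because $T$ is self-adjoint and $\lambda$ is real, so $T-\lambda\colon\dom(T)\to\calH$ is a bijection with bounded inverse and $\lambda\in\rho(T)$.

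I do not expect a genuine obstacle here --- this is, as stated, a simple consequence of the spectral theorem. The only point requiring a little attention is to make sure one uses the spectral-measure characterization of $\sigma(T)$ (equivalently, that the support of $E$ equals $\sigma(T)$), and not merely the functional-calculus version of the theorem; this is standard and available in all the cited references.
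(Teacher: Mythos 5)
Your proof is correct and follows exactly the route the paper indicates: the paper itself offers no written-out argument beyond the remark that the lemma is ``a simple consequence of the spectral theorem for unbounded operators,'' citing Reed--Simon and Rudin, and your argument is precisely the standard spelling-out of that remark via the projection-valued measure $E$ and the identification of $\sigma(T)$ with the support of $E$. The alternative, spectral-measure-free argument you sketch for the direction ``bounded below $\Rightarrow$ spectrum in $[M,\infty)$'' (closed range plus $\operatorname{im}(T-\lambda)^\perp=\ker(T-\lambda)=0$) is also correct and a nice elementary supplement, though it goes beyond what the paper implicitly uses.
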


A basic problem in perturbation theory is to show that $A+B$ inherits the properties of $A$ if $B$ is not too large compared to $A$.  This idea of a ``small perturbation" is made precise in the following definition.
Let $A$ and $B$ be densely defined operators on a Hilbert space $\scrH$. The operator $B$ is said to be $A$-bounded if $\dom(A) \subset \dom(B)$ and if there exist $a,b\in \RR$ such
\begin{equation}
	\label{eq:relatively_bounded}
	\|Bx\| \leq a \| Ax\| + b\| x\|
\end{equation} for all $x\in \dom(A)$.

\begin{theorem}[Kato--Rellich theorem, {\cite[Theorem X.12]{ReedSimon:II}}]
Suppose that $A$ is self-adjoint, $B$ is symmetric, and $B$ is $A$-bounded with $a<1$. Then $A+B$ is self-adjoint on $\dom(A)$. Further, if $A$ is bounded below by $M$, then $A+B$ is bounded below by $M- \max\{ b/(1-a), a|M|+b\}$, where $a$ and $b$ are given by \eqref{eq:relatively_bounded}. In particular, if $A$ is strictly positive and $b=0$, then $A+B$ is strictly positive.
\end{theorem}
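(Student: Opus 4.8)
The plan is to run the classical resolvent--factorization proof. Throughout we may assume the constants $a,b$ in \eqref{eq:relatively_bounded} are non-negative. First, since $\dom(A)\subseteq\dom(B)$, the operator $A+B$ is defined on the dense subspace $\dom(A)$, and for $x,y\in\dom(A)$ the self-adjointness of $A$ and the symmetry of $B$ give $\langle(A+B)x,y\rangle=\langle Ax,y\rangle+\langle Bx,y\rangle=\langle x,(A+B)y\rangle$; thus $A+B$ is symmetric. By the basic criterion for self-adjointness \cite[Theorem VIII.3]{ReedSimon:I} (applied, if necessary, to $\mu^{-1}(A+B)$), it is enough to show that $A+B+i\mu$ and $A+B-i\mu$ are surjective for some $\mu>0$. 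Fix $\mu>0$; then $\pm i\mu\in\rho(A)$ and the spectral theorem yields
\[
 \bigl\|(A+i\mu)^{-1}\bigr\|\le\frac1\mu,\qquad \bigl\|A(A+i\mu)^{-1}\bigr\|\le 1,
\]
being the suprema over $t\in\RR$ of $|t+i\mu|^{-1}$ and $|t(t+i\mu)^{-1}|$. Since $(A+i\mu)^{-1}$ maps $\scrH$ into $\dom(A)\subseteq\dom(B)$, one has on $\dom(A)$ the factorization $A+B+i\mu=\bigl(1+B(A+i\mu)^{-1}\bigr)\circ(A+i\mu)$, and by \eqref{eq:relatively_bounded} and the two bounds above
\[
 \bigl\|B(A+i\mu)^{-1}\bigr\|\le a\,\bigl\|A(A+i\mu)^{-1}\bigr\|+b\,\bigl\|(A+i\mu)^{-1}\bigr\|\le a+\frac b\mu .
\]
Because $a<1$, the choice $\mu>b/(1-a)$ makes this norm $<1$, so $1+B(A+i\mu)^{-1}$ is invertible on $\scrH$ by a Neumann series; since $A+i\mu\colon\dom(A)\to\scrH$ is a bijection, so is $A+B+i\mu$, and likewise $A+B-i\mu$. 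Hence $A+B$ is self-adjoint on $\dom(A)$.

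For the lower bound, suppose $A$ is bounded below by $M$, equivalently $\sigma(A)\subseteq[M,\infty)$ by Lemma~\ref{lemma_bound_and_spectrum}, and put $M'=M-\max\{b/(1-a),\,a|M|+b\}$, so that $M'\le M$. As $A+B$ is self-adjoint, $\sigma(A+B)\subseteq\RR$, so by Lemma~\ref{lemma_bound_and_spectrum} it suffices to prove that every real $\lambda<M'$ belongs to $\rho(A+B)$. Fix such a $\lambda$ and set $\delta=M-\lambda>0$; then $\lambda<M$, so $(A-\lambda)^{-1}$ is bounded, and the spectral theorem gives $\|(A-\lambda)^{-1}\|\le 1/\delta$ together with the elementary bound $\|A(A-\lambda)^{-1}\|=\sup_{t\ge M}|t|/(t-\lambda)\le\max\{1,\,|M|/\delta\}$. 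Exactly as before,
\[
 \bigl\|B(A-\lambda)^{-1}\bigr\|\le a\max\Bigl\{1,\frac{|M|}\delta\Bigr\}+\frac b\delta ,
\]
and the definition of $M'$ gives $\delta>\max\{b/(1-a),\,a|M|+b\}$, which makes the right-hand side strictly less than $1$: in the regime $\delta\ge|M|$ because $\delta>b/(1-a)$, and in the regime $\delta<|M|$ because $\delta>a|M|+b$. Then $1+B(A-\lambda)^{-1}$ is invertible, and the factorization $A+B-\lambda=\bigl(1+B(A-\lambda)^{-1}\bigr)\circ(A-\lambda)$ shows $A+B-\lambda$ is a bijection, i.e.\ $\lambda\in\rho(A+B)$. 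Therefore $\sigma(A+B)\subseteq[M',\infty)$ and $A+B$ is bounded below by $M'$.

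Finally, if $A$ is strictly positive and $b=0$, then $M>0$, hence $|M|=M$ and $M'=M-\max\{0,aM\}=(1-a)M>0$ since $a<1$, so $A+B$ is strictly positive. The only steps requiring genuine care are the two spectral-theoretic resolvent estimates --- in particular the sharp bound $\|A(A-\lambda)^{-1}\|\le\max\{1,|M|/(M-\lambda)\}$ --- and matching the threshold $\max\{b/(1-a),a|M|+b\}$ to the two regimes $\delta\ge|M|$ and $\delta<|M|$; everything else is formal once the factorization and the Neumann series are in hand.
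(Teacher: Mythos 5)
Your proof is correct: the paper itself only cites Reed--Simon (Theorem X.12) without giving a proof, and your argument is exactly the standard resolvent--factorization proof found there --- symmetry of $A+B$ on $\dom(A)$, the Neumann-series inversion of $1+B(A\pm i\mu)^{-1}$ using $\|B(A+i\mu)^{-1}\|\le a+b/\mu<1$, and the same factorization at real $\lambda<M'$ with the bound $\|A(A-\lambda)^{-1}\|\le\max\{1,|M|/(M-\lambda)\}$, whose two regimes you match correctly to the threshold $\max\{b/(1-a),a|M|+b\}$. Nothing further is needed.
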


We are going to apply the Kato--Rellich theorem to  very special perturbations.  The most transparent way to present our argument is to formulate a general statement.

\begin{lemma}[Homotopy lemma]\label{lemma:homotopy}
	Let $T\colon \dom(T)\to \mathscr{H}$ be an essentially self-adjoint, strictly positive operator on a Hilbert space $\mathscr{H}$. Let $S_t\colon \mathscr{H}\to \mathscr{H}$, $t\in [0,1]$, be a family of bounded operators with the following properties:
	\begin{enuma}
		\item $S_0=\id$;
		\item $t\mapsto S_t$ is continuous in the operator norm;
		\item each $S_t$ is invertible;
		\item the operators $T_t:=S_t \circ T$ are symmetric.
	\end{enuma}
Then $T_t$ is for every $t\in [0,1]$ an essentially self-adjoint, strictly positive operator satisfying $\dom(\b T_t)= \dom(\b T)$. 
\end{lemma}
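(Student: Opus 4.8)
The plan is to use a continuity/connectedness argument on the interval $[0,1]$. Set
\begin{displaymath}
 \mathcal{G}=\{t\in[0,1]\colon T_t \text{ is essentially self-adjoint, strictly positive, and }\dom(\b T_t)=\dom(\b T)\}.
\end{displaymath}
By hypothesis (a), $0\in\mathcal{G}$, so $\mathcal{G}\neq\emptyset$. I would show that $\mathcal{G}$ is both open and closed in $[0,1]$; since $[0,1]$ is connected, this forces $\mathcal{G}=[0,1]$, which is the assertion. The key input is the Kato--Rellich theorem applied to perturbations of the form $T_{t}=T_{t_0}+(S_t-S_{t_0})\circ T = S_{t_0}\circ T + (S_t S_{t_0}^{-1}-\id)\circ T_{t_0}$; the point is that $(S_tS_{t_0}^{-1}-\id)$ has small operator norm for $t$ near $t_0$, so the perturbing operator $B=(S_tS_{t_0}^{-1}-\id)\circ T_{t_0}$ is $T_{t_0}$-bounded with relative bound $a<1$ and $b=0$.

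To make this precise I would argue as follows. Fix $t_0\in\mathcal{G}$, so by definition $A:=\b T_{t_0}$ is self-adjoint, strictly positive, with $\dom(A)=\dom(\b T)$. For $t$ near $t_0$ write $R_t:=S_tS_{t_0}^{-1}$, a bounded invertible operator with $R_{t_0}=\id$ and, by continuity (b) and continuity of inversion in the operator norm, $\|R_t-\id\|\to 0$ as $t\to t_0$; in particular we may choose a neighbourhood of $t_0$ in which $\|R_t-\id\|=:a<1$. On $\dom(A)$ the operator $R_t\circ A$ agrees with $S_t\circ T$, the closure of $T_t$; moreover $B:=(R_t-\id)\circ A$ satisfies $\dom(A)\subset\dom(B)$ and $\|Bx\|\le a\|Ax\|$ for all $x\in\dom(A)$, so $B$ is $A$-bounded with relative bound $a<1$ and $b=0$. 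Since $R_t\circ A=A+B$ is by hypothesis (d) symmetric on $\dom(A)$ (it is the closure of the symmetric operator $T_t$, or one checks symmetry directly on the core), the Kato--Rellich theorem yields that $A+B$ is self-adjoint on $\dom(A)=\dom(\b T)$ and strictly positive (as $A$ is strictly positive and $b=0$). Finally one checks that the restriction of $A+B$ to the original domain $\dom(T_{t_0})=\dom(T)$ is essentially self-adjoint with closure $A+B$: this follows because $T_t$ and its extension $S_t\circ T$ have the same closure — indeed $S_t\circ T=S_t S_{t_0}^{-1}\circ(S_{t_0}\circ T)$ is a bounded invertible transform of a closed operator hence closed, and it contains $T_t$, while conversely any closed extension of $T_t$ is contained in it by the graph-transport argument $\overline{\mathrm{graph}(T_t)}=(\id\times S_t)\,\overline{\mathrm{graph}(T)}\cap\cdots$; being explicit here is the one slightly delicate point. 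This shows a whole neighbourhood of $t_0$ lies in $\mathcal{G}$, so $\mathcal{G}$ is open.

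For closedness of $\mathcal{G}$, the same Kato--Rellich perturbation argument applies: if $t_n\in\mathcal{G}$ and $t_n\to t_\infty$, pick $n$ large enough that $\|S_{t_\infty}S_{t_n}^{-1}-\id\|<1$ (possible by (b), (c)), and run the argument above with $t_0=t_n$, $t=t_\infty$ to conclude $t_\infty\in\mathcal{G}$; so $\mathcal{G}$ is closed, and since it is nonempty and $[0,1]$ is connected, $\mathcal{G}=[0,1]$. I expect the main obstacle to be the bookkeeping around domains and closures — specifically, verifying cleanly that $S_t\circ T$ (interpreted on $\dom(T)$ and then closed) really is the Kato--Rellich perturbation $A+B$ of $A=\b T_{t_0}$ on the nose, i.e. that the two natural closed operators coincide, and that essential self-adjointness transfers from $T_{t_0}$ to $T_t$ under the bounded invertible twist $S_tS_{t_0}^{-1}$. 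One convenient way around this is to use the criterion of Theorem~\ref{thm:saExt}: essential self-adjointness of a strictly positive symmetric operator is equivalent to having dense image, and $T_t=S_t\circ T$ has dense image iff $T$ does (since $S_t$ is a bounded bijection), which reduces the transfer of essential self-adjointness to a one-line observation and isolates the genuine analytic content into the Kato--Rellich step for self-adjointness and strict positivity of the closure.
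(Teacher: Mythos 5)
Your argument is correct and is essentially the paper's proof in a different wrapper: both hinge on the graph-transport identity $\b T_t = S_t\circ\b T$ (so $\dom(\b T_t)=\dom(\b T)$) and on applying Kato--Rellich to $\b T_t = \b T_{t_0}+(S_tS_{t_0}^{-1}-\id)\circ\b T_{t_0}$ with $b=0$ and relative bound $a<1$, obtained from norm-continuity of $t\mapsto S_t$ and $t\mapsto S_t^{-1}$. The only difference is presentational: the paper fixes a uniform $\epsilon$ via $\max_t\|S_t^{-1}\|<\infty$ and iterates finitely many times from $0$ to $1$, while you package the identical local step as an open-and-closed (connectedness) argument on $[0,1]$.
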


\begin{proof} First of all observe that since each $S_t$ is invertible also $t\mapsto S_t^{-1}$ is continuous in the operator norm. 
	Let $C:=\max_{t \in [0,1]} \|S_t^{-1}\|<\infty$. By continuity of $t \mapsto S_t$ on the compact interval $[0,1]$, we find $\epsilon>0$ such that
	\begin{displaymath}
		\|S_{t_1}-S_{t_2}\| \leq \frac{1}{2 C}
	\end{displaymath}
	whenever $|t_1-t_2| < \epsilon$.
	
	Since $\id \oplus  S_t $ is an invertible operator on  $\scrH\times \scrH$, the closures satisfy $\b T _t = S_t \circ \b T$ and $\dom(\b T_t)= \dom(\b T)$. 	
	Since $\b T_{t_1}-\b T_{t_2}=(S_{t_1}-S_{t_2}) \circ S_{t_1}^{-1} \circ \b T_{t_1}$, we obtain for $|t_1-t_2|<\epsilon$ and $x \in \dom (\b T)$ that
	\begin{align*}
		\|(\b T_{t_1}-\b T_{t_2})x\| & \leq \|S_{t_1}-S_{t_2}\| \cdot \|S_{t_1}^{-1}\| \cdot \|\b T_{t_1}x\| \leq \frac12 \|\b T_{t_1}x\|.
	\end{align*}
	Hence $\b T_{t_2}$ is a small perturbation of $\b T_{t_1}$ in the sense of the Kato--Rellich theorem, with $a=\frac12$ and $b=0$.  Thus $\b T_{t_2}$ is self-adjoint and strictly positive if $\b T_{t_1}$ has these properties.  Iterating this argument finitely many times we see that each $\b T_t$ is self-adjoint and strictly positive since  $\b T=\b T_0$ has these properties.
\end{proof}

\subsection{Primitive valuations}

\label{sec:primitive}
Assume $1\leq k\leq n/2$ and let us fix a tuple $\bC=(C_0,C_1,\ldots, C_{n-2k})$ of convex bodies from $\calK_+^\infty(\RR^n)$.

\begin{definition}
The subspace of primitive valuations with respect to the reference bodies $\bC$, denoted $\VP_\bC\subset \VE_{n-k}$, is the kernel of $L_\bC\colon \VE_{n-k}\to \VE_{k-1}$. Analogously, the subspaces of primitive valuations in $\VF_{n-k}$ and $\Val^\infty_{n-k}$ are defined. We use superscripts $\VE, \VF, \infty$ if the ambient space is not obvious from the context.
\end{definition}

We define a projection onto the primitive subspace that is in general not orthogonal, but has other remarkable properties. Its existence  relies on the hard Lefschetz theorem for valuations proved in Sections \ref{s:SphereBundle} and \ref{s:Surjectivity}.

\begin{definition} We define the operator
	$\pi_{\bC}\colon \VE_{n-k}\to \VE_{n-k}$ by
\begin{displaymath}
 \pi_{\bC} = \id - L_{C_0} \circ L_{\bC,C_0}^{-1} \circ  L_\bC,
 \end{displaymath}
where $(\bC,C_0)$ denotes the   $(n-2k+2)$-tuple $(C_0,C_1,\ldots,C_{n-2k},C_0)$.
\end{definition}

For the spaces $\VF_{n-k}$ and $\Val_{n-k}^\infty$, the operator $\pi_{\bC}$ is defined by the same formula. Note that $\pi_{\bC}$ is in all cases continuous. We collect several elementary, but important properties.

\begin{lemma}\label{lemma:Pproj}
	In each of the spaces $\VE_{n-k}$, $\VF_{n-k}$, and $\Val^\infty_{n-k}$, the  following assertions hold:
\begin{enuma}
\item The operator $\pi_{\bC}$ is a projection with image $\VP_\bC$. Its kernel is the image of $L_{C_0}\colon \VE_{n-k+1}\to\VE_{n-k}$.
\item Let $\bC'$ be another $(n-2k)$-tuple of reference bodies. The restriction $\pi_{\bC}\colon \VP_{\bC'}\to  \VP_{\bC}$ is an isomorphism.
\end{enuma}
\end{lemma}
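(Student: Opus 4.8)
The plan is to verify parts (a) and (b) by the same argument in all three spaces, since the only input is the hard Lefschetz theorem together with the fact that $\pi_\bC$ is continuous and defined by the same algebraic formula everywhere. First I would prove (a). Write $\pi_\bC = \id - L_{C_0} \circ L_{\bC, C_0}^{-1} \circ L_\bC$, and recall that by Theorem \ref{thm:main}(a) (extended to $\VE$ and $\VF$ via Corollary \ref{cor_contDependence_E} and Proposition \ref{prop:contDependence}), the map $L_{(\bC, C_0)} = L_{\bC, C_0} \colon \VE_{n-k+1} \to \VE_{k-1}$ is an isomorphism, so the composition makes sense. The key algebraic identity is $L_{\bC, C_0} = L_\bC \circ L_{C_0} = L_{C_0} \circ L_\bC$ by commutativity of the convolution algebra. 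To see that $\pi_\bC$ is a projection, compute
\begin{displaymath}
\pi_\bC^2 = \pi_\bC - \pi_\bC \circ L_{C_0} \circ L_{\bC, C_0}^{-1} \circ L_\bC,
\end{displaymath}
so it suffices to show $\pi_\bC \circ L_{C_0} = 0$, i.e.\ that $\operatorname{im}(L_{C_0} \colon \VE_{n-k+1} \to \VE_{n-k})$ lies in the kernel of $\pi_\bC$. But $L_\bC \circ L_{C_0} = L_{\bC, C_0}$, so $L_{C_0} \circ L_{\bC, C_0}^{-1} \circ L_\bC \circ L_{C_0} = L_{C_0} \circ L_{\bC, C_0}^{-1} \circ L_{\bC, C_0} = L_{C_0}$, hence $\pi_\bC \circ L_{C_0} = L_{C_0} - L_{C_0} = 0$. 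This simultaneously shows $\operatorname{im}(L_{C_0}) \subseteq \ker \pi_\bC$.

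Next I would identify the image and kernel precisely. For the image: if $\phi \in \VP_\bC = \ker(L_\bC \colon \VE_{n-k} \to \VE_{k-1})$, then $\pi_\bC \phi = \phi - L_{C_0} L_{\bC, C_0}^{-1}(L_\bC \phi) = \phi$, so $\VP_\bC \subseteq \operatorname{im} \pi_\bC$; conversely $L_\bC \circ \pi_\bC = L_\bC - L_\bC \circ L_{C_0} \circ L_{\bC, C_0}^{-1} \circ L_\bC = L_\bC - L_{\bC, C_0} \circ L_{\bC, C_0}^{-1} \circ L_\bC = 0$, so $\operatorname{im} \pi_\bC \subseteq \ker L_\bC = \VP_\bC$. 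Thus $\operatorname{im} \pi_\bC = \VP_\bC$ and $\pi_\bC|_{\VP_\bC} = \id$, confirming it is a projection onto $\VP_\bC$. For the kernel: we already have $\operatorname{im}(L_{C_0}) \subseteq \ker \pi_\bC$; for the reverse, if $\pi_\bC \phi = 0$ then $\phi = L_{C_0}\big(L_{\bC, C_0}^{-1} L_\bC \phi\big) \in \operatorname{im}(L_{C_0})$. (One should note that for $k = n/2$ one has $n-k+1 > n-k = n/2$, and $L_{C_0} \colon \VE_{n-k+1} \to \VE_{n-k}$ is still the relevant convolution; the formula and all identities persist, so the borderline case needs no separate treatment beyond checking the index ranges.)

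For part (b), given another $(n-2k)$-tuple $\bC'$, I would show $\pi_\bC \colon \VP_{\bC'} \to \VP_\bC$ is an isomorphism by exhibiting $\pi_{\bC'} \colon \VP_\bC \to \VP_{\bC'}$ as an inverse, up to a correction. The cleanest route: since $\VE_{n-k} = \VP_{\bC'} \oplus \operatorname{im}(L_{C_0'})$ and $\VE_{n-k} = \VP_\bC \oplus \operatorname{im}(L_{C_0})$ — here $C_0$ and $C_0'$ are the distinguished zeroth bodies, and this is the direct sum decomposition from (a) applied to each tuple — the map $\pi_\bC|_{\VP_{\bC'}}$ is injective iff $\VP_{\bC'} \cap \ker \pi_\bC = \VP_{\bC'} \cap \operatorname{im}(L_{C_0}) = 0$ and surjective iff $\VP_{\bC'} + \operatorname{im}(L_{C_0}) = \VE_{n-k}$. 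Both follow because $L_\bC$ restricted to $\operatorname{im}(L_{C_0})$ is injective (indeed $L_\bC \circ L_{C_0} = L_{\bC, C_0}$ is an isomorphism onto $\VE_{k-1}$, so $L_\bC|_{\operatorname{im}(L_{C_0})}$ is a bijection onto $\VE_{k-1}$): if $\psi \in \VP_{\bC'} \cap \operatorname{im}(L_{C_0})$ one needs $L_{\bC'}\psi = 0$, and I would use the hard Lefschetz theorem for $L_{\bC}$ to transfer between the two decompositions — concretely, given $\psi = L_{C_0}\chi$, surjectivity of $\pi_{\bC'}$ onto $\VP_{\bC'}$ produces the inverse. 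The expected main obstacle is being careful that the "correction" relating $\pi_\bC$ and $\pi_{\bC'}$ is itself invertible; the clean way around it is to observe that $L_\bC$ maps $\operatorname{im}(L_{C_0'})$ isomorphically onto $\VE_{k-1}$ as well (again because $L_\bC \circ L_{C_0'} = L_{\bC, C_0'}$ is an isomorphism by hard Lefschetz applied to the tuple $(\bC, C_0')$), so $\VE_{n-k} = \VP_\bC \oplus \operatorname{im}(L_{C_0'})$ too, which forces $\pi_\bC|_{\VP_{\bC'}}$ to be the composition of the identity on $\VP_{\bC'}$ with the projection along $\operatorname{im}(L_{C_0'})$ onto $\VP_\bC$, manifestly an isomorphism with inverse the analogous projection. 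The whole argument is purely formal linear algebra once hard Lefschetz is in hand.
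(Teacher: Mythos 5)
Your verification of part (a) is correct and is exactly the "formal consequence" the paper has in mind: the identities $\pi_\bC\circ L_{C_0}=0$, $L_\bC\circ\pi_\bC=0$, and $\pi_\bC|_{\VP_\bC}=\mathrm{id}$ do all follow from $L_\bC\circ L_{C_0}=L_{(\bC,C_0)}$.

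Part (b), however, has a genuine gap, and it is a gap about which tuple one applies hard Lefschetz to. You correctly reduce the problem to showing $\VP_{\bC'}\cap\operatorname{im}(L_{C_0})=0$ and $\VP_{\bC'}+\operatorname{im}(L_{C_0})=\VE_{n-k}$, i.e.\ $\VE_{n-k}=\VP_{\bC'}\oplus\operatorname{im}(L_{C_0})$. But then you invoke the isomorphisms $L_{(\bC,C_0)}$ and $L_{(\bC,C_0')}$, which both use the Lefschetz operator attached to $\bC$ and so only see $\VP_\bC=\ker L_\bC$. They give you the decompositions $\VE_{n-k}=\VP_\bC\oplus\operatorname{im}(L_{C_0})$ (which is part (a)) and $\VE_{n-k}=\VP_\bC\oplus\operatorname{im}(L_{C_0'})$, neither of which is the one you need. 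The correct input is the hard Lefschetz theorem for the \emph{mixed} tuple $(\bC',C_0)$: if $\psi=L_{C_0}\chi\in\VP_{\bC'}$ then $0=L_{\bC'}\psi=L_{(\bC',C_0)}\chi$ forces $\chi=0$ (injectivity), and given $\psi\in\VP_\bC$ one solves $L_{(\bC',C_0)}\eta=L_{\bC'}\psi$ and checks $\phi=\psi-L_{C_0}\eta\in\VP_{\bC'}$ with $\pi_\bC\phi=\psi$ (surjectivity), which is the paper's argument. The final "clean way around" is also not salvageable as stated: from $\VE_{n-k}=\VP_\bC\oplus\operatorname{im}(L_{C_0'})$ you conclude that $\pi_\bC|_{\VP_{\bC'}}$ is the projection onto $\VP_\bC$ along $\operatorname{im}(L_{C_0'})$, but $\pi_\bC$ is by definition the projection along $\operatorname{im}(L_{C_0})$, and the two projections onto the same image but along different complements have no reason to agree on $\VP_{\bC'}$ (concretely, $\pi_\bC\phi$ and the projection along $\operatorname{im}(L_{C_0'})$ differ by $L_{C_0}L_{(\bC,C_0)}^{-1}L_\bC\phi-L_{C_0'}L_{(\bC,C_0')}^{-1}L_\bC\phi$, which is not zero for general $\phi\in\VP_{\bC'}$).
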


\begin{proof}
(a) is a formal consequence of the definition. 

For the proof of (b), let $\phi\in \VP_{\bC'}$ with $\pi_{\bC} \phi=0$. Then $\phi=L_{C_0}\psi$ for some $\psi\in \VE_{n-k+1}$. Hence $0=L_{\bC'} \phi= L_{\bC',C_0} \psi$, and thus $\psi=0$ by  the hard Lefschetz theorem applied to the $(n-2k+2)$-tuple $(\bC',C_0)$. This shows that the restriction $\pi_{\bC}\colon \VP_{\bC'}\to  \VP_{\bC}$ is injective.

To prove surjectivity, let $\psi\in \VP_\bC$. By the hard Lefschetz theorem there exists $\eta\in \VE_{n-k+1}$ satisfying
\begin{displaymath}
	L_{\bC',C_0} \eta = L_{\bC'} \psi.
\end{displaymath}
Put $\phi = \psi -L_{C_0} \eta\in \VE_{n-k}$. Note that $L_{\bC'} \phi =0 $ and hence $\phi\in \VP_{\bC'}$. Since the image of $L_{C_0}$ is the kernel of $\pi_{\bC}$, we obtain $\pi_{\bC} \phi = \pi_{\bC}\psi = \psi$,  as required.
\end{proof}

Let us explicitly state the following consequence of the lemma.
\begin{lemma} $\VP^\infty_\bC$ is a dense subspace of $\VP^\VE_\bC$.
\end{lemma}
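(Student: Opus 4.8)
The plan is to combine the density of $\Val^\infty_{n-k}$ in $\VE_{n-k}$ with the continuity of the projection $\pi_\bC$. Since we are in the regime $1\le k\le \frac n2$, we have $1\le n-k\le n-1$, so by the kernel theorem (the Proposition following Lemma~\ref{lem:closed_vertical}) the space $\Val^\infty_{n-k}$ is canonically identified with $X_{n-k}/Y_{n-k}$, which by the very definition of $\VE_{n-k}$ is a dense subspace of it. Hence every $\phi\in\VP^\VE_\bC\subset\VE_{n-k}$ is a limit in $\VE_{n-k}$ of a sequence $\phi_j\in\Val^\infty_{n-k}$.

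Next I would observe that the operator $\pi_\bC$ on $\VE_{n-k}$ restricts to the operator $\pi_\bC$ on $\Val^\infty_{n-k}$. Both are given by the same formula $\pi_\bC=\id-L_{C_0}\circ L_{\bC,C_0}^{-1}\circ L_\bC$, and each factor is compatible with passage to the completion: $L_\bC$ and $L_{C_0}$ on smooth valuations are the restrictions of the corresponding Hilbert space operators by Lemma~\ref{lemma_extension_convolution}, and the inverse $L_{\bC,C_0}^{-1}\colon\Val^\infty_{k-1}\to\Val^\infty_{n-k+1}$ from Theorem~\ref{thm:main}(a) is the restriction of the Hilbert space isomorphism $L_{\bC,C_0}^{-1}\colon\VE_{k-1}\to\VE_{n-k+1}$ (Corollary~\ref{cor_contDependence_E}), simply because both are two-sided inverses of the injective operator $L_{\bC,C_0}\colon\VE_{n-k+1}\to\VE_{k-1}$ and $L_{\bC,C_0}$ maps $\Val^\infty$ into $\Val^\infty$. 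In particular $\pi_\bC(\Val^\infty_{n-k})\subset\Val^\infty_{n-k}$, and by Lemma~\ref{lemma:Pproj}(a) the image of this restriction is exactly $\VP^\infty_\bC$.

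Finally I would conclude: the operator $\pi_\bC\colon\VE_{n-k}\to\VE_{n-k}$ is continuous and, being a projection onto $\VP^\VE_\bC$ by Lemma~\ref{lemma:Pproj}(a), acts as the identity on $\VP^\VE_\bC$. Applying it to the approximating sequence gives $\pi_\bC\phi_j\to\pi_\bC\phi=\phi$ in $\VE_{n-k}$, while $\pi_\bC\phi_j\in\VP^\infty_\bC$ for every $j$; hence $\phi$ lies in the $\VE$-closure of $\VP^\infty_\bC$, which proves the density. Given the machinery already developed, this lemma is essentially a soft consequence, and the only point that genuinely requires attention is the compatibility of the two incarnations of $\pi_\bC$ spelled out in the previous paragraph; it rests on the already-established hard Lefschetz theorem (and, through it, on the elliptic regularity that guarantees $L_{\bC,C_0}^{-1}$ preserves smoothness), and everything else is routine.
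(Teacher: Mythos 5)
Your proof is correct and follows the same approach as the paper's, which is considerably terser: the paper simply observes that each Lefschetz map preserves smooth valuations, so $\pi_\bC$ does too, and then combines this with the density of $\Val^\infty_{n-k}$ in $\VE_{n-k}$ and with Lemma~\ref{lemma:Pproj}(a). The one substantive point you make more carefully than the paper — that $L_{\bC,C_0}^{-1}$ on $\Val^\infty$ really is the restriction of the Hilbert space inverse, which is where the hard Lefschetz theorem and elliptic regularity enter — is indeed the crux, and your extra care there is appropriate even though the paper treats it as evident.
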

\begin{proof}
Observe that by definition each Lefschetz map $L_C$ preserves the subspace of smooth valuations. 	
Since $\Val^\infty_{n-k}$ is dense in $\VE_{n-k}$ by the definition of the latter space and  the image of $\pi_\bC$ is $\VP_\bC^\VE$, the claim follows from the fact that $\pi_\bC$ preserves the subspace of smooth valuations.
\end{proof}

An immediate consequence of Proposition~\ref{prop:contDependence} and Corollary~\ref{cor_contDependence_E}
is

\begin{proposition}\label{prop:contDependencePC} The map
\begin{displaymath}
	\pi: \calK^\infty_+(\RR^n)^{n-2k+1}   \to \mathscr{L}(\VE_{n-k}), \quad \bC \mapsto \pi_{\bC}\\
\end{displaymath}
is continuous, where the domain is equipped with the $C^2$ topology and the target is equipped with the operator norm.
	 The same conclusion holds if $\VE$ is replaced by $\VF$.
\end{proposition}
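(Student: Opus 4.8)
The assertion is a formal consequence of the continuous dependence of the Lefschetz maps, and of their inverses, on the reference bodies. Recall that by definition
\[
 \pi_{\bC} = \id - L_{C_0}\circ L_{(\bC,C_0)}^{-1}\circ L_\bC ,
\]
with $(\bC,C_0)=(C_0,C_1,\ldots,C_{n-2k},C_0)$; this exhibits $\pi_\bC$ as the identity minus the composition
\[
 \VE_{n-k}\xrightarrow{L_\bC}\VE_{k-1}\xrightarrow{L_{(\bC,C_0)}^{-1}}\VE_{n-k+1}\xrightarrow{L_{C_0}}\VE_{n-k}.
\]
Since the composition map $\scrL(\scrH_1,\scrH_2)\times\scrL(\scrH_2,\scrH_3)\to\scrL(\scrH_1,\scrH_3)$ is continuous for the operator norms, and so is $T\mapsto\id-T$, the plan is simply to check that each of these three arrows depends continuously on $\bC$ in operator norm, where $\calK^\infty_+(\RR^n)$ carries the $C^2$ topology; the same reasoning will then apply with $\VF$ in place of $\VE$.

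The middle arrow is the inverse of the hard Lefschetz isomorphism $L_{(\bC,C_0)}\colon\VE_{n-k+1}\to\VE_{k-1}$ associated with the $(n-2(k-1))$-tuple $(\bC,C_0)$, so its continuous dependence on $(\bC,C_0)$, and hence on $\bC$, is precisely the content of Corollary~\ref{cor_contDependence_E} (and of Proposition~\ref{prop:contDependence} in the $\VF$ case). For the two outer arrows I would use the differential-form picture of the convolution: by Theorem~\ref{thm:BFconvolution} together with Lemma~\ref{lemma:form_mixed_volume}, the operator $L_C$ (convolution with $nV(C,\Cdot[n-1])$) sends a defining closed vertical $n$-form $\tau$ to $\Omega_C\wedge\tau$, and a defining $(n-1)$-form $\omega$ to $*_1^{-1}(*_1\omega\wedge\Omega_C)$ modulo $D$-closed forms, where $\Omega_C=-d\alpha_C$ --- this is exactly the computation underlying the proof of Lemma~\ref{lemma_extension_convolution}. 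Composing the $n-2k+1$ factors, $L_\bC$ acts on a defining $(n-1)$-form by $\omega\mapsto *_1^{-1}(*_1\omega\wedge\Omega_{C_0}\wedge\cdots\wedge\Omega_{C_{n-2k}})$ and $L_{C_0}$ by $\omega\mapsto *_1^{-1}(*_1\omega\wedge\Omega_{C_0})$, and likewise on the $\tau$-side. The coefficients of $\Omega_{C_i}=-d\alpha_{C_i}$ are the entries of the Hessians $\Hess h_{C_i}$, so those of $\Omega_{C_0}\wedge\cdots\wedge\Omega_{C_{n-2k}}$ are multilinear polynomials in these entries; since wedging (and conjugating by $*_1$) with a fixed smooth form is bounded on $L^2$ with norm controlled by the supremum of its coefficients, and this bound descends to the quotient norm defining $\VE$, the operators $L_{C_0}\in\scrL(\VE_{n-k+1},\VE_{n-k})$ and $L_\bC\in\scrL(\VE_{n-k},\VE_{k-1})$ depend continuously --- in fact locally Lipschitz --- on the bodies in the $C^2$ topology. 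On $\VF$ the quotient step is unnecessary. This is the elementary observation already used in the proof of Proposition~\ref{prop:contDependence}.

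Assembling the two paragraphs yields the continuity of $\bC\mapsto\pi_\bC$ into $\scrL(\VE_{n-k})$ and, verbatim, into $\scrL(\VF_{n-k})$. I do not anticipate a genuine obstacle here: the only point deserving a word of care is that $L_{C_0}$ and $L_\bC$ are not middle-degree Lefschetz maps, so their continuity in operator norm is not literally a consequence of Proposition~\ref{prop:contDependence} or Corollary~\ref{cor_contDependence_E} but of the wedge-product description just recalled --- or, more briefly, one may simply cite the proof of Proposition~\ref{prop:contDependence}, where this identical point is handled.
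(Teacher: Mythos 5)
Your proof is correct and follows essentially the same route the authors have in mind when they present the proposition as an ``immediate consequence'' of Proposition~\ref{prop:contDependence} and Corollary~\ref{cor_contDependence_E}: decompose $\pi_\bC=\id-L_{C_0}\circ L_{(\bC,C_0)}^{-1}\circ L_\bC$, use that composition and $T\mapsto\id-T$ are operator-norm continuous, and check each factor depends continuously on the bodies. You are also right to flag the small wrinkle that the outer factors $L_{C_0}$ and $L_\bC$ are not middle-degree Lefschetz maps, so their continuity is not \emph{literally} the statement of Proposition~\ref{prop:contDependence} but rather the degree-independent observation in its proof (wedging with $\Omega_{C_i}$ has $L^2$ operator norm controlled by the $C^0$ norms of $\Hess h_{C_i}$, and this descends to the quotient in the $\VE$ case); the paper leaves this implicit, and your remark is a legitimate and useful filling-in of that shorthand.
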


\subsection{Proof of the Hodge--Riemann relations}

Our strategy will be to fit the Hodge--Riemann relations for smooth valuations into a framework where we can apply  the Kato--Rellich theorem in the form of the homotopy lemma (Lemma~\ref{lemma:homotopy}).

Let $\bB$ denote the tuple of reference bodies $(B^n,\ldots,B^n)$, where $B^n$ is the euclidean unit ball.
Consider on $\VP_\bB^\VE$ the densely defined quadratic forms
\begin{align*}
 q_\bB(\phi,\psi) & =(-1)^k \phi * L_{\bB_{\setminus0}}\bar \psi, \\
 \pi_\bC^* q_\bC(\phi,\psi) &  =q_\bC(\pi_\bC \phi,\pi_\bC \psi) =(-1)^k \pi_\bC \phi * (L_{\bC_{\setminus0}} \circ  \pi_\bC \bar \psi),
\end{align*}
for $ \phi,\psi \in \VP_\bB^\infty$.

\begin{lemma} \label{lemma:HR_TCpositive}
	The Hodge--Riemann relations are satisfied for $\bC$ if and only if $\pi_\bC^* q_\bC(\phi,\phi) >0$ for all $\phi \in \VP^\infty_\bB, \phi \neq 0$.
\end{lemma}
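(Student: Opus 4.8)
The plan is to use the isomorphism $\pi_\bC\colon \VP_\bB^\VE \to \VP_\bC^\VE$ from Lemma~\ref{lemma:Pproj}(b) to transfer the Hodge--Riemann form $q_\bC$, which lives on $\VP_\bC$, to a form on the \emph{fixed} space $\VP_\bB^\VE$. The point is that positive definiteness is preserved under pullback by a linear isomorphism, so the Hodge--Riemann relations for $\bC$ --- positivity of $q_\bC$ on $\VP_\bC^\infty$ --- should be equivalent to positivity of $\pi_\bC^* q_\bC$ on the preimage $\pi_\bC^{-1}(\VP_\bC^\infty)$. The only subtlety is that the statement asks for positivity of $\pi_\bC^* q_\bC$ on $\VP_\bB^\infty$, not on $\pi_\bC^{-1}(\VP_\bC^\infty)$, so the first thing I would check is that these two subspaces agree.

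The key step is therefore the identity $\pi_\bC(\VP_\bB^\infty) = \VP_\bC^\infty$. The inclusion $\pi_\bC(\VP_\bB^\infty) \subseteq \VP_\bC^\infty$ is immediate since each $L_C$ preserves smooth valuations (as noted in the proof of the density lemma), hence so does $\pi_\bC$, and the image of $\pi_\bC$ is $\VP_\bC$. For the reverse inclusion, the argument in the proof of Lemma~\ref{lemma:Pproj}(b) applied with $\bC' = \bB$ shows that $\pi_\bC\colon \VP_\bB \to \VP_\bC$ is surjective; I would rerun that argument entirely inside $\Val^\infty$, using the hard Lefschetz theorem for smooth valuations (Theorem~\ref{thm:main}(a), proved in \S\ref{s:Surjectivity}) to solve $L_{\bB,C_0}\eta = L_\bB\psi$ with $\eta \in \Val_{n-k+1}^\infty$ whenever $\psi \in \VP_\bC^\infty$, and conclude $\phi = \psi - L_{C_0}\eta \in \VP_\bB^\infty$ with $\pi_\bC\phi = \psi$. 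Thus $\pi_\bC$ restricts to a \emph{bijection} $\VP_\bB^\infty \to \VP_\bC^\infty$.

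Granting this, the equivalence is purely formal: for $\phi \in \VP_\bB^\infty$ we have by definition $\pi_\bC^* q_\bC(\phi,\phi) = q_\bC(\pi_\bC\phi, \pi_\bC\phi)$, and as $\phi$ ranges over the nonzero elements of $\VP_\bB^\infty$, the element $\pi_\bC\phi$ ranges exactly over the nonzero elements of $\VP_\bC^\infty$. Hence $\pi_\bC^* q_\bC(\phi,\phi) > 0$ for all such $\phi$ if and only if $q_\bC(\psi,\psi) > 0$ for all nonzero $\psi \in \VP_\bC^\infty$, which is precisely the statement of the Hodge--Riemann relations (Theorem~\ref{thm:main}(b)) for the tuple $\bC$.

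The main obstacle, such as it is, is the bijectivity of $\pi_\bC$ at the level of smooth valuations: one must be careful that the solution $\eta$ of the Lefschetz equation produced by surjectivity is genuinely smooth, which is exactly what Theorem~\ref{prop_surjectivity} guarantees for the $(n-2k+2)$-tuple $(\bB, C_0)$ --- here it is essential that $\bB$ and $C_0$ all lie in $\calK_+^\infty(\RR^n)$. Everything else is bookkeeping with definitions.
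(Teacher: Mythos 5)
Your argument is correct and is the same as the paper's, which disposes of the lemma in one line by citing the bijectivity of $\pi_\bC\colon\VP_\bB^\infty\to\VP_\bC^\infty$. The only difference is that you re-derive this bijectivity from scratch (by rerunning the argument for Lemma~\ref{lemma:Pproj}(b) with $\bC'=\bB$ inside $\Val^\infty$), whereas the paper simply invokes Lemma~\ref{lemma:Pproj}(b), which is explicitly stated to hold in each of $\VE_{n-k}$, $\VF_{n-k}$, \emph{and} $\Val^\infty_{n-k}$, so no re-derivation is needed.
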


\begin{proof}
This is clear, since $\pi_\bC:\VP^\infty_\bB \to \VP^\infty_\bC$ is bijective.
\end{proof}

By Proposition \ref{prop:SobolevVal} and Theorem \ref{thm:poincare}, the forms $q_\bB$ and $\pi_\bC^* q_\bC$ can be extended to continuous maps $\VP_\bB^\infty \times \VP_\bB^\VE \to \C$. Since $\VP_\bB^\VE$ is a closed subspace of the Hilbert space $\VE_{n-k}$, it is a Hilbert space itself. By the Riesz representation theorem we may thus write
\begin{align*}
 q_\bB(\phi,\psi) & = \langle T_\bB \phi, \psi\rangle_\VE\\
 \pi_\bC^* q_\bC(\phi,\psi) & = \langle T_\bC \phi, \psi\rangle_\VE
\end{align*}
with symmetric operators $T_\bB$ and $T_\bC$ on $\VP_\bB^\VE$ that are densely defined on $\VP_\bB^\infty$.

The next proposition is crucial in our proof.
\begin{proposition}\label{prop:HRmain} There exists a bounded invertible operator $S_\bC\colon  \VP_\bB^\VE \to \VP_\bB^\VE$ such that
	$T_\bC = S_\bC \circ T_\bB$. Explicitly,
	\begin{equation}\label{eq:SC}
		S_\bC:=\left(\pi_\bB \circ L_{\bB_{\setminus0}}^{-1} \circ L_{\bC_{\setminus0}} \circ \pi_\bC\right)^*: \VP_\bB^\VE \to \VP_\bB^\VE.
	\end{equation}
\end{proposition}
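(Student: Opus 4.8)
The plan is to verify the factorization $T_\bC=S_\bC T_\bB$ by computing inner products in $\VE$, and then to check boundedness and invertibility of $S_\bC$ separately. I will use freely that the convolution is commutative and associative, so Lefschetz operators of concatenated (and reordered) tuples compose — in particular $L_{C_0}\circ L_{\bC_{\setminus0}}=L_\bC$ and $L_{\bB_{\setminus0}}\circ L_{B^n}=L_{B^n}\circ L_{\bB_{\setminus0}}=L_\bB$ — that each $L_C$ is self-adjoint for the Poincar\'e pairing $\langle\Cdot,\Cdot\rangle$, and that $L_C,\pi_\bB,\pi_\bC$ commute with complex conjugation.

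Write $N:=\pi_\bB\circ L_{\bB_{\setminus0}}^{-1}\circ L_{\bC_{\setminus0}}\circ\pi_\bC$, which restricted to $\VP_\bB^\VE$ maps into $\VP_\bB^\VE$; by \eqref{eq:SC} we have $S_\bC=N^*$, so $S_\bC^*=N$, and the asserted identity is equivalent to $\langle T_\bC\phi,\psi\rangle_\VE=\langle T_\bB\phi,N\psi\rangle_\VE=q_\bB(\phi,N\psi)$ for all $\phi\in\VP_\bB^\infty=\dom T_\bB$ and all $\psi\in\VP_\bB^\VE$. Expanding both sides through the Poincar\'e pairing gives $q_\bB(\phi,N\psi)=(-1)^k\langle\phi,L_{\bB_{\setminus0}}\overline{N\psi}\rangle$ and $\langle T_\bC\phi,\psi\rangle_\VE=\pi_\bC^*q_\bC(\phi,\psi)=(-1)^k\langle\pi_\bC\phi,L_{\bC_{\setminus0}}\overline{\pi_\bC\psi}\rangle$. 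Expanding $\pi_\bB$ inside $L_{\bB_{\setminus0}}\circ N$ and using the composition rules for $L$ yields $L_{\bB_{\setminus0}}\circ N=L_{\bC_{\setminus0}}\circ\pi_\bC-L_\bB\circ L_{\bB,B^n}^{-1}\circ L_{B^n}\circ L_{\bC_{\setminus0}}\circ\pi_\bC$, so $q_\bB(\phi,N\psi)=(-1)^k\langle\phi,L_{\bC_{\setminus0}}\pi_\bC\bar\psi\rangle-(-1)^k\langle L_\bB\phi,L_{\bB,B^n}^{-1}L_{B^n}L_{\bC_{\setminus0}}\pi_\bC\bar\psi\rangle$, and the last term vanishes since $L_\bB\phi=0$ for $\phi\in\VP_\bB$. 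Comparing with $\langle T_\bC\phi,\psi\rangle_\VE$, the difference is $(-1)^k\langle(\id-\pi_\bC)\phi,L_{\bC_{\setminus0}}\pi_\bC\bar\psi\rangle$; since $(\id-\pi_\bC)\phi\in\operatorname{im}L_{C_0}$ by Lemma~\ref{lemma:Pproj}(a), pushing $L_{C_0}$ across the pairing replaces $L_{C_0}\circ L_{\bC_{\setminus0}}$ by $L_\bC$, which annihilates the primitive valuation $\pi_\bC\bar\psi$; hence the difference is $0$ and $T_\bC\phi=S_\bC T_\bB\phi$ on the common domain $\VP_\bB^\infty$.

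Boundedness of $S_\bC=N^*$ is immediate: $\pi_\bC$ is bounded on $\VE_{n-k}$ (Proposition~\ref{prop:contDependencePC}), $L_{\bC_{\setminus0}}\colon\VE_{n-k}\to\VE_k$ is bounded (Lemma~\ref{lemma_extension_convolution}), $L_{\bB_{\setminus0}}^{-1}\colon\VE_k\to\VE_{n-k}$ is bounded (Corollary~\ref{cor_contDependence_E}), and $\pi_\bB$ is bounded onto $\VP_\bB^\VE$. For invertibility it suffices that $N$ be a bijection of $\VP_\bB^\VE$; I would factor $N|_{\VP_\bB^\VE}$ as the isomorphism $\pi_\bC\colon\VP_\bB^\VE\xrightarrow{\sim}\VP_\bC^\VE$ of Lemma~\ref{lemma:Pproj}(b) followed by $F:=(\pi_\bB\circ L_{\bB_{\setminus0}}^{-1}\circ L_{\bC_{\setminus0}})|_{\VP_\bC^\VE}\colon\VP_\bC^\VE\to\VP_\bB^\VE$, and prove $F$ bijective. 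If $Fg=0$ then $L_{\bB_{\setminus0}}^{-1}L_{\bC_{\setminus0}}g\in\ker\pi_\bB=\operatorname{im}(L_{B^n}\colon\VE_{n-k+1}\to\VE_{n-k})$, say $=L_{B^n}w$; applying $L_{C_0}\circ L_{\bB_{\setminus0}}$ gives $L_\bC g=L_{C_0}\circ L_\bB\,w$, which is $0$ since $g$ is primitive, forcing $w=0$ by the hard Lefschetz theorem for the $(n-2k+2)$-tuple $(C_0,B^n,\dots,B^n)$; then $L_{\bC_{\setminus0}}g=0$ and $g=0$ by the hard Lefschetz theorem for $\bC_{\setminus0}$. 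Conversely, given $u\in\VP_\bB^\VE$, the hard Lefschetz theorem for $(C_0,B^n,\dots,B^n)$ lets us choose $\eta\in\VE_{n-k+1}$ with $L_{C_0}\circ L_\bB\,\eta=-L_{C_0}\circ L_{\bB_{\setminus0}}\,u$; setting $g:=L_{\bC_{\setminus0}}^{-1}(L_{\bB_{\setminus0}}u+L_\bB\eta)$ we get $L_\bC g=L_{C_0}(L_{\bB_{\setminus0}}u+L_\bB\eta)=0$, so $g\in\VP_\bC^\VE$, and $Fg=\pi_\bB(u+L_{B^n}\eta)=u$ because $\pi_\bB u=u$ and $L_{B^n}\eta\in\ker\pi_\bB$. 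Thus $N$ is a bounded bijection, hence an isomorphism by the open mapping theorem, and so is $S_\bC=N^*$.

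The operator identity is essentially bookkeeping with the Poincar\'e pairing and the definitions of $\pi_\bB,\pi_\bC$; the substantial point, which I expect to be the main obstacle, is the invertibility of $S_\bC$: it forces the hard Lefschetz theorem for the $(n-2k+2)$-tuple obtained by adjoining $C_0$ (equivalently $B^n$) to the reference bodies — an input not visible in the statement — and requires care in tracking degrees and in ensuring that every Lefschetz map appearing is an isomorphism of Hilbert spaces with bounded inverse.
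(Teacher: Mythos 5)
Your proof is correct, and for the operator identity $T_\bC = S_\bC\circ T_\bB$ you take essentially the same route as the paper: expand both sides through the Poincar\'e pairing, and use the primitivity conditions together with Lemma~\ref{lemma:Pproj}(a) to insert and delete the projections $\pi_\bB$, $\pi_\bC$ for free. The paper organizes this as a single chain of equalities from $\langle T_\bC\phi,\psi\rangle_\VE$ to $\langle S_\bC T_\bB\phi,\psi\rangle_\VE$, whereas you compute the difference and show it vanishes; the ingredients are identical.

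Where you genuinely go beyond the paper is the invertibility of $S_\bC$. The paper's proof consists only of the computation establishing the identity and says nothing about why $N:=\pi_\bB\circ L_{\bB_{\setminus0}}^{-1}\circ L_{\bC_{\setminus0}}\circ\pi_\bC$ is a bijection of $\VP_\bB^\VE$ -- and this is not automatic, since $\pi_\bB$ is merely a projection, not an isomorphism. Your direct argument (injectivity via the hard Lefschetz theorem for the $(n-2k+2)$-tuple $(C_0,B^n,\dots,B^n)$ and for $\bC_{\setminus 0}$; surjectivity by constructing a preimage with the help of the same Lefschetz isomorphism) is correct and fills this gap. A slightly shorter variant that stays closer to the paper's toolbox: observe that the isomorphism $L_{\bB_{\setminus0}}^{-1}\circ L_{\bC_{\setminus0}}$ of $\VE_{n-k}$ carries $\VP_\bC^\VE=\ker(L_{C_0}\circ L_{\bC_{\setminus0}})$ isomorphically onto $\VP_{(C_0,\bB_{\setminus0})}^\VE=\ker(L_{C_0}\circ L_{\bB_{\setminus0}})$, and then apply Lemma~\ref{lemma:Pproj}(b) to the pair of tuples $\bB$ and $(C_0,\bB_{\setminus0})$ to see that $\pi_\bB$ restricts to an isomorphism $\VP_{(C_0,\bB_{\setminus0})}^\VE\to\VP_\bB^\VE$; composing with $\pi_\bC\colon\VP_\bB^\VE\xrightarrow{\sim}\VP_\bC^\VE$ shows that $N|_{\VP_\bB^\VE}$ is an isomorphism. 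Both arguments use exactly the same substance (the hard Lefschetz theorem for tuples augmented by $C_0$ or $B^n$), which, as you correctly anticipate, is the non-obvious input here.
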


\proof
We compute for any $\phi,\psi\in\VP_\bB^\infty$
\begin{align*}
 \langle T_\bC\phi,\psi\rangle_\VE & = \pi_\bC^* q_\bC(\phi,\psi) \\
 & =(-1)^k \pi_\bC \phi * L_{\bC_{\setminus0}} \circ \pi_\bC \bar \psi\\
 & =(-1)^k \phi * L_{\bC_{\setminus0}} \circ \pi_\bC \bar \psi\\
 & = (-1)^k \phi * L_{\bB_{\setminus0}} \circ L_{\bB_{\setminus0}}^{-1} \circ L_{\bC_{\setminus0}} \circ \pi_\bC \bar \psi\\
 & = (-1)^k \phi * L_{\bB_{\setminus0}} \circ \pi_\bB \circ L_{\bB_{\setminus0}}^{-1} \circ L_{\bC_{\setminus0}} \circ \pi_\bC \bar \psi\\
  & = q_\bB(\phi, \pi_\bB \circ L_{\bB_{\setminus0}}^{-1} \circ L_{\bC_{\setminus0}} \circ \pi_\bC \psi)\\
   & = q_\bB(\phi, S_\bC^* \psi)\\
   & = \langle S_\bC \circ T_\bB \phi,\psi\rangle_\VE.
\end{align*}
In the third line we use that $\mathrm{im}(\pi_\bC-\mathrm{id})\subset\mathrm{im}(L_{C_0})$ by Lemma \ref{lemma:Pproj}(a), and that $L_{C_0} \circ L_{\bC_{\setminus0}} \circ \pi_\bC \bar \psi=L_\bC \pi_\bC \psi=0$ since $\pi_\bC \bar \psi$ is primitive with respect to $\bC$. Similarly, in the fifth line we use that $\mathrm{im}(\pi_\bB-\mathrm{id})\subset\mathrm{im}(L_{B^n})$, and $L_{B^n} \circ L_{\bB_{\setminus0}}  \phi=L_\bB \phi=0$ since $\phi$ is primitive with respect to $\bB$.
\endproof

\begin{proposition} \label{prop_positivity_tb_tc}
	\begin{enuma}
	\item The operator $T_\bB$ on $\VP_\bB$ is strictly positive and essentially self-adjoint, where $\bB=(B^n,\ldots,B^n)$ with $B^n$ the euclidean unit ball.
	\item The operator $T_\bC$ on $\VP_\bB$ is strictly positive and essentially self-adjoint, where $\bC \in \calK^\infty_+(\RR^n)^{n-2k+1}$ is an arbitrary tuple of reference bodies.
	\item The Hodge--Riemann relations hold for the reference bodies $\bC$.
	\end{enuma}
\end{proposition}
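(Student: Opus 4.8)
The plan is to prove the three parts in order, with part (a) as the base case, part (b) following by the homotopy argument, and part (c) as an immediate corollary.

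\medskip

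\emph{Part (a).} For $\bC=\bB$ the tuple of euclidean balls, the operator $T_\bB$ encodes the Hodge--Riemann form $q_\bB$ on primitive valuations with respect to balls. This is precisely the content of the ball case of the Hodge--Riemann relations, which was established in \cite{Kotrbaty:HR, KotrbatyWannerer:Harmonic}. First I would invoke this result to conclude that $q_\bB(\phi,\phi)>0$ for all nonzero $\phi\in \VP_\bB^\infty$, i.e.\ that $T_\bB$ is positive; strictness on the $\VE$-completion should follow because in the ball case one has explicit control (via the decomposition into $\SO(n)$-irreducibles) showing that the associated quadratic form dominates a multiple of the $\VE$-norm on the primitive subspace. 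For essential self-adjointness I would apply Theorem~\ref{thm:saExt}: since $T_\bB$ is strictly positive and symmetric, it suffices to check that $T_\bB$ has dense image. This in turn follows from the hard Lefschetz theorem for balls together with Poincar\'e duality (Theorem~\ref{thm:poincare}), which identify the image of $T_\bB$ with a dense subspace --- essentially $L_{\bB_{\setminus 0}}$ restricted to the primitive subspace is an isomorphism onto its (dense) image after transport through $\Pd$.

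\medskip

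\emph{Part (b).} Here I would apply the homotopy lemma (Lemma~\ref{lemma:homotopy}) with $T=T_\bB$ and the family $S_t$ obtained by deforming. Choose a path $t\mapsto \bC(t)$ in $\calK^\infty_+(\RR^n)^{n-2k+1}$ from $\bC(0)=\bB$ to $\bC(1)=\bC$, e.g.\ $C_i(t)$ the body with support function $(1-t)|\cdot|+t\,h_{C_i}$, which stays in $\calK^\infty_+$ and is continuous in the $C^2$ topology. Set $S_t := S_{\bC(t)}$ as in \eqref{eq:SC}. Then $S_0=S_\bB=\id$ (since $\pi_\bB\circ L_{\bB_{\setminus0}}^{-1}\circ L_{\bB_{\setminus0}}\circ\pi_\bB=\pi_\bB=\id$ on $\VP_\bB$); each $S_t$ is bounded and invertible --- invertibility because $L_{\bB_{\setminus0}}^{-1}\circ L_{\bC(t)_{\setminus0}}$ conjugated by the projections is invertible with inverse built from $L_{\bC(t)_{\setminus0}}^{-1}\circ L_{\bB_{\setminus0}}$, using Lemma~\ref{lemma:Pproj}(b) and the hard Lefschetz theorem; continuity of $t\mapsto S_t$ in the operator norm follows from Proposition~\ref{prop:contDependencePC} together with Corollary~\ref{cor_contDependence_E} (continuity of $L_{\bC_{\setminus0}}$ and of inversion), and taking adjoints is norm-continuous; and finally $T_{\bC(t)}=S_t\circ T_\bB$ is symmetric because it is, by construction, the operator representing the Hermitian form $\pi_{\bC(t)}^*q_{\bC(t)}$, which is symmetric as $q_{\bC(t)}$ is (the convolution pairing is symmetric and $(-1)^k\overline{\phi*L_{\bC_{\setminus0}}\psi}=(-1)^k\psi*L_{\bC_{\setminus0}}\phi$ on the relevant real structure). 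The homotopy lemma then yields that each $T_{\bC(t)}$ is essentially self-adjoint and strictly positive; specializing to $t=1$ gives (b).

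\medskip

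\emph{Part (c).} Since $T_\bC$ is strictly positive, $\langle T_\bC\phi,\phi\rangle_\VE=\pi_\bC^*q_\bC(\phi,\phi)>0$ for all nonzero $\phi\in\VP_\bB^\infty$; by Lemma~\ref{lemma:HR_TCpositive} this is exactly the assertion that the Hodge--Riemann relations hold for $\bC$, i.e.\ Theorem~\ref{thm:main}(b).

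\medskip

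The main obstacle I expect is verifying the hypotheses of the homotopy lemma rigorously --- in particular (i) the strict positivity of $T_\bB$ on the \emph{Hilbert space} completion $\VP_\bB^\VE$, not merely positivity of the form on smooth valuations, which requires quoting the ball case in the sharp quantitative form, and (ii) the invertibility of $S_t$ for all $t$, which rests on threading the hard Lefschetz isomorphisms and the projection isomorphisms of Lemma~\ref{lemma:Pproj}(b) through the definition \eqref{eq:SC}. Everything else --- symmetry, the identity $T_\bC=S_\bC\circ T_\bB$, norm-continuity --- is already packaged in Proposition~\ref{prop:HRmain}, Proposition~\ref{prop:contDependencePC}, and Corollary~\ref{cor_contDependence_E}, so the proof is mostly a matter of assembling these pieces and invoking Lemma~\ref{lemma:homotopy} and Theorem~\ref{thm:saExt}.
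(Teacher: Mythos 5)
Your parts (b) and (c) reproduce the paper's argument essentially verbatim: the paper deforms $\bB$ to $\bC$ along the straight-line path $C_i(t)=(1-t)B+tC_i$, sets $S_t=S_{\bC(t)}$, verifies the four hypotheses of the homotopy lemma exactly as you do (with $S_0=\id$ from $\pi_\bB L_{\bB_{\setminus 0}}^{-1}L_{\bB_{\setminus 0}}\pi_\bB=\pi_\bB$, invertibility and norm-continuity from Proposition~\ref{prop:HRmain}, Proposition~\ref{prop:contDependencePC} and Corollary~\ref{cor_contDependence_E}, symmetry from Lemma~\ref{lemma:HR_TCpositive}), and then (c) is immediate.

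Where I would push back is on part (a). You write this as though it were a straightforward citation of the ball-case Hodge--Riemann relations from \cite{Kotrbaty:HR, KotrbatyWannerer:Harmonic}, but those results only give $q_\bB(\phi,\phi)>0$ for nonzero smooth primitive $\phi$; they do not give strict positivity of $T_\bB$ as an unbounded operator on $\VP_\bB^\VE$, i.e.\ a bound of the form $\langle T_\bB\phi,\phi\rangle_\VE\geq M\|\phi\|_\VE^2$ with $M>0$. This is the genuinely new content of (a), and the paper proves it by explicitly computing the $\VE$-norm of the highest-weight valuations $\phi_{r,k,m}$ and comparing with the value $\langle T_\bB\phi_{r,k,m},\phi_{r,k,m}\rangle$ from \cite[Theorem 1.4]{KotrbatyWannerer:Harmonic} to show each eigenvalue $c_\lambda\geq 1$ --- a nontrivial computation deferred to Section~\ref{s:Spectral}. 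You do gesture at this ("one has explicit control via the decomposition into $\SO(n)$-irreducibles"), which is the right idea; but your proposed route to essential self-adjointness, via hard Lefschetz and Poincar\'e duality, does not obviously work: $T_\bB\phi$ is the $\VE$-Riesz representative of $\psi\mapsto q_\bB(\phi,\psi)$, not $L_{\bB_{\setminus 0}}\phi$ itself, so one cannot directly read off its image from the Lefschetz isomorphism. The paper instead obtains dense image directly from the $\SO(n)$-decomposition: $T_\bB$ is equivariant, hence acts on each $\Gamma_\lambda\subset\Val^\infty_r$ by the scalar $c_\lambda\neq 0$, so the image contains the dense subspace $\bigoplus_\lambda\Gamma_\lambda$. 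Since you already invoke that decomposition for strict positivity, you should use it for dense image as well rather than taking the detour through Poincar\'e duality.
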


\proof

The proof of (a) is based on a computation involving highest weight vectors. Since this computation is rather lengthy and requires us to introduce further notation, we postpone it to the final section. 

Let us prove (b). For $t\in[0,1]$ and  $i=0,\ldots, n-2k$ we define  
\begin{displaymath}
  C_i(t) := (1-t)B + t C_i \in \calK^\infty_+(\RR^n), \quad \bC(t):=(C_0(t), \ldots, C_{n-2k}(t)). 	
\end{displaymath}
Clearly $\bC(0)=\bB$ and $\bC(1)=\bC$. Keeping the notation of Proposition~\ref{prop:HRmain}, we define $S_t:=S_{\bC(t)}$, so that $T_{\bC(t)} = S_t \circ T_\bB$. By Proposition~\ref{prop:HRmain}, each $S_t$ is invertible. Moreover, in view of \eqref{eq:SC} and the continuous dependence on the reference bodies guaranteed by  Proposition~\ref{prop:contDependence}, the map $t\mapsto S_t$ is continuous in the operator norm. Lemma \ref{lemma:HR_TCpositive} shows that $T_{\bC(t)}$ is symmetric. Since $T_\bB$ is essentially self-adjoint and strictly positive by (a), the hypothesis of the homotopy lemma is satisfied. We thus conclude that  $T_\bC$ is strictly positive and essentially self-adjoint. 

Part (c) follows from (b) and Lemma~\ref{lemma:HR_TCpositive}. 
\endproof

\begin{proof}[Proof of Theorem~\ref{thm:mainMV}]
 The assumption in Theorem \ref{thm:mainMV} says that
		\begin{displaymath}
			\phi:=\sum_{i=1}^N x_i V(A_1^i,\ldots,A_k^i,\Cdot[n-k]) \in \VP^\infty_\bC.
		\end{displaymath}
	We have
		\begin{equation} \label{eq:qc_explicit}
		\begin{split}
		q_\bC(\phi,\phi)=(-1)^k \frac{(n-k)!^2}{n!} \sum_{i,j=1}^N x_ix_j V(A_1^i,\ldots,A_k^i,A_1^j,\ldots,A_k^j,C_1,\ldots,C_{n-2k}),
			\end{split}
		\end{equation}
		by \eqref{eq_convolution_mixed_volumes} and the definition of $L_\bC$. The Hodge--Riemann relations imply $q_\bC(\phi,\phi)\geq0$ with equality if and only if $\phi=0$.  
\end{proof} \pagebreak

\section{Spectral properties}

\label{s:Spectral}

In the final section we will investigate the properties of the Hodge--Riemann sesquilinear form $q_\bC$ as a quadratic form on the Hilbert spaces  $\VP_\bC^\VE$ and $\VP_\bC^\VF$. We will complete the proof of the Hodge--Riemann relations by a direct computation in the case where all reference bodies are euclidean balls, which is the initial point in the homotopy argument in the proof of Proposition \ref{prop_positivity_tb_tc}. We also clarify our choice of the Hilbert space $\VE$ instead of $\VF$ in this last section by showing that the properties of the Hodge--Riemann sesquilinear form on $\VE$ and $\VF$ are very distinct. The former is closable and has a discrete spectrum of eigenvalues with finite multiplicities accumulating at  $+\infty$, while the latter is bounded and has a spectrum of eigenvalues with finite multiplicities accumulating at $0$. Using the $\VE$ completion, we may  relax the smoothness assumption in Theorem~\ref{thm:mainMV}.

\subsection{Further facts about self-adjoint operators}

Our proofs will rely on certain properties of self-adjoint operators that, for convenience of the reader, we review in this section.

An operator $A$ with the property that  $(A-\mu)^{-1}$ is compact for all $\mu\in \rho(A)$ is said to have compact resolvent.  We will need the following characterization of such operators. 

\begin{theorem}[{\cite[Theorem XIII.64]{ReedSimonIV}}]\label{thm:compactresolvent}
	Let $A$ be a semibounded self-adjoint operator on $\scrH$. Then the following are equivalent:
	\begin{enuma}
		\item $(A-\mu)^{-1}$ is compact for some $\mu\in \rho(A)$. 
		\item $(A-\mu)^{-1}$ is compact for all $\mu\in \rho(A)$. 
		\item $\{ x\in \dom(A)\colon \|x\|\leq 1 \text{ and } \| Ax\|\leq b\}$ is compact for all $b$. 
		\item \label{item:compact_eigenvalues} There exists a complete orthonormal basis $(x_n)_{n\in \NN}$ in $\dom(A)$ so that $Ax_n = \mu_n x_n$ with  $\mu_1\leq \mu_2\leq \cdots $ and $\mu_n\to \infty$. 
		\item The spectrum of $A$ is discrete and consists of eigenvalues with finite multiplicities. 
	\end{enuma}
\end{theorem}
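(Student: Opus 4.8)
The plan is to run the standard equivalences for operators with compact resolvent: first (a) $\Leftrightarrow$ (b) $\Leftrightarrow$ (c) among the analytic conditions, then close the circle (b) $\Rightarrow$ (d) $\Rightarrow$ (e) $\Rightarrow$ (b) to bring in the spectral description. The spectral theorem for self-adjoint operators is the workhorse throughout, and semiboundedness of $A$ is used only to guarantee that the eigenvalues can be listed so that $\mu_n\to+\infty$ rather than merely $|\mu_n|\to\infty$. Throughout I would fix a real $\mu_0$ strictly below $\inf\sigma(A)$, so that $\mu_0\in\rho(A)$, and write $R(\lambda)=(A-\lambda)^{-1}$.

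For (a) $\Leftrightarrow$ (b) I would invoke the first resolvent identity, which since resolvents commute can be written $R(\lambda)=R(\mu)\bigl(I+(\lambda-\mu)R(\lambda)\bigr)$; this exhibits $R(\lambda)$ as a product of $R(\mu)$ with a bounded operator, so compactness of one resolvent forces compactness of all. For (b) $\Leftrightarrow$ (c), the point is that $R(\mu_0)$ is an isomorphism of $\scrH$ onto $\dom(A)$ with the graph norm $x\mapsto(\|x\|^2+\|Ax\|^2)^{1/2}$, since $AR(\mu_0)=I+\mu_0R(\mu_0)$ is bounded; hence $R(\mu_0)\colon\scrH\to\scrH$ is compact exactly when graph-norm bounded subsets of $\dom(A)$ are precompact in $\scrH$. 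Concretely, any $x$ with $\|x\|\le1$ and $\|Ax\|\le b$ is $R(\mu_0)y$ with $\|y\|\le b+|\mu_0|$, and conversely $R(\mu_0)$ sends the unit ball into a set of the form appearing in (c); this is the translation between the two formulations.

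Next, assuming (b), the operator $R(\mu_0)$ is compact, self-adjoint and injective, so the spectral theorem for compact self-adjoint operators gives a complete orthonormal basis $(x_n)_{n\in\NN}$ of $\scrH$ with $R(\mu_0)x_n=\nu_nx_n$, $\nu_n\neq0$, $\nu_n\to0$. Then $x_n\in\dom(A)$ and $Ax_n=(\mu_0+\nu_n^{-1})x_n$; boundedness below of $A$ together with $|\mu_0+\nu_n^{-1}|\to\infty$ forces $\mu_n:=\mu_0+\nu_n^{-1}\to+\infty$, and after reindexing $\mu_1\le\mu_2\le\cdots$, which is (d). For (d) $\Rightarrow$ (e) I would check that $\sigma(A)=\{\mu_n:n\in\NN\}$: if $\lambda\notin\{\mu_n\}$ then $\sum_n(\mu_n-\lambda)^{-1}\langle\cdot,x_n\rangle x_n$ is a bounded inverse of $A-\lambda$, since the coefficients stay bounded as $\mu_n\to\infty$, so $\lambda\in\rho(A)$; and $\{\mu_n\}$ is discrete with each eigenvalue of finite multiplicity precisely because $\mu_n\to\infty$. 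Finally (e) $\Rightarrow$ (b) follows from the pure-point spectral decomposition $R(\mu_0)=\sum_n(\mu_n-\mu_0)^{-1}P_n$, with $P_n$ the finite-rank spectral projections: this is a norm limit of finite-rank operators because $(\mu_n-\mu_0)^{-1}\to0$, hence compact.

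I expect the main obstacle to be bookkeeping rather than anything conceptual: one must rule out the eigenvalues drifting to $-\infty$ or accumulating at a finite value --- this is exactly where semiboundedness is needed --- and treat the degenerate case $\dim\scrH<\infty$ separately (there $\sigma(A)$ is finite and the eigenbasis in (d) is finite, so ``$\mu_n\to\infty$'' is vacuous). The other slightly fussy point is the passage in (b) $\Leftrightarrow$ (c) between compactness of the operator $R(\mu_0)$ and precompactness of graph-norm bounded sets in $\dom(A)$, where the boundedness of $AR(\mu_0)$ is the key.
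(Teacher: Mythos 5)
The paper does not prove this theorem; it cites it verbatim as Theorem~XIII.64 of Reed--Simon, Volume~IV, so there is no internal argument against which to compare your proposal. Taken on its own, your outline is a correct and standard reconstruction: the resolvent identity for (a)$\Leftrightarrow$(b), the graph-norm characterization of $\dom(A)$ via $R(\mu_0)$ for (b)$\Leftrightarrow$(c), the spectral theorem for compact self-adjoint operators applied to $R(\mu_0)$ for (b)$\Rightarrow$(d), the explicit series inverse for (d)$\Rightarrow$(e), and finite-rank approximation of $R(\mu_0)$ for (e)$\Rightarrow$(b). You correctly identify semiboundedness as what pins the eigenvalues to escape to $+\infty$ rather than $\pm\infty$. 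Two small points worth making explicit if you were to write this out in full: in (b)$\Leftrightarrow$(c), statement (c) asserts \emph{compactness}, not merely precompactness, so you should observe that the set $\{x\in\dom(A):\|x\|\le1,\ \|Ax\|\le b\}$ is automatically closed in $\scrH$ --- this follows from $A$ being a closed operator together with weak compactness of the bounded set of values $Ax$. And in (d)$\Rightarrow$(e), showing the formal series $\sum_n(\mu_n-\lambda)^{-1}\langle\cdot,x_n\rangle x_n$ really inverts $A-\lambda$ (that its range lands in $\dom(A)$ and that $A-\lambda$ is surjective) takes a short closedness argument on top of the boundedness of the coefficients; the key estimate $\sup_n|\mu_n/(\mu_n-\lambda)|<\infty$, which you implicitly use, is what makes this work.
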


Next, we need some facts about quadratic forms and refer to \cite[Section VIII.6]{ReedSimon:I} for details. By a quadratic form $q$ on a Hilbert space $\scrH$ we will understand a sesquilinear form $q\colon \dom(q)\times \dom(q)\to \CC$ defined on a dense subspace $\dom(q)$ of $\scrH$. If there is a number $M\in \RR$ such that  $q(x,x)\geq M \|x\|^2$ holds for all $x\in \dom (q)$, then $q$ is said to be semibounded.  If $\scrH$ is complex, this implies that $q$ is symmetric, i.e., $q(y,x)=\overline{q(x,y)}$ for all $x,y$. If $M\geq 0$, then $q$ is said to be positive. If $|q(x,x)| \leq M\|x\|^2$ for some $M$ and all $x \in \dom(q)$, then $q$ is called bounded.

Let $q$ be a quadratic form bounded below by $M$. $q$ is called closed if $\dom(q)$ is  complete under the norm
\begin{displaymath}
	\|x\|_{+1} = \sqrt{ q(x,x)+ (1-M) \|x\|^2}.
\end{displaymath}
We will need the following fact.

\begin{lemma}[{\cite[Problem 15, Chapter VIII]{ReedSimon:II}}]\label{lemma:closed_quadratic} 
	A semibounded quadratic form $q$ is closed if and only if whenever 
	\begin{displaymath}
	  x_n\in \dom(q), \quad x_n \stackrel{\scrH}{\to} x, \quad \text{and}\quad q(x_n-x_m, x_n-x_m)\underset{m,n\to \infty}{\to} 0,
	\end{displaymath}
	then $x \in \dom(q)$ and $q(x_n-x, x_n-x)\to 0$. If this is the case, we have $q(x,x)=\lim_{l \to \infty} q(x_l,x_l)$. 
	\end{lemma}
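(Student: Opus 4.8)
The plan is to unwind the definition of closedness directly, since it is purely a statement about the completeness of $\dom(q)$ in the norm $\|x\|_{+1}^2 = q(x,x) + (1-M)\|x\|^2$, where $M$ is a lower bound for $q$. First I would record two elementary facts. Since a semibounded sesquilinear form on a complex Hilbert space is automatically symmetric, $\langle x,y\rangle_{+1} := q(x,y) + (1-M)\langle x,y\rangle_\scrH$ is a genuine inner product, and $\|x\|_{+1}^2 \ge M\|x\|^2 + (1-M)\|x\|^2 = \|x\|^2$; hence $\|\cdot\|_{+1}$-convergence implies $\scrH$-convergence, and every $\|\cdot\|_{+1}$-Cauchy sequence is $\scrH$-Cauchy. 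The second fact is the exact identity $q(y,y) = \|y\|_{+1}^2 - (1-M)\|y\|_\scrH^2$, which I will use as a bridge: for any sequence with $y_n \to 0$ in $\scrH$, one has $\|y_n\|_{+1} \to 0$ if and only if $q(y_n,y_n) \to 0$.

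For the forward implication, assume $q$ is closed and take $x_n \in \dom(q)$ with $x_n \to x$ in $\scrH$ and $q(x_n - x_m, x_n - x_m) \to 0$. Then $(x_n)$ is $\scrH$-Cauchy, so the bridge identity gives $\|x_n - x_m\|_{+1} \to 0$; by closedness $(x_n)$ has a $\|\cdot\|_{+1}$-limit $y \in \dom(q)$, and $y = x$ because $\|\cdot\|_{+1}$-convergence forces $\scrH$-convergence and $\scrH$-limits are unique. Applying the bridge identity once more to $x_n - x \to 0$ yields $q(x_n - x, x_n - x) \to 0$, as required.

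For the converse, assume the sequential condition and let $(x_n)$ be $\|\cdot\|_{+1}$-Cauchy. It is $\scrH$-Cauchy, hence converges in $\scrH$ to some $x$; the bridge identity turns $\|x_n - x_m\|_{+1} \to 0$ into $q(x_n - x_m, x_n - x_m) \to 0$, so the hypothesis gives $x \in \dom(q)$ and $q(x_n - x, x_n - x) \to 0$, and then $\|x_n - x\|_{+1} \to 0$. Thus $\dom(q)$ is $\|\cdot\|_{+1}$-complete, i.e.\ $q$ is closed. For the last assertion, continuity of the norm $\|\cdot\|_{+1}$ gives $\|x_l\|_{+1} \to \|x\|_{+1}$, and combining this with $\|x_l\|_\scrH \to \|x\|_\scrH$ and the bridge identity gives $q(x_l, x_l) \to q(x,x)$.

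There is no serious obstacle here beyond bookkeeping; the one point that deserves attention is that $M$ need not be $\le 1$, so $q$ cannot simply be estimated from above by $\|\cdot\|_{+1}^2$. I would sidestep this by always passing through the exact identity $q(y,y) = \|y\|_{+1}^2 - (1-M)\|y\|_\scrH^2$ rather than an inequality, so that the $\scrH$-term is subtracted off exactly in each limit, after which the argument is purely formal. Since the statement is quoted from Reed--Simon one may of course simply cite it; the above is what one would write out if a proof were wanted.
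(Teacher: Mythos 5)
The paper cites this lemma from Reed--Simon (as an exercise) without giving a proof, so there is no in-paper argument to compare against. Your argument is correct and is the standard unwinding of the definition: closedness is completeness of $\dom(q)$ in the norm $\|\cdot\|_{+1}$, and the exact "bridge" identity $q(y,y)=\|y\|_{+1}^2-(1-M)\|y\|_{\scrH}^2$ converts $\|\cdot\|_{+1}$-Cauchyness into the hypothesis $q(x_n-x_m,x_n-x_m)\to 0$ and back, in both directions and in the final limit assertion, without any sign issues when $M>1$. The two preliminary observations you record --- symmetry of a semibounded form on a complex space and the bound $\|x\|_{+1}\ge\|x\|_{\scrH}$ --- are exactly what is needed to make the bookkeeping go through, and the paper itself notes the first of these in the preceding paragraph.
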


The closure of a quadratic form $q$, if it exists, is the smallest closed quadratic form which extends it. If a closure of $q$ exists, then $q$ is called closable.

\begin{theorem}[Friedrichs extension theorem {\cite[Theorem X.23]{ReedSimon:II}}] \label{thm_friedrich}
	Let $A$ be a positive symmetric operator and let $q(x,y)=\langle Ax,y\rangle$ for $x,y\in \dom (A)$. Then $q$ is a closable quadratic form and its closure $\hat q$ is the quadratic form of a unique self-adjoint operator $\hat A$. $\hat A$ is a positive extension of $A$ and the lower bound for the spectrum of $\hat A$ is the lower bound of $q$.
\end{theorem}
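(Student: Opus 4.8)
The plan is the classical ``form-domain completion plus Riesz representation'' argument. Since $A$ is positive and symmetric, $\langle x,y\rangle_{+1}:=\langle Ax,y\rangle+\langle x,y\rangle$ is an inner product on $\dom(A)$ with $\|x\|_{+1}\geq\|x\|$. First I would let $\mathcal H_{+1}$ be the completion of $\dom(A)$ in $\|\cdot\|_{+1}$ and extend the continuous inclusion $\dom(A)\hookrightarrow\mathcal H$ to a bounded map $j\colon\mathcal H_{+1}\to\mathcal H$. The only step with real content --- precisely the closability of $q$ --- is to show that $j$ is injective. For this I would take a $\|\cdot\|_{+1}$-Cauchy sequence $x_n\in\dom(A)$ with $\|x_n\|\to0$ and prove $q(x_n,x_n)\to0$: writing $q(x_n,x_n)=q(x_n-x_m,x_n)+q(x_m,x_n)$, the first term is $\leq\|x_n-x_m\|_{+1}\,\|x_n\|_{+1}$ by the Cauchy--Schwarz inequality for the positive form $q$ (which is dominated by $\|\cdot\|_{+1}^2$), while the second is $\leq\|Ax_m\|\,\|x_n\|\to0$ as $n\to\infty$ for fixed $m$; choosing $m$ so that $\|x_n-x_m\|_{+1}$ is small for $n\geq m$ then forces $q(x_n,x_n)\to0$, hence $\|x_n\|_{+1}\to0$. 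Granting injectivity of $j$, I identify $\mathcal H_{+1}$ with a dense subspace of $\mathcal H$; the continuous extension $\hat q$ of $q$ to $\mathcal H_{+1}\times\mathcal H_{+1}$ is closed (one verifies the criterion of Lemma~\ref{lemma:closed_quadratic}) and is the closure of $q$.

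Next I would construct $\hat A$. For $f\in\mathcal H$ the functional $y\mapsto\langle f,y\rangle$ is $\|\cdot\|_{+1}$-bounded on $\mathcal H_{+1}$, so the Riesz representation theorem yields a unique $Bf\in\mathcal H_{+1}$ with $\langle Bf,y\rangle_{+1}=\langle f,y\rangle$ for all $y\in\mathcal H_{+1}$. A short check shows that $B$, regarded as an operator on $\mathcal H$, is bounded, self-adjoint, injective, and satisfies $0\leq B\leq I$; hence $B^{-1}$ is a self-adjoint operator with dense domain $\operatorname{ran}B\subset\mathcal H_{+1}$ and $B^{-1}\geq I$. I then set $\hat A:=B^{-1}-I$. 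For $x=Bf$ and $y\in\mathcal H_{+1}$ one has $\langle\hat Ax,y\rangle=\langle f,y\rangle-\langle x,y\rangle=\langle x,y\rangle_{+1}-\langle x,y\rangle=\hat q(x,y)$, so $\hat q$ is the quadratic form of the self-adjoint operator $\hat A$. To see $\hat A\supseteq A$, note that for $x\in\dom(A)$ the identity $\langle Ax,y\rangle=\hat q(x,y)$ holds on the $\|\cdot\|_{+1}$-dense set $\dom(A)$ and extends by continuity to all $y\in\mathcal H_{+1}$, which via the defining property of $B$ places $x$ in $\operatorname{ran}B$ with $\hat Ax=Ax$; in particular $\hat A$ is a positive extension of $A$.

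Finally I would pin down the lower bound and uniqueness. Passing $q(x_n,x_n)\geq m_0\|x_n\|^2$ to the limit, with $m_0$ the lower bound of $q$, gives $\langle\hat Ax,x\rangle=\hat q(x,x)\geq m_0\|x\|^2$ on $\dom(\hat A)$, so $\sigma(\hat A)\subset[m_0,\infty)$ by Lemma~\ref{lemma_bound_and_spectrum}; conversely $\langle\hat Ax,x\rangle=\langle Ax,x\rangle$ on $\dom(A)$ shows $m_0$ cannot be improved, so the lower bound of $\sigma(\hat A)$ equals that of $q$. For uniqueness I would use the standard one-to-one correspondence between closed semibounded forms and semibounded self-adjoint operators: if $\hat A'$ is self-adjoint with form $\hat q$, then for $x\in\dom(\hat A)$ and $y\in\dom(\hat A')$ one has $\langle\hat Ax,y\rangle=\hat q(x,y)=\overline{\hat q(y,x)}=\langle x,\hat A'y\rangle$, so $x\in\dom((\hat A')^*)=\dom(\hat A')$ with $\hat A'x=\hat Ax$, whence $\hat A\subseteq\hat A'$, and the reverse inclusion follows by symmetry. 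The step I expect to be the main obstacle is the injectivity of $j$ in the first paragraph, i.e.\ the closability of $q$: this is the one place where positivity (semiboundedness) of $A$ is genuinely used and where the analogous statement fails for a general symmetric operator; everything afterward is a routine application of the Riesz representation theorem and elementary spectral theory.
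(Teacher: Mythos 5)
This statement is quoted verbatim from Reed--Simon (Theorem X.23) and the paper offers no proof of its own, and your argument is precisely the standard Friedrichs-extension proof used there: closability via the $\|\cdot\|_{+1}$-completion together with the bound $|q(x_m,x_n)|\leq\|Ax_m\|\,\|x_n\|$, then the Riesz representation theorem producing $B=(\hat A+I)^{-1}$, followed by the lower-bound and uniqueness checks. It is correct; the only detail worth adding is that $\operatorname{ran}B=\dom(\hat A)$ is $\|\cdot\|_{+1}$-dense in $\mathcal{H}_{+1}$ (immediate from the defining identity $\langle Bf,y\rangle_{+1}=\langle f,y\rangle$), which is what justifies calling $\hat q$ \emph{the} quadratic form of $\hat A$ rather than merely saying $\hat A$ represents $\hat q$ on its domain.
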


\subsection{Spectral properties  of the operator $T_\bB$}

\label{subsec:spectrum}

The paper \cite{KotrbatyWannerer:Harmonic} constructs differential forms  $\omega_{r,k,m} \in X_r$ such that the induced valuations $\phi_{r,k,m}\in \Val_r^\infty(\RR^n)$ are highest weight vectors  for the natural action of orthogonal group.
Moreover,  \cite[Theorem~1.4]{KotrbatyWannerer:Harmonic} computes $q_\bB(\phi_{r,k,m})$. To prove Proposition~\ref{prop_positivity_tb_tc}(a), and thereby to complete the proof of the Hodge--Riemann relations, it will be sufficient to compute the $L^2$ norm of the differential forms $ \omega_{r,k,m}$. Let $s_n$ denote the volume of the $n$-dimensional unit sphere. 
\begin{lemma} The valuation $\phi_{r,k,m}\in \Val_r^\infty$ of \cite[Theorem~1.3]{KotrbatyWannerer:Harmonic} satisfies
	\begin{displaymath}
	  \| \phi_{r,k,m}\|_{\VE}^2  \leq
	 \frac{(n+m-r-2)s_{n+2m-3}}{s_{n+m-r-3}^2s_{2m-3}}{n-2k\choose r-k}.
	\end{displaymath}
\end{lemma}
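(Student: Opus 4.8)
The plan is to estimate the quotient norm by the $L^2$-norm of the explicit representative. By the very definition of $\VE_r$, the quantity $\|\phi_{r,k,m}\|_\VE$ is the norm of the representing form $\omega_{r,k,m}$ in $X_r/Y_r$, whence
\[
\|\phi_{r,k,m}\|_\VE^2 \;\le\; \|\omega_{r,k,m}\|_{L^2}^2 \;=\; \int_{S^{n-1}}\omega_{r,k,m}\wedge\overline{*\,\omega_{r,k,m}}
\]
by \eqref{eq_product_and_hodge}. Thus the whole statement reduces to estimating this single integral of the concrete form $\omega_{r,k,m}$ of \cite{KotrbatyWannerer:Harmonic}; the inequality in the lemma is precisely the slack from passing to the quotient $X_r/Y_r$ (and, if convenient, from an over-estimate of the ensuing spherical integral). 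Since in the application to Proposition~\ref{prop_positivity_tb_tc}(a) only an upper bound on $\|\phi_{r,k,m}\|_\VE$ enters, this loss is harmless.

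Next I would insert the explicit description of the form $\omega_{r,k,m}\in X_r$ from \cite{KotrbatyWannerer:Harmonic}. Up to a normalizing scalar, which I would carry along, $\omega_{r,k,m}$ is a wedge product of the contact form $\alpha$, a power $(d\alpha)^{r-k}$ of $d\alpha=\sum_i du_i\wedge dx_i$, a fixed monomial in the horizontal differentials $dx_i$, and the pullback under $S\RR^n\to S^{n-1}$ of a fixed spherical harmonic $Y$ on $S^{n-1}$ of the degree prescribed by $m$, arranged so as to have bidegree $(r,n-1-r)$. In the identification $\Omega^{r,n-1-r}(S\RR^n)^{tr}=\largewedge^r(\RR^n)^*\otimes\Omega^{n-1-r}(S^{n-1})$, the Hodge star $*$ factors as the Euclidean Hodge star on $\largewedge(\RR^n)^*$ tensored with the Hodge star on $S^{n-1}$, and $\omega_{r,k,m}\wedge\overline{*\,\omega_{r,k,m}}$ becomes a scalar function times the Riemannian volume form of $S^{n-1}$.

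The remaining computation of $\int_{S^{n-1}}\omega_{r,k,m}\wedge\overline{*\,\omega_{r,k,m}}$ then splits into two essentially independent parts. First, the exterior-algebra bookkeeping: pairing the $\largewedge^r(\RR^n)^*$-components through the Euclidean Hodge star and expanding the powers of $d\alpha$ into their horizontal and vertical contributions produces a purely combinatorial constant, and it is here that I expect the factor $\binom{n-2k}{r-k}$ to appear --- as the number of admissible ways of distributing the $r-k$ copies of $d\alpha$ over the available directions, exactly in the spirit of the linear-algebra identities behind Lemma~\ref{lemma:form_mixed_volume} and Proposition~\ref{prop:ppforms}. Second, what is left is a spherical integral of the form $\int_{S^{n-1}}|Y(u)|^2 f(\langle u,e\rangle)\,du$ for a fixed unit vector $e$ and an explicit weight $f$ coming from $\alpha\wedge(d\alpha)^{r-k}$; passing to the polar coordinate $t=\langle u,e\rangle$ (with $du=(1-t^2)^{(n-3)/2}\,dt\,dv$, $v\in S^{n-2}$) and invoking the Funk--Hecke formula together with the standard Gegenbauer and beta-integral evaluations expresses this integral through Gamma functions, which reassemble into $\dfrac{(n+m-r-2)\,s_{n+2m-3}}{s_{n+m-r-3}^2\,s_{2m-3}}$. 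Multiplying the normalizing scalar from \cite[Theorem~1.3]{KotrbatyWannerer:Harmonic}, the combinatorial constant, and the value of the spherical integral yields the asserted bound.

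I expect the main obstacle to be the exterior-algebra bookkeeping just described: one must control all signs and multiplicities generated by the Hodge star, by the repeated powers of $d\alpha$, and by the splitting of $T(S\RR^n)$ into horizontal and vertical directions, and check that the outcome collapses to exactly $\binom{n-2k}{r-k}$ rather than some neighbouring combinatorial quantity. A secondary, purely bureaucratic difficulty is reconciling conventions --- here $s_n$ denotes the volume of the $n$-dimensional unit sphere, and one has to make sure the normalization of the harmonic $Y$ is the one fixed in \cite[Theorem~1.3]{KotrbatyWannerer:Harmonic}, so that the constant produced is genuinely the stated one; neither point is conceptually deep, but both demand care.
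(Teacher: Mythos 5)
Your top-level reduction is exactly the paper's: $\|\phi_{r,k,m}\|_{\VE}$ is the quotient norm in $X_r/Y_r$ and hence bounded by the $L^2$ norm of the representative $\omega_{r,k,m}$, and the claimed inequality comes solely from this slack (the paper in fact computes $\|\omega_{r,k,m}\|_{L^2}^2$ as an \emph{equality} with the right-hand side). So the strategy is the same and your identification of where the inequality comes from is correct.

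Where you diverge is in the details of the computation, and here your description of $\omega_{r,k,m}$ is not quite what appears in \cite{KotrbatyWannerer:Harmonic}. The form is not a single wedge of $\alpha$, $(d\alpha)^{r-k}$, a fixed $dx$-monomial and a spherical harmonic; it is a sum $\zeta_{\bar 1}^{m-2}(\omega_{r,k}^{(1)}+\omega_{r,k}^{(2)})$ written in a complex frame $\zeta_j, dz_j, d\zeta_j$, where the two summands themselves expand (via the multinomial theorem) as sums over an index $a$ and a subset $A\subset L$ with $|A|=r-k$. The binomial coefficient $\binom{n-2k}{r-k}$ you correctly expect then arises as the number of such subsets $A$ (with $|L|=n-2k$), not from counting ways to place $d\alpha$'s. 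The spherical integral the paper actually evaluates is $\int_{S^{n-1}}|\zeta_1|^{2m-4}\sum_a|\zeta_a|^2\,d\sigma$ — a moment integral of powers of complex linear coordinates, evaluated directly in spherical coordinates — rather than a Funk--Hecke/Gegenbauer pairing with a spherical harmonic against a zonal weight. Both are elementary and would produce the same Gamma-function/volume constants, so your proposed route would work, but it is a different (and arguably more roundabout) way to do the same integral, and you would still need the precise expansion from \cite{KotrbatyWannerer:Harmonic} to get the combinatorial factor and the $i_E$-contraction bookkeeping right (the paper must also verify $i_E\omega_{r,k,m}=0$ before integrating the pointwise norm).
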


\begin{proof}
	Using the notation introduced on page 560 of \cite{KotrbatyWannerer:Harmonic}, but moving the constant  appearing in front of the integral in \cite[Theorem~1.3]{KotrbatyWannerer:Harmonic} into the definition of $\omega_{r,k,m}$, we have that 
	\begin{displaymath}
	\omega_{r,k,m}=\frac{(\sqrt{-1})^{\lfloor\frac n2\rfloor}(\sqrt 2)^{m-2}}{s_{n+m-r-3}}\zeta_{\b1}^{m-2}(\omega_{r,k}^{(1)}+\omega_{r,k}^{(2)}),
	\end{displaymath}	
	where, using the multinomial theorem, 
	\begin{align*}
		\omega_{r,k}^{(1)}\otimes\Theta_1&=\zeta_{\b K}(d\zeta_{\b K})^{[k-1]}(d\zeta_L)^{[n-r-k]}(dz_L)^{[r-k]}(\b{dz_K})^{[k]}\\
		&=\sum_{a=1}^k\sum_{\substack{A\subset L\\|A|=r-k}} (\zeta_{\b a}\otimes dz_{\b a})(d\zeta_{\b {K_a}})^{[k-1]}(d\zeta_{L\setminus A})^{[n-r-k]}(dz_A)^{[r-k]}(\b{dz_K})^{[k]}\\
		&=:\sum_{a=1}^k\sum_{\substack{A\subset L\\|A|=r-k}} \omega^{(1)}_{a,A}\otimes\Theta_1,
	\end{align*}
	and
	\begin{align*}
		\omega_{r,k}^{(2)}\otimes\Theta_1&=\zeta_{L}(d\zeta_{\b K})^{[k]}(d\zeta_L)^{[n-r-k-1]}(dz_L)^{[r-k]}(\b{dz_K})^{[k]}\\
		&=\sum_{a\in L}\sum_{\substack{A\subset L_a\\|A|=r-k}}(\zeta_a\otimes dz_a)(d\zeta_{\b K})^{[k]}(d\zeta_{L_a\setminus A})^{[n-r-k-1]}(dz_A)^{[r-k]}(\b{dz_K})^{[k]}\\
		&=:\sum_{a\in L}\sum_{\substack{A\subset L_a\\|A|=r-k}} \omega^{(2)}_{a,A}\otimes\Theta_1.
	\end{align*}
	Here $L_a=L\setminus\{a\}$. Since $\omega^{(1)}_{a,A},\omega^{(2)}_{a,A}$ are both multiples of an orthonormal basis vector, it is readily seen that pointwise
	\begin{displaymath}
		\langle\zeta_{\b1}^{m-2}\omega^{(i)}_{a,A},\zeta_{\b1}^{m-2}\omega^{(j)}_{b,B}\rangle=\delta_{i,j}\delta_{a,b}\delta_{A,B}|\zeta_{\b1}|^{2m-4}|\zeta_a|^2.
	\end{displaymath}
	 
 Integration in spherical coordinates gives
	\begin{displaymath}
		\int_{S^{n-1}}|\zeta_1|^{2m-4}\sum_{a=1}^k|\zeta_a|^2d\sigma
			=\frac{(m+k-2)}{2^{m-2}}\frac{s_{n+2m-3}}{s_{2m-3}}
	\end{displaymath}
	and similarly
	\begin{displaymath}
		\int_{S^{n-1}}|\zeta_1|^{2m-4}\sum_{a\in L}|\zeta_a|^2d\sigma
	     =\frac{(n-2k)}{2^{m-2}}\frac{s_{n+2m-3}}{s_{2m-3}}.
	\end{displaymath}
	Since
\begin{displaymath}
  (\omega_{r,k}^{(1)}+\omega_{r,k}^{(2)})\otimes\Theta_1=\zeta_J(d\zeta_J)^{[n-r-1]}(dz_J)^{[r-k]}(\b{dz_K})^{[k]}
\end{displaymath}
we clearly have $i_E\omega_{r,k,m}=0$, where $E$ is the Euler vector field.
	Altogether,
	\begin{align*}
		&\|\omega_{r,k,m}\|^2_{L^2}\\
		&\quad=\int_{S^{n-1}} |\omega_{r,k,m}|^2-|i_E\omega_{r,k,m}|^2\\
		&\quad=\frac{2^{m-2}}{s_{n+m-r-3}^2}\int_{S^{n-1}}|\zeta_1|^{2m-4}\left[{n-2k\choose r-k}\sum_{a=1}^k|\zeta_a|^2+{n-2k-1\choose r-k}\sum_{a\in L}|\zeta_a|^2\right]\\
		&\quad=\frac{(n+m-r-2)s_{n+2m-3}}{s_{n+m-r-3}^2s_{2m-3}}{n-2k\choose r-k}.
	\end{align*}
Since $\|\phi_{r,k,m}\|_\VE = \inf_{\eta\in Y_r} \| \omega_{r,k,m} + \eta\|_{L^2}\leq \|\omega_{r,k,m}\|_{L^2} $, we obtain the desired inequality. 
\end{proof}

\begin{proof}[Proof of Proposition~\ref{prop_positivity_tb_tc}(a)]
 	The decomposition of the space $\VP_{\bB}\subset \VE_{n-r}$ as an orthogonal sum of irreducible representations under the action of the orthogonal group $\mathrm{SO}(n)$ was obtained in \cite{KotrbatyWannerer:Harmonic}, based on \cite{ABS:Harmonic}: 
	\begin{equation}\label{eq:L2decomp} 
		\VP_{\bB} = \b{\bigoplus_\lambda \Gamma_\lambda},
	\end{equation}
where $\lambda$ runs over all highest weights of the form
\begin{displaymath}
	 (m,\underbrace{2,\ldots,2}_{r-1\ \text{times}},0,\ldots,0), \quad \text{ together with } (m,\underbrace{2,\ldots,2}_{r-2\ \text{times}},-2) \text{ if } n=2r,
\end{displaymath}
and where $\Gamma_\lambda$ is the irreducible representation of $\mathrm{SO}(n)$ with highest weight $\lambda$.
Moreover, $\Gamma_\lambda\subset \Val^\infty_r$.	

Since $T_\bB$ is $\SO(n)$-equivariant, by Schur's lemma, $\Gamma_\lambda$ is an eigenspace of $T_\bB$. Let  $c_\lambda$ denote the corresponding (necessarily real) eigenvalue.  The valuation $\phi_{n-r,r,m}$ is a highest weight vector in $\VP_{\bB}$ with highest weight $\lambda=(m,2,\dots,2,0,\dots,0)$ and
\begin{align*}
	\langle T_\bB\phi_{n-r,r,m},\phi_{n-r,r,m}\rangle=(n-2r)!\frac{(m+r-1)(n+m-r)v_{n+2m-2}}{v_{r+m-2}^2 s_{2m-3}}
\end{align*}
by \cite[Theorem 1.4]{KotrbatyWannerer:Harmonic}. 	Thus, using the previous lemma,
	\begin{align*}
	\frac{\langle T_\bB\phi_{n-r,r,m},\phi_{n-r,r,m}\rangle}{\|\phi_{n-r,r,m}\|_\VE^2}\geq\frac{(n-2r)!(n+m-r)(r+m-1)(r+m-2)}{(n+2m-2)}.
	\end{align*}
	Since  the number on the right-hand side is $\geq1$, we conclude that $c_\lambda\geq 1$. Similarly one proceeds for the highest weight $\lambda=(m,2,\dots,2,-2)$.
	
	If $\phi$ is any element of $\VP_\bB$ and we decompose $\phi = \sum_\lambda \phi_\lambda$ according to \eqref{eq:L2decomp}, then  
	\begin{displaymath}
	   \langle  T_\bB \phi, \phi\rangle = \sum_\lambda c_{\lambda} |\phi_\lambda|^2_\VE\geq \|\phi\|^2_\VE.
	\end{displaymath}
	Hence $T_\bB$ is bounded below by $1$ and in particular strictly positive. 
	
	Since each $c_\lambda$ is non-zero, the decomposition~\eqref{eq:L2decomp} also shows that $T_\bB$ has dense image. Since $T_\bB$ is symmetric, Theorem~\ref{thm:saExt} implies that $T_\bB$ is essentially self-adjoint.	
\end{proof}

\subsection{The Hodge--Riemann form as a  quadratic form on a Hilbert space}

The theory of quadratic forms on an infinite-dimensional space is much richer than in the finite-dimensional setting. In this final section we consider the Hodge--Riemann sesquilinear form $q_\bC$ from this perspective.

\begin{lemma}\label{lemma:TCcompact}
For every tuple $\bC$ of reference bodies from $\calK_+^\infty(\RR^n)$, the operator $\b T_\bC$ has compact resolvent. Moreover, $0$ belongs to the resolvent set.
\end{lemma}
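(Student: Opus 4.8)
The plan is to reduce, via the factorization $\b T_\bC=S_\bC\circ\b T_\bB$ from the proof of Proposition~\ref{prop_positivity_tb_tc}(b), to spectral information about $\b T_\bB$ that is essentially already contained in Subsection~\ref{subsec:spectrum}. First I would record that $\b T_\bC$ is self-adjoint (Proposition~\ref{prop_positivity_tb_tc}(b)) and strictly positive, the latter because $T_\bC$ is strictly positive and the closure of a strictly positive symmetric operator is again strictly positive. By Lemma~\ref{lemma_bound_and_spectrum} the spectrum of $\b T_\bC$ lies in $[M,\infty)$ for some $M>0$, so $0\in\rho(\b T_\bC)$ and $\b T_\bC^{-1}$ is a bounded operator on $\VP_\bB^\VE$; this already gives the second assertion of the lemma, and by the equivalence of (a) and (b) in Theorem~\ref{thm:compactresolvent} it reduces the first assertion to showing that the single operator $\b T_\bC^{-1}$ is compact.

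The next step is to invoke the identity $\b T_\bC=S_\bC\circ\b T_\bB$ with $\dom(\b T_\bC)=\dom(\b T_\bB)$ and $S_\bC$ bounded and invertible, which follows from Proposition~\ref{prop:HRmain} together with the argument used in the proof of the homotopy lemma (Lemma~\ref{lemma:homotopy}). This yields $\b T_\bC^{-1}=\b T_\bB^{-1}\circ S_\bC^{-1}$: for $y\in\VP_\bB^\VE$ the vector $\b T_\bB^{-1}S_\bC^{-1}y$ lies in $\dom(\b T_\bB)=\dom(\b T_\bC)$, and $\b T_\bC$ sends it to $S_\bC S_\bC^{-1}y=y$. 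Since $S_\bC^{-1}$ is bounded, it now suffices to prove that $\b T_\bB^{-1}$ is compact, i.e.\ to settle the case of euclidean balls.

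For the ball case I would read off everything from the orthogonal decomposition $\VP_\bB^\VE=\overline{\bigoplus_\lambda\Gamma_\lambda}$ of \eqref{eq:L2decomp}, in which each $\Gamma_\lambda$ is a finite-dimensional $\SO(n)$-irreducible subspace of smooth valuations on which $\b T_\bB$ acts as multiplication by a scalar $c_\lambda>0$, and where the highest weights $\lambda$ that occur are, apart from a bounded number of exceptional ones, parametrized by a single positive integer $m$. The estimate derived in the proof of Proposition~\ref{prop_positivity_tb_tc}(a) shows that $c_\lambda$ grows at least quadratically in $m$; in particular $c_\lambda\to\infty$. Hence for every $R>0$ only finitely many $\Gamma_\lambda$ satisfy $c_\lambda\le R$, and concatenating orthonormal bases of the $\Gamma_\lambda$ produces a complete orthonormal system of eigenvectors of $\b T_\bB$, all lying in $\dom(\b T_\bB)$, whose eigenvalues can be arranged in non-decreasing order and tend to $+\infty$. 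By Theorem~\ref{thm:compactresolvent}(d) the operator $\b T_\bB$ has compact resolvent, and since $0\in\rho(\b T_\bB)$ the inverse $\b T_\bB^{-1}$ is compact. This completes the argument.

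I do not anticipate a genuine obstacle: the only piece of real content --- the spectral gap for $\b T_\bB$ produced through highest-weight vectors --- has already been worked out in Subsection~\ref{subsec:spectrum}, so what remains is bookkeeping with unbounded self-adjoint operators (passing to closures and threading the factorization through $S_\bC$) together with the elementary observation that the eigenvalue bound used there is in fact quadratic in $m$, not merely bounded below by $1$.
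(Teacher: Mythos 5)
Your proof is correct and takes essentially the same approach as the paper: both reduce, via the factorization $\b T_\bC=S_\bC\circ\b T_\bB$, to the compactness of the resolvent of $\b T_\bB$, which comes from the eigenvalue decomposition in the proof of Proposition~\ref{prop_positivity_tb_tc}(a) together with Theorem~\ref{thm:compactresolvent}(d). The only differences are cosmetic: the paper transfers compactness from $\b T_\bB$ to $\b T_\bC$ through the bounded-set criterion of Theorem~\ref{thm:compactresolvent}(c) rather than factoring $\b T_\bC^{-1}=\b T_\bB^{-1}\circ S_\bC^{-1}$ directly, and your explicit observation that the eigenvalue lower bound grows quadratically in $m$ --- so that $c_\lambda\to\infty$ --- fills in a detail the paper leaves implicit when invoking Theorem~\ref{thm:compactresolvent}(d).
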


\begin{proof}
That $\b T_\bB$ has compact resolvent follows from the proof of Proposition~\ref{prop_positivity_tb_tc}(a) using Theorem~\ref{thm:compactresolvent}\ref{item:compact_eigenvalues}\hspace{-.3em}. Recall that $\b T_\bC= S_\bC\circ \b T_\bB$, where $S_\bC$ is an isomorphism. 
It follows that for every $b$ 
\begin{equation}\label{eq:compTA} \{ \phi \in \dom \b T_\bC\colon  \|\phi\|\leq 1 \text{ and } \| \b T_\bC \phi\|\leq b\}\end{equation}
is contained in
\begin{displaymath}
 \{ \phi \in \dom \b T_\bB\colon  \|\phi\|\leq 1 \text{ and } \| \b T_\bB \phi\|\leq b\| S_\bC^{-1}\|\}.
\end{displaymath}
Since $\b T_\bB$ has compact resolvent, the latter set is compact by Theorem~\ref{thm:compactresolvent}.

 Since \eqref{eq:compTA} is closed, it follows that it is also compact. Using again Theorem~\ref{thm:compactresolvent} it follows that $\b T_\bC$ has compact resolvent. Since $T_\bC$ is strictly positive by Proposition  \ref{prop_positivity_tb_tc}(b), so is $\b T_\bC$ and hence  $\sigma(\b T_\bC) \subset (0,\infty)$ by Lemma~\ref{lemma_bound_and_spectrum}.
\end{proof}

\begin{theorem}\label{thm:mainQuadratic}
	Let  $\bC=(C_0,C_1,\ldots, C_{n-2k})$ be a tuple of  reference bodies from $\calK_+^\infty(\RR^n)$. 
	\begin{enuma}
		\item 
	The Hodge--Riemann sesquilinear form $q_\bC$ defined on $\VP_\bC^\infty\subset \VE_{n-k}$ is closable. Its closure $q^\VE_\bC$ is the quadratic form of a unique  self-adjoint operator $Q_\bC^\VE$.  This operator is strictly positive. Its spectrum is  discrete and consists of eigenvalues with finite multiplicities.
	\item The Hodge--Riemann sesquilinear form $q_\bC$ defined on $\VP_\bC^\infty\subset \VF_{n-k}$ is bounded. Its closure $q^\VF_\bC$ is the quadratic form of a unique bounded  self-adjoint operator $Q_\bC^\VF$. This operator is positive and compact. Consequently, each non-zero element of the spectrum is an eigenvalue with finite multiplicity and the eigenvalues accumulate only at $0$.  
	\end{enuma}
\end{theorem}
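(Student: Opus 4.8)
The plan is to deduce both statements from facts already proved in this section: the essential self-adjointness and strict positivity of $T_\bB$ and $T_\bC$ on $\VP_\bB$ (Proposition~\ref{prop_positivity_tb_tc}) and the compactness of the resolvent of $\overline{T_\bC}$ (Lemma~\ref{lemma:TCcompact}). Part~(a) will be obtained by transporting these properties along the topological isomorphism $\pi_\bC$ and invoking the Friedrichs extension theorem, while part~(b) is a more direct analysis of $q_\bC$ on $\VP_\bC^\VF$ via the Poincar\'e pairing and the compact embedding $\VF\hookrightarrow\VE$. In both parts I will use that $\VP^\infty_\bC$ is dense in $\VP^\VE_\bC$ and in $\VP^\VF_\bC$: the first was recorded after Lemma~\ref{lemma:Pproj}, and the second follows by the same argument, since $\pi_\bC$ acts continuously on $\VF_{n-k}$ by Proposition~\ref{prop:contDependencePC}.

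For part~(a), I would first observe that, since $\pi_\bC\colon\VP_\bB^\VE\to\VP_\bC^\VE$ is a topological isomorphism carrying $\VP_\bB^\infty$ onto $\VP_\bC^\infty$ (Lemma~\ref{lemma:Pproj}(b)) and $\pi_\bC^*q_\bC$ extends continuously to $\VP_\bB^\infty\times\VP_\bB^\VE$ (Proposition~\ref{prop:SobolevVal}, Theorem~\ref{thm:poincare}), the form $q_\bC$ is continuous on $\VP_\bC^\infty\times\VP_\bC^\VE$; being real-valued by the Hodge--Riemann relations it is Hermitian, hence represented on $\VP_\bC^\infty$ by a densely defined symmetric operator $Q_0$. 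Writing $\chi=\pi_\bC\phi$ with $\phi\in\VP_\bB^\infty$ and using $T_\bC\ge M>0$ (Proposition~\ref{prop_positivity_tb_tc}(b)) gives $q_\bC(\chi,\chi)=\langle T_\bC\phi,\phi\rangle_\VE\ge M\|\phi\|_\VE^2\ge M\|\pi_\bC\|^{-2}\|\chi\|_\VE^2$, so $Q_0$ is strictly positive; the Friedrichs extension theorem (Theorem~\ref{thm_friedrich}) then shows that $q_\bC$ is closable, that its closure $q^\VE_\bC$ is the quadratic form of a unique self-adjoint operator $Q^\VE_\bC$, and that $Q^\VE_\bC$ is strictly positive. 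For the spectral assertion I would use that closures of semibounded quadratic forms transport along bounded isomorphisms: from $\pi_\bC^*q_\bC(\phi,\psi)=q_\bC(\pi_\bC\phi,\pi_\bC\psi)$ on $\VP_\bB^\infty$, passing to closures and using the uniqueness in Theorem~\ref{thm_friedrich} together with the essential self-adjointness of $T_\bC$, one obtains $\pi_\bC^*\,Q^\VE_\bC\,\pi_\bC=\overline{T_\bC}$, hence $(Q^\VE_\bC)^{-1}=\pi_\bC\,(\overline{T_\bC})^{-1}\,\pi_\bC^{*}$. Since $(\overline{T_\bC})^{-1}$ is compact by Lemma~\ref{lemma:TCcompact} and $\pi_\bC,\pi_\bC^{*}$ are bounded, $(Q^\VE_\bC)^{-1}$ is compact, so Theorem~\ref{thm:compactresolvent} shows that $Q^\VE_\bC$ has compact resolvent and discrete spectrum consisting of eigenvalues of finite multiplicity. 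This also establishes Theorem~\ref{thm:mainE}.

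For part~(b), I would start from $q_\bC(\chi,\psi)=(-1)^k\langle\chi,L_{\bC_{\setminus0}}\overline\psi\rangle$ for $\chi,\psi\in\VP_\bC^\infty$, the pairing being the Poincar\'e pairing. By Theorem~\ref{thm:poincare} this pairing is continuous on $\VE_{n-k}\times\VF_k$, by Lemma~\ref{lemma_extension_convolution} the map $L_{\bC_{\setminus0}}\colon\VF_{n-k}\to\VF_k$ is bounded, and $\VF_{n-k}\hookrightarrow\VE_{n-k}$ is bounded (Proposition~\ref{prop:SobolevVal}), so $|q_\bC(\chi,\psi)|\le C\|\chi\|_\VE\,\|L_{\bC_{\setminus0}}\overline\psi\|_\VF\le C'\|\chi\|_\VF\,\|\psi\|_\VF$. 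Hence $q_\bC$ extends to a bounded Hermitian form on $\VP_\bC^\VF$ (Hermitian since $q_\bC\ge0$ on the dense subspace $\VP_\bC^\infty$ by Theorem~\ref{thm:main}(b)), and is represented by a bounded self-adjoint operator $Q^\VF_\bC\ge0$. Reading the same estimate with the $\VE$-norm retained on the first argument shows $\chi\mapsto q_\bC(\chi,\psi)$ is $\VE$-continuous, so the Riesz representation theorem furnishes a bounded operator $R\colon\VP_\bC^\VE\to\VP_\bC^\VF$, characterized by $\langle R\chi',\psi\rangle_\VF=(-1)^k\langle\chi',L_{\bC_{\setminus0}}\overline\psi\rangle$, with $Q^\VF_\bC=R\circ\iota$, where $\iota\colon\VP_\bC^\VF\hookrightarrow\VP_\bC^\VE$ is the inclusion. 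Since $\iota$ is compact (Proposition~\ref{prop:SobolevVal}), $Q^\VF_\bC$ is compact, and the spectral theorem for compact self-adjoint operators gives the remaining assertions about the spectrum.

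Granting Proposition~\ref{prop_positivity_tb_tc} and Lemma~\ref{lemma:TCcompact}, which carry the analytic weight, the rest is functional-analytic bookkeeping; the point requiring care is that $\pi_\bC$ is not an isometry, so the self-adjointness and compact-resolvent properties of $\overline{T_\bC}$ cannot simply be conjugated to $\VP^\VE_\bC$ but must be carried through the quadratic-form picture, via the identity $\pi_\bC^*\,Q^\VE_\bC\,\pi_\bC=\overline{T_\bC}$. In part~(b) the analogous subtlety is to recognize that boundedness of $q_\bC$ on $\VF$ is precisely continuity of the Poincar\'e pairing for the $\VE$--$\VF$ duality, while its compactness is the Rellich--Kondrachov embedding $\VF\hookrightarrow\VE$ in disguise.
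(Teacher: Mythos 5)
Your proposal is correct and follows essentially the same route as the paper: both deduce the spectral statements from Proposition~\ref{prop_positivity_tb_tc}, Lemma~\ref{lemma:TCcompact}, the Friedrichs extension theorem, and the compact embedding $\VF\hookrightarrow\VE$. The only variations are cosmetic: for part~(a) you identify $Q_\bC^\VE$ with $\pi_\bC^{-*}\circ\overline{T_\bC}\circ\pi_\bC^{-1}$ by transporting form closures along $\pi_\bC$, whereas the paper observes that both operators are positive self-adjoint extensions of $\pi_\bC^{-*}\circ T_\bC\circ\pi_\bC^{-1}$ and invokes Theorem~\ref{thm:saExt}; and for part~(b) you factor $Q_\bC^\VF$ through the compact inclusion $\VP_\bC^\VF\hookrightarrow\VP_\bC^\VE$ in degree $n-k$, while the paper factors through $\iota_k\colon\VF_k\hookrightarrow\VE_k$ in degree $k$.
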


\begin{proof} (a) Note that
\begin{displaymath}
q_\bC(\phi) = \langle \pi_\bC^{-*} \circ T_\bC \circ \pi_\bC^{-1}  \phi, \phi\rangle_\VE, \quad \phi\in \VP_\bC^\infty.
\end{displaymath}
That $q_\bC$ is closable follows from the Hodge--Riemann relations and Friedrichs extension theorem \ref{thm_friedrich}. Its closure corresponds to a unique self-adjoint operator $Q_\bC^\VE$ and this operator extends  $\pi_\bC^{-*} \circ T_\bC \circ \pi_\bC^{-1} $. But $\pi_\bC^{-*} \circ \b T_\bC \circ \pi_\bC^{-1}$ is another positive self-adjoint extension. By Theorem~\ref{thm:saExt}, these extensions must coincide: $Q_\bC^\VE=\pi_\bC^{-*} \circ \b T_\bC \circ \pi_\bC^{-1}$. By Lemma~\ref{lemma:TCcompact}, $\b T_\bC$ has compact resolvent. In particular, since zero belongs to the resolvent set of $\b T_\bC$ by Lemma \ref{lemma:TCcompact},  $\b T_\bC^{-1}$ is compact. It follows that $(Q_\bC^\VE)^{-1}$ is compact and Theorem~\ref{thm:compactresolvent} shows that $Q_\bC^\VE$ has compact resolvent. This implies the statement about the spectrum of $Q_\bC^\VE$.
	
	(b) Recall from Proposition~\ref{prop:SobolevVal} that the inclusion $\iota_k \colon \VF_{k}\hookrightarrow \VE_k$ is compact. Let $r_{n-k}:\VF_{n-k}^* \to \VF_{n-k}$ be the antilinear map induced by the Riesz representation theorem, i.e., $\langle \phi,r_{n-k} \tau \rangle_\VF=\langle \phi,\tau \rangle, \tau \in \VF_{n-k}^*, \phi \in \VF_{n-k}$.
	
	For $\phi,\psi\in \VP^\infty_\bC\subset \VF_{n-k}$ we have 	
	\begin{align*}
		q_\bC(\phi,\psi) & =(-1)^k \phi * L^\VF_{\bC_{\setminus0}} \b \psi\\
		& = (-1)^k \langle \phi,\Pd_k \circ \iota_k \circ L_{\bC_{\setminus0}}^\VF \b \psi\rangle \\
		& =(-1)^k \langle \phi,r_{n-k} \circ \Pd_k \circ \iota_k \circ L_{\bC_{\setminus0}}^\VF \b \psi\rangle_\VF \\
		& = \langle \phi, Q_\bC^\VF \psi \rangle_\VF,
	\end{align*}
where 
\begin{displaymath}
	Q_\bC^\VF \psi=r_{n-k} \circ \Pd_k \circ \iota_k \circ L_{\bC_{\setminus0}}^\VF \b \psi.
\end{displaymath}

	Since the composition of a compact operator with a bounded operator yields a compact operator, we conclude that $Q_\bC^\VF$ is bounded, self-adjoint, and compact. The Hodge--Riemann relations for smooth valuations imply that $Q_\bC^\VF$ is positive. The statement about the spectrum is a well-known property of compact operators, see, e.g., \cite[Theorem VI.15]{ReedSimon:I}.
\end{proof}

 We say that a convex body $K$ in $\RR^n$ belongs to the class $\calK^{1,1}(\RR^n)$ if its  support function is differentiable on $\RR^n\setminus \{0\}$ and  all partial derivatives $\partial_i h_K\colon \RR^n\setminus\{0\} \to \RR$  are locally Lipschitz. Recall that a function $f\colon A\to \RR$, where $A\subset \RR^n$, is called locally Lipschitz if its restriction to compact subsets of $A$ is Lipschitz. Moreover, if $U$ is an open subset of $\RR^n$, then $f$ is locally Lipschitz in $U$ if and only if $f\in W^{1,\infty}_{\mathrm{loc}}(U)$, see \cite[Section 4.2.3, Theorem 5]{EvansGariepy:Measure}. Recall also that a function $f$ belongs to the Sobolev space $ W^{1,\infty}_{\mathrm{loc}}(U)$ if and only if $
f\in W^{1,\infty}(V)$ for each compactly contained open subset $V\subset  U$.

\begin{lemma} \label{lemma:C11}
Let $A\in \calK^{1,1}(\RR^n)$. Then there exists a sequence  $(A_l)$ of  convex bodies from $\calK_+^\infty(\RR^n)$ such that 
\begin{enuma}
\item $h_{A_l}\to h_A$ in $C^1(\RR^n\setminus\{0\})$ as $l\to \infty$, and 
\item the partial derivatives 
\begin{displaymath}
	\left. \frac{\partial^2h_{A_{l}}}{\partial x_i\partial x_j}\right|_{S^{n-1}}
\end{displaymath}
 converge in $L^\infty(S^{n-1})$. 
 \end{enuma}
Moreover, the limit in (b) does not depend on the approximating sequence $(A_l)$ and is denoted by $\left. \frac{\partial^2h_{A}}{\partial x_i\partial x_j}\right|_{S^{n-1}}.$
\end{lemma}
\begin{proof}
The standard mollification procedure, see \cite[Theorem 3.4.1]{Schneider:BM} and the comments after this theorem concerning $\calK_+^\infty(\RR^n)$, yields a sequence $(A_l)$ of bodies from $\calK_+^\infty(\RR^n)$ satisfying (a).  	For 
	$V=\{x\in \RR^n\colon \frac 12 < |x| \leq 2\}$  we have $\partial_i h_A\in W^{1,\infty}(V)$ and $\partial_i h_{A_l} \to \partial_i h_A$ in $W^{1,\infty}(V)$. 
	Since the second partial derivatives are homogeneous functions, it follows that 
	\begin{displaymath}
	   \left. \frac{\partial^2h_{A_{l}}}{\partial x_i\partial x_j}\right|_{S^{n-1}} 
	\end{displaymath}
	converges in $L^\infty(S^{n-1})$ and that the limit does not depend on the sequence $(A_l)$.
\end{proof}

In view of the definition \eqref{eq:omegaOmega} and the preceding lemma we may define a differential form with $L^\infty$ coefficients $\omega_\bA\in \Omega_{L^\infty}^{n-k,k-1}(S\RR^n)^{tr} = L^\infty(E_{n-k,k-1})$, 
\begin{displaymath}
  \omega_{\mathbf A} := \frac{(n-k)!}{n!}  \cdot   *_1^{-1}(*_1\omega_{A_1} \wedge \Omega_{A_{2}} \wedge \cdots \wedge \Omega_{A_{k}}),
\end{displaymath}
for  each tuple  $\bA= (A_1,\ldots, A_{k}), 1 \leq k \leq n$ of convex bodies from $\calK^{1,1}(\RR^n)$. Let $\phi_\bA\in \VE_{n-k}$ denote the corresponding valuation.

In the following lemma we use  that generalized valuations can be evaluated on convex bodies belonging to the class $\calK_+^{\infty}(\RR^n)$ and that  $\VE\subset \Val^{-\infty}$ by Proposition~\ref{prop:SobolevVal}. 
\begin{lemma} \label{lemma:C11_mixedvolume} 
	For each tuple  $
\bA= (A_1,\ldots, A_{k})$ of convex bodies from $\calK^{1,1}(\RR^n)$, the valuation  $\phi_{\bA}\in \VE_{n-k}$ is uniquely determined by the property 
\begin{displaymath}
	\phi_{\bA}(K)= V(A_1,\dots,A_{k}, K[n-k] )
\end{displaymath}
for all $K\in \calK_+^\infty(\RR^n)$.  
Moreover, if $ 1 \leq k \leq \frac{n}{2}$ and $\bC=(C_0,\ldots, C_{n-2k})$ is a tuple of convex bodies from $\calK^\infty_+(\RR^n)$, then
\begin{displaymath}
	 L_\bC\phi_\bA(K)= \frac{(n-k)!}{(k-1)!}V( A_1,\dots,A_{k}, C_0,\ldots, C_{n-2k},K[k-1])
\end{displaymath}
 for all $K\in \calK_+^\infty(\RR^n)$.
\end{lemma}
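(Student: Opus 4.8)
The plan is to reduce to the already-established smooth case by approximation, exploiting that $\omega_{\bA}$ depends on the bodies only through $h_{A_1}$ itself and the second $u$-derivatives of $h_{A_2},\dots,h_{A_k}$ restricted to $S^{n-1}$ --- precisely the data controlled by Lemma~\ref{lemma:C11}. So, for each $i$ I would fix a sequence $A_i^l\in\calK_+^\infty(\RR^n)$ approximating $A_i$ as in that lemma and set $\bA^l=(A_1^l,\dots,A_k^l)$.

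First I would check the two identities when all bodies are smooth, working with differential forms. By Lemma~\ref{lemma:form_mixed_volume} one has $\lcur 0,\omega_A\rcur=nV(A,\Cdot[n-1])$ and $*_1D\omega_A=\Omega_A$, so iterating Theorem~\ref{thm:BFconvolution} --- both the product formula and its ``moreover'' clause, which gives $*_1D(\omega_1*\omega_2)=*_1D\omega_1\wedge *_1D\omega_2$ --- yields
\begin{align*}
 nV(A_1^l,\Cdot[n-1])*\cdots*nV(A_k^l,\Cdot[n-1])
 &=\lcur 0,\ *_1^{-1}(*_1\omega_{A_1^l}\wedge\Omega_{A_2^l}\wedge\cdots\wedge\Omega_{A_k^l})\rcur\\
 &=\tfrac{n!}{(n-k)!}\,\phi_{\bA^l}.
\end{align*}
On the other hand \eqref{eq_convolution_mixed_volumes}, iterated, identifies the left-hand side with $\tfrac{n!}{(n-k)!}V(A_1^l,\dots,A_k^l,\Cdot[n-k])$, so $\phi_{\bA^l}=V(A_1^l,\dots,A_k^l,\Cdot[n-k])$ as smooth valuations, and in particular $\phi_{\bA^l}(K)=V(A_1^l,\dots,A_k^l,K[n-k])$ for every $K\in\calK(\RR^n)$. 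Convolving once more: $L_\bC$ is, by \eqref{eq_convolution_mixed_volumes}, convolution with $\tfrac{n!}{(2k-1)!}V(C_0,\dots,C_{n-2k},\Cdot[2k-1])$, and applying \eqref{eq_convolution_mixed_volumes} a final time gives $L_\bC\phi_{\bA^l}=\tfrac{(n-k)!}{(k-1)!}V(A_1^l,\dots,A_k^l,C_0,\dots,C_{n-2k},\Cdot[k-1])$; the binomial bookkeeping here is routine.

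Next I would let $l\to\infty$. Since $\omega_A=h_A\,\iota_T\vol$ and $\Omega_A=-d\alpha_A$ involve only $h_A$ and its second $u$-derivatives, Lemma~\ref{lemma:C11} gives $\omega_{A_1^l}\to\omega_{A_1}$ and $\Omega_{A_i^l}\to\Omega_{A_i}$ in $L^\infty$, and as wedge products and $*_1^{\pm1}$ are continuous operations on $L^\infty$-forms, $\omega_{\bA^l}\to\omega_{\bA}$ in $\Omega^{n-k,k-1}_{L^\infty}(S\RR^n)^{tr}$, hence in $\Omega^{n-k,k-1}_{L^2}(S\RR^n)^{tr}$ because $S^{n-1}$ is compact. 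By the definition of the $\VE$-norm this forces $\phi_{\bA^l}\to\phi_{\bA}$ in $\VE_{n-k}$, and then $L_\bC\phi_{\bA^l}\to L_\bC\phi_{\bA}$ in $\VE_{k-1}$ by continuity of the extended Lefschetz map (Lemma~\ref{lemma_extension_convolution}). For a fixed $K\in\calK_+^\infty(\RR^n)$ the evaluation $\psi\mapsto\psi(K)$ of \eqref{eq:evaluation} is continuous on $\VE_{n-k}$ and on $\VE_{k-1}$, since it factors through the pairing against the smooth valuation $\vol(\Cdot+K)$, which is continuous on these spaces by Proposition~\ref{prop:SobolevVal} and Theorem~\ref{thm:poincare}. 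Combining this with continuity of mixed volumes and with $A_i^l\to A_i$ in the Hausdorff metric (which follows from $h_{A_i^l}\to h_{A_i}$ uniformly on $S^{n-1}$), passing to the limit in the two smooth identities produces the asserted formulas. Uniqueness of $\phi_{\bA}$ is then immediate: an element of $\VE_{n-k}\subset\Val_{n-k}^{-\infty}$ is determined by its values $\langle\,\cdot\,,\vol(\Cdot+K)\rangle$ on $K\in\calK_+^\infty(\RR^n)$, because these valuations span a dense subspace of $\Val^\infty$ and the Poincar\'e pairing is perfect.

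The step I expect to require the most care is this passage to the limit: verifying that $L^\infty$- (hence $L^2$-) convergence of the defining forms yields $\VE$-convergence of the corresponding valuations, and that evaluation at a fixed body of class $\calK^\infty_+(\RR^n)$ is $\VE$-continuous and agrees with the classical evaluation of the smooth approximants $\phi_{\bA^l}$. The remainder is a direct application of the Bernig--Fu convolution formula (Theorem~\ref{thm:BFconvolution}) together with \eqref{eq_convolution_mixed_volumes} and the chasing of multiplicative constants.
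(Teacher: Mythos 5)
Your proof is correct and follows essentially the same strategy as the paper's: approximate the $\calK^{1,1}$ bodies by smooth bodies via Lemma~\ref{lemma:C11}, establish the identities in the smooth case via Lemma~\ref{lemma:form_mixed_volume}, Theorem~\ref{thm:BFconvolution}, and \eqref{eq_convolution_mixed_volumes}, then pass to the limit using $L^\infty$-convergence of the forms and continuity of evaluation via the Poincar\'e pairing (the paper packages this last step via Corollary~\ref{cor:evaluation} and \eqref{eq:evaluation}, but the content is the same). One cosmetic slip: the valuations $V(K[n-k],\Cdot[k])$, $K\in\calK_+^\infty(\RR^n)$, actually span all of $\Val_k^\infty$ rather than merely a dense subspace, but the uniqueness conclusion holds either way.
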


\begin{proof}
Choose approximating sequences $(A_{j,l})$ as in Lemma~\ref{lemma:C11} and let $\bA_l=(A_{1,l}, \ldots, A_{k,l})$. Then $ \omega_{\bA_l} \to\omega_{\bA}$ in $L^\infty$ as $l\to \infty$ and 
 by Lemma~\ref{lemma:form_mixed_volume} and Theorem~\ref{thm:BFconvolution} one has $\phi_{\bA_l}= V( A_{1,l},\dots,A_{k,l},\Cdot[n-k])$.
Therefore  \eqref{eq:evaluation} and  Corollary~\ref{cor:evaluation} imply  
 \begin{align*}
 \phi_\bA(K) &=   \frac{(-1)^{\binom{n}{2}}}{(n-k)!}  \int_{S^{n-1}} *_1\omega_{\bf A}\wedge  (\Omega_K)^{n-k}   \\
 &= \lim_{l\to\infty}   \frac{(-1)^{\binom{n}{2}}}{(n-k)!}  \int_{S^{n-1}} *_1\omega_{\bA_l}\wedge  (\Omega_K)^{n-k}\\
 &=  \lim_{l\to\infty} V( A_{1,l},\dots,A_{k,l},K[n-k])\\
 & =  V( A_1,\dots,A_{k},K[n-k]).
 \end{align*}
The same argument shows the statement concerning $L_\bC\phi_\bA(K)$.
\end{proof}

\begin{proposition}\label{prop:C11}
Let $0\leq k\leq \frac n2$, $N\in\NN$, $x_i\in\RR$, and $A_j^i\in\calK^{1,1}(\RR^n)$ and $C_i\in \calK^\infty_+(\RR^n)$. If 
\begin{displaymath}
   \phi = \sum_{i=1}^N x_i V(A_1^i,\dots,A_k^i,\Cdot[n-k]) \in \VE_{n-k}
\end{displaymath}
is $\bC$-primitive, then $\phi$ lies in the domain of $q_\bC^\VE$ and
\begin{equation}
	\label{eq:C11}
	q_\bC^\VE(\phi,\phi) =  (-1)^k  \frac{(n-k)!^2}{n!}\sum_{i,j=1}^N x_ix_j V(A_1^i,\dots,A_k^i,A_1^j,\dots,A_k^j,C_1,\dots,C_{n-2k}).
\end{equation}
\end{proposition}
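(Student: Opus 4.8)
The plan is to reduce to the smooth case treated in the proof of Theorem~\ref{thm:mainMV} by a double approximation --- first smoothing the bodies $A^i_j$, then projecting onto the primitive subspace --- and to transfer the resulting limiting identity from the completion $\VF_{n-k}$, where the relevant sesquilinear form is bounded, to $\VE_{n-k}$, where the Hodge--Riemann form $q^\VE_\bC$ is closed. We may assume $k\geq 1$, since for $k=0$ one has $\phi=(\sum_i x_i)\vol\in\Val^\infty_n$ and the assertion is trivial.

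First I would fix, for all $i$ and $j$, an approximating sequence $(A^i_{j,l})_l$ in $\calK^\infty_+(\RR^n)$ as in Lemma~\ref{lemma:C11}, and put $\phi_l:=\sum_{i=1}^N x_i V(A^i_{1,l},\dots,A^i_{k,l},\Cdot[n-k])\in\Val^\infty_{n-k}$. Iterating Lemma~\ref{lemma:form_mixed_volume} and Theorem~\ref{thm:BFconvolution} shows that $*_1\tau_{\phi_l}$ is a fixed positive multiple of $\sum_i x_i\,\Omega_{A^i_{1,l}}\wedge\cdots\wedge\Omega_{A^i_{k,l}}$; since each $\Omega_{A^i_{j,l}}$ converges in $L^\infty(S^{n-1})$ by Lemma~\ref{lemma:C11}(b), the forms $\tau_{\phi_l}$ are Cauchy in $L^2$, so $(\phi_l)$ is Cauchy in $\VF_{n-k}$. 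On the other hand $\omega_{\phi_l}=\sum_i x_i\omega_{\bA^i_l}$ converges in $L^\infty\subset L^2$ to $\sum_i x_i\omega_{\bA^i}$ (as in the proof of Lemma~\ref{lemma:C11_mixedvolume}), hence $\phi_l\to\phi$ in $\VE_{n-k}$; since $\VF_{n-k}\hookrightarrow\VE_{n-k}$ is continuous, the $\VF$-limit of $(\phi_l)$ is again $\phi$, so $\phi\in\VF_{n-k}$, and as $L_\bC$ is compatible with this inclusion (Lemma~\ref{lemma_extension_convolution}) and injective, $L^\VF_\bC\phi=0$, i.e.\ $\phi\in\VP^\VF_\bC$. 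Finally set $\psi_l:=\pi_\bC\phi_l\in\VP^\infty_\bC$; since $\pi_\bC$ is continuous on $\VF_{n-k}$ (Proposition~\ref{prop:contDependencePC}) and fixes $\phi$ (Lemma~\ref{lemma:Pproj}(a)), one gets $\psi_l\to\phi$ in $\VF_{n-k}$, hence also in $\VE_{n-k}$.

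Next I would identify $\lim_l q_\bC(\psi_l)$. Let $\tilde q$ denote the sesquilinear form $(-1)^k\,\Cdot * L_{\bC_{\setminus0}}\overline{\Cdot}$, which is bounded on $\VF_{n-k}$ by Lemma~\ref{lemma_extension_convolution} and Theorem~\ref{thm:poincare}, and which restricts to $q_\bC$ on $\VP^\infty_\bC$; in particular $q_\bC(\psi_l)=\tilde q(\psi_l)$ and $q_\bC(\psi_l-\psi_m)=\tilde q(\psi_l-\psi_m)$. By \eqref{eq_convolution_mixed_volumes} and the definition of $L_\bC$ (as in the derivation of \eqref{eq:qc_explicit}), $\tilde q(\phi_l)$ equals $(-1)^k\frac{(n-k)!^2}{n!}\sum_{i,j}x_ix_j V(A^i_{1,l},\dots,A^i_{k,l},A^j_{1,l},\dots,A^j_{k,l},C_1,\dots,C_{n-2k})$, which tends to the right-hand side of \eqref{eq:C11} by continuity of mixed volumes under Hausdorff convergence. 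Since $\phi_l\to\phi$ and $\psi_l\to\phi$ in $\VF_{n-k}$ and $\tilde q$ is bounded, we get $q_\bC(\psi_l)=\tilde q(\psi_l)\to\tilde q(\phi)=\lim_l\tilde q(\phi_l)=$ the right-hand side of \eqref{eq:C11}, while $q_\bC(\psi_l-\psi_m)=\tilde q(\psi_l-\psi_m)\to 0$ because $(\psi_l)$ is Cauchy in $\VF_{n-k}$.

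It remains to feed this into the $\VE$-side. The quadratic form $q^\VE_\bC$ is closed and strictly positive by Theorem~\ref{thm:mainQuadratic}(a), $\psi_l\in\VP^\infty_\bC\subset\dom(q^\VE_\bC)$ with $\psi_l\to\phi$ in $\VE_{n-k}$, and $q^\VE_\bC(\psi_l-\psi_m)=q_\bC(\psi_l-\psi_m)\to 0$; hence Lemma~\ref{lemma:closed_quadratic} applies and gives $\phi\in\dom(q^\VE_\bC)$ together with $q^\VE_\bC(\phi)=\lim_l q^\VE_\bC(\psi_l)=\lim_l q_\bC(\psi_l)$, which we have just identified with the right-hand side of \eqref{eq:C11}. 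The step I expect to be most delicate is the first one: checking that the $\calK^{1,1}$ hypothesis is exactly what forces $*_1\tau_\phi$ into $L^\infty\subset L^2$, so that $\phi$ genuinely lies in $\VF_{n-k}$ and the primitive projections $\psi_l$ converge there --- it is precisely this $\VF$-convergence, combined with the boundedness of $q^\VF_\bC$, that supplies the Cauchy hypothesis needed to invoke the closedness of $q^\VE_\bC$.
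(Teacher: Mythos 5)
Your proof is correct and follows essentially the same approach as the paper: smooth the bodies via Lemma~\ref{lemma:C11}, project onto the primitive subspace, verify that the approximating sequence is Cauchy in $\VF_{n-k}$ so that the (bounded) Hodge--Riemann form on $\VF$ controls the Cauchy condition, and then invoke the closedness of $q^\VE_\bC$ via Lemma~\ref{lemma:closed_quadratic}. You are in fact somewhat more explicit than the paper on two points that the paper leaves implicit --- that $\phi$ itself lies in $\VF_{n-k}$ and is primitive there, and how exactly $\lim_l q_\bC(\psi_l)$ is identified with the right-hand side of \eqref{eq:C11} via the bounded form $\tilde q$ --- and these details are handled correctly.
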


\begin{proof}
Choose convex bodies $A^i_{j,l}\in \calK^\infty_+(\RR^n)$ as in  Lemma~\ref{lemma:C11}. Put
\begin{displaymath}
  \psi_l =  \sum_{i=1}^N x_i V(A_{1,l}^i,\dots,A_{k,l}^i,\Cdot[n-k]) \in \Val_{n-k}^\infty
\end{displaymath}
and $\phi_l := \pi_\bC \psi_l$. Then the proof of Lemma~\ref{lemma:C11_mixedvolume} shows  $\psi_l\to \phi$ in $\VE_{n-k}$ and hence $\phi_l\to \phi$ in $\VE_{n-k}$.  Observe that
\begin{align*}
  q_\bC(\phi_l-\phi_m,\phi_l-\phi_m)&=(-1)^k L_{\bC_{\setminus0}}(\phi_l-\phi_m) * \pi_\bC(\psi_l-\psi_m)\\
  &= (-1)^k \langle    L_{\bC_{\setminus0}}^\VE (\phi_l-\phi_m),  \pi_\bC^\VF (\psi_l-\psi_m)\rangle.
\end{align*}
Let  $\omega_l= \sum_{i=1}^N x_i \omega_{\bA^i_l}$ and $\omega= \sum_{i=1}^N x_i \omega_{\bA^i}$ be differential forms representing $\psi_l$ and $\phi$. Since $*_1 D\omega_{\bA} =  \frac{(n-k)!}{n!}  \cdot   \Omega_{A_{1}} \wedge \cdots \wedge \Omega_{A_{n-k}}$, it follows that  $D\omega_l-D\omega_m\to 0$ in $L^\infty$, hence $\psi_l-\psi_m\to 0$ in $\VF_{n-k}$. We conclude that
\begin{displaymath}
 q_\bC(\phi_l-\phi_m,\phi_l-\phi_m)\to 0\quad \text{as }l,m\to \infty.
\end{displaymath}

By Lemma~\ref{lemma:closed_quadratic} and \eqref{eq:qc_explicit}, $\phi$ lies in the domain of $q^\VE_\bC$ and
\begin{align*}
 q^\VE_\bC(\phi,\phi)
 & = \lim_{l \to \infty} q_\bC(\phi_l,\phi_l)\\
  & = (-1)^k  \frac{(n-k)!^2}{n!}\sum_{i,j=1}^N x_ix_j V(A_1^i,\dots,A_k^i,A_1^j,\dots,A_k^j,C_1,\dots,C_{n-2k}).
\end{align*}

\end{proof}

Proposition~\ref{prop:C11} and Theorem~\ref{thm:mainQuadratic}(a) allow us to weaken the hypothesis in Theorem~\ref{thm:mainMV} as follows.

\begin{corollary}
\label{cor:K11}
	The conclusion of Theorem~\ref{thm:mainMV} holds also if the convex bodies $A_j^i$ belong merely to $\mathcal K^{1,1}(\RR^n)$.
\end{corollary}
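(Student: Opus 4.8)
The plan is to bootstrap from the smooth case already in hand: everything will follow once we observe that, for bodies $A_j^i\in\calK^{1,1}(\RR^n)$, the two scalar identities in Theorem~\ref{thm:mainMV} are exactly the statements that a certain valuation $\phi\in\VE_{n-k}$ is primitive, respectively zero, and that the quadratic expression in the conclusion is $q_\bC^\VE(\phi,\phi)$. First I would dispose of the trivial case $k=0$, in which the asserted inequality is merely the non-negativity of $V(C_1,\dots,C_n)$ and the $A_j^i$ play no role. So assume $1\le k\le n/2$, put $\bC=(C_0,\dots,C_{n-2k})$, and let
\[
 \phi=\sum_{i=1}^N x_i\,V(A_1^i,\dots,A_k^i,\Cdot[n-k])\in\VE_{n-k}
\]
be the valuation attached to the tuples $(A_1^i,\dots,A_k^i)$ of $\calK^{1,1}$-bodies, constructed just before Lemma~\ref{lemma:C11_mixedvolume}.

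The first real step is to check that the hypothesis of the corollary forces $\phi$ to be $\bC$-primitive. By the second formula of Lemma~\ref{lemma:C11_mixedvolume}, $(L_\bC\phi)(K)$ equals a fixed positive multiple of $\sum_i x_iV(A_1^i,\dots,A_k^i,C_0,\dots,C_{n-2k},K[k-1])$ for every $K\in\calK_+^\infty(\RR^n)$. The hypothesis, being an identity of functions on polytopes, extends by continuity of mixed volumes to all convex bodies, so this vanishes for all such $K$. Since $L_\bC\phi\in\VE_{k-1}\subset\Val_{k-1}^{-\infty}$ by Proposition~\ref{prop:SobolevVal}, and a generalized valuation vanishing on every body of $\calK_+^\infty(\RR^n)$ is zero (via \eqref{eq:evaluation}, since the valuations $\vol(\Cdot+K)$, $K\in\calK_+^\infty(\RR^n)$, span $\Val^\infty$), I conclude $L_\bC\phi=0$, that is, $\phi\in\VP_\bC^\VE$.

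Now the operator-theoretic machinery does the rest. By Proposition~\ref{prop:C11}, $\phi$ lies in the domain of $q_\bC^\VE$ and
\[
 q_\bC^\VE(\phi,\phi)=(-1)^k\,\frac{(n-k)!^2}{n!}\sum_{i,j=1}^N x_ix_j\,V(A_1^i,\dots,A_k^i,A_1^j,\dots,A_k^j,C_1,\dots,C_{n-2k}).
\]
By Theorem~\ref{thm:mainQuadratic}(a) the self-adjoint operator $Q_\bC^\VE$ representing $q_\bC^\VE$ is strictly positive, so $q_\bC^\VE(\phi,\phi)\ge c\,\|\phi\|_\VE^2\ge 0$ for some $c>0$, with equality if and only if $\phi=0$. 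Cancelling the positive factor $\frac{(n-k)!^2}{n!}$ gives the claimed inequality. For the equality case I would invoke the uniqueness in Lemma~\ref{lemma:C11_mixedvolume} together with the density of $\calK_+^\infty(\RR^n)$ in $\calK(\RR^n)$ and continuity of mixed volumes: $\phi=0$ in $\VE_{n-k}$ holds precisely when $\sum_i x_iV(A_1^i,\dots,A_k^i,K[n-k])=0$ for all $K\in\calK_+^\infty(\RR^n)$, equivalently for all polytopes, which is exactly the equality condition of Theorem~\ref{thm:mainMV}.

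Since Proposition~\ref{prop:C11} and Theorem~\ref{thm:mainQuadratic}(a) are already available, I do not expect any genuine analytic obstacle here; the only point needing care is the bookkeeping of the two middle paragraphs, i.e.\ matching the scalar hypothesis and conclusion of Theorem~\ref{thm:mainMV} with the statements about $\phi\in\VE_{n-k}$. This rests on identifying elements of $\VE_{n-k}$ and $\VE_{k-1}$ with generalized valuations (Proposition~\ref{prop:SobolevVal}) and on the approximation of $\calK^{1,1}$-bodies by bodies from $\calK_+^\infty(\RR^n)$ underlying Lemma~\ref{lemma:C11} and Lemma~\ref{lemma:C11_mixedvolume}.
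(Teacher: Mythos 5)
Your proposal is correct and follows the same path as the paper's proof: identify the data with a valuation $\phi\in\VE_{n-k}$, use Lemma~\ref{lemma:C11_mixedvolume} to deduce that the hypothesis makes $\phi$ primitive, apply Proposition~\ref{prop:C11} to place $\phi$ in the domain of $q_\bC^\VE$ and to express $q_\bC^\VE(\phi,\phi)$ as the quadratic mixed-volume sum, and conclude from the strict positivity in Theorem~\ref{thm:mainQuadratic}(a). The paper states these steps tersely; you merely fill in the detail of why $L_\bC\phi=0$ (evaluation of generalized valuations on $\calK_+^\infty$, density of $\vol(\Cdot+K)$, continuity of mixed volumes), which is exactly the reasoning implicit in the paper's citation of Lemma~\ref{lemma:C11_mixedvolume}.
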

\begin{proof}
	Let the hypothesis of  Theorem~\ref{thm:mainMV} be satisfied for 	convex bodies $A_j^i$ from $\mathcal K^{1,1}(\RR^n)$.
By Lemma~\ref{lemma:C11_mixedvolume}, the valuation 
\begin{displaymath}
  \phi = \sum_{i=1}^N x_i V(A_1^i,\dots,A_k^i,\cdot[n-k])
\end{displaymath}
is primitive as an element of $\VE_{n-k}$ and thus belongs by Proposition~\ref{prop:C11}  to the domain of $q^\VE_\bC$. Since $q^\VE_\bC$ is strictly positive by  Theorem~\ref{thm:mainQuadratic}(a), Proposition~\ref{prop:C11} yields 
\begin{displaymath}
  (-1)^k\sum_{i,j=1}^N x_ix_j V(A_1^i,\dots,A_k^i,A_1^j,\dots,A_k^j,C_1,\dots,C_{n-2k})\geq 0
\end{displaymath}
with equality if and only if $\phi=0$ as an element of $\VE_{n-k}$. By Lemma~\ref{lemma:C11_mixedvolume} the latter is equivalent to $\sum_{i=1}^N x_i V(A_{1,l}^i,\dots,A_{k,l}^i,K[n-k])=0$ for all convex bodies $K$.
 \end{proof}

\bibliographystyle{abbrv}
\bibliography{ref_papers,ref_books}

\end{document}